\newtheorem{theorem}{Theorem}[section]
\newtheorem{claim}[theorem]{Claim}
\newtheorem*{theorem*}{Theorem}
\newtheorem*{definition*}{Definition}
\newtheorem*{prop*}{Proposition}
\newtheorem*{cor*}{Corollary}
\newtheorem*{lemma*}{Lemma}
\newtheorem*{claim*}{Claim}
\newtheorem{lemma}[theorem]{Lemma}
\newtheorem{corollary}[theorem]{Corollary}
\newtheorem{cor}[theorem]{Corollary}
\newtheorem{prop}[theorem]{Proposition}
\theoremstyle{definition}
\newtheorem{definition}[theorem]{Definition}
\theoremstyle{remark}
\newtheorem{remark}[theorem]{Remark}
\newtheorem*{remarks*}{Remarks}
\numberwithin{equation}{section}
\DeclareMathOperator{\imc}{imc}
\DeclareMathOperator{\per}{per}
\DeclareMathOperator{\Hom}{Hom}
\DeclareMathOperator{\length}{length}
\newcommand{\cO}{\mathcal O}
\newcommand{\inverse}{^{-1}}
\newcommand{\Y}{\mathcal Y}
\DeclareMathOperator{\Stretch}{stretch}
\newcommand{\Z}{\mathbb{Z}}
\newcommand{\Hplane}{\mathbb{H}^2}
\newcommand{\op}{\operatorname}
\newcommand{\cal}{\mathcal}
\newcommand{\SO}{\op{SO}}
\newcommand{\bH}{\mathbb H}
\newcommand{\T}{\op{T}}
\newcommand{\be}{\begin{equation}}
	\newcommand{\ee}{\end{equation}}
\renewcommand{\L}{\mathcal L}
\renewcommand{\T}{\mathsf{T}}
\newcommand{\N}{\mathbb N}
\renewcommand{\epsilon}{\varepsilon}
\newcommand{\ep}{\epsilon}
\newcommand{\R}{\mathbb{R}}
\renewcommand{\v}{\mathsf v}
\newcommand{\PSL}{\op{PSL}}
\newcommand{\Qm}{\cal{Q}}
\newcommand{\Sigmazero}{\Sigma_0}
\newcommand{\chprox}{\leadsto}
\newcommand{\chproxb}{{\leftrightsquigarrow}}
\newcommand{\ssm}{\smallsetminus}
\newcommand{\Zxy}[2]{\prescript{}{#2}{\mathsf{Z}}^{#1}}
\newcommand{\Axy}[2]{\prescript{}{#2}{\mathscr{A}}^{#1}}
\newcommand{\s}{\mathscr{S}_+}
\newcommand{\Gimc}{\mathcal{G}^{\mathrm{imc}}}
\newcommand{\G}{\mathcal{G}}
\begin{document}
	
	\graphicspath{ {} }
	
	\title{Classification of horocycle orbit closures in $ \Z $-covers}
 	\author{James Farre}
    \author{Or Landesberg}
    \author{Yair Minsky}
	\begin{abstract}
		We fully describe all horocycle orbit closures in $ \Z $-covers of compact hyperbolic surfaces. Our results rely on a careful analysis of the efficiency of all distance minimizing geodesic rays in the cover. As a corollary we obtain in this setting that all non-maximal horocycle orbit closures, while fractal, have integer Hausdorff dimension.
	\end{abstract}

    \vspace*{-0.2cm}
	\maketitle
 
		\tableofcontents
\section{Introduction}

The study of horospherical flow  in hyperbolic manifolds dates back to Hedlund in the
1930's \cite{Hedlund}, and plays a fundamental role in the modern theory of homogenous
dynamics. The behavior of the flow, both measure-theoretic and topological, is very
	well-studied in the finite-volume and even geometrically-finite cases where it is known to exhibit extreme rigidity, see e.g.~\cite{Furstenberg,Dani-Smillie,Burger,Roblin,Ratner}.  In contrast, the general infinite-volume setting is far less well-understood. 

In \cite{FLM}, we introduced some geometric techniques to study the behavior of horocycle
(and horospherical) orbit closures in the ``first'' symmetric infinite volume case,
namely hyperbolic manifolds with cocompact $\Z$-actions. In particular, for a
regular cover $\Sigma\to \Sigma_0$ with deck group  $\Z$ and $\Sigma_0$ closed, we related the behavior of
horospherical orbit closures to a ``maximal stretch lamination'' for circle valued maps of
$\Sigma_0$. 
In two dimensions this allowed us to demonstrate the sensitivity of the structure of orbit closures to the hyperbolic structure on $\Sigma_0$.

In this paper we complete this study with a classification of orbit closures, in the case that $\Sigma$ is 2-dimensional, in
terms of the geometric information of the stretch lamination, and
combinatorial data which we call the ``slack graph'' of such a lamination.

Our approach provides a new and detailed description of non-maximal horocycle closures. Among the features we obtain:

\begin{itemize}

\item  {\em Integer Hausdorff dimension:}
All horocyclic orbit closures in $\T^1\Sigma$ are of Hausdorff dimension
  1, 2 or 3 (although the only orbit closure which is also a topological manifold is $\T^1\Sigma$ itself). 

\item \emph{Finiteness:} $\T^1\Sigma$ contains finitely many horocycle orbit closures up to translation by the geodesic flow. 

\item  {\em Slack graph:} we introduce a directed graph whose vertices correspond to weak components of
  the stretch lamination and whose edges are geodesic transitions between them. A
  ``slack'' function on the ``fundamental semigroupoid'' of this graph is the basic
  organizing object controlling horocycle closures.

\item  {\em Recurrence semigroup:} There exists a closed non-discrete sub-semi-group of geodesic translations under which the horocycle orbit closures are sub-invariant, $\{t \geq 0 : a_t\overline{Nx} \subseteq \overline{Nx}\}$. This semigroup is countable and of
depth $\omega$ when the stretch lamination covers a multicurve, and it contains a ray in all other cases.

\item {\em Chain proximality:} we introduce and study this metric/dynamical generalization
  of proximality and   find that, in our setting, it is an equivalence relation whose
  classes are the weak components of the stretch lamination, playing a crucial role in reducing the analysis to a finite vertex graph.

\end{itemize}

We were struck by the delicate nature of the structures that arise in this setting, and the contrast with the geometrically finite case. 

We believe the techniques have further reach, with some indication they will apply to higher rank abelian covers, higher dimensional manifolds, and possibly manifolds with quasi-periodic symmetry, non-constant negative curvature, or those locally modeled on certain higher rank homogeneous/symmetric spaces.

A more detailed summary of our results follows.

    \subsection{Main results}
    Let $\Sigmazero$ be a closed, oriented hyperbolic surface with unit tangent bundle $\T^1\Sigmazero \cong G/\Gamma_0$, where $G = \PSL_2(\R)$ and $\Gamma_0\le G$ is a uniform lattice acting isometrically on the right.
    Let $A=\{a_t\}\le G$ denote the diagonal subgroup generating, via left multiplication, the geodesic flow, let $A_+ = \{a_t : t\ge 0\}$, and let $N\le G$ be the lower unipotent subgroup corresponding to the stable horocycle flow on $\T^1\Sigmazero$. We denote by $U$ the opposite horospherical group.

    A homotopy class of maps $\varphi=[\Sigmazero \to \R/\Z]$ determines a normal subgroup $\Gamma \lhd \Gamma_0$, namely the kernel of the map induced on $\pi_1$.
    Let $\pi_\Z: \Sigma \to \Sigmazero$ be the corresponding cover with  deck group $\Gamma_0/\Gamma \cong \Z$.
    Note that the limit set $\Lambda_\Gamma$ of $\Gamma$ is the entire circle $\partial \mathbb H^2$.

    A point $x \in \T^1\Sigma$ is \emph{quasi-minimizing} if there is a constant $c$ such that 
    \begin{equation}\label{eqn: qm}
        d(a_tx, x) \ge t-c
    \end{equation}
    for all $t\ge0$, and $x$ is \emph{minimizing} if \eqref{eqn: qm} holds with $c=0$. We say $x$ is \emph{bi-minimizing} if the entire geodesic $Ax$ is isometrically embedded in $\T^1\Sigma$. We remark that the endpoints of quasi-minimizing rays are the \emph{non-horospherical} limit points in $\partial \Hplane$.
    
    By a Theorem of Eberlein and Dal'bo \cite{Eberlein, Dal'bo}, 
    \[\overline {Nx} \not = \T^1\Sigma \Leftrightarrow\text{ $x$ is quasi-minimizing}.\]
    Let $\Qm \subset \T^1\Sigma$ denote the set of quasi-minimizing points. 

    The asymptotic behavior of quasi-minimizing rays is captured by an oriented chain recurrent \emph{distance minimizing} geodesic lamination $\lambda_0 \subset \Sigmazero$ contained in the maximally stretched set for any best Lipschitz (tight) representative of $\varphi$ \cite[Theorem 1.4]{FLM}:
    \[\bigcup_{x\in \Qm}\omega\text{-}\!\lim_{t\to \infty} \pi_\Z(a_tx) = \T^1\lambda_0,\]
    where $\T^1\lambda_0 \subset \T^1\Sigmazero$ denotes the unit vectors tangent to $\lambda_0$.
    The set $\T^1\lambda = \pi_\Z\inverse (\T^1\lambda_0) \subset \T^1\Sigma$ consists only of bi-minimizing lines. (The lamination $\lambda_0$ is the same as that obtained by Gu\`eritaud-Kassel \cite{GK:stretch} and Daskalapoulos-Uhlenbeck \cite{DU:circle}; see Section \ref{Subsec:Tight map summary of notations}).
    
    Every (isotopy class of) orientable chain recurrent geodesic lamination on $\Sigmazero$ (satisfying a mild positivity condition in homology) appears as the distance minimizing lamination for some $\Z$-cover of a closed hyperbolic surface $\Sigma\to \Sigmazero$ %
    \cite[Theorem 5.3]{FLM} (see also \Cref{rmk: extend construction of stretch laminations}).

    The following result is a corollary of the classification and structure theory for $N$-orbit closures developed throughout the paper.
    It states that the \emph{dynamical structure} of non-maximal $N$-orbit closures can be read from the \emph{topological features} of $\lambda_0$, which is itself obtained by solving a \emph{geometric optimization} problem.  

    \begin{theorem}\label{thm: dichotomy intro}
        There is a dichotomy.
        \begin{enumerate}[label = (\alph*)]
            \item $\lambda_0$ is a simple multi-curve: for all $x \in \Qm$, $\overline {Nx}$ is a countable union of horocycles, hence has Hausdorff dimension $1$.
            The set of endpoints of quasi-minimizing rays in $\partial\Hplane$ is countable. %
            \item $\lambda_0$ contains an infinite leaf: for all $x\in \Qm$, $\overline{Nx}$ has Hausdorff dimension $2$ and $\overline {Nx} \cap A_+x$ contains a ray.
            The set of  endpoints of quasi-minimizing rays in $\partial \mathbb H^2$ is an uncountable set with Hausdorff dimension 0. %
        \end{enumerate}
    \end{theorem}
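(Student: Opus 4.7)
The plan is to deduce the dichotomy from the structural classification of $N$-orbit closures established earlier in the paper, which encodes $\overline{Nx}$ in terms of combinatorial paths in the slack graph $\mathcal{G}$ associated to $\lambda_0$, the chain-proximality equivalence relation, and the recurrence semigroup $\{t\geq 0 : a_t\overline{Nx}\subseteq \overline{Nx}\}$ of geodesic translations sub-preserving $\overline{Nx}$. The key point is that the \emph{topology} of $\lambda_0$ (multi-curve versus infinite leaf) directly controls the \emph{geometry} of both $\mathcal{G}$ and of this semigroup, from which the Hausdorff dimension statements and the countability/uncountability statements for endpoints will follow.

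For case (a), suppose $\lambda_0$ is a simple multi-curve. Then the lift $\lambda = \pi_{\Z}^{-1}(\lambda_0) \subset \Sigma$ consists of countably many closed geodesic lifts of finitely many simple closed curves, and $\T^1\lambda$ is a countable disjoint union of closed $A$-orbits. The weak components of $\lambda$ are finite sets of closed geodesics, so $\mathcal{G}$ has finitely many vertices with only isolated admissible transitions, and the recurrence semigroup is countable of depth $\omega$ by the structure theorem. It follows that $\overline{Nx}$ is a countable union of $N$-orbits, organized by paths in $\mathcal{G}$ originating at $x$; since each $N$-orbit is 1-dimensional, $\overline{Nx}$ has Hausdorff dimension exactly $1$. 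The countability of endpoints of quasi-minimizing rays in $\partial\Hplane$ then follows, since each such endpoint is an endpoint of some lift of a component of $\lambda_0$ in $\Hplane$, and there are only countably many such lifts.

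For case (b), when $\lambda_0$ contains an infinite leaf, the critical structural input is that the recurrence semigroup contains a genuine ray $[t_0,\infty)$. This is forced by the continuum of return times of an infinite chain recurrent leaf, together with chain proximality organizing the associated asymptotic matchings into actual $A$-translations. Given this ray, the structure theorem yields locally $NA$-invariant portions of $\overline{Nx}$, hence Hausdorff dimension at least $2$; and the inclusion $\{a_t x : t\in [t_0,\infty)\}\subset\overline{Nx}$ is immediate, giving the ray in $\overline{Nx}\cap A_+x$. For the matching upper bound I would use the local description of $\overline{Nx}$ as a union of $N$-orbits fibering, through $\mathcal{G}$, over a transverse slice to $\lambda$; this slice meets only the set of leaves accessible from $x$ via admissible transitions, which is contained in the endpoint set of $\lambda$, a Cantor set of Hausdorff dimension $0$ by a Birman--Series-type estimate for chain recurrent laminations. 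The same $0$-dimensional Cantor structure, pushed to $\partial\Hplane$, yields the uncountable dimension-$0$ non-horospherical endpoint set.

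The principal obstacle will be case (b), and specifically the assertion that the recurrence semigroup actually contains an interval rather than merely a dense countable subset: one must show that arbitrarily small positive slack available along an infinite chain recurrent leaf realizes as a geodesic translation preserving $\overline{Nx}$, which requires producing continuous families of efficient detours between nearby leaves of $\lambda$ and then using chain proximality to promote these to elements of the recurrence semigroup. The companion dimension upper bound in case (b) is the other substantive step: it requires a careful matching of the slack graph structure with the transverse Cantor structure of $\lambda$, so that the $N$-orbits composing $\overline{Nx}$ are organized along a transverse set of dimension $0$ rather than accumulating with positive transverse dimension.
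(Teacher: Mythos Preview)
Your outline for case~(a) and the lower bound in case~(b) is essentially the paper's argument: the countable-union-of-horocycles statement is exactly Corollary~\ref{cor: multi-curve structure}, and the ray in the recurrence semigroup is Corollary~\ref{Cor: Zxx is ray if infinite leaf} combined with Theorem~\ref{Thm:the ray part in Zxy}, which then gives a piece $A_{(t_1,t_2)}Ny\subset\overline{Ny}$ of dimension~$2$.

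There is, however, a genuine gap in your treatment of the endpoint set in case~(b). You argue that the non-horospherical limit set is uncountable and $0$-dimensional by identifying it with ``the endpoint set of $\lambda$, a Cantor set'' via a Birman--Series estimate. But these two sets are \emph{not} the same: a quasi-minimizing ray is only asymptotic to the \emph{set} $\T^1_+\lambda$, not necessarily to a single leaf. When $\lambda_0$ has an infinite leaf but only countably many leaves (e.g.\ a closed curve together with a chain-recurrent spiral, as in the middle example of Figure~\ref{fig: prototypes}), the transversal to $\lambda$ is countable and the leaf endpoints in $\partial\Hplane$ form a countable set. Your Cantor/Birman--Series argument gives nothing here. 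The paper obtains uncountability by an explicit construction: since $\Zxy{x}{x}=[0,\infty)$, there are arcs in $\Axy{x}{x}$ with arbitrarily small positive slack; chaining together a sequence with \emph{summable} slacks (and permuting the sequence in uncountably many ways) produces uncountably many quasi-minimizing rays that are not asymptotic to any leaf---the Garnett points. This construction is missing from your proposal.

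For the dimension upper bound you propose fibering $\overline{Nx}$ over a transversal and invoking Birman--Series. The paper takes a shorter route: it simply cites the previously established fact that $\dim\Qm=2$ (from \cite[Cor.~1.5]{FLM}), which immediately bounds every $\overline{Nx}\subset\Qm$ and, after stripping the $AN$-directions, gives dimension~$0$ for the full non-horospherical limit set. Your Birman--Series argument as stated only controls the \emph{leaf} endpoints, not the Garnett endpoints, so even for the dimension statement you would need either that prior result or an argument that applies to arbitrary quasi-minimizing rays rather than simple geodesics.
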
  

     In Theorem 1.13 of \cite{FLM}, we demonstrated that the \emph{topology} of non-maximal $N$-orbit closures is not rigid. More precisely, we constructed sequences $(\Sigmazero^i)_{i}$ of hyperbolic metrics on a closed surface $S_0$ converging to   $\Sigmazero^\infty$,  with associated  $\Z$-covers $\Sigma^i \to \Sigmazero^i$ such that no non-maximal $N$-orbit closure in $\T^1\Sigma^i$ was homeomorphic to any non-maximal $N$-orbit closure in $\T^1\Sigma^\infty$.
    \Cref{thm: dichotomy intro} above recovers a certain amount of rigidity of  $N$-orbit closures in $\Z$-covers:

    \begin{corollary}\label{cor: intro H dim}
           All $N$-orbit closures in $\Z$-covers of closed hyperbolic surfaces have integer Hausdorff dimension.
    \end{corollary}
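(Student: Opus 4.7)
The plan is to dispatch the corollary by a short case analysis based on the Eberlein--Dal'bo dichotomy, reducing everything to \Cref{thm: dichotomy intro}. Let $x \in \T^1\Sigma$ and consider $\overline{Nx}$. Either $x \notin \Qm$, in which case the theorem of Eberlein and Dal'bo cited in the text gives $\overline{Nx} = \T^1\Sigma$, or $x \in \Qm$, in which case $\overline{Nx}$ is non-maximal.

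In the first case, since $\Sigma$ is a 2-dimensional hyperbolic surface, $\T^1\Sigma \cong G/\Gamma$ is a smooth 3-manifold, and hence its Hausdorff dimension equals its topological dimension, namely $3$. In the second case, the distance minimizing lamination $\lambda_0 \subset \Sigmazero$ is a fixed geometric object attached to the cover $\Sigma \to \Sigmazero$ (independent of the choice of $x \in \Qm$), so \Cref{thm: dichotomy intro} applies uniformly: if $\lambda_0$ is a simple multi-curve then $\dim_H \overline{Nx} = 1$ for every $x \in \Qm$, and if $\lambda_0$ contains an infinite leaf then $\dim_H \overline{Nx} = 2$ for every $x \in \Qm$.

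In all three situations the Hausdorff dimension lies in $\{1, 2, 3\}$, proving the corollary. There is no real obstacle here: the content is entirely in \Cref{thm: dichotomy intro}, and the corollary is merely the observation that the two possible non-maximal dimensions ($1$ and $2$) are integers, together with the trivial fact that the only remaining orbit closure is the ambient 3-manifold $\T^1\Sigma$.
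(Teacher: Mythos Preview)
Your proposal is correct and matches the paper's own treatment: the corollary is stated immediately after \Cref{thm: dichotomy intro} without a separate proof, as it is an immediate consequence of that dichotomy together with the Eberlein--Dal'bo criterion for maximality. Your case analysis is exactly the intended argument.
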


    \begin{remarks*}\;
    	\begin{enumerate}[leftmargin=*]
    		\item The non-maximal orbit closures are never homogeneous nor even topological submanifolds, see \S\ref{subsec: non-regularity of orbit closures}.
    		\item Not all surfaces satisfy dimension rigidity for $N$-orbit closures.  It is well known that convex cocompact surfaces can have minimal horocycle orbit closures of arbitrary dimension between $2$ and $3$, coming from $2=\dim AN$ plus the dimension for the limit set. 
    		In forthcoming work with F. Dal'bo, we expect to construct a geometrically infinite hyperbolic surface (without cyclic symmetry) with a horocycle orbit closure having arbitrary Hausdorff dimension between $1$ and $2$.
    		\item The dichotomy in \Cref{thm: dichotomy intro} may also be described in terms of the existence of certain types of limit point in $\partial \Hplane$ called Garnett points, see \S\ref{subsec: dichotomy}.
      \item We believe a similar result should hold for $\Z$-covers of finite volume and geometrically finite surfaces, when considering quasi-minimizing rays eventually contained in the convex core.
    	\end{enumerate}
    \end{remarks*}

    \medskip
    The set of quasi-minimizing points is naturally partitioned $\Qm = \Qm_- \sqcup \Qm_+$, where $x\in \Qm_\pm$ if $A_+x$ exits the `$\pm$'-end of $\T^1\Sigma$.
    We will focus our attention on $N$-orbit closures facing the `$+$'-end of $\T^1\Sigma$; the analysis for the `$-$'-end is analogous (see \S\ref{subsec: minus end}). %
    
    \subsection*{Slack}

    Our description of $N$-orbit closures in $\T^1\Sigma$ is facilitated by the choice of a $1$-Lipschitz tight map 
    \[\tau_0: \Sigmazero \to \R/c\Z \in \varphi,\]
    i.e., a best Lipschitz map representing $\varphi$ (see \S\ref{Subsec:Tight map summary of notations}).  A lift of $\tau_0$ to a $\Z$-equivariant $1$-Lipschitz mapping $\tau: \Sigma \to \R$ constitutes a `ruler' that allows us to measure and compare the progress of projections of $A_+$-orbits in $\T^1\Sigma$ to $\Sigma$. 
    Abusing notation, we also denote by $\tau: \T^1\Sigma\to \R$ the pullback of $\tau$ along the tangent projection.

    Consider bi-minimizing points $x$ and $y \in \T^1_+\lambda = \T^1\lambda \cap \Qm_+$ satisfying $\tau(x) = \tau(y)$.
    Since $\overline {Nx} \subset \Qm_+$ and $\Qm_+$ is foliated by $A$-orbits,
    any good description of $\overline {Nx}$ would require a thorough understanding of the set
         \begin{equation}\label{eqn: def Zxy intro}
         \Zxy xy = \{t\in \R: a_ty \in \overline{Nx}\}.
     \end{equation}
    In \cite[\S7]{FLM}, we proved that $\Zxy xx \subset [0,\infty)$  has the structure of a non-discrete semi-group.
    
    For a more detailed description of  $\Zxy xy$, we quantify the \emph{efficiency} of certain geodesics $Az$ that join the past of $Ay$ with the future of $Ax$, as in Figure \ref{fig: slack intro}.
    The set of such $Az$ is denoted by $\Axy xy$.
    
    \begin{figure}[h]
        \centering
        \includegraphics[width=0.6\linewidth]{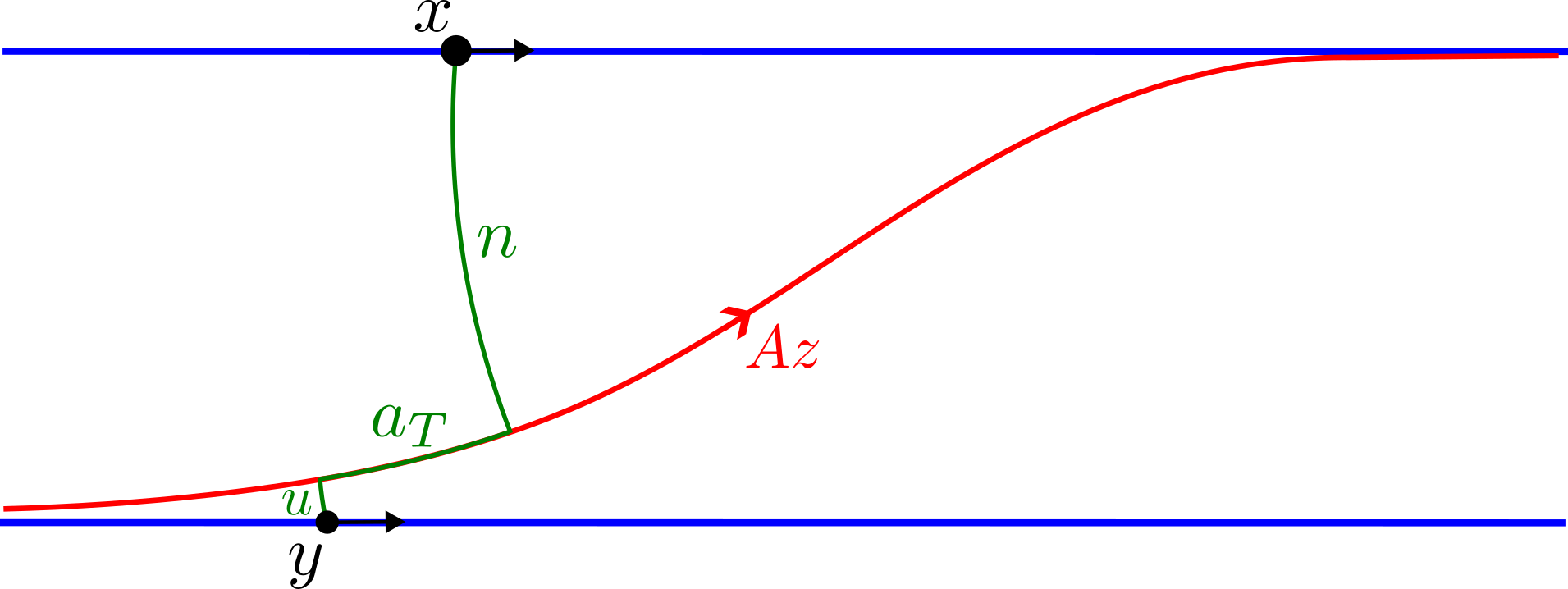}
        \caption{\small The relationship between slack, the Bruhat
        decomposition, and $N$-orbit closures.}
        \label{fig: slack intro}
    \end{figure}
    
    Efficiency is measured by an invariant of paths in $\T^1\Sigma$ called \emph{slack} (Definition \ref{def: slack}).  
    For a line $Az$, the slack $\s(Az) \in [0,\infty]$ is
    \[\s(Az) = \lim_{t\to \infty} 2t - \tau(a_t z) +\tau(a_{-t}z).\]
    The basic idea is, for $Az \in \Axy xy$ with $z$ very close to $y$, that the $A_+$ orbit of $z$ `wastes' roughly time $\s(Az)$ before catching up and becoming strongly asymptotic with $x$.  If $z$ is very close to $y$, then there will be an offset of about $\s(Az)$ corresponding to this recurrence of $Nx$ near $Ay$. 

    More formally, slack is a geometric avatar of the $A$-coordinate in the Bruhat decomposition (see \S\ref{Sec:Slack of Paths}):
    for $Az \in \Axy xy$, lift $x, y \in G/\Gamma$ to $g,h \in G$ using the path $Az$ (see Figure \ref{fig: slack intro}) and write 
    \[gh\inverse = na_Tu\in NAU.\]
    Then we have %
    \[\s(Az) = T.\]
    
    If $Az$ is very close to $y$, then $u$ is very small.
    So, if $T$ is an accumulation point of $\{\s(Az_m)\}$ where $Az_m\in \Axy xy$ are lines that tend to $Ay$ on compact sets near $y$ as $m\to \infty$, then $a_Ty \in \overline {Nx} \cap Ay$.  Conversely, every point in $\overline {Nx} \cap Ay$ arises in this way (Lemma \ref{lem:slack gives points in Nxbar}).

    \begin{remark}
        We draw the reader's attention to the similarity between the shape of the symbol `$\mathsf Z$' in \eqref{eqn: def Zxy intro} and the arrangement of lines $Ay$, $Az$, and $Ax$ in Figure \ref{fig: slack intro}.  The authors found that this notation helped us to keep track of the roles played by $x$ and $y$.
    \end{remark}

    \subsection*{The slack graph}
    A \emph{weak component} $\mu\subset \lambda \subset \Sigma$ is a sublamination with the property that the $\epsilon$-neighborhood of $\mu$ is connected for every $\epsilon>0$.
    Weak components of $\lambda$ project to components of $\lambda_0$, and the preimage of a component of $\lambda_0$ is a finite\footnote{This is not completely obvious, but can be deduced from covering space theory applied to a suitable \emph{train track neighborhood} of $\lambda_0$ or by combining \Cref{thm: chprox equiv chain recurrent} and \Cref{cor: chprox and weak components}.}
 union of weak components of $\lambda$.
    
    We define a directed graph $\G$\footnote{We should really decorate $\G$ with a `$+$'-sign, but do not for readability.} called the \emph{slack graph} whose vertex set $V(\G)$ consists of a choice of $x \in \T_+^1\mu \cap \tau\inverse (0)$, as $\mu$ ranges over the (finitely many) weak components of $\lambda$.
    For $x$ and $y\in V(\G)$, the directed edge set from $y$ to $x$ is $\Axy xy$.

    Slack extends to a morphism $\s: \Pi(\G) \to \R_{\ge0}$, where $\Pi(\G)$ is the fundamental semi-groupoid of finite directed paths in $\G$. %
    For $y$ and $x \in V(\G)$, denote by $\Hom_{\G}(y,x)$ the set of finite directed paths joining $y$ to $x$.
    \medskip

    Consider the special case that $\lambda_0$ is a multi-curve, which implies that $\T^1_+\lambda = \cup_{x \in V(\G)}Ax$ is the (finite) union of uniformly isolated leaves.
    For a set $S\subset \R$, the \emph{derived set} $S^{(1)}$ is obtained from $S$ by removing the isolated points from $S$.
    Inductively, $S^{(i)}$ is the derived set of $S^{(i-1)}$. 
    We say that $S$ has depth $\omega$ if $S^{(i)} \not = \emptyset$ for all $i$ and $\cap_{i \in \mathbb N} S^{(i)} = \emptyset$.

    \begin{theorem}\label{thm: depth omega intro}
        If $\lambda_0$ is a multi-curve, then for all $x, y \in V(\G)$, $\Zxy xy$  is countable with depth $\omega$ and satisfies
        \[\s(\Hom_{\G}(y,x)) = \Zxy xy. \]
    \end{theorem}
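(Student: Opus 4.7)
The theorem has three parts: the equality $\s(\Hom_{\G}(y,x)) = \Zxy xy$, countability of $\Zxy xy$, and Cantor--Bendixson rank $\omega$. Countability would be essentially free: the multicurve hypothesis forces $\lambda$ to be a finite disjoint union of isolated leaves in $\Sigma$, so $V(\G)$ is finite; each edge set $\Axy xy$ is at most countable (edges are parametrized by pairs of leaves of $\lambda$ together with homotopy data for the transition arc); hence $\Hom_{\G}(y,x)$ is countable, and so is its image under $\s$.

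For the $\supseteq$ direction of the equality I would use that $\s \colon \Pi(\G) \to \R_{\ge 0}$ is a morphism, so for a path $\gamma = \alpha_1 \circ \cdots \circ \alpha_n$ with intermediate vertices $y = w_n, \dots, w_0 = x$ and $\alpha_i \in \Axy{w_{i-1}}{w_i}$ one has $\s(\gamma) = \sum_i \s(\alpha_i)$. I would induct on $n$: the base case is exactly Lemma \ref{lem:slack gives points in Nxbar}. For the inductive step, the key observation is that whenever $a_s w \in \overline{Nx}$ one has $a_s \overline{Nw} \subseteq \overline{Nx}$, because $a_s$ normalizes $N$ and $\overline{Nx}$ is $N$-invariant and closed. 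Applying this with $w = w_1$, $s = \s(\alpha_1)$, and the induction hypothesis $a_{\s(\alpha_2 \circ \cdots \circ \alpha_n)} y \in \overline{N w_1}$ gives $a_{\s(\gamma)} y \in \overline{Nx}$.

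For the $\subseteq$ direction, I would start from $t \in \Zxy xy$, which by Lemma \ref{lem:slack gives points in Nxbar} yields a sequence $Az_m \in \Axy xy$ with $Az_m \to Ay$ on a compact neighborhood of $y$ and $\s(Az_m) \to t$. Since $\lambda_0$ is a multicurve, its components are isolated with a uniformly positive collar, so each quasi-minimizing line $Az_m$ decomposes canonically into a finite sequence of segments shadowing specific weak components of $\lambda$, glued by short transitions. The shadowed components form the vertex data of a path $\gamma_m \in \Hom_{\G}(y,x)$ with $\s(\gamma_m) = \s(Az_m)$ by additivity. After passing to a subsequence the combinatorial type stabilizes; discreteness of edge slacks within a fixed type (again from isolation of leaves) then forces $\s(\gamma_m) = t$ for all large $m$.

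For depth $\omega$, I would combine the equality with additivity: edge slacks are bounded below by a positive constant $s_0$ (a collar width), so any $t \in \Zxy xy$ is realized by a path of length at most $t/s_0$; every element has finite combinatorial complexity. A Cantor--Bendixson induction then shows that each derivation strips away at least one `limiting' edge, so no element lies in all $Z^{(k)}$, giving $\bigcap_k Z^{(k)} = \emptyset$. Non-emptiness of each $Z^{(k)}$ would follow from the existence of cycles in $\G$ together with the non-discreteness of edge slack sets (the content of \cite[\S7]{FLM}), producing paths of arbitrarily large length whose slacks accumulate to arbitrary finite depth. The main obstacle will be the $\subseteq$ direction of the equality: extracting a clean combinatorial path from the approximating lines and verifying exact realization of $t$, not merely in the limit. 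The multicurve hypothesis is essential here, replacing the continuous moduli of leaves that would appear in the infinite-leaf case by discrete data.
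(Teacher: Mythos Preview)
Your overall strategy aligns with the paper's, but two steps as written would fail.

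In the $\subseteq$ direction, a single line $Az_m$ is itself a length-one edge in $\Axy xy$; it does not decompose into a longer path $\gamma_m$ in $\G$, because edges of $\G$ are bi-infinite geodesics, not the finite ``transitions'' you describe. More seriously, your appeal to ``discreteness of edge slacks within a fixed type'' is false: the set $\s(\Axy xy)$ is not discrete---Lemma~\ref{lem: non-trivial accumulation} shows that its accumulation points are precisely the slacks of length-$\ge 2$ paths. The paper's mechanism is a \emph{geometric limit chain}: one shows each excursion of $Az_m$ outside a fixed neighborhood of $\T^1_+\lambda$ contributes a definite amount of slack (Claim~\ref{clm: slack bounded below}), hence the number and total length of excursions are uniformly bounded; passing to a subsequence one extracts, \emph{in the limit}, a broken path $\beta^1\cdots\beta^M$ of genuine edges of $\G$ with $\sum_j \s(\beta^j) = T$ exactly (Claim~\ref{clm: betas are a path}, via Lemma~\ref{lem:epsilon chain slack}). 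No such path is attached to $Az_m$ for finite $m$.

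For depth $\omega$, you are missing the nontrivial direction: why is every slack of a length-$(i{+}1)$ path a \emph{nonconstant} limit of slacks of length-$i$ paths (so that $\s(\Hom_\G^{(i+1)})$ survives into the $i$th derived set)? This is not formal; it is the ``no miracles'' Lemma~\ref{lem: non-trivial accumulation}, an explicit Bruhat computation showing $\log\delta(a_{-t}u a_t n)\neq 0$ whenever $u,n\neq e$. The converse direction (that length-exactly-$i$ slacks are isolated among length-$\ge i$ slacks) again requires the geometric limit chain machinery (Claim~\ref{clm: nontrivial accumulation}). There is also a smaller gap in your base case for $\supseteq$: Lemma~\ref{lem:slack gives points in Nxbar} needs a \emph{sequence} $y_m\to y$ with $\s(A_+y_m)\to\s(Az)$, which a single edge $Az$ does not provide; the paper manufactures one from the $\Z$-translates $k.Az$.
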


\begin{remarks*}
			\begin{enumerate}
				\item We, in fact, provide a precise description of the depth of each point in $\Zxy{x}{y}$ where accumulations are filtered by the combinatorial length of paths in $\G$, via the slack map $\s$. See \Cref{subsec: Depth} for details.
				\item This statement (as well as its proof, to a certain extent, given in \S\ref{sec: finite lamination}) is reminiscent of the celebrated result of J{\o}rgensen and Thurston that the set of volumes of hyperbolic $3$-manifolds is a well ordered set of ordinal type $\omega^\omega$, which, in particular, has depth $\omega$ \cite{Gromov:JT, Thurston:notes}.  By an explicit computation along the lines of \Cref{lem: non-trivial accumulation}, however, we know that $\Zxy xx$ is never well-ordered.
			\end{enumerate}
	\end{remarks*}

    The proof of this statement does not use anything specific to dimension $2$, hence generalizes to horospherical orbit closures projected to the tangent bundle in $\Z$-covers of higher dimensional hyperbolic manifolds; see \Cref{thm: d-manifolds intro}.
    \medskip

    Returning to the general case,  any $x \in \Qm_+$, $A_+x$ is eventually contained in the $\ep$-neighborhood of  $\T^1_+\mu$, for some weak component $\mu$ and every $\ep>0$ \cite[Theorem 3.4]{FLM};
    define 
    \[\v: \Qm_+ \to V(\G)\]
    accordingly. There is an $N$-invariant, upper semi-continuous function defined, for $x\in \T^1\Sigma$, by 
    \[\beta_+(x) = \tau(x) - \s(A_+x) \in [-\infty, \infty).\]
    Then $\beta_+(x)>-\infty$ if and only if $x\in \Qm_+$.
    The \emph{marked Busemann function} 
    \[\hat\beta_+: \Qm_+ \to \R\times V(\G)\]
    for the `$+$'-end of $\T^1\Sigma$ is defined by $\hat \beta_+(x) = (\beta_+(x), \v(x))$.
    
    The following reduces the problem of computing arbitrary $N$-orbit closures to a finite list, up to $A$-translation.  
    
    \begin{theorem}\label{thm: intro reduction}
        For all $x$, $y\in \Qm_+$, 
        \[\overline{Nx} = \overline{Ny} \text{ if and only if } \hat\beta_+(x) = \hat\beta_+(y).\]
        In particular, \[\overline {Nz} = a_{\beta_+(z)}\overline{N\v(z)}.\]
    \end{theorem}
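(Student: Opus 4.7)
The plan is to establish the functional properties of $\hat\beta_+$, then prove a Key Claim that furnishes the ``in particular'' statement and the reverse direction of the iff, and finally deduce the forward direction from upper semi-continuity plus a uniqueness-of-shadowing rigidity.

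First, rewrite
\[\beta_+(x) = \lim_{t\to\infty}\bigl(\tau(a_t x) - t\bigr),\]
a monotone decreasing limit since $\tau$ is $1$-Lipschitz along $A$-orbits; hence $\beta_+$ is upper semi-continuous. $N$-invariance follows from the strong asymptoticity of $A_+$-orbits of $N$-related points together with Lipschitz continuity of $\tau$, and a reparametrization of the defining limit yields the $A$-equivariance $\beta_+(a_s x) = \beta_+(x) + s$. The shadowing function $\v$ is both $A$-invariant (translations preserve asymptotic behavior) and $N$-invariant (strongly asymptotic $A_+$-orbits shadow the same weak component by \cite[Theorem 3.4]{FLM}).

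The heart of the argument is the following Key Claim: for $x, y \in \Qm_+$ with $\hat\beta_+(x) = \hat\beta_+(y) = (T, x_0)$, we have $y \in \overline{Nx}$. Let $\mu$ denote the weak component containing $x_0$. Both $A_+ x$ and $A_+ y$ shadow $\T^1_+\mu$ with asymptotic $\tau$-value $t + T$, so for large $t$, $a_t x$ and $a_t y$ lie in small $NAU$-neighborhoods of points of $\T^1_+\mu$ whose $\tau$-values synchronize to $o(1)$. Build a sequence of transition lines $Az_t \in \Axy{y}{x}$ that bridge the backward trajectory of $Ay$ to the forward trajectory of $Ax$ via a bi-minimizing segment in $\T^1_+\mu$. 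Bi-minimality of the bridge contributes no slack, and the $\beta_+$-alignment forces the $NAU$-coordinate errors at either end to decay, giving $\s(Az_t) \to 0$. By \Cref{lem:slack gives points in Nxbar}, $y = a_0 y \in \overline{Nx}$.

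From the symmetric Key Claim, $\hat\beta_+(x) = \hat\beta_+(y)$ implies $\overline{Nx} = \overline{Ny}$, which is the reverse direction of the iff. The ``in particular'' follows by taking $y := a_{\beta_+(z)}\v(z)$: since $\v(z) \in V(\G)$ is a bi-minimizing vector with $\tau = 0$, $\beta_+(\v(z)) = \lim_t(t - t) = 0$, so by $A$-equivariance $\hat\beta_+(y) = (\beta_+(z), \v(z)) = \hat\beta_+(z)$, whence $\overline{Nz} = \overline{Ny} = a_{\beta_+(z)}\overline{N\v(z)}$, using that $A$ normalizes $N$ in $G$ so $\overline{N(a_s w)} = a_s\overline{Nw}$. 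For the forward direction, if $\overline{Nx} = \overline{Ny}$, pick $n_k \in N$ with $n_k x \to y$; $N$-invariance and upper semi-continuity give $\beta_+(y) \geq \beta_+(x)$, and the symmetric argument via $m_k y \to x$ gives equality. The ``in particular'' then forces $\overline{N\v(x)} = \overline{N\v(y)}$; a rigidity statement---that an $N$-orbit closure in $\Qm_+$ shadows a unique weak component, itself a consequence of the shadowing structure of \cite[Theorem 3.4]{FLM} applied to $V(\G)$---forces $\v(x) = \v(y)$.

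The main obstacle is the slack-vanishing $\s(Az_t) \to 0$ in the Key Claim: explicitly constructing the transition lines $Az_t$, controlling the $NAU$-coordinates of $a_t x$ and $a_t y$ relative to suitable points of $\T^1_+\mu$, and verifying that the $\beta_+$-alignment together with bi-minimality of the bridge cancels all slack contributions is the technical crux, requiring careful Bruhat calculus along the bi-minimizing shadowing locus. A secondary subtlety is the $\v(x) = \v(y)$ rigidity used in the forward direction.
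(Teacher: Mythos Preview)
Your overall architecture matches the paper's (\S7.1) closely: the properties of $\beta_+$ and $\v$, the forward direction via upper semi-continuity, and the ``in particular'' deduction are all essentially as in the paper. The substantive gap is in the Key Claim.

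You assert that one can ``bridge the backward trajectory of $Ay$ to the forward trajectory of $Ax$ via a bi-minimizing segment in $\T^1_+\mu$'' and that ``bi-minimality of the bridge contributes no slack.'' But at a large time $t$, the points $a_t x$ and $a_t y$ shadow points $p_t, q_t \in \T^1_+\mu$ that typically lie on \emph{different leaves} of $\mu$. A single bi-minimizing segment can only travel along one leaf, so there is no such bridge in general. What is actually needed is a broken path that jumps between leaves with total jump-size tending to zero---and the existence of such paths is precisely the content of the chain-proximality theory (\S\S\ref{sec:chain prox min}--\ref{Section:Synchronous Transversal}), culminating in \Cref{thm: chprox equiv chain recurrent} and \Cref{cor: chprox and weak components}, which the paper invokes via \Cref{lem:chain prox implies inclusion in Nxbar}. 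This is not a Bruhat estimate to be filled in but the heart of the argument: establishing that any two points in $\T^1_+\mu \cap \tau^{-1}(0)$ are chain proximal requires the Poincar\'e-recurrence and gap-shrinking analysis of \S\ref{sec:chain prox min}. You correctly flag this as ``the technical crux,'' but you have not supplied it.

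A second, smaller issue: \Cref{lem:slack gives points in Nxbar} is stated for $x, y \in \T^1_+\lambda$ with $\tau(x) = \tau(y)$, and its proof uses $\s(A_+ x)=0$; your $x, y$ are general quasi-minimizing points. The paper handles this via an intermediate reduction: use $A$-proximality (\cite[Cor.~8.3, Prop.~8.5]{FLM}) to replace a general $x \in \Qm_+$ with $\beta_+(x)=0$ by some $z \in \T^1_+\mu \cap \tau^{-1}(0)$ satisfying $\overline{Nx} = \overline{Nz}$, and only then invoke chain proximality between $z$ and the chosen representative $x_i$.

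For the forward direction on $\v$, the paper's argument is more concrete than your ``rigidity'' sketch: distinct weak components are a positive distance apart, so every arc in $\Axy{x_i}{x_j}$ has slack bounded below by some $\delta>0$ (\Cref{small slack close to L}), whence $0 \notin \Zxy{x_i}{x_j}$ by \Cref{prop:Zxy closure of s(Axy)}, giving $x_j \notin \overline{Nx_i}$ and hence $y\notin\overline{Nx}$.
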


    Using the symmetry that $\lambda_0$ can be computed either as the set of  $\omega$-limit points of projections of qausi-minimizing rays exiting the `$+$'-end or the `$-$'-end, Theorem \ref{thm: intro reduction} counts the number of distinct $N$-orbit closures,  up to $A$-translation.
    \begin{corollary}\label{cor: intro number N orbit closures}
        There are exactly $2|V(\G)|+1$ distinct $N$-orbit closures in $\T^1\Sigma$, up to $A$-translation.\footnote{The $+1$ is for all of $\T^1\Sigma$.} %
    \end{corollary}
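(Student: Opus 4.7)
The plan is a counting argument applying \Cref{thm: intro reduction} separately to each end of $\T^1\Sigma$. By the Eberlein--Dal'bo theorem recalled above, every $N$-orbit closure is either $\T^1\Sigma$ or $\overline{Nx}$ for some $x \in \Qm = \Qm_+\sqcup\Qm_-$, so I organize the count into three pools: the maximal class, the $\Qm_+$-closures, and the $\Qm_-$-closures.

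For the $\Qm_+$ pool, I use that $a_s$ normalizes $N$, so $a_s\overline{Nz} = \overline{N(a_sz)}$, together with $\v(a_sz) = \v(z)$ (since $A_+(a_sz)\subset Az$ has the same $\omega$-limit set in $\T^1\Sigmazero$). The formula $\overline{Nz} = a_{\beta_+(z)}\overline{N\v(z)}$ from \Cref{thm: intro reduction} then exhibits every $\Qm_+$-closure as an $A$-translate of some $\overline{Nv}$ with $v \in V(\G)$. Conversely, if $\overline{Nv_1} = a_s\overline{Nv_2}$ for $v_1,v_2\in V(\G)$, rewriting the right side as $\overline{N(a_sv_2)}$ and applying \Cref{thm: intro reduction} forces $\v(v_1) = \v(a_sv_2) = \v(v_2)$; since each vertex of $V(\G)$ is by construction the unique chosen representative of its weak component, $v_1=v_2$. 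Thus the $\Qm_+$ pool contributes exactly $|V(\G)|$ classes, and the symmetric analysis at the $-$ end (using that the weak components of $\lambda$ are the same regardless of orientation) contributes another $|V(\G)|$ classes.

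The only remaining step, and the main obstacle, is to verify mutual disjointness of the three pools. The maximal class is obviously separate from the others. For the $\Qm_+$ versus $\Qm_-$ disjointness, since ``which end $A_+z$ exits'' is $A$-invariant, it suffices to show that $\overline{Nx}\subset\Qm_+$ for every $x\in\Qm_+$. This follows from the $N$-invariance and upper semi-continuity of $\beta_+$ together with the characterization $\Qm_+ = \{y : \beta_+(y) > -\infty\}$: for any net $n_k x \to y$ with $n_k \in N$, we have $\beta_+(y) \ge \limsup_k \beta_+(n_k x) = \beta_+(x) \in \R$, so $y \in \Qm_+$. Summing the three mutually disjoint contributions yields $1 + |V(\G)| + |V(\G)| = 2|V(\G)| + 1$.
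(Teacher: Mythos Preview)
Your proof is correct and follows essentially the same approach as the paper's. The paper's own argument (in \S\ref{subsec: minus end}) is terser: it simply asserts that, by \Cref{Thm: the marked Busemann function}, each of $\Qm_+$ and $\Qm_-$ contributes $k=|V(\G)|$ distinct families, and the dense orbits account for the remaining one. You have spelled out the same ingredients more carefully---the $A$-equivariance of $\v$, the upper semi-continuity of $\beta_+$ to get $\overline{Nx}\subset\Qm_+$, and the $A$-invariance of $\Qm_\pm$ to separate the two pools---all of which the paper either uses implicitly or records elsewhere.
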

    Implicit is the statement that, in the definition of $\G$, our choice of $x \in \T^1_+\mu \cap \tau\inverse(0)$ for each weak component $\mu \subset \lambda$,  was immaterial.
    A major ingredient in its proof is the study of a certain \emph{chain proximality} relation on $\T_+^1\lambda \cap \tau\inverse(0)$, discussed below. 
    This part of our analysis relies heavily on the structure of geodesic laminations on surfaces (rather than higher dimensional manifolds). %

    The following result gives a description of the $N$-orbit closure of a vertex of $\G$, which is essentially reduced to finitely many computations of the sets $\Zxy xy$ for $x$ and $y \in V(\G)$.
    \begin{theorem}\label{thm: intro N orbit vertex}
    For all $x,y \in V(\G)$
    \[\Zxy {x}{y} = \overline{\s(\Hom_{\G} (y, x))}. \]
    For each $x \in V(\G)$, 
        \[\overline{Nx} = \hat\beta_+\inverse \left ( \bigcup_{y \in V(\G)}  \overline{\s(\Hom_{\G} (y,x))} \times \{y\}\right). \]
    \end{theorem}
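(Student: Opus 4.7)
The plan is to derive the second statement from the first via the marked Busemann function, and to prove the first statement by combining the slack--Bruhat correspondence with the combinatorial structure of the slack graph $\G$.

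For the reduction, observe that $\hat\beta_+$ is $N$-invariant (both $\beta_+$ and $\v$ are), so $\overline{Nx} \subset \Qm_+$ is a union of fibers of $\hat\beta_+|_{\Qm_+}$. For any $y \in V(\G)$, $\tau(y) = 0$ by choice of vertex representatives and $y$ is bi-minimizing, so $\s(A_+ y) = 0$; hence $\hat\beta_+(a_t y) = (t, y)$ whenever $a_t y \in \Qm_+$. Combining this with \Cref{thm: intro reduction}---which forces $\overline{Nw} = a_{\beta_+(w)}\overline{N\v(w)}$---the pair $(t, y)$ lies in $\hat\beta_+(\overline{Nx})$ precisely when $a_t y \in \overline{Nx}$, i.e.\ when $t \in \Zxy{x}{y}$. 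Therefore $\overline{Nx} = \hat\beta_+^{-1}\bigl(\bigcup_{y \in V(\G)} \Zxy{x}{y} \times \{y\}\bigr)$, and substituting the identity from the first statement yields the desired formula.

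For the first statement, both inclusions are needed. Since $\Zxy{x}{y} = \{t : a_t y \in \overline{Nx}\}$ is the continuous preimage of a closed set, it is already closed; it thus suffices to prove $\s(\Hom_\G(y,x)) \subset \Zxy{x}{y}$ and $\Zxy{x}{y} \subset \overline{\s(\Hom_\G(y,x))}$. For the forward inclusion, slack is a morphism on the fundamental semi-groupoid, so for a directed path $y = v_0 \to v_1 \to \cdots \to v_k = x$ with edges $\alpha_i \in \Axy{v_i}{v_{i-1}}$ we have $\s(\alpha_1 \cdots \alpha_k) = \sum_i \s(\alpha_i)$. Geometrically, one builds for each path a sequence of genuine lines $Az_m \in \Axy{x}{y}$ with $Az_m \to Ay$ on compacts near $y$ and $\s(Az_m) \to \sum_i \s(\alpha_i)$ by closing, via small horospherical perturbations at each intermediate vertex $v_i$, the asymptotic gap between the future of $\alpha_i$ and the past of $\alpha_{i+1}$ (both approaching $Av_i$). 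The slack--Bruhat correspondence then places the limiting slack in $\Zxy{x}{y}$.

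For the reverse inclusion, every $T \in \Zxy{x}{y}$ is, by the same correspondence, a limit $\s(Az_m) \to T$ for $Az_m \in \Axy{x}{y}$ with $Az_m \to Ay$ on compacts near $y$. Each bi-minimizing line $Az_m$ traces out, after projection by $\pi_\Z$, a finite sequence of weak components of $\lambda$; the transitions between consecutive components lie in the relevant $\Axy{\bullet}{\bullet}$-edge sets and assemble into a directed path in $\G$ from $y$ to $x$ with composite slack equal to $\s(Az_m)$, exhibiting $T$ as a limit of slacks of finite paths. The principal obstacle lies in the concatenation argument for the forward inclusion: one must verify that the horospherical perturbations joining consecutive edges produce genuine lines in $\Axy{x}{y}$ (rather than piecewise geodesics) with slacks converging to the additive sum, without any extra contribution from the transitions. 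This relies on the fine geometry of bi-minimizing rays near weak components and the control afforded by the tight map $\tau$ developed earlier in the paper.
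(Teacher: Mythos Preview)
Your reduction of the second formula to the first via \Cref{thm: intro reduction} and $N$-invariance of $\hat\beta_+$ is correct and matches the paper (this is \Cref{Cor:Orbit closure via representatives}).

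For the first formula, however, there is a genuine gap in your forward inclusion, and your reverse inclusion is over-complicated. Take the reverse inclusion first: if $T\in\Zxy{x}{y}$, then \Cref{lem:slack gives points in Nxbar} produces rays $A_+y_m$ forward-asymptotic to $Ax$ with $y_m\to y$ and $\s(A_+y_m)\to T$. Concatenating with $A_-y$ and straightening (\Cref{lem:epsilon chain slack}) already gives lines in $\Axy{x}{y}$, i.e.\ length-one paths in $\G$, with slack tending to $T$. There is no need to decompose $Az_m$ into a chain of weak components; your description of ``tracing out a finite sequence of weak components'' does not reflect what these lines actually do, and is not how the argument runs.

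The real difficulty is the forward inclusion, and specifically the base case of a \emph{single} edge $Az\in\Axy{x}{y}$. To conclude $\s(Az)\in\Zxy{x}{y}$ via \Cref{lem:slack gives points in Nxbar} you need a sequence $y_m\to y$ lying on lines forward-asymptotic to $Ax$ with slack tending to $\s(Az)$. Your ``small horospherical perturbations at each intermediate vertex'' produce nothing here: for a single edge there are no intermediate vertices, so your construction returns the same line $Az$ and no basepoints converging to $y$. Even for longer paths, gluing the pieces along long segments of $Av_i$ yields one straightened line, not a sequence approaching $y$; points far in the past on $\alpha_1$ are close to $a_{-s}y$ for large $s$, not to $y$ itself. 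What is missing is a \emph{recurrence} mechanism bringing translates of the configuration back near $y$. In the paper this is the content of \Cref{prop:Zxy closure of s(Axy)}: one uses minimality of the component containing $y$ to find $d\Z$-translates of $Az$ passing arbitrarily close to $y$, and then chain proximality within the component containing $x$ (\Cref{cor: finite cover}) to redirect the forward end back to $A_+x$ with negligible extra slack. Once $\s(\Axy{x}{y})\subset\Zxy{x}{y}$ is established this way, paths of arbitrary length are handled by the sub-additivity \eqref{eqn: subcontainment for Zxy} rather than by geometric concatenation. Your proposal does not invoke minimality or chain proximality, and without them the argument for the forward inclusion does not close.
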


    In the case that $\lambda_0$ contains an infinite leaf, we obtain the following structural properties of $\Zxy {x}{y}$:
    
    \begin{theorem}\label{thm: intro contains a ray}
    For any $x,y \in V(\G)$ the shift set $\Zxy{x}{y}$ contains a ray $[\rho_{y,x}, \infty)$ if and only if $\lambda_0$ contains an infinite leaf. In that case, the constant $0 \leq \rho_{y,x} < \infty$ is given explicitly from the graph $\G$, and $\Zxy {x}{y} \setminus [\rho_{y,x},\infty)$ is at most countable with finite depth.
    \end{theorem}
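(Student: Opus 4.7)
The plan is to handle the two directions separately, exploiting the equality $\Zxy{x}{y} = \overline{\s(\Hom_{\G}(y,x))}$ from Theorem \ref{thm: intro N orbit vertex} together with the fact that $\s:\Pi(\G)\to\R_{\ge 0}$ is a morphism of semigroupoids, so slack is additive under concatenation. The ``only if'' direction is then immediate: if $\lambda_0$ is a simple multi-curve, Theorem \ref{thm: depth omega intro} asserts that $\Zxy{x}{y}$ is countable, which rules out any ray.

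For the ``if'' direction, suppose $\lambda_0$ contains an infinite leaf and fix a weak component $\mu\subset\lambda$ containing one, with associated vertex $z\in V(\G)$. The central claim is that there exists $s_\mu\in[0,\infty)$ with $\overline{\s(\Hom_{\G}(z,z))} \supseteq [s_\mu,\infty)$. Granting this, for any $p\in\Hom_{\G}(y,z)$ and $q\in\Hom_{\G}(z,x)$ the additivity of $\s$ implies that the slacks of concatenations $p\cdot\ell\cdot q$ are dense in $[\s(p)+\s(q)+s_\mu,\infty)$; taking closures gives the desired ray inside $\Zxy{x}{y}$, and the explicit constant
\[
\rho_{y,x} = \min_{z,\,p,\,q}\bigl(\s(p)+\s(q)+s_{\mu_z}\bigr)
\]
(minimum over $z$ whose weak component carries an infinite leaf and over $p\in\Hom_{\G}(y,z)$, $q\in\Hom_{\G}(z,x)$) is finite because $V(\G)$ is finite, any $y,x$ can be joined through such $z$, and each $s_\mu<\infty$.

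To establish the density claim, I would pick a small arc on $\Sigmazero$ transverse to the projection of $\mu$ and meeting an infinite leaf. Chain recurrence together with non-isolation of the leaf make the first-return dynamics on this arc non-trivial in a \emph{continuous} sense, and each return lifts, via the $\Z$-cover and its deck action, to a loop $Aw\in\Axy{z}{z}$. Its slack, computed from the defining limit $\s(Aw)=\lim_{t\to\infty}(2t-\tau(a_tw)+\tau(a_{-t}w))$, equals the defect between the hyperbolic length of the approximating arc and its $\tau$-displacement. Varying the return point continuously along the support of the infinite leaf produces a continuously parameterized family of loops whose slacks sweep out a dense subset of a ray $[s_\mu,\infty)$. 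For the complement $\Zxy{x}{y}\setminus[\rho_{y,x},\infty)$, only paths avoiding every infinite-leaf vertex can contribute; these live in a subgraph $\G_c\subset\G$ on closed-curve vertices, to which the multi-curve analysis underlying Theorem \ref{thm: depth omega intro} applies to give countability. Finite Cantor-Bendixson depth of the complement then follows because each closed-curve loop contributes a fixed positive quantum of slack, capping the number of loops appearing in any path of total slack below $\rho_{y,x}$.

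The main obstacle is the density step: upgrading the mere qualitative presence of an infinite leaf to the \emph{density} of $\s(\Hom_{\G}(z,z))$ in a ray. One must rule out the degenerate possibility that all loop slacks happen to land on a discrete arithmetic progression, which is a genuine question about the interaction between the transverse structure of $\mu$ and the tight map $\tau$.
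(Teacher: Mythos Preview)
Your overall architecture is sound and matches the paper: the ``only if'' direction via Theorem~\ref{thm: depth omega intro}, the reduction to a vertex $z$ whose weak component carries an infinite leaf, the additivity of slack under concatenation to propagate a ray from $\Zxy{z}{z}$ to $\Zxy{x}{y}$, and the analysis of the complement through the multi-curve subgraph $\Gimc$ all agree with the paper's Theorem~\ref{Thm:the ray part in Zxy} and Corollary~\ref{cor:depth at the beginning of Zxy}. Your formula for $\rho_{y,x}$ also essentially reduces to the paper's once one knows $s_\mu=0$.

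The genuine gap is exactly the one you flagged: the ``density step'' showing $\overline{\s(\Hom_\G(z,z))}\supseteq[s_\mu,\infty)$. Your proposed mechanism---continuously varying a return point along the infinite leaf to sweep out a ray of slacks---does not work as stated. Elements of $\Axy{z}{z}$ are discrete objects (bi-infinite geodesics determined by pairs of endpoints in $\partial\Hplane$), not a continuously parameterized family, and there is no a~priori reason their slacks should be dense rather than landing on a lattice. The paper sidesteps this entirely by exploiting a structure you did not use: $\Zxy{z}{z}$ is a \emph{closed sub-semigroup} of $\R_{\ge 0}$ (immediate from \eqref{eqn: subcontainment for Zxy}). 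Hence it suffices to produce arbitrarily small \emph{positive} elements of $\Zxy{z}{z}$, and the full ray $[0,\infty)$ follows automatically---so in fact $s_\mu=0$.

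The paper produces those small positive values by two concrete mechanisms (Corollary~\ref{Cor: Zxx is ray if infinite leaf}). If the component has uncountably many leaves, pick a nearby $y\in\Y$ at distance $<\varepsilon$ from $z$ that is neither forward nor backward asymptotic to $z$; the geodesic splicing $A_-z$ to $A_+y$ has slack in $(0,\kappa_c\varepsilon)$ by Lemma~\ref{lem:epsilon chain slack}, and symmetrically, giving elements of $\Zxy{y}{z}+\Zxy{z}{y}\subset\Zxy{z}{z}$ of size $O(\varepsilon)$. If the component has finitely many leaves including an infinite chain-recurrent one, that leaf is itself a zero-slack path $\underline\alpha\in\Hom_{\G^{\per}}(z,z)$ of length $\ge 2$, and the ``no miracles'' Lemma~\ref{lem: non-trivial accumulation} produces nearby paths with strictly positive slack tending to $0$. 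Either way the semigroup argument closes the gap without any density-of-slacks claim.
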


    A more detailed description of $\Zxy {x}{y} \setminus [\rho_{y,x},\infty)$ is given in \S\ref{subsec: slack graph G}.
    \subsection*{Examples}

    To illustrate our main theorems, we consider the following three prototypical examples illustrated in Figure \ref{fig: prototypes}:
    \begin{enumerate}
        \item $\lambda$ has finitely many leaves, i.e., $\lambda_0$ is a multi-curve.
        \item $\lambda$ is weakly connected with countably many leaves.
        \item $\lambda$ is weakly connected with uncountably many leaves and $\lambda_0$ is minimal. 
    \end{enumerate}

    \begin{figure}[h]
        \centering
        \includegraphics[width=1\linewidth]{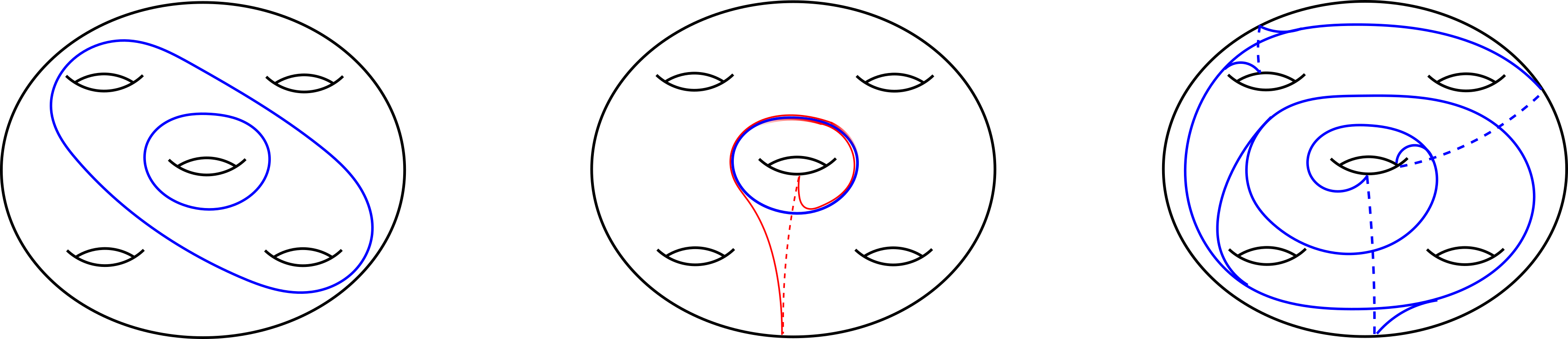}
        \caption{\small Prototypical examples (1)--(3) pictured in terms of $\lambda_0 \subset\Sigmazero$.  The middle example consists of a simple closed curve (blue) and an isolated (red) chain recurrent leaf that spirals onto it.  The rightmost example pictures a (train track approximation) of a minimal, orientable lamination with uncountably many leaves.}
        \label{fig: prototypes}
    \end{figure}

    By the structure theory for geodesic laminations on hyperbolic surfaces \cite{Thurston:notes}, an orientable chain recurrent geodesic lamination has a finite number of connected components, each consists of finitely many minimal sublaminations and chain recurrent isolated leaves spiraling onto them, so 
    an arbitrary $\lambda_0$ (hence $\lambda$) exhibits some combination of these prototypical behaviors.

    For example (1), 
    Theorems \ref{thm: depth omega intro}, \ref{thm: intro reduction}, and \ref{thm: intro N orbit vertex} imply that $\overline {Nx}$ is a countable union of $N$-orbits for every $x \in \Qm$; see \Cref{cor: multi-curve structure}.

    In examples (2) and (3), there is only one vertex in $\G$ and $\lambda$ contains infinitely many leaves.  In these examples, Theorems \ref{thm: intro reduction}--\ref{thm: intro contains a ray} assert that for every $x\in \Qm_+$ the orbit closure $\overline{Nx}$ is a ``$\beta_+$-horoball'':
    \begin{equation}\label{eqn: uniform horoball intro}
        \overline {Nx} = \beta_+\inverse([\beta_+(x),\infty)).
    \end{equation}

    \begin{remark}\label{rmk: Garnett}
        In example (2), there are countably many non-Garnett limit points in $\partial \mathbb H^2$, while in example (3), there are uncountably many such.  See \S\ref{subsec: dichotomy} for a discussion of Garnett points.
    \end{remark}
        
    In our previous work, we obtained a result \cite[Theorem 1.12]{FLM} with a similar flavor as \eqref{eqn: uniform horoball intro} under a dynamical constraint on the first return system to a fiber of $\tau_0$ induced by the geodesic flow tangent to $\lambda_0$, namely that it was minimal and weak-mixing.  In such a system, essentially every pair of points (a dense $G_\delta$ set of pairs) is \emph{proximal}.
    In this article, we show that a weaker notion of \emph{chain proximality} for pairs $x$ and $y$ is sufficient to guarantee that $\overline {Nx} = \overline {Ny}$;
    see \S\ref{subsec: chain proximality intro} and \S\S\ref{sec:chain prox min}--\ref{Section:Synchronous Transversal}, below.

    \subsection*{Partial results in higher dimensions}
    Some of the techniques developed in this paper (e.g., \S\S\ref{Sec:Slack of Paths}--\ref{sec: finite lamination}) apply more generally to the setting that $\Sigma_0$ is a hyperbolic $m$-manifold, $\Sigma\to \Sigmazero$ is a $\Z$-cover, and  $\T^1\lambda_0\subset \T^1\Sigma_0$ consists only of (finitely many) closed $A$-orbits.
    Recently, Cameron Rudd found infinitely many examples of closed hyperbolic $3$-manifolds fibering over the circle where $\lambda_0$ consists of (short) closed curves transverse to the surface fibers \cite{Rudd}.
    Thus the hypotheses of the following are verified, in dimension $3$, by many interesting examples.
    \begin{theorem}\label{thm: d-manifolds intro}
        Let $\Sigma \to \Sigma_0$ be a $\Z$-cover of a closed hyperbolic $m$-manifold and suppose that the corresponding distance minimizing lamination $\lambda_0$ is a multi-curve in $\Sigmazero$. 
        
        Then the closure of every non-dense leaf of the foliation of $\T^1\Sigma$ by horospheres is a countable union of horospheres, and the depth of the corresponding recurrence semigroup, $\Zxy x x$, is $\omega$.
    \end{theorem}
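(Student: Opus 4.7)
The plan is to transport the proof of Theorem \ref{thm: depth omega intro} directly to the higher-dimensional setting. The hypothesis that $\lambda_0$ is a multi-curve is precisely what keeps the slack graph $\G$ finite; once that is secured, the remaining analysis is essentially graph-theoretic and none of it relies on $\Sigma$ being a surface. First I would verify that the basic geometric inputs extend to arbitrary dimensions: the Eberlein--Dal'bo characterization identifying non-dense $N$-orbits with quasi-minimizing points; the existence of a $1$-Lipschitz tight circle-valued representative $\tau_0$ of $\varphi$ (the construction of \cite{GK:stretch, DU:circle} is insensitive to dimension); and the identification of the slack $\s$ with the $A$-coordinate of the Bruhat decomposition (Section \ref{Sec:Slack of Paths}), which yields the accumulation formula describing $\overline{Nx} \cap Ay$ in terms of limits of $\s(\Axy{x}{y})$.

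Next, I would observe that since $\lambda_0$ is a multi-curve, $\T^1\lambda_0$ is a finite disjoint union of closed $A$-orbits; each closed leaf of $\lambda_0$ has non-trivial $\tau$-progress (otherwise it would not be distance-minimizing), so its $\pi_\Z$-preimage in $\Sigma$ is a disjoint union of bi-infinite geodesic lines permuted by the deck group. Modulo this action, each leaf is its own weak component, and the vertex set $V(\G)$, constructed by picking one vector in $\T^1_+\mu \cap \tau^{-1}(0)$ for each weak component $\mu$, is therefore finite; the fundamental semigroupoid $\Pi(\G)$ is then countable. I then need the reduction $\overline{Nx} = a_{\beta_+(x)} \overline{N\v(x)}$ for every $x \in \Qm_+$; this is the $m$-dimensional analogue of Theorem \ref{thm: intro reduction} in the multi-curve regime, and should follow from upper semi-continuity of $\beta_+$ together with the fact that a quasi-minimizing forward ray must eventually shadow a single closed geodesic tangent to $\T^1\lambda$. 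When $\lambda_0$ is a multi-curve, the shadowing is immediate from positivity of pairwise distances between the finitely many closed geodesics composing $\T^1\lambda_0$, and does not require the surface-lamination machinery used elsewhere in the paper.

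Finally, as in the two-dimensional case, the slack formula $\Zxy{x}{y} = \overline{\s(\Hom_\G(y,x))}$ for $x,y \in V(\G)$ would follow from the accumulation formula above together with the reduction. Every edge of $\G$ carries slack bounded below by a uniform constant $\delta > 0$, controlled by the geometry of the finite collection of closed geodesics and their minimal non-trivial recurrence times, so the slack of any concatenation of $k$ edges is at least $k\delta$. This shows $\s(\Hom_\G(y,x))$ is countable, and filtering by combinatorial path length makes its derived sets form a strictly descending chain of non-empty sets whose intersection is empty, so its closure has depth exactly $\omega$. Combined with the reduction step and applied with $y=x$, this yields the two conclusions: $\overline{Nx}$ is the countable union of horospheres $\bigcup_{t \in \Zxy{x}{x}} Na_tx$, and $\Zxy{x}{x}$ has depth $\omega$. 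The main technical point I expect to address is establishing the uniform lower bound $\delta$ on edge slack---the minimal bridging cost between distinct closed leaves, or the minimum non-trivial self-recurrence time of a single leaf---which should fall out of compactness of the finitely many closed orbits in $\T^1\lambda_0$.
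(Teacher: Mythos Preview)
Your proposal is correct and follows essentially the same route as the paper: the authors explicitly remark that the arguments of \S\ref{sec: finite lamination} (Theorems \ref{thm:structure of Zxy} and \ref{thm: countable filtration Zxy}, Corollaries \ref{cor: multi-curve structure} and \ref{cor: depth omega}) are not dimension-specific, and Theorem \ref{thm: d-manifolds intro} is obtained by running them verbatim in $\bH^m$. One small point: in the multi-curve case the paper actually proves the sharper identity $\Zxy{x}{y} = \s(\Hom_\G(y,x))$ without a closure (via the geometric limit chain argument, which is the technical heart you are somewhat underselling), and this is what gives countability directly; the uniform edge-slack lower bound you flag as the main technicality is the easy part (Lemma \ref{small slack close to L}), while the real work lies in extracting geometric limit chains and the ``no miracles'' Lemma \ref{lem: non-trivial accumulation} needed for the depth filtration.
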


    Geodesic laminations in higher dimension do not have as strong a structure theory as in dimension 2, and for this reason 
    we are not able to establish any of our results for the infinite-leaf case.

    \subsection{About the proof}
    The slack of $Az$ behaves very differently depending on how much time $Az$ spends near $\T_+^1\lambda$.
    We therefore have two main strategies for controlling or computing slacks, depending on whether trajectories are forced to make big-slack excursions between components of $\T^1_+\lambda$ or can travel between leaves in one component with small slack.
    These strategies are combined in \S\ref{sec: structure of horo orbit closures} by way of the slack graph $\G$ to obtain our main Theorems \ref{thm: intro reduction}--\ref{thm: intro contains a ray}.

    \subsection*{Geometric limit chains}

The case that $\lambda_0$ consists of closed curve components is considered in \S\ref{sec: finite lamination}, 
where we obtain lower bounds on slack proportional to the length of excursions of $Az$ between components of $\T^1_+\lambda$,
    and prove Theorem \ref{thm: depth omega intro} (see also Theorems  \ref{thm:structure of Zxy} and \ref{thm: countable filtration Zxy} and Corollary \ref{cor: depth omega}). 
    The structure of accumulation points comes from analyzing how sequences of lines $Az_m \in \Axy xy$ with bounded slack degenerate as $m\to \infty$ when $\lambda_0$ is a multi-curve.
    
    The idea is that the bound on slack forces the geometry of $Az_m$ away from $\T^1_+\lambda$ to stabilize (up to subsequence), but subsegments can spend arbitrarily long amounts of time near different components of $\T^1_+\lambda$,  accumulating very little slack.
    By studying the possible geometric limits up to the $\Z$-action, we show that there are finitely many \emph{geometric limit chains}, each of finite length, consisting of geodesics $Az_1\cdots Az_r$ forming directed paths in $\mathcal G$, and $\sum \s (Az_i)$ is an accumulation point of $\{\s(Az_m)\}$.

    \medskip
    
    \subsection*{Chain proximality}\label{subsec: chain proximality intro} 
        When $\lambda$ has a weak component with infinitely many leaves, there are lines $Az$ that make (infinitely) many small jumps between geodesics in $\T^1_+\lambda$ building up arbitrarily \emph{small} slack.
    This phenomenon leads us to the chain proximality relation on points of $\T^1_+\lambda$ in the same $\tau$-fiber introduced and studied in \S\S\ref{sec:chain prox min}--\ref{Section:Synchronous Transversal}.
    
    It is not difficult to see that if rays $A_+x$ and $A_+y$ are \emph{proximal}, then $\overline {Nx} = \overline {Ny}$ \cite[\S8]{FLM}.
    We formulate a weaker, not necessarily symmetric notion of \emph{chain proximality} (Definition \ref{def: chain proximal}) on points in $\T_+^1\lambda \cap \tau\inverse (0)$ and prove that if $x$ is chain proximal to $y$, written $x \chprox y$, then $\overline {Nx}\subset \overline {Ny}$.
    Essentially, $x \chprox y$ means that a pseudo-orbit of the geodesic flow starting at $x$ can \emph{intercept} the geodesic orbit of $y$ in $\T^1\Sigma$ in a synchronous fashion, i.e., the pseudo-orbit starting at $x$ arrives to the $A_+$-orbit of $y$ at the same time as $y$.
    In addition, the total distance of all of the jumps made by such pseudo-orbits can be made arbitrarily small.

    The notion of chain proximality applies to an arbitrary transformation or flow on a metric space. 
    We study, in detail, the chain proximality relation on pairs of points
    for the first return 
    \[\sigma : \T_+^1\lambda_0 \cap \tau_0\inverse(0) \to \T_+^1\lambda_0 \cap \tau_0\inverse(0) \] for the geodesic flow and prove
  
    \begin{theorem}\label{thm: chain prox intro}
        $\sigma$-chain proximality is a (symmetric) equivalence relation on $\T_+^1\lambda_0 \cap \tau_0\inverse (0)$ with finitely many equivalence classes.
        Moreover, there is a positive $d$ such that in the $d$-fold cover $\pi_d : \Sigma_d \to \Sigmazero$ intermediate to $\pi_\Z: \Sigma \to \Sigmazero$, the connected components of $\lambda_d = \pi_d\inverse(\lambda_0)$ identify both the chain proximality equivalence classes and the weak components of $\lambda$ in $\Sigma$.
    \end{theorem}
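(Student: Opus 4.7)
The plan is to establish first that $\chprox$ is an equivalence relation, then characterize the classes as the intersections of connected components of $\lambda_0$ with $\T^1_+\lambda_0 \cap \tau_0^{-1}(0)$, and finally lift to a suitable finite intermediate cover. Reflexivity is immediate, and transitivity follows formally by concatenating an $\epsilon/2$-pseudo-orbit witnessing $x \chprox y$ with one witnessing $y \chprox z$, since the synchronicity (time-matching) condition is preserved under concatenation.

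Symmetry is the substantive step. Given a synchronous $\epsilon$-pseudo-orbit from $x$ intercepting the $A_+$-orbit of $y$, I would reverse it by invoking the chain recurrence of the first return map $\sigma$ on $\T^1_+\lambda_0 \cap \tau_0^{-1}(0)$: each small jump in the forward pseudo-orbit can be approximated, backwards, by tracking a nearby $\sigma$-orbit that nearly closes up on itself, producing a synchronous pseudo-orbit from $y$ to $x$ of comparable total jump size. The synchronicity constraint and uniform continuity of $\sigma$ on compacta in the section are both needed to prevent accumulation of reversal error.

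For the characterization of equivalence classes, I would appeal to the structure theorem for chain recurrent geodesic laminations on surfaces: $\lambda_0$ has finitely many connected components, each a union of finitely many minimal sublaminations together with chain recurrent isolated leaves spiralling onto them. Within a single component $C$, any two points of $\T^1_+C \cap \tau_0^{-1}(0)$ are chain proximal — for two points on the same minimal sublamination this is a consequence of minimality plus uniform continuity of the flow, while a spiralling isolated leaf reduces to the minimal case after one small jump onto its limiting sublamination. Points lying in distinct components of $\lambda_0$ are separated by a uniform positive distance in $\T^1 \Sigmazero$, precluding interception by any sufficiently small-jump pseudo-orbit; hence the classes biject with components of $\lambda_0$, yielding finiteness.

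To identify the classes with connected components of $\lambda_d$ (and thus with weak components of $\lambda$), recall that each component $C$ of $\lambda_0$ has preimage $\pi_\Z^{-1}(C) \subset \Sigma$ equal to a finite union of weak components, permuted cyclically by the $\Z$-deck action. Letting $d_C$ denote the number of weak components over $C$ and setting $d = \mathrm{lcm}\{d_C\}$ over the (finitely many) components, in the intermediate cover $\pi_d : \Sigma_d \to \Sigmazero$ the preimage $\pi_d^{-1}(C)$ splits into precisely $d_C$ connected components, each identified with a single weak component of $\lambda$. The main obstacle will be verifying symmetry quantitatively — namely, that reversing a synchronous $\epsilon$-pseudo-orbit costs $O(\epsilon)$ in the total jump budget while preserving time-matching — which requires carefully exploiting the strong chain recurrence of $\lambda_0$ against the metric-dynamic coupling of transverse horospherical and geodesic directions near $\T^1 \lambda_0$.
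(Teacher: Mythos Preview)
There is a genuine gap in your characterization of the equivalence classes. You assert that within a single connected component $C$ of $\lambda_0$, any two points of $\T^1_+C \cap \tau_0^{-1}(0)$ are chain proximal, so that the classes biject with components of $\lambda_0$. This is false in general: if the first return $\sigma$ restricted to a minimal component admits a continuous rational eigenfunction of order $q>1$, that single component contributes at least $q$ distinct chain-proximality classes (the paper discusses exactly this phenomenon in the introduction). Minimality gives density of orbits but says nothing about \emph{synchronous} interception---you must arrive at $\sigma^m(y)$ exactly at step $m$, and a rational-rotation factor is precisely an obstruction to this. The passage to the $d$-fold cover is therefore not bookkeeping; it is what splits these hidden classes into honest connected components of $\lambda_d$, and $d$ is not the $\mathrm{lcm}$ of weak-component counts (which you have not yet shown to be finite) but rather the order of the permutation $\sigma$ induces on the classes.

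Your symmetry argument (reversing pseudo-orbits via chain recurrence of $\sigma$) is correspondingly underdetermined: chain recurrence lets you close up an orbit but gives no control on the synchronization offset incurred along the way. The paper's route is entirely different and measure-theoretic. For a minimal component one fixes a fully supported invariant probability $\nu$ and uses Poincar\'e recurrence for the diagonal $\sigma$-action on $X^m$ to produce a full-$\nu$ set $X^\dagger$ such that $x\chprox y$ whenever $x$ and $y\in X^\dagger$ lie in the same component $L_i$ of a snug-neighborhood transversal. The essential geometric input---missing from your sketch---is that every gap of $X$ in the transversal shrinks in either forward or backward time (asymptotic boundary leaves of a finite-area complementary piece), which is what permits crossing gaps with small jumps at jointly recurrent times. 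A separate ``almost-proximal'' lemma then upgrades from $X^\dagger$ to all of $X$; symmetry falls out because one has directly established $x\chprox y$ and $y\chprox x$ for all pairs in the same union $M_j$ of $L_i$-blocks. Only after choosing $d$ so that $\sigma^d$ fixes each $M_j$ do the classes match connected components of $\lambda_d$, and the identification with weak components of $\lambda$ in $\Sigma$ is a further argument using $\ep$-interceptions lifted to $\Sigma$.
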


    See Theorems \ref{thm: chain prox invt equiv relation} and \ref{thm: chprox equiv chain recurrent} and \Cref{cor: chprox and weak components} for precise statements.
    We point out that Theorem \ref{thm: chain prox invt equiv relation} does not use the structure of the tight map $\tau_0$ and may be of interest outside of the context of the present article.
    \medskip

    Chain proximality is invariant under, e.g., bi-Lipschitz conjugations, and is both a dynamical and geometrical concept.
    To illustrate this, note that there are minimal, orientable geodesic laminations $\mu_0 \subset \Sigma_0$ equipped with a transverse measure such that the first return $\sigma$ to a suitable transversal admits an order and measure preserving (topologically semi-) conjugacy to an irrational circle rotation.
    No two distinct points are chain proximal for any circle rotation $t \in \R/c\Z \mapsto t+\theta \mod c\Z$.  
    However, the interaction of the hyperbolic geometry of $\mu_0 \subset \Sigma_0$ together with the dynamics of the first return mapping results in only finitely many (in fact, only one) chain proximality equivalence classes for $\sigma$.  

    In a similar fashion, if the first return map $\sigma$ admits, as a continuous factor, a \emph{rational} circle rotation of order $q$, then there are at least $q$ chain proximality equivalence classes for $\sigma$.
    In particular, if $\lambda_0$ is minimal and $\sigma$ admits a non-trivial continuous rational eigenfunction to $\mathbb C$, then there are strictly more chain proximality equivalence classes than connected components of $\T_+^1\lambda_0$.
    We remark that, for topological reasons, if $\lambda_0$ is minimal and filling, then $\sigma$ does not admit a continuous rational  eigenfunction (see Remark \ref{rmk: minimal filling lamination}).

    \subsection{Chain-recurrence of the stretch lamination}\label{chain recurrence always}
For a given tight map $\tau_0: \Sigmazero \to \R/c\Z$, denote by $\Stretch(\tau_0)$ the maximally stretched locus, i.e., the set of points whose local Lipschitz constant is the global Lipschitz constant.
    In Gu\`eritaud-Kassel \cite{GK:stretch}, it is shown that the intersection of maximal stretch loci over all tight representatives of a given homotopy class $\varphi$ of maps $\Sigmazero \to \R/\Z$ is a geodesic lamination $\lambda_0(\varphi)$, and that a tight map  $\tau_0$ can be chosen such that $\lambda_0(\varphi) = \Stretch (\tau_0)$.
  When the dimension is 2, they also show that this lamination is chain-recurrent.
  
  In the appendix we will extend this result:
\begin{theorem}\label{L chain recurrent}
Let $\Sigma_0$ be a closed hyperbolic $m$-manifold.   For any nontrivial homotopy class $\varphi$ of maps   $\Sigma_0\to
\R/\Z$, the stretch lamination $ \lambda_0(\varphi) $ is chain-recurrent.
\end{theorem}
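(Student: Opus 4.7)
The plan is to argue by contradiction, imitating the dimension-two scheme of Gu\'eritaud-Kassel and reducing the question to constructing an explicit perturbation of a tight map that strictly shrinks its stretch locus. I would fix a tight map $\tau_0\colon\Sigma_0\to\R/c\Z$ in the class $\varphi$ with $\Stretch(\tau_0)=\lambda_0(\varphi)$, provided by the result of Gu\'eritaud-Kassel cited in the introduction, and write $L=\op{Lip}(\tau_0)$. Regard $\lambda_0(\varphi)$ as a closed invariant subset of $\T^1\Sigma_0$ for the geodesic flow, oriented so that $\tau_0$ stretches along the flow direction at rate $L$. Decompose $\lambda_0(\varphi)=\Lambda_{\op{cr}}\sqcup\Lambda_w$ into its internal chain-recurrent set and the wandering complement, and assume, for contradiction, that some $v_0\in\Lambda_w$ exists.

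By Conley's fundamental theorem for the restricted flow, there is a continuous Lyapunov function $F\colon\lambda_0(\varphi)\to[0,1]$ that is strictly decreasing along the orbit of $v_0$ and constant on each chain component of $\Lambda_{\op{cr}}$. I would use this to extract a compact orbit arc through $v_0$ with a tubular neighborhood $U\subset\T^1\Sigma_0$ trapped between two level sets of a smooth extension of $F$, so that any geodesic segment exiting $U$ cannot re-enter it. The key step is then to build a smooth function $G\colon\Sigma_0\to\R$ such that (i) $G$ is non-increasing along every geodesic in $\lambda_0(\varphi)$ and strictly decreasing along the orbit arc through $v_0$, and (ii) the $C^1$-norm of $G$ is small enough that $\tau_\delta:=\tau_0+\delta G$ is $L$-Lipschitz for all sufficiently small $\delta>0$. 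Such a $\tau_\delta$ remains tight and represents $\varphi$, but $v_0\notin\Stretch(\tau_\delta)$, contradicting $v_0\in\bigcap_{\tau\text{ tight}}\Stretch(\tau)=\lambda_0(\varphi)$ and forcing $\Lambda_w=\emptyset$.

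The main obstacle is the construction of $G$ in dimension $m\ge 3$, where the ideal-polygon structure of $\Sigma_0\setminus\lambda_0(\varphi)$ used in the surface case is not available. I would proceed locally: smooth $F$ to a $C^1$-function $\tilde F$ on $U$ and define $G$ on $\Sigma_0$ by integrating $\tilde F$ along fibres of the footpoint projection $\T^1\Sigma_0\to\Sigma_0$ against a cutoff supported in $U$. Property (i) should follow from $F$ being a Lyapunov function; the delicate part of property (ii) is to prevent the positive contributions of $\delta G$ to $|d\tau_\delta|$ at vectors outside the stretch locus from accumulating along long geodesic segments that revisit $\op{supp}(G)$. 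This is precisely where chain-transience of $v_0$ is indispensable: because the trapping property prevents geodesics from re-entering $U$, every orbit meets $\op{supp}(G)$ in a single interval of bounded length, so the total perturbation along it is bounded by $\delta\|G\|_{C^1}$ times a constant, which for small $\delta$ is absorbed by the slack between $L$ and $\sup_{v\notin\lambda_0(\varphi)}d\tau_0(v)$ on compact sets. Making this absorption quantitative and uniform across $\T^1\Sigma_0$, without invoking two-dimensional lamination structure, is the technical heart of the argument.
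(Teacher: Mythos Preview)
Your high-level strategy---assume a non-chain-recurrent point $v_0$ and construct a homotopic tight map whose stretch locus misses it---is the same as the paper's. The gap is in step (ii), the verification that $\tau_\delta=\tau_0+\delta G$ remains $L$-Lipschitz.

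The ``slack'' you invoke, namely $L-\sup_{v\notin\lambda_0(\varphi)}d\tau_0(v)$, is zero: vectors $v$ approaching $\T^1_+\lambda_0$ have $d\tau_0(v)\to L$, so there is no uniform margin to absorb $\delta\|G\|_{C^1}$. What you actually need pointwise is $|\nabla\tau_0+\delta\nabla G|\le L$; at a point of $\lambda_0$ this forces $\langle\nabla\tau_0,\nabla G\rangle\le -\tfrac{\delta}{2}|\nabla G|^2$, i.e.\ $G$ must be \emph{strictly} decreasing along every leaf through $\op{supp}(\nabla G)$. A compactly supported $G$ cannot achieve this on a leaf that both enters and exits its support, since $G$ must return to $0$. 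Your Lyapunov trapping guarantees that \emph{leaves of $\lambda_0$} meet $U$ in a single arc, but to verify the Lipschitz constant you must control $|\nabla\tau_\delta|$ at \emph{every} unit vector, and generic geodesics in $\Sigma_0$ are dense and revisit any open $U$ infinitely often; the ``single interval'' claim simply does not apply to them. Finally, the proposed construction of $G$ by averaging a smoothed Lyapunov function over unit-sphere fibres gives no reason for property (i) to survive: monotonicity along leaves is a condition on a single direction at each basepoint, and fibre-averaging destroys exactly that directional information.

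The paper avoids all of this by never trying to perturb $\tau_0$ additively on $\Sigma_0$. It introduces two disjoint closed $A$-invariant sublaminations $\Lambda_+,\Lambda_-\subset\lambda_0\setminus Ax$ (points chain-reachable from $x$, respectively points from which $x$ is chain-reachable), and proves, using non-chain-recurrence of $x$, the positive-slack inequality
\[
\inf\{\,d(y,z)-(\tau(y)-\tau(z)) : y\in\hat\Lambda_-,\ z\in\hat\Lambda_+\,\}>0
\]
in the $\Z$-cover. One then sets $\tau'=\tau$ on $\hat\Lambda_-$ and $\tau'=\tau-\epsilon$ on $\hat\Lambda_+$; the slack inequality makes $\tau'$ $1$-Lipschitz on $\hat\Lambda_+\cup\hat\Lambda_-$ by a four-line case check, and a McShane extension produces a $1$-Lipschitz $\Z$-equivariant function on all of $\Sigma$ with no further analysis of gradients. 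Since the forward orbit of $x$ accumulates on $\Lambda_+$, where $\tau'$ has been shifted, $Ax$ cannot lie in the stretch locus of $\tau'$. The McShane step is precisely what replaces your attempted global control of $|\nabla\tau_\delta|$, and is what makes the argument dimension-independent.
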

(Note that this is not a strict generalization of \cite{GK:stretch}, as they prove their
result for any hyperbolic target manifold, and our target is always a circle.)

In view of this, unless stated otherwise, we will assume that our tight map $\tau_0$
has been chosen such that $\Stretch (\tau_0) = \lambda_0([\tau_0])$.  
    
    \subsection{Organization of the paper}
    After some preliminaries and a summary of some notations following our previous work in \S\ref{sec: preliminaries}, the paper is divided into three parts:
    \subsection*{\S\S\ref{Sec:Slack of Paths}--\ref{sec: finite lamination}}
    In \S\ref{Sec:Slack of Paths}, we discuss slack and collect some of its basic properties that will be required throughout.
    In \S\ref{sec: finite lamination}, we give a detailed account of the structure of horocycle orbit closures when $\lambda_0$ is a multi-curve in terms of the slack graph $\G$. In particular we prove \Cref{thm: depth omega intro,thm: d-manifolds intro}.
    We also discuss, in \S\ref{subsec: infinite cr leaf}, what happens when $\lambda_0$ has finitely many leaves but contains an infinite leaf.
    The results in these sections hold for hyperbolic manifolds in higher dimensions, as well.

    \subsection*{\S\S\ref{sec:chain prox min}--\ref{Section:Synchronous Transversal}}
    In \S\ref{sec:chain prox min}, we discuss the chain proximality relation and prove that,  when considering the first return map to a $C^1$ transversal for the geodesic flow tangent to an orientable and minimal geodesic lamination in a hyperbolic surface, chain proximality is an equivalence relation with finitely many equivalence classes   (this may be of independent interest).  
    In \S\ref{Section:Synchronous Transversal}, we specialize to the setting that the geodesic lamination of interest is $\lambda_0$ and apply our techniques from the previous section to each minimal sublamination, after we build a nice transversal.  As a byproduct of the construction of a nice transversal, we obtain a structural result for the behavior of tight circle valued maps in a neghborhood of $\lambda_0$ (Corollary \ref{cor: structure of tight maps}).
    We then pass to a finite cover $\Sigma_d \to \Sigma_0$ intermediate to $\Sigma \to \Sigma_0$ and analyze the chain proximality equivalence relation in the cover.  In particular, we obtain Theorem \ref{thm: chprox equiv chain recurrent}, which identifies the equivalence classes as connected components of the preimage of $\lambda_0$ in $\Sigma_d$.

    \subsection*{\S\ref{sec: structure of horo orbit closures}}
    In this section we combine the work done in the previous two parts to conclude our main structural results for general surfaces, implying in particular \Cref{thm: dichotomy intro,thm: intro reduction,thm: intro N orbit vertex,thm: intro contains a ray} and \Cref{cor: intro number N orbit closures}. We also construct some sequences of closed hyperbolic surfaces and corresponding $\Z$-covers that stay in a compact set of metrics, but which have recurrence semi-groups with considerably different structures.  These examples  illustrate further non-rigidity properties of $N$-orbit closures in the category of $\Z$-covers of closed hyperbolic surfaces. Finally, we explain why non-maximal $N$-orbit closures are not manifolds.

    \subsection*{Acknowledgements}
    The first named author would like to thank Xiaolong Hans Han for pointing out Proposition 9.4 in \cite{GK:stretch}.  We thank Hee Oh for helpful remarks on an earlier draft. The first named author received funding from DFG – Project-ID 281071066 – TRR 191, and the third named author was partially supported by DMS-2005328.

\section{Preliminaries}\label{sec: preliminaries}
 
 This section contains odds and ends summarizing important results as well as a few simple observations and remarks on terminology, all of which should help facilitate reading through this manuscript. 
 The reader may find it useful to consult our previous paper \cite[\S\S2--3]{FLM} for a more thorough discussion of preliminaries including quasi-minimizing rays, geodesic laminations and the basic relationship between best Lipschitz (tight) maps and distance minimizing geodesic laminations.
 
\subsection{Background on tight maps and distance minimizing laminations}\label{Subsec:Tight map summary of notations}

Here we briefly recall some terminology, notation, and results regarding tight maps and their maximally stretched sets from \cite{FLM}.

Throughout the paper, unless otherwise indicated, $\Sigma_0$ is a closed, oriented hyperbolic surface (the discussion in this subsection also holds for arbitrary hyperbolic manifolds of dimension at least $2$), $\varphi$ is a non-trivial homotopy class of maps $\Sigma_0\to \R/\Z$, and $\pi_\Z: \Sigma \to \Sigmazero$ is the associated $\Z$-cover.
A map $\tau_0 : \Sigma_0 \to \R/\Z \in \varphi$ is called \emph{tight} if its Lipschitz constant realizes the following naive lower bound 
    \[\sup_{\gamma \subset \Sigmazero}\frac{\deg \varphi|_\gamma}{\ell(\gamma)}\]
    on the Lipschitz constant of any representative of $\varphi$,
    where $\ell(\gamma)$ is the hyperbolic length of a closed curve $\gamma \subset \Sigmazero$.
    Thus a tight map has, in particular, the smallest Lipschitz constant in its homotopy class.
    
    By composing a tight map $\Sigma_0 \to \R/\Z$ with an affine map $\R/\Z \to \R/c\Z$, we can assume that its Lipschitz constant is $1$.
    Abusing notation, we use the same letter $\tau_0: \T^1\Sigmazero \to \R/c\Z$ to denote the pullback of our $1$-Lipschitz tight map along the tangent projection $\T^1\Sigmazero\to \Sigmazero$.
    Any lift  $\tau : \T^1\Sigma \to \R$ of $\tau_0: \T^1\Sigmazero \to \R/c\Z$ is equivariant with respect to the deck group $\Z$: 
    \[\tau(k.x) = \tau(x) +kc\in \R.\]

    In \S3 of our previous paper, we associated to $\tau_0 : \T^1\Sigma_0 \to \R/c\Z$ the $A$-invariant part $\L_0 \subset \T^1\Sigmazero$ of the set of points $x \in \T^1\Sigmazero$ satisfying 
    \[|\tau_0(a_{-\delta/2}x) - \tau_0( a_{\delta/2}x)| = \delta\]
    for a suitable small parameter $c/2>\delta>0$
    and concluded that $\L_0$ is tangent to a geodesic lamination. %
    
    There is a tight map in every homotopy class \cite{DU:circle,GK:stretch}, and the intersection of the maximally stretched set (the set of points maximizing the local Lipschitz constant) over all homotopic tight maps is a non-empty geodesic lamination $\lambda_0 \subset \Sigmazero$ \cite{GK:stretch}.
    By \cite[Prop. 9.4]{GK:stretch} (also \Cref{L chain recurrent}) we know that $\lambda_0$ is chain recurrent 
    and that we can find a $\tau_0$ satisfying  $\Stretch (\tau_0) = \lambda_0$, and from now on, we assume that this is the case.
    In particular,
    \begin{equation}\label{eqn: L is Tlambda}
        \L_0 = \T^1\lambda_0
    \end{equation}
    holds.

    Note that $\lambda_0$ is oriented (by a choice of orientation on $\R/c\Z$).
    We use $\T^1_+\lambda_0$ to denote the points tangent to $\lambda_0$ in the positive direction and define $\T^1_-\lambda_0$ analogously. 
    Sometimes, we implicitly identify $\lambda_0$ with $\T^1_+\lambda_0$, which induces an $A$ action on $\lambda_0$.
    All of the same remarks apply to $\lambda = \pi_\Z\inverse (\lambda_0)\subset \Sigma$.
    
    Recall that $\Qm$ denotes the set of quasi-minimizing points in $\T^1\Sigma$ satisfying \eqref{eqn: qm}, and $\Qm = \Qm_+ \sqcup \Qm_-$, where $x \in \Qm_\pm$ means that $\tau(a_tx) \to \pm \infty$, as $t\to \infty$.
    The $\omega$-limit set mod $\Z$ of $\Qm$ in $\T^1\Sigma$ is the set 
    \[\Qm_\omega = \{x\in \T^1\Sigma: \exists y \in \Qm \text{ such that } \pi_\Z(a_ty) \text{ accumulates onto } \pi_\Z(x)\}.\]
    For our choice of $\tau_0$ satisfying \eqref{eqn: L is Tlambda}, by Theorem 1.4 of \cite{FLM} we have 
    \[\T^1\lambda =  \Qm_\omega.\]
	These equalities illustrate the relevance of the geometry of tight maps in our investigation of non-maximal horocycle orbit closures.

	\subsection{Sub-additive property of $\Zxy{x}{y}$}
    Recall the following definition from the introduction
    \begin{definition}
		For $x,y\in \T^1_+\lambda$ with $\tau(x) = \tau(y)$ define
		\[
		\Zxy xy = \{t: a_t y \in \overline{N x}\}. 
		\]
	\end{definition}
    Suppose $s \in \Zxy zy$ and $t\in \Zxy xz$.  
    Then $a_tz \in \overline{Nx}$, and so $a_t \overline{Nz} = \overline{Na_tz} \subset \overline {Nx}$.  In particular, $a_{t+s} y \in \overline {Nx}$, which proves the following useful property:
    \begin{equation}\label{eqn: subcontainment for Zxy}
        \Zxy zy + \Zxy xz \subset \Zxy xy.
    \end{equation}

	The above property implies in particular that $\Zxy xx$ is a semigroup, which we refer to as the \emph{recurrence semigroup} of $x$. It is is also the semigroup of sub-invariance of the associated horocycle orbit closure, that is,
	\[ \Zxy xx = \{t \in \R : a_t \overline{N x} \subseteq \overline{N x} \}. \]
	See \cite[\S7]{FLM} for more details.\footnote{In \cite{FLM} we used the notation $\Delta_x$ and considered it as a sub-semigroup of $A$ (or rather the centralizer of $A$, in higher dimensions).}

	\subsection{Terminology for asymptotic relations}
	
    We say that points $x$ and $y$ are \emph{forward asymptotic} if 
	\[ d(a_tx,a_ty)\to 0 \quad \text{as}\quad t\to\infty, \]
    i.e., $Nx = Ny$.
    
	Given a point $x \in \T^1\Sigma$ and a set $E \subset \T^1\Sigma$, we say that $x$ (also $A_+x$ or $Ax$) is \emph{forward asymptotic} to $E$ or \emph{asymptotic in forward time} to $E$ if for any $\varepsilon > 0$, there is a $T \in \R$ such that the geodesic ray $A_{[T,\infty)}x$ is contained in the $\varepsilon$-neighborhood of $E$. 

    Similarly, lines $Ax$ and $Ay$ (or rays $A_+x$ and $A_+y$) are \emph{forward asymptotic} or \emph{asymptotic in forward time} if there exists $b \in \R$ such that points $a_b x$ and $y$ are forward asymptotic, i.e., $ANx = ANy$.

    Finally, lines $Ax$ and $Ay$ are \emph{backward asymptotic} or \emph{asymptotic in backward time} if $A(-x)$ and $A(-y)$ are asymptotic in forward time, where $-: \T^1\Sigma \to \T^1\Sigma$ is the fiberwise antipodal involution.

    \subsection{Note on higher dimensions} \label{subsec: note about higher dimensions}
    
If the dimension of $\Sigma$ is greater than two, then the action of $N$ is defined only in the frame bundle of $\Sigma$, namely $G/\Gamma$. However, the horospheres themselves make sense as the leaves of a foliation of $\T^1\Sigma$ (namely the strong stable manifolds of the geodesic flow). Because most of this paper deals with dimension 2, we mostly elide this distinction. But to convert any of our discussions to higher dimension one can simply replace an orbit $Nx$ with ``the horospherical leaf containing $x$", and an expression like $y=nx, n\in N$ with ``$y$ is a point in the horospherical leaf containing $x$". This is relevant in sections 3 and 4, which can be carried out in any dimension.

	\section{Slack of paths}\label{Sec:Slack of Paths}

         As touched on in the introduction, our results rely on an analysis of the ``efficiency'' of different quasi-minimizing rays in $\T^1\Sigma$. In this section we make this notion precise and develop a few basic properties which will be used throughout the paper.
    
The {\em slack} of a path in $\Sigma$ measures the gap between its length and the
$\tau$-difference between its endpoints. For technical reasons we consider paths in
$\T^1\Sigma$ as well as $\Sigma$: 
    
	\begin{definition}\label{def: slack}

		Let $ \alpha: [a,b] \to \Sigma $ be a rectifiable curve. We define the \emph{slack of $ \alpha $} to be
		\[ \s(\alpha) = \mathrm{length}(\alpha) -
                (\tau(\alpha(b))-\tau(\alpha(a))). \]
                Similarly if $\hat\alpha:[a,b]\to\T^1\Sigma$ is rectifiable we define its slack to be the slack of its projection to $\Sigma$. 
Note that $\s$ is non-negative and additive under concatenation of paths, so if $I\subset \R$ is connected and $\alpha: I \to \T^1\Sigma $, we can define \[\s(\alpha) = \lim_{T \to \infty} \s\left(\alpha|_{I\cap [-T,T]}\right)
=\sup_{T >0} \s\left(\alpha|_{I\cap [-T,T]}\right).\] 
	\end{definition}
(We can define $\mathscr{S}_-$ by replacing $\tau$ with $-\tau$, and obtain a similar
discussion.)

If $\alpha$ is a geodesic flow line, of the form $A_{[s,t]}z$, we note that $\s(\alpha)$
is just $(t-s) - (\tau(a_t z)-\tau(a_s z))$. Using our choice of $\tau_0$ satisfying $\Stretch(\tau_0) = \lambda_0$, we have that 
\begin{equation}\label{eqn: zero slack in lambda}
\s(\alpha) = 0 \text{ if and only if } \alpha\subset
\T^1_+\lambda.  
\end{equation}

The following elementary consequence states that geodesic arcs that are not too close to the stretch lamination $\lambda_0$ have a definite amount of slack. It will be used in several places and we point out that it applies for $\Sigma_0$ a closed hyperbolic manifold of any dimension.

\begin{lemma}\label{small slack close to L}
Let $\tau_0:\Sigma_0\to\R/c\Z$ be a tight map for which $\lambda_0([\tau_0])=\Stretch(\tau_0)$, and
fix $b'>b>0$. For each $\delta$ there exists $\ep$ such that, if $\alpha \subset \T^1\Sigma$ is an oriented
geodesic arc whose length is in $[b,b']$ and $\s(\alpha) < \ep$, then $\alpha$ is in a
$\delta$-neighborhood of $\T^1_+\lambda_0$. 
\end{lemma}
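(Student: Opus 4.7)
The plan is to argue by contraposition, exploiting compactness of $\T^1\Sigmazero$ together with the characterization \eqref{eqn: zero slack in lambda}. Concretely, fix $\delta > 0$ and suppose the conclusion fails. Then there exists a sequence of oriented geodesic arcs $\alpha_n \subset \T^1\Sigma$ with lengths $\ell_n \in [b,b']$ and $\s(\alpha_n) \to 0$, yet with each $\alpha_n$ not contained in the $\delta$-neighborhood of $\T^1_+\lambda_0$. The goal is to extract a subsequential geometric limit which \eqref{eqn: zero slack in lambda} forces to lie in $\T^1_+\lambda_0$, and thereby reach a contradiction.

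I would first push everything down to the compact base. Let $\bar\alpha_n \subset \T^1\Sigmazero$ denote the projection of $\alpha_n$. Length is preserved by the covering projection, and since $\tau$ is a $\Z$-equivariant lift of $\tau_0$, the $\tau$-difference between the endpoints of $\alpha_n$ equals the integral of $d\tau_0$ along $\bar\alpha_n$ (no winding ambiguity arises because the length is bounded by $b'$ and $\tau_0$ is $1$-Lipschitz); therefore $\s(\bar\alpha_n) = \s(\alpha_n) \to 0$. The arcs $\bar\alpha_n$ now live in the compact manifold $\T^1\Sigmazero$, their lengths lie in the compact interval $[b,b']$, and their initial data $\bar\alpha_n(0)$ range in the compact space $\T^1\Sigmazero$. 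Passing to a subsequence, $\bar\alpha_n(0) \to v_\infty$ and $\ell_n \to \ell_\infty \in [b,b']$, and by continuous dependence of the geodesic flow on initial data, $\bar\alpha_n$ converges uniformly to the geodesic arc $\bar\alpha_\infty$ of length $\ell_\infty$ starting at $v_\infty$.

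Continuity of $\tau_0$ and of the endpoints of the $\bar\alpha_n$ then gives $\s(\bar\alpha_\infty) = \lim \s(\bar\alpha_n) = 0$, so \eqref{eqn: zero slack in lambda} applied on the base forces $\bar\alpha_\infty \subset \T^1_+\lambda_0$. Uniform convergence then places $\bar\alpha_n$ in the $\delta$-neighborhood of $\T^1_+\lambda_0$ for all large $n$, contradicting the choice of the sequence. I do not anticipate any genuine obstacle in this argument: the only points requiring a little care are verifying that slack descends correctly under the covering and is continuous under uniform convergence of geodesic arcs with slowly varying length, both of which follow from the $1$-Lipschitz property and $\Z$-equivariance of $\tau$, together with the already-established identity \eqref{eqn: zero slack in lambda}, which itself is exactly where the hypothesis $\Stretch(\tau_0) = \lambda_0([\tau_0])$ is used.
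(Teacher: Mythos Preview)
Your proof is correct and follows essentially the same approach as the paper's: argue by contradiction, pass to a subsequence using compactness of $\T^1\Sigmazero$ and the bounded length interval $[b,b']$, obtain a limit arc of zero slack, and invoke \eqref{eqn: zero slack in lambda} (i.e., that $\lambda_0$ is the full stretch locus of $\tau_0$) to force the limit into $\T^1_+\lambda_0$, contradicting the assumption. The paper's version is simply a terser rendering of this same compactness argument; your additional care in projecting to the compact base and checking that slack descends is a reasonable way to make the implicit compactness explicit.
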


\begin{proof} 
Suppose the statement fails, then there is $\delta>0$ and a sequence $\alpha_m$ with $\s(\alpha_m)\to 0$
such that $\alpha_m$ are not contained in a $\delta$-neighborhood of $\T^1_+\lambda_0$. A subsequence
converges to an arc $\alpha$ of zero slack, and because $\lambda_0$ is the full stretch locus of
$\tau_0$, this means that $\alpha$ is in $\T^1_+\lambda_0$, a contradiction. 
\end{proof}

\subsection*{Slack and orbit closures}
This lemma indicates the basic quantitative connection between slack and and $N$-orbit
closures. It relates limits of slack values with the sets $\Zxy{x}{y}$ defined in the introduction.

	\begin{lemma}\label{lem:slack gives points in Nxbar}
		For any $ x, y \in \T^1_+\lambda$ with $\tau(x) = \tau(y)$  and $ t \geq 0 $, we have $ a_t y \in \overline{Nx}$ if and only if there exists $ y_m \to y $
such that $Ay_m$ is asymptotic to $Ax$ in forward time, and $ \s(\alpha_m) \to t $. 
	\end{lemma}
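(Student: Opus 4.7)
My plan is to use the Bruhat decomposition interpretation of slack explained in the introduction: for a line $Az$ that is forward asymptotic to $Ax$ with basepoint $z$ close to $y$, if we lift $x, y$ to $g, h \in G$ using the path $Az$ and write $gh^{-1} = na_T u \in NAU$, then $\s(Az) = T$, with the $U$-component going to $e$ precisely as $z \to y$. Both implications will amount to making this dictionary explicit.

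For the forward implication, I would start from $n_m \in N$ with $n_m x \to a_t y$ in $\T^1\Sigma$ (given by $a_t y \in \overline{Nx}$) and set $y_m = a_{-t} n_m x$. Continuity gives $y_m \to y$, and since $n_m \in N$ is contracted by the forward flow, the line $Ay_m = An_m x$ is forward asymptotic to $Ax$ with the time offset $d(a_s y_m, a_{s-t}x) \to 0$ as $s \to \infty$. The slack computation I would then carry out is: lift coherently so that $n_m \tilde x_m \to a_t \tilde y$ and $\tilde y_m = a_{-t} n_m \tilde x_m \to \tilde y$; then the product $\tilde x_m \tilde y^{-1}$ equals $n_m^{-1} a_t$ plus a vanishing correction, whose Bruhat $A$-component converges to $a_t$. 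Alternatively, because $x \in \T^1_+\lambda$ has linear $\tau$-behavior $\tau(a_s x) = \tau(x)+s$, the forward-ray slack can be computed directly as $\s(A_+ y_m) = t + \tau(y_m) - \tau(x) \to t$, while the backward-slack contribution tends to $0$ since $y_m \to y \in \T^1_+\lambda$.

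For the backward implication, suppose $y_m \to y$ with $Ay_m$ forward asymptotic to $Ax$ and $\s(\alpha_m) \to t$. Forward asymptoticity furnishes a unique time offset $c_m$ with $d(a_s y_m, a_{s-c_m}x) \to 0$, and since $y_m$ then lies on the strong stable leaf of $a_{-c_m}x$, I can write $y_m = a_{-c_m} n_m x$ for a unique $n_m \in N$. Computing the slack as above yields $\s(\alpha_m) = c_m + \tau(y_m) - \tau(x) + o(1)$, where $o(1) \to 0$ captures the backward-slack contribution that vanishes because $y_m \to y \in \T^1_+\lambda$. Since $\tau(y_m) \to \tau(y) = \tau(x)$ and $\s(\alpha_m) \to t$, this forces $c_m \to t$. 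Finally, continuity gives $n_m x = a_{c_m} y_m \to a_t y$, so $a_t y \in \overline{Nx}$.

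The main obstacle I anticipate is handling the backward-slack correction rigorously: namely, verifying that as $y_m \to y$, the $U$-component in the Bruhat decomposition (equivalently, the backward-slack contribution of $Ay_m$) really tends to $0$, so that the $A$-component truly captures the quantity $\s(\alpha_m)$ in the limit. This rests crucially on $y \in \T^1_+\lambda$ providing a bi-minimizing reference geodesic $Ay$ of zero slack, so that nearby lines inherit small backward slack over compact scales, with the exponential expansion of the backward flow controlled by the rate $y_m \to y$.
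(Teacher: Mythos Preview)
Your approach is correct and essentially the same as the paper's: write $y_m = n'_m a_{s_m} x$ with $n'_m \in N$, compute the forward-ray slack via the linearity of $\tau$ along $\T^1_+\lambda$, and read off the conclusion from $\tau(y_m)\to\tau(y)=\tau(x)$. The paper streamlines this by first isolating the \emph{exact} identity $\s(A_+ nx) = \tau(nx) - \tau(x)$ for $x\in\T^1_+\lambda$ and $n\in N$; in particular your $o(1)$ term is actually zero, and the ``backward-slack'' worry in your final paragraph is a non-issue, since $\alpha_m$ is the forward ray $A_+ y_m$ (not the bi-infinite line), so no $U$-component or backward contribution ever enters.
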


        \begin{proof}          We first claim: If $x\in \T^1_+\lambda$ and $n\in N$ then
          \begin{equation}\label{eqn: slack and tau}
            \s(A_+ nx) = \tau(nx) - \tau(x).
          \end{equation}
          To prove this, since $a_snx$ and $a_sx$ are asymptotic as $s\to +\infty$, and
          $\tau$ is 1-Lipschitz, we have
          \begin{align*}
            \s(A_+ nx) &= \lim_{s\to\infty} s - \tau(a_snx) + \tau(nx) \\
            &= \lim_{s\to\infty} s - \tau(a_sx) + \tau(x) - \tau(x) + \tau(nx) \\
            &= \s(A_+ x) + \tau(nx)-\tau(x).
          \end{align*}
          Since $x\in \T^1_+\lambda$ we have $\s(A_+ x)=0$, which gives (\ref{eqn: slack and tau}). 

          \medskip
          
          Now if $a_ty \in \overline{Nx}$, there are $n_m\in N$ such that $n_m x \to a_t
          y$. Let $y_m = a_{-t} n_m x = n'_m a_{-t}x$  (where $n'_m\in N$ also). Then $y_m
          \to y$ and applying (\ref{eqn: slack and tau}) we have
               $$\s(A_+ y_m) =          \tau(y_m) - \tau(a_{-t}x). $$
This converges to $\tau(y) - \tau(a_{-t}x) = t$, 
          since $\tau(y)=\tau(x)$ and $x\in \T^1_+\lambda$. 
This gives one direction. 

          Conversely, suppose that $y_m\to y$ and $\s(A_+ y_m) \to t$, where $Ay_m$ is forward
          asymptotic to $Ax$. This means there is $s_m\in \R$ and $n_m\in N$ such that
          $y_m = n_m a_{s_m} x$. Again by (\ref{eqn: slack and tau}), we have
\begin{align*}
          \s(A_+ y_m) &= \tau(y_m) - \tau(a_{s_m} x) \\
          &= \tau(y_m) - \tau(x) - s_m. 
\end{align*}
          Since the left hand side converges to $t$ and since $\tau(y_m) \to \tau(y) = \tau(x)$, we
          conclude $-s_m \to t$. But this means that $ a_{-s_m} y_m \to a_t y$, and since
          $$ a_{-s_m} y_m = a_{-s_m} n_m a_{s_m} x  = n'_m x $$
          for some $n'_m\in N$, we have  $a_ty \in \overline{Nx}$. This gives the other  direction. 
        \end{proof}

\subsection*{Slack and the Bruhat decomposition}
\mbox{}

Suppose that $x,y,z\in \T^1\Sigma$ such that $Az\in \Axy xy$ -- that is, $Az$ is asymptotic to $Ax$ in forward time and $Ay$ in backward time. 
Up to the action of $\pi_1\Sigma_0$ there is a unique triple of lifts $\hat x, \hat y, \hat z$ to $\T^1\Hplane$ so that $A\hat z$ is asymptotic to $A\hat x$ in forward time and $A\hat y$ in backward time (starting with a lift of $Az$, for sufficiently large $T$ lift an arc $A_{[0,T]}z$ together with a path to $Ax$ shorter than the injectivity radius; and do the same for an arc $A_{[-T,0]}z$ and $Ay$).

We can obtain $\hat x$ from $\hat y$ by an expression like this: 
\begin{equation}\label{bruhat xy} \hat x = n a_t u \hat y
\end{equation}
where $n\in N$, $a_t\in A$ and $u\in U$ ($U$ being the unstable horospherical subgroup). Geometrically, following Figure \ref{fig: slack intro}, we are moving $\hat y$ along its unstable horocycle until we get to $Az$, moving along the geodesic $Az$, and then along a stable horocycle till we get to $\hat x$. Algebraically, we could get this by identifying $\hat x$ and $\hat y$ with $g$ and $h$ in $G$, respectively, and then $na_t u$ is the Bruhat decomposition of $gh^{-1}$.

We denote the  $A$ part of (\ref{bruhat xy}) by $\delta(gh^{-1})$, so that $t=\log(\delta(gh^{-1}))$. %
We have the following relationship between this quantity and our slack: 

    \begin{lemma}\label{lem: slack computes delta projection}
        With notation as above, suppose that $x, y \in \T^1_+\lambda$ and $\tau(x) = \tau(y)$, and $Az\in \Axy xy $.  Then 
        $$\s(Az) = \log(\delta(gh\inverse))\in\R_{\ge0}.$$
    \end{lemma}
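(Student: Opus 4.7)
The plan is to identify $\s(Az)$ with the $A$-coordinate $\log\delta(gh^{-1})$ of the Bruhat decomposition by parametrizing $\hat z$ against both $\hat x$ and $\hat y$, then computing the slack in terms of these coordinates.

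First, I set up Bruhat coordinates. Since $A\hat z$ and $A\hat x$ share a forward endpoint on $\partial \Hplane$, $\hat z$ lies in the $AN$-orbit of $\hat x$, so $\hat z = n_0 a_{s_0} \hat x$ uniquely with $n_0 \in N$, $s_0 \in \R$. Symmetrically, backward asymptoticity of $A\hat z$ with $A\hat y$ gives $\hat z = u_0 a_{s_1}\hat y$ with $u_0 \in U$, $s_1 \in \R$. Equating, solving for $\hat x$, and conjugating using that $A$ normalizes $N$ and $U$,
\[ gh^{-1} = a_{-s_0} n_0^{-1} u_0 a_{s_1} = \bigl(a_{-s_0} n_0^{-1} a_{s_0}\bigr)\cdot a_{s_1 - s_0}\cdot \bigl(a_{-s_1} u_0 a_{s_1}\bigr) \in N\cdot A\cdot U. \]
By uniqueness of the Bruhat decomposition, $\log\delta(gh^{-1}) = s_1 - s_0$.

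Next, I compute the slack in these coordinates. From $a_T \hat z = (a_T n_0 a_{-T}) \cdot a_{T + s_0}\hat x$, the stable contraction $a_T n_0 a_{-T} \to e$ as $T\to +\infty$ combined with the $1$-Lipschitz property of $\tau$ with respect to the $G$-invariant Sasaki metric on $\T^1\Hplane$ yields
\[ \bigl|\tau(a_T \hat z) - \tau(a_{T + s_0}\hat x)\bigr| \le d(a_T n_0 a_{-T}, e) \longrightarrow 0, \]
and symmetrically $|\tau(a_{-T}\hat z) - \tau(a_{s_1 - T}\hat y)| \to 0$. Since $\hat x, \hat y\in \T^1_+\lambda$ and $\Stretch(\tau_0) = \lambda_0$, equation \eqref{eqn: zero slack in lambda} gives $\s(A\hat x) = \s(A\hat y) = 0$, so $\tau$ increases at unit speed along $A\hat x$ and $A\hat y$: namely $\tau(a_{T + s_0}\hat x) = \tau(\hat x) + T + s_0$ and $\tau(a_{s_1 - T}\hat y) = \tau(\hat y) + s_1 - T$. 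Using $\tau(\hat x) = \tau(\hat y)$ (well-defined on lifts since $\Gamma$ preserves $\tau$),
\[ \s(Az) = \lim_{T\to\infty} \bigl(2T - \tau(a_T\hat z) + \tau(a_{-T}\hat z)\bigr) = s_1 - s_0 = \log\delta(gh^{-1}), \]
and non-negativity is automatic from $\s(Az)\ge 0$ by \Cref{def: slack}.

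The main step to get right is the algebraic identity that puts $gh^{-1}$ in canonical $NAU$ form while preserving the $A$-coordinate $s_1 - s_0$; the subsequent slack computation is essentially forced by the unit-speed behavior of $\tau$ along $\T^1_+\lambda$ (a consequence of the standing assumption $\Stretch(\tau_0) = \lambda_0$) together with the exponential decay of $a_T n_0 a_{-T}$ in $PSL_2(\R)$ under stable horocyclic contraction.
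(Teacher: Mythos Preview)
Your proof is correct and follows essentially the same approach as the paper's: parametrize $z$ as $n_0 a_{s_0} x$ and $u_0 a_{s_1} y$, read off $\log\delta(gh^{-1}) = s_1 - s_0$ from the Bruhat decomposition, and compute $\s(Az) = s_1 - s_0$ using that $\tau$ increases at unit speed along leaves of $\T^1_+\lambda$. The only cosmetic difference is that the paper splits $\s(Az) = \s(A_+z) + \s(A_-z)$ and invokes the identity \eqref{eqn: slack and tau} from the previous lemma, while you compute the limit directly.
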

    \begin{proof}
        The proof is equivalent to the proof of \cite[Lemma 9.4]{FLM} with
        small changes of notation.

        From the definitions, we have $z = n a_s x$ and $z = ua_t y$ for some $s, t \in \R$, $n\in N$ and $u \in U$.
        We see then that $gh\inverse = a_{-s}n\inverse ua_t$, so that $\log(\delta(gh\inverse)) = t-s$.

        Using (\ref{eqn: slack and tau}) from the proof of Lemma \ref{lem:slack gives
          points in Nxbar}, we have
        $$
        \s(A_+ z) = \tau(z) - \tau(a_s x) = \tau(z)-\tau(x) - s. 
        $$
        Reversing flow direction, the roles of $U$ and $N$, and the sign of $\tau$, the same identity yields
        $$
        \s(A_- z) = \tau(a_t y) - \tau(z) = \tau(y) -\tau(z) + t.
        $$
        Putting these together, and using $\tau(x)=\tau(y)$, we have
        \[\s(Az) =         \s(A_+ z)  +         \s(A_- z) = t-s.\]
    \end{proof}

    \begin{remarks*}
        \begin{enumerate}[leftmargin=*]
        	\item The notion of slack depends on our choice of $1$-Lipschitz tight map $\tau$, as does the requirement that $ \tau(x) = \tau(y)$. Together, these dependencies ``cancel each other out.''
        	\item Our definition of slack coincides with Sarig's notion of Busemann cocycle, \cite[\S4.1]{Sarig2010}, when considering two geodesics which are both backward and forward asymptotic in $\Sigma$. Sarig used this cocycle  to study the quasi-invariance properties of horocycle-flow-invariant Radon measures on $\T^1\Sigma$.
        \end{enumerate}
    \end{remarks*}

\subsection*{Estimating slack of broken paths}
\mbox{}
Let $\Sigma$ be a hyperbolic manifold of any dimension $m\ge 2$, and $\tau:\Sigma\to\R$ a 1-Lipschitz function with associated slack $\s$.  This lemma shows that a broken geodesic with ``small'' total jumps between its segments has a geodesic representative whose slack is estimated by the sum of the slacks of the pieces. For a smooth path $\alpha$ in $\Sigma$ we let $\T^1\alpha$ denote its tangent lift. 

	\begin{lemma}\label{lem:epsilon chain slack}
		For all $ c>0 $ there exist constants $\kappa_c,\varepsilon_0>0$ such that the following holds for all $ 0 < \varepsilon < \varepsilon_0 $. Let $ \alpha_i:[a_i,b_i] \to \Sigma$ for $ i=1,...,n $ be a sequence of geodesic arcs, each of length greater or equal to $c$, and satisfying
		\begin{equation}\label{eqn: sum jumps}
		\sum_{i=1}^{n-1} d_{\T^1\Sigma}(\T^1\alpha_i(b_i),\T^1\alpha_{i+1}(a_{i+1})) < \varepsilon
		\end{equation}
  and let $\bar\alpha$ denote an arc obtained from $\cup\alpha_i$ by joining each endpoint $\alpha_i(b_i)$ to $\alpha_{i+1}(a_{i+1})$ using arcs whose total length is less than $\varepsilon$. Then  there exists a geodesic arc $ \alpha $  homotopic rel endpoints to $\bar \alpha$ and satisfying
		\[ \left|\s(\alpha)- \sum_{i=1}^n \s(\alpha_i) \right| < \kappa_c \cdot \varepsilon. \]
		Moreover,  the Hausdorff distance between $ \alpha $ and $ \bar \alpha $ is smaller than $ \kappa_c \varepsilon $.
		
		The claim further holds with $ \alpha_n:[a_n,\infty) \to \Sigma
		$ and where $ \alpha $ is a geodesic ray from $ \alpha_1(a_1) $ 
  which is forward-asymptotic to $\alpha_n$; and similarly with
		$\alpha_1:(-\infty,b_1]\to\Sigma$. 
\end{lemma}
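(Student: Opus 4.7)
The plan is to separate the estimate into an algebraic comparison of $\s(\bar\alpha)$ with $\sum_i \s(\alpha_i)$, and a purely geometric straightening of $\bar\alpha$ into a true geodesic arc $\alpha$. For the algebraic part, slack is additive over concatenation, so writing $\bar\alpha$ as the concatenation $\alpha_1 \cdot \beta_1 \cdot \alpha_2 \cdots \beta_{n-1} \cdot \alpha_n$ with $\beta_j$ the joining arc from $\alpha_j(b_j)$ to $\alpha_{j+1}(a_{j+1})$, I get
$$\s(\bar\alpha) = \sum_{i=1}^n \s(\alpha_i) + \sum_{j=1}^{n-1} \s(\beta_j).$$
Each $\s(\beta_j) = \length(\beta_j) - (\tau(\beta_j(\text{end})) - \tau(\beta_j(\text{start})))$ lies in $[0, 2\length(\beta_j)]$ because $\tau$ is $1$-Lipschitz, so $0 \le \sum_j \s(\beta_j) \le 2\varepsilon$.

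For the geometric part, take $\alpha$ to be the unique geodesic arc homotopic to $\bar\alpha$ rel endpoints. Since the endpoints coincide,
$$\s(\bar\alpha) - \s(\alpha) = \length(\bar\alpha) - \length(\alpha) \ge 0,$$
the inequality because $\alpha$ realizes the minimum length. The heart of the proof is establishing the reverse bound $\length(\bar\alpha) - \length(\alpha) \le \kappa_c \varepsilon$, together with the Hausdorff estimate. I would lift to the universal cover $\mathbb{H}^m$, first replace each $\beta_j$ by the geodesic chord between its endpoints (which decreases total length by at most $\varepsilon$ and moves the path Hausdorff by at most $\varepsilon$), and then apply a hyperbolic-trigonometry estimate at each of the $n-1$ junctions: two geodesic arcs of length $\ge c$ meeting at a vertex with $\delta_j$-close incoming and outgoing tangent vectors (in the $\T^1$-sense, controlling both base distance and angle) can be replaced by the geodesic between their far endpoints with length loss and Hausdorff motion at most a constant times $\delta_j$, the constant depending only on $c$. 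The hypothesis \eqref{eqn: sum jumps} ensures that $\sum_j \delta_j < \varepsilon$, so iterating bounds both the total length loss and the total Hausdorff motion by $O(\varepsilon)$. Combining with the algebraic step yields the desired slack estimate with a uniform $\kappa_c$.

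For the extension to a ray with $\alpha_n:[a_n,\infty)\to\Sigma$, I would apply the finite statement to the truncations $\alpha_1,\dots,\alpha_{n-1},\alpha_n|_{[a_n,T]}$ for each $T$, producing geodesic arcs $\alpha_T$ with slack and Hausdorff bounds uniform in $T$, and extract a subsequential limit via Arzelà--Ascoli as $T \to \infty$. The limit is a geodesic ray from $\alpha_1(a_1)$ that is forward-asymptotic to $\alpha_n$ by construction, and both estimates pass to the limit by lower semicontinuity of slack and continuity of length on compact subarcs. The main obstacle I anticipate is the per-junction straightening estimate itself: the Morse lemma yields only qualitative Hausdorff control, while here I need a constant that is \emph{linear} in the angular/positional deviation. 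This will require a direct first-variation or $\sinh/\cosh$ computation that exploits the lower bound $c$ on segment length to keep the hyperbolic amplification factors (which depend on the arm lengths adjacent to each vertex) from blowing up.
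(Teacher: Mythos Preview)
Your reduction is tidy: since $\alpha$ and $\bar\alpha$ share endpoints, $\s(\bar\alpha)-\s(\alpha)=\length(\bar\alpha)-\length(\alpha)$, and your bound $|\s(\bar\alpha)-\sum_i\s(\alpha_i)|\le 2\varepsilon$ is immediate from additivity. So everything does reduce to bounding the length deficit linearly in $\varepsilon$, uniformly in $n$. The gap is in the iteration, and it is not the per-vertex constant you flagged. Straightening $\gamma_{j-1}\cdot\alpha_{j+1}$ into $\gamma_j$ changes the terminal tangent by an angle $\phi_j>0$, which adds to the next exterior angle: the effective angle at step $j$ is $\psi_j\le\theta_{j+1}+\phi_j$, not the original $\theta_{j+1}$. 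The triangle Gauss--Bonnet bound $\phi_j<\psi_{j-1}$ by itself only gives $\psi_j\le\sum_{k\le j+1}\theta_k$, and then summing the per-step deficits yields $O(n\varepsilon^2)$, which is not uniform in $n$. The rescue is the hyperbolic law of sines: since the adjoined arm $\alpha_{j+1}$ has length $\ge c$, one gets $\sin\phi_j=\sin\psi_{j-1}\cdot\sinh\ell_{j-1}/\sinh\ell_j$, and the ratio is bounded by some $\rho(c)<1$. This geometric damping gives $\sum_j\psi_j\le C_c\sum_k\theta_k$, after which both the length and Hausdorff estimates go through. That damping mechanism, not the boundedness of a single-junction constant, is what you are missing.

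The paper avoids iteration entirely. After lifting to $\bH^m$ and matching consecutive endpoints, it spans the loop $\alpha_1*\cdots*\alpha_n*\alpha^{-1}$ by a triangulated disk $D$, applies Gauss--Bonnet to get $\mathrm{area}(D)\le\sum_i\theta_i$, and then observes that disjoint triangles inside $D$ with bases on $\alpha$ of length comparable to $c$ and heights $r_i=d(\alpha_i(b_i),\alpha)$ force $\sum_i r_i\le \kappa\cdot\mathrm{area}(D)$. The slack comparison is then done segment-by-segment between $\alpha_i$ and the subarc of $\alpha$ between the projected points $y_{i-1},y_i$, each piece costing at most $2(r_{i-1}+r_i)$. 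This one-shot area argument is the paper's substitute for your inductive angle-tracking.
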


    \begin{proof}
By considering a lift of  $\bar\alpha$ to  $\bH^m$, and pulling back
the function $\tau$, we can reduce to the case that $\Sigma=\bH^m$.  In this case we choose $\alpha$ to be the unique geodesic joining the endpoints of $\bar\alpha$.

We may reduce to the case that $ \alpha_i(b_i)=\alpha_{i+1}(a_{i+1})$: 
we do this by moving the endpoints slightly in $\bH^m$, and the lower bound
$c$ on the lengths of the $\alpha_i$ maintains the control on the tangent vectors and hence on the sum (\ref{eqn: sum
  jumps}). In particular, 
letting $\theta_i\ge 0$ be the angle between $\T^1\alpha_i(b_i)$ and $\T^1\alpha_{i+1}(a_{i+1})$, 
for some $\kappa_1 = \kappa_1(c)$ we have
\begin{equation}\label{eqn: sum theta}
  \sum\theta_i \le \kappa_1\varepsilon.
\end{equation}

It is well-known (see e.g. \cite[Thm 4.2.10]{CEG})
that there is an $\ep_0$ and $\kappa_2$ so that the Hausdorff distance between $\alpha$ and $\cup\alpha_i$ is at most $\kappa_2\ep$. 

Let $r_i = d(\alpha_i(b_i),\alpha)=d(\alpha_{i+1}(a_{i+1}),\alpha)$. We
next claim that 

\begin{equation}\label{eqn: sum ri}
  \sum r_i < \kappa_3\sum \theta_i
\end{equation}
    for $\kappa_3=\kappa_3(c)$. 

Let $h:D\to \bH^m$ be a ``triangulated disk'' spanning the loop $\gamma = \alpha_1*\cdots
 *\alpha_n*\alpha^{-1}$. That is, choose a triangulation of a disk $D$ whose
 vertices are points $x_0,\ldots,x_n$ on the boundary, and choose $h$ 
 so that the segment between $x_{i-1}$ and $x_{i}$ maps to $\alpha_i$ for $i=1,\ldots,n$,
 the segment between $x_n$ and $x_0$ maps to $\alpha^{-1}$, and each triangle maps to a geodesic triangle
 in $\bH^m$.  See \Cref{fig: disk D}. Pulling back the hyperbolic metric via $h$ we obtain a hyperbolic metric on
 $D$ whose boundary is polygonal, and such that the angle subtended at $x_i$ is at least
 $\pi-\theta_i$ for $i=1,\ldots,n-1$, and is non-negative at $x_0$ and $x_n$. (This is
 obtained by considering the spherical distance between the incoming and outgoing tangent
 vector at each vertex -- see  \Cref{fig: angle}). 

 \begin{figure}[htb]
        \centering
        \includegraphics[width=1\linewidth]{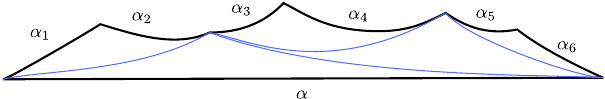}
        \caption{The triangulated disk in $\bH^m$ (which need not be embedded).}
        \label{fig: disk D}
    \end{figure}

The Gauss-Bonnet theorem tells us 
    $${\rm area}(D) \le \sum\theta_i.$$
    In the other direction, each vertex $\alpha_i(b_i)$ is distance $r_i$ from
    $\alpha$.
This means there is a triangle
    in $D$ with base on the $\alpha$ side of length a definite fraction of $c$ and height
    at least $r_i$, and all these triangles are disjoint. Summing the areas of these
    triangles we get
    $${\rm area}(D)\ge \sum \kappa_4 r_i,$$
for    $\kappa_4 = \kappa_4(\ep_0,c)$.     The claim (\ref{eqn: sum ri}) follows. 

\begin{figure}[htb]
        \centering
        \includegraphics{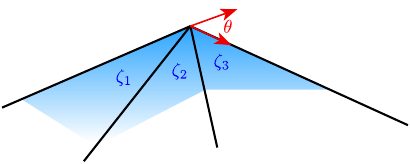}
        \caption{At a corner of the polygonal disk, the sum of the internal angles $\sum\zeta_j$ is at least $\pi-\theta$.}
        \label{fig: angle}
    \end{figure}

    Now to finish the lemma, let $y_i\in \alpha$ be the  closest point to $\alpha_i(b_i)$. Since $\tau$ is 1-Lipschitz we have $|\tau(y_i)-\tau(\alpha_i(b_i))| \le r_i$.  Letting $\beta_i$ be the segment of $\alpha$ between $y_{i-1}$ and $y_i$, we see that 
    $$|\s(\beta_i) - \s(\alpha_i)| < 2(r_{i-1}+r_i).$$
    Since slack along a path is additive, we obtain the conclusion of the lemma (in
    the finite case) by adding over the $\beta_i$ and using (\ref{eqn: sum theta}) and (\ref{eqn: sum ri}).

        The proof when one or both of $\alpha_1$ or $\alpha_n$ are rays is similar, or can be obtained from the finite case by taking limits. 
    \end{proof}

	\section{Finite Lamination case}\label{sec: finite lamination}
    In this section, we treat the case that $\lambda_0$ has finitely many leaves. 
    Our main Theorems \ref{thm:structure of Zxy} and \ref{thm: countable filtration Zxy} relate the slacks of edge-paths in the slack graph $\G$ with the structure of $N$-orbit closures.  %
    Our arguments are not dimension specific enabling us to establish Theorem \ref{thm: d-manifolds intro} regarding higher dimensional hyperbolic manifolds (see also Corollaries \ref{cor: multi-curve structure} and \ref{cor: depth omega} as well as \S\ref{subsec: note about higher dimensions}).

    In \S\S\ref{subsec: infinite cr leaf}--\ref{subsec: isolated multi-curve}, we explain how the techniques of this section get us closer toward understanding $N$-orbit closures for general $\lambda_0$ with infinite leaves.
    
    \subsection{$\lambda_0$ is a multi-curve}\label{subsec: multi-curve}
    We begin with the case that $\lambda_0$ is a disjoint union of finitely many simple closed geodesics, and assume that $\tau_0$ has been chosen, as in \S\ref{Subsec:Tight map summary of notations}, such that $\Stretch(\tau_0) = \lambda_0$ holds. %
    \medskip

    Consider the first return $\sigma : \tau_0\inverse(0) \cap \T^1_+\lambda_0 \to \tau_0\inverse(0)\cap \T^1_+\lambda_0$ under the geodesic flow in $\T^1\Sigmazero$.  Since $|\tau_0\inverse(0)\cap \T^1_+\lambda_0|$ is finite, and $\sigma$ is homeomorphic (i.e., bijective), some power $d$ of $\sigma$ is the identity.
    We replace $\Sigma_0$ with the regular $\Z/d\Z$-cover corresponding to the subgroup $dc\Z\subset c\Z = \pi_1(\R/c\Z)$, $\tau_0$ with the lifted map to $\R/dc\Z$, and the $\Z$-action with the action of $d\Z\cong \Z$.
    Abusing notation, all objects are labeled as before.
    \medskip
    
    The components of $\T^1_+\lambda$ are of the form $A x$, where $x\in \tau\inverse(0)\cap \T^1_+\lambda$.
    Let $\Axy xy $ denote the bi-infinite geodesics that are asymptotic to $A y$ in backward time and to $A x$ in forward time. 
    Let $\mathcal G$ be the directed graph satisfying
    \begin{itemize}
        \item the vertex set  $V(\mathcal G)$ is $\tau\inverse(0)\cap \T^1_+\lambda$. 
        \item the set of directed edges from $y$ to $x \in V(\mathcal G)$ is $\Axy xy$.
    \end{itemize}

    The \emph{fundamental semi-groupoid} $\Pi (\mathcal G)$ is the set of finite directed edge-paths in $\mathcal G$.
    The notion of slack from \S\ref{Sec:Slack of Paths} extends \[\s: \Pi(\mathcal G) \to \R_{\ge0}\]
    by the rule $\s(e_1\cdot e_2) = \s(e_1) + \s(e_2)$.
    For $x$ and $y \in V(\mathcal G)$, let $\Hom_{\mathcal G} (y,x)$ be those directed edge-paths from $y$ to $x$.

    \begin{theorem}\label{thm:structure of Zxy}
    When $\lambda_0$ is a multicurve, 
        $$\s(\Hom_{\mathcal G}(y, x))) = \Zxy xy.$$  
        Consequently, $\Zxy xy$ is countable.
    \end{theorem}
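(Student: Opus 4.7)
The plan is to prove the two inclusions separately, with the reverse inclusion $\Zxy{x}{y} \subseteq \s(\Hom_{\G}(y,x))$ being the substantive one. For $\s(\Hom_{\G}(y,x)) \subseteq \Zxy{x}{y}$ I induct on path length, combined with the sub-containment property \eqref{eqn: subcontainment for Zxy}. Given a single edge $Az \in \Axy{v}{w}$, I exploit the backward asymptote $Az \sim Aw$ to pick $y_m \in Az$ with $y_m \to w$ in $\T^1\Sigma$; then $Ay_m = Az$ is automatically forward asymptotic to $Av$. Because $w \in \T^1_+\lambda$ carries zero slack, $\s(A_- y_m) \to 0$, so by additivity $\s(A_+ y_m) \to \s(Az)$. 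Lemma \ref{lem:slack gives points in Nxbar} then gives $a_{\s(Az)} w \in \overline{Nv}$, i.e.\ $\s(Az)\in\Zxy{v}{w}$. Iterating along an edge-path and applying \eqref{eqn: subcontainment for Zxy} yields the inclusion.

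For the hard direction, given $t \in \Zxy{x}{y}$, Lemma \ref{lem:slack gives points in Nxbar} produces $y_m \to y$ with $Ay_m$ forward asymptotic to $Ax$ and $\s(A_+ y_m) \to t$. The goal is to extract, after passing to subsequences, a finite directed path $(Az_1,\ldots,Az_k)$ in $\G$ from $y$ to $x$ with $t = \sum_i \s(Az_i)$. Since $\lambda_0$ is a multicurve in closed $\Sigmazero$, I can pick $\delta > 0$ small enough that in $\T^1\Sigma$ the $2\delta$-neighborhoods of distinct components of $\T^1_+\lambda$ are locally disjoint. By Lemma \ref{small slack close to L} any unit-length subarc of $A_+ y_m$ outside the $\delta$-neighborhood of $\T^1_+\lambda$ accumulates a definite amount of slack, and the bound $\s(A_+ y_m) \le t+1$ then forces a uniform upper bound on the number of ``transitions'' of $A_+ y_m$ between $\delta$-neighborhoods of distinct components.

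Since components of $\T^1_+\lambda$ are finite in number modulo the deck $\Z$-action, a subsequence stabilizes the combinatorial shadowing pattern: $A_+ y_m$ shadows a fixed finite sequence $y = v_0, v_1, \ldots, v_k = x$ of components (the endpoints are forced by $y_m \to y$ and the forward asymptote to $Ax$; the middle components are stabilized by using $\Z$-equivariance to identify specific lifts). Each transition arc has uniformly bounded length (outside the $\delta$-neighborhood, length is comparable to slack), so Arzel\`a--Ascoli produces geometric limits $Az_i \subset \T^1\Sigma$, and the shadowing on either side forces $Az_i$ to be backward asymptotic to $Av_{i-1}$ and forward asymptotic to $Av_i$, i.e., an edge of $\G$ from $v_{i-1}$ to $v_i$. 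Applying Lemma \ref{lem:epsilon chain slack} to the broken-path approximation with parameter decreasing to zero gives $\s(A_+ y_m) \to \sum_i \s(Az_i) = t$ along a diagonal subsequence, establishing $t \in \s(\Hom_{\G}(y,x))$. Countability of $\Zxy{x}{y}$ is then automatic: $V(\G)$ is countable, and each edge set $\Axy{v}{w}$ is countable (bi-infinite geodesics with prescribed lifted endpoints, parameterized by $\pi_1\Sigma$), so $\G$ has only countably many finite directed paths.

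The main obstacle is the drift under the $\Z$-action: since $\tau(a_s y_m)\to\infty$, successive shadowing phases are separated by arbitrarily large $\tau$-jumps, so I must carefully track which $\Z$-translate of a projected component each $v_i$ refers to and verify that the geodesic edges $Az_i$ piece together into a coherent path in $\G$ rather than a sequence of edges agreeing only up to $\Z$. This is where the multicurve hypothesis is genuinely needed beyond compactness: it forces each component of $\T^1_+\lambda$ to be a single $A$-orbit, so that ``shadowing'' degenerates to genuine forward/backward asymptoticity in the limit, and the combinatorial bookkeeping closes.
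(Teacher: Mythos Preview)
Your hard direction is essentially the paper's argument: bound the number of excursions outside a tube around $\T^1_+\lambda$ by uniform slack-per-unit-length (Lemma~\ref{small slack close to L}), pass to a subsequence, take geometric limits of the excursion arcs to get edges of $\G$, and use Lemma~\ref{lem:epsilon chain slack} to see the slacks add up to $t$. The paper is slightly cleaner in one respect: it first arranges that the approximating lines $\alpha_m$ lie in $\Axy{x}{y}$ (bi-infinite, backward asymptotic to $Ay$), using the Bruhat decomposition near the identity. With your rays $A_+y_m$ coming straight from Lemma~\ref{lem:slack gives points in Nxbar} you have only $y_m\to y$, so establishing that the first limiting edge $Az_1$ is genuinely backward asymptotic to $Ay$ (rather than merely starting near $y$) requires an extra argument; this is a place where your sketch is thin.

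The easy direction, however, has a real gap. You write: ``pick $y_m\in Az$ with $y_m\to w$ in $\T^1\Sigma$; then $Ay_m=Az$.'' But in the $\Z$-cover $\Sigma$ the leaf $Aw$ is an embedded line, not a closed geodesic. Backward asymptoticity of $Az$ to $Aw$ means $d(a_{-t}z,\,a_{s-t}w)\to 0$ for some fixed $s$ as $t\to\infty$, and $a_{s-t}w$ runs off to infinity along $Aw$; no sequence of points on the single line $Az$ converges to the specific point $w$. The paper repairs this by bringing in the deck $\Z$-action: writing $uw\in Az$ with $u\in U$, one has $y_k:=u_k w\in k.Az$ where $u_k=a_{-kc}u a_{kc}\to e$, so $y_k\to w$ while each $Ay_k=k.Az$ is still forward asymptotic to $Ax$ (here one uses that the first return to the $\tau_0$-fiber is the identity, after passing to the finite cover set up at the start of \S\ref{subsec: multi-curve}). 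This use of the $\Z$-translates---not recurrence on a single line---is the missing idea in your base case.
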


    Applying our previous work \cite{FLM}, we obtain a complete description of horocycle orbit closures. %
    \begin{cor}\label{cor: multi-curve structure}
        Whenever $\lambda_0$ is a multi-curve and $z\in \Qm_+$, then $\overline{Nz}$ is a countable union of horocycle orbits, hence has Hausdorff dimension $1$.
    \end{cor}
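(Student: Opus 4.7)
The plan is to reduce the general case $z\in \Qm_+$ to the case where $z$ is a vertex of $\mathcal G$, and then to decompose $\overline{Nx}$ for $x\in V(\mathcal G)$ into a countable union of horocycles indexed by the finite vertex set $V(\mathcal G)$ and the countable shift sets $\Zxy{x}{y}$ supplied by Theorem \ref{thm:structure of Zxy}.

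\emph{Step 1 (Reduction to vertices).} Since $\lambda_0$ is a multi-curve, after replacing $\Sigma_0$ by the finite cover used in \S\ref{subsec: multi-curve}, the set $\T^1_+\lambda$ is a finite disjoint union of isolated bi-infinite geodesics $Ax$, $x\in V(\mathcal G)$. By \cite[Thm.~3.4]{FLM} the ray $A_+z$ is forward asymptotic to $\T^1_+\lambda$, and by isolatedness it must be forward asymptotic to a single leaf $Ax$ with $x\in V(\mathcal G)$. Forward asymptoticity to $Ax$ with some time offset $t$ means precisely that $z$ lies on the stable horosphere through $a_tx$, i.e.\ $Nz=a_tNx$ and hence $\overline{Nz}=a_t\overline{Nx}$. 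As $a_t$ acts by isometries and preserves Hausdorff dimension, it suffices to establish the corollary for $z=x\in V(\mathcal G)$.

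\emph{Step 2 (Partitioning $\overline{Nx}$).} Fix $x\in V(\mathcal G)$. Any $w\in\overline{Nx}$ inherits the quasi-minimizing property from $x$ and points to the `$+$'-end, so $w\in \Qm_+$. By the reasoning of Step 1, $w$ is forward asymptotic to a unique vertex $y\in V(\mathcal G)$ with some offset $s\in\R$, so $w\in a_sNy$. The condition $w\in\overline{Nx}$ then forces $a_sy\in\overline{Nx}$, i.e.\ $s\in \Zxy{x}{y}$; conversely every such $s$ contributes the entire horocycle $a_sNy$ to $\overline{Nx}$. This yields the disjoint decomposition
\[
\overline{Nx}=\bigsqcup_{y\in V(\mathcal G)}\bigsqcup_{s\in \Zxy{x}{y}} a_sNy.
\]

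\emph{Step 3 (Conclusion).} By Theorem \ref{thm:structure of Zxy}, each $\Zxy{x}{y}$ is countable, and $V(\mathcal G)$ is finite, so the displayed union is a countable union of horocycle orbits. Each horocycle is a smooth $1$-dimensional submanifold of $\T^1\Sigma$, and a countable union of sets of Hausdorff dimension $1$ has Hausdorff dimension $1$. Combined with Step 1, this gives the claim for every $z\in \Qm_+$.

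The only nontrivial point is the rigidity in Step 1--Step 2: namely that a quasi-minimizing ray in the multi-curve case must be asymptotic to a single, isolated leaf of $\T^1_+\lambda$ with a well-defined offset. This is precisely where the hypothesis that $\lambda_0$ is a multi-curve (giving finitely many, uniformly isolated asymptotic targets) is essential, and it is supplied by the structure results cited from \cite{FLM}.
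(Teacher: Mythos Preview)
Your proof is correct and follows essentially the same route as the paper's: reduce to a vertex $x\in V(\mathcal G)$ via forward asymptoticity of $A_+z$ to an isolated leaf (your offset $t$ is the paper's $\beta_+(z)$), then decompose $\overline{Nx}$ as $\bigcup_{y\in V(\mathcal G)} N A_{\Zxy xy}\,y$ and invoke Theorem~\ref{thm:structure of Zxy} for countability. The only point you leave slightly implicit is why $\overline{Nx}\subset \Qm_+$ (rather than possibly meeting $\Qm_-$); the paper handles this via the $N$-invariance and upper semicontinuity of $\beta_+$, but your argument that any $w\in\overline{Nx}$ has $\overline{Nw}\subsetneq \T^1\Sigma$, hence is quasi-minimizing, together with forward asymptoticity to $\T^1_+\lambda$, suffices.
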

    \begin{proof}[Proof of \Cref{cor: multi-curve structure}]
    Every quasi-minimizing ray exiting the ``$+$'' end of $\Sigma$ is asymptotic to a leaf of $\T^1_+\lambda$, so
    \[\Qm_+ = \cup_{y\in V(\mathcal G)} P y, \]
    where $P=AN$.
    Then $\overline {Nz}$ is the union of its intersections with each of the finitely many $Py$.

    Assume that $Az$ is asymptotic in forward time to $Ax$.
    Since $\beta_+(x) = 0$ and $\beta_+$ is $N$-invariant, we have $Nz\cap Ax = \{a_{\beta_+(z)}x\}$, or more generally, $Nz\cap Ax = a_{\beta_+(z) - \beta_+(x)}x$.
    Using the definition of $\Zxy xy$ we obtain 
    \begin{equation}\label{eqn:countable horocycle orbit closure}
        \overline {Nz} = a_{\beta_+(z)}\overline{Nx} = a_{\beta_+(z)}\bigcup_{y \in V(\mathcal G)}N A_{\Zxy xy }y.
    \end{equation}
    Theorem \ref{thm:structure of Zxy} tells us that $A_{\Zxy xy }y$ is countable, hence \eqref{eqn:countable horocycle orbit closure} exhibits $\overline {Nz}$ as a countable union of horocycle orbits, which have Hausdorff dimension $1$, and proves that the Hausdorff dimension of $\overline {Nz}$ is $1$.
    \end{proof}

    \begin{proof}[Proof of Theorem \ref{thm:structure of Zxy}]
        We show that $\s(\Hom_{\mathcal G}(y,x))\subset \Zxy xy$ in two steps.
        First, we consider edgepaths of length one, that is, suppose that $Az \in \Axy xy$.
        By assumption, the first return $\sigma$ from  $F_0\cap \T^1_+\lambda_0$ to itself is the identity, so $k.Az\in \Axy xy$ for all $k \in \Z$.
        
        Observe that there are $y_k \in k.Az$ tending to $y$ as $k\to \infty$.
        Indeed, since $Az$ is asymptotic to $Ay$ in backward time, there is a $u \in U$ such that $uy\in Az$.
        Thus \[a_{-kc}u a_{kc} a_{-kc}y\in Az. \]
        With $u_k = a_{-kc}u a_{kc}$, we have that
         \[k.u_k a_{-kc}y\in k.Az.\]
        Since $k.a_{-kc}y = y$ and the $\Z$ action commutes with $U$, $y_k = u_k y \in k.Az$.  Since $\|u_k\|\to 0$, we get $y_k \to y$ as $k \to \infty$.

        Let  $\epsilon>0$ be given. %
        Additivity of slack, the fact that the slack of a path contained in $Ay$ is 0, and continuity  give that the geodesic ray
        \[\alpha_k: t \mapsto a_ty_k, ~ t\ge 0,\] has 
        \[|\s(\alpha_k) - \s(k.Az)|<\epsilon,\]
        for $k$ large enough.
        Note that $\s(k.Az) = \s(Az)$ for all $k$.
        Since $\epsilon$ was arbitrary and $\alpha_k$ is asymptotic to $Ax$ in forward time, Lemma \ref{lem:slack gives points in Nxbar} gives that $\s(Az) \in \Zxy xy$.

        In the second step, suppose that $\alpha_1 \cdots \alpha_n \in \Hom_{\mathcal G} (y,x)$.
        From the definition of $\s$, we have 
        \[ \s(\alpha_1\cdots \alpha_n) = \sum \s(\alpha_i).\]
        By the first step, $\s(\alpha_i) \in \Zxy{x_{i+1}}{x_{i}}$, where $x_1 = y$ and $x_n = x$. Using \eqref{eqn: subcontainment for Zxy} and induction, we find that
        \[\Zxy {x_2}{x_1} + \dots + \Zxy {x_n}{x_{n-1}} \subset \Zxy xy,\]
        hence conclude that
        \[\s(\alpha_1\cdots \alpha_n) \in \Zxy xy.\]
        This completes the proof that $\s(\Hom_{\mathcal G}(y,x))\subset \Zxy xy$.
        \medskip
	
        \medskip
        Now we show that $\s(\Hom_{\mathcal G}(y,x))\supset \Zxy xy$.  That is, we show that whenever $a_Ty \in \overline {Nx}$, then there is a  finite edgepath $\underline\alpha\in \Hom_ {\mathcal G}(y,x)$ such that $\s(\underline \alpha) = T$.

        Suppose then that $a_Ty \in \overline {Nx}$, and find a sequence $n_m \in N$ such that $n_m x \to a_Ty$ as $m\to \infty$. Furthermore, we can choose $n_m$ such that $An_m x$ is asymptotic to $Ay$ in backward time (this is an application of the Bruhat decomposition in a small neighborhood of the identity). %
        In other words, $An_m x\in \Axy xy$.
        Let $\alpha_m$ denote the path $t\mapsto a_tn_m x$.
        
        As in the proof of Lemma \ref{lem:slack gives points in Nxbar}, $\s(\alpha_m) \to T$, as $m\to \infty$.  What we have left to show is that $T = \s(\alpha^1)+ \cdots +\s(\alpha^i)$, where $\alpha^1 \cdots \alpha^i$ is a directed edgepath from $y$ to $x$ in $\mathcal G$.

        The finitely many $A$-orbits constituting $\T^1_+\lambda$ are uniformly isolated. 
        Find a positive $\epsilon_0$ smaller than the injectivity radius of $\T^1\Sigma$  such that the distance in $\T^1\Sigma$ between distinct components of $\T^1_+\lambda$ (lifted to $\T^1\mathbb H^2$) is at least $3\epsilon_0$. 
        In what follows, for a positive $\epsilon>0$, $\T^1_+\lambda^{(\epsilon)}$ denotes the $\epsilon$-neighborhood of $\T^1_+\lambda$ in $\T^1\Sigma$.

        Consider the components, listed in order along $\alpha_m$
        \[\alpha_m \setminus \T^1_+\lambda^{(\epsilon_0)} = \kappa_m^1 \cup \ldots \cup \kappa_m^{i_m}. \]
        Note that since $ \alpha_m $ is asymptotic to $ \T^1_+\lambda$ in both directions, and since the distance between the different components of $ \T^1_+\lambda^{(\varepsilon_0)} $ is at least $ \varepsilon_0 $ (and there is a shortest non-trivial loop starting and ending in any given component), then there are indeed only finitely many $ \kappa_m^j $'s for each $m$. 
        We think of $\kappa_m^j$ as an ``$\epsilon_0$-excursion'' taken by $\alpha_m$ away from $\T^1_+\lambda$.
        
        \begin{claim}\label{clm: slack bounded below}
        There is a $\delta>0$ such that $\s(\kappa_m^j)>\delta \ell(\kappa_m^j)\ge\delta\ep_0 $ for all $m$ and $j$.
        \end{claim}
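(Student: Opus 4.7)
My approach is to apply \Cref{small slack close to L} iteratively along $\kappa_m^j$ and exploit additivity of slack under concatenation to produce the linear bound $\s(\kappa_m^j) > \delta \ell(\kappa_m^j)$, and then handle the length bound $\ell(\kappa_m^j) \ge \epsilon_0$ by a geometric case analysis in the universal cover.

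For the linear slack bound, apply \Cref{small slack close to L} with parameters $b = \epsilon_0$, $b' = 2\epsilon_0$, and (in its contrapositive form) $\delta' = \epsilon_0/2$ to obtain a constant $\epsilon^* > 0$ such that any geodesic arc of length in $[\epsilon_0, 2\epsilon_0]$ that is not contained in the $(\epsilon_0/2)$-neighborhood of $\T^1_+\lambda_0$ has slack at least $\epsilon^*$. By construction, $\kappa_m^j$ lies in the complement of $\T^1_+\lambda^{(\epsilon_0)}$, so every subarc is at distance $\ge \epsilon_0 > \epsilon_0/2$ from $\T^1_+\lambda$ and satisfies the hypothesis. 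Assuming $\ell(\kappa_m^j) \ge \epsilon_0$, subdivide $\kappa_m^j$ into $k \ge \lfloor \ell(\kappa_m^j)/(2\epsilon_0)\rfloor$ consecutive subarcs, each of length in $[\epsilon_0, 2\epsilon_0]$. Each such subarc has slack at least $\epsilon^*$, and by additivity of slack under concatenation,
\[
  \s(\kappa_m^j) \;\ge\; k \cdot \epsilon^* \;\ge\; \tfrac{\epsilon^*}{2\epsilon_0}\, \ell(\kappa_m^j),
\]
so $\delta := \epsilon^*/(2\epsilon_0)$ witnesses the first inequality.

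To verify $\ell(\kappa_m^j) \ge \epsilon_0$, lift $\alpha_m$ to $\til\alpha_m$ in $\T^1\mathbb H^2$, so that (the relevant lift of) $\kappa_m^j$ has endpoints on $\partial\T^1_+\til\lambda^{(\epsilon_0)}$. Split into two cases. In the first, the two endpoints lie on the $\epsilon_0$-boundaries of \emph{distinct} components $A\til x_1, A\til x_2$ of $\T^1_+\til\lambda$. Since distinct components are at distance $\ge 3\epsilon_0$ in $\T^1\mathbb H^2$, their $\epsilon_0$-neighborhoods are separated by at least $\epsilon_0$, so $\ell(\kappa_m^j)\ge \epsilon_0$ immediately. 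In the second case, both endpoints lie on the $\epsilon_0$-boundary of the $\epsilon_0$-neighborhood of a single component $A\til x_1$; here I would use the strict convexity of the distance function $t\mapsto d_{\mathbb H^2}(\pi\til\alpha_m(t), \pi A\til x_1)$ between two distinct geodesics in $\mathbb H^2$ (with its unique minimum) together with the fact that $d_{\T^1\mathbb H^2}\ge d_{\mathbb H^2}\circ \pi$, to force the interval of $t$ on which $d_{\T^1\mathbb H^2}(\til\alpha_m(t), A\til x_1) \ge \epsilon_0$ to have length bounded below, possibly after replacing $\epsilon_0$ by a smaller positive constant.

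The main obstacle I anticipate is the same-component subcase of the length bound: in principle a geodesic in $\T^1\mathbb H^2$ could make a very short excursion outside an $\epsilon_0$-tube around an $A$-orbit. Resolving this requires using the interplay between the convexity of the hyperbolic distance function in $\mathbb H^2$ and the angular contribution to the $\T^1\mathbb H^2$ metric; the subdivision argument for the slack inequality itself is then routine.
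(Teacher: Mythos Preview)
Your argument for the linear slack bound is essentially the paper's: subdivide $\kappa_m^j$ into pieces of length in $[\ep_0,2\ep_0]$ and apply \Cref{small slack close to L} to each; the paper's proof is exactly this, stated in one line.

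For the length bound $\ell(\kappa_m^j)\ge\ep_0$, the paper simply asserts it ``by choice of $\ep_0$'', so your more careful treatment is welcome. Your Case~A (endpoints of $\kappa_m^j$ lie near \emph{distinct} lifts of $\T^1_+\lambda$) is exactly right and is in fact the only case that occurs. Your sketch for Case~B, however, does not close as written: convexity of $t\mapsto d_{\bH^2}(\pi\til\alpha_m(t),\pi A\til x_1)$ together with $d_{\T^1}\ge d_{\bH^2}\circ\pi$ only tells you the \emph{base} distance stays $\le\ep_0$ throughout $\kappa_m^j$, which yields no lower bound on its length. The cleaner resolution is that Case~B is vacuous. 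Just outside $\kappa_m^j$ on both sides, $\til\alpha_m$ lies in the $\T^1$-$\ep_0$-tube of that same lift $A\til x_1$; so if $t\mapsto d_{\T^1}(\til\alpha_m(t),A\til x_1)$ had interval sublevel sets, it would be $<\ep_0$ on all of $[t_0,t_1]\supset\kappa_m^j$, contradicting the definition of $\kappa_m^j$. And it does have interval sublevel sets for small levels: near $A\til x_1$ the transverse displacement of one $A$-orbit from another decomposes into stable and unstable parts, which contract and expand exponentially under the flow, so the distance behaves like $\sqrt{C_s^2e^{-2t}+C_u^2e^{2t}}$. Thus Case~B never arises, and no shrinking of $\ep_0$ is needed.
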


	\begin{proof}[Proof of Claim \ref{clm: slack bounded below}]
 By choice of $\ep_0$, $\kappa_m^j$ has length at least $\ep_0$. We can cut it up into $\lfloor \ell(\kappa_m^j)\rfloor$ segments of length in $[\ep_0,2\ep_0]$, and apply Lemma \ref{small slack close to L} to each of them, obtaining the desired inequality.
	\end{proof}
	
	Additivity of the slack and Claim \ref{clm: slack bounded below} produces a uniform upper bound on $i_m$, the number of $\epsilon_0$-excursions, and their total length.
	To see this, observe that 
	\[T+1 \ge \s(\alpha_m) > \sum_{j = 1}^{i_m} \s(\kappa_m^j)\ge \sum_{j = 1}^{i_m} \delta\ell(\kappa_m^j) \ge \delta\ep_0 i_m\]
	holds for $m$ large enough.
	Thus, up to taking a subsequence, we may assume that $i_m = i_0$ is constant and $(T+1)/\delta \ge \sum \ell(\kappa_m^j)$.
	 
        Choose points $p_m^j \in \kappa_m^j$ for all $m$ and $j =1 , ..., i_0$.
        By compactness of $\T^1\Sigmazero$, we may find $p_j \in \T^1\Sigmazero$ and a subsequence  such that $\lim_{m\to \infty} \pi_\Z(p_m^j) = \pi_\Z(p^j)\in \T^1\Sigmazero$ for all $j$. 
        After a further subsequence we may assume that for all $j,k$, either $d(p_m^j,p_m^k)$ is bounded or $d(p_m^j,p_m^k)\to\infty$.

        Boundedness of $d(p_m^j,p_m^k)$ as $m\to \infty$ is an equivalence relation on the upper indices, for which equivalence classes are intervals in $\N\cap [1, ..., i_0]$. 
        Let $1\le M\le i_0$ be the number of such equivalence classes.
        Now we choose a representative $q_m^j$ for $j = 1, ..., M$ for each equivalence class of the upper indices (so that for each $j$, $q_m^j = p_m^{k_j}$ for some $k_j$).
        Thus $d(q_m^j,q_m^k)\to\infty$ for each $1\le j< k\le M$, and $\pi_\Z(q_m^j) \to q^j$ as $m \to \infty$.

        Let $\beta^j=Aq^j$, and note that, as pointed geodesics, $\pi_\Z(\alpha_m,q^j_m)$ converges to $\pi_\Z(\beta^j,q^j)$. 

        \begin{claim}\label{clm: betas are a path}
           Each $\beta^j$ is an edge of $\G$, and $\beta^1\cdots \beta^M$ is a path in $\G$. We have
           \begin{equation}\label{eqn: slack sum inequality}
           \sum_j\s(\beta^j) = \lim_{m\to\infty}\s(\alpha_m)=T.
           \end{equation}
         \end{claim}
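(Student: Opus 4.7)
The slack function $\s:\T^1\Sigma \to [0,\infty]$ is lower semi-continuous, being a supremum over $R$ of the continuous functions $z \mapsto 2R - \tau(a_R z) + \tau(a_{-R} z)$.  Letting $k_m^j \in \Z$ be the deck translations realizing the pointed convergence $(k_m^j\cdot\alpha_m, k_m^j\cdot q_m^j) \to (\beta^j, q^j)$, lower semi-continuity and $\Z$-invariance of slack (which follows from $\Z$-equivariance of $\tau$ with a constant shift) give $\s(\beta^j) \le \liminf_m \s(\alpha_m) \le T+1$.  In particular, each $\beta^j$ is quasi-minimizing in both directions, so by \cite[Theorem~1.4]{FLM} its $\omega$- and $\alpha$-limit sets project into $\T^1\lambda_0$.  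Since $\lambda_0$ is a multi-curve, $\T^1_+\lambda$ is a finite, disjoint union of uniformly isolated $A$-orbits, so $\beta^j$ is forward-asymptotic to a single component $Ax_+^j$ and backward-asymptotic to $Ax_-^j$, with $x_\pm^j \in V(\G)$.  Thus $\beta^j \in \Axy{x_+^j}{x_-^j}$ is an edge of $\G$.

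\textbf{Paragraph 2 (Consecutive edges compose).}  To verify that $\beta^1\cdots\beta^M \in \Hom_\G(y,x)$ we must check $x_-^1 = y$, $x_+^M = x$, and $x_+^j = x_-^{j+1}$ for $1 \le j \le M-1$.  Examine $\alpha_m$ on the intermediate portion between clusters $I_m^j$ and $I_m^{j+1}$: by construction it is disjoint from every $\ep_0$-excursion, hence lies in $\T^1_+\lambda^{(\ep_0)}$, and by the $3\ep_0$-separation of components it lies in the $\ep_0$-neighborhood of a single component $Ac_m^j \subset \T^1_+\lambda$.  Pick $r_m \in \alpha_m$ on this portion whose $\alpha_m$-distances to $q_m^j$ and to $q_m^{j+1}$ both tend to $\infty$.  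Under the translation $k_m^j$, $r_m$ is transported to the far forward tail of $\beta^j$, hence tracks $Ax_+^j$; under $k_m^{j+1}$ it lies on the far backward tail of $\beta^{j+1}$, hence tracks $Ax_-^{j+1}$.  Both descriptions record the same component $Ac_m^j$, $\Z$-translated so that its unique point in $\tau^{-1}(0)$ converges, so $x_+^j = x_-^{j+1}$ as vertices of $\G$.  The endpoint conditions $x_-^1 = y$ and $x_+^M = x$ follow by the same argument applied to the tails of $\alpha_m$ before $I_m^1$ and after $I_m^M$, which lie in the $\ep_0$-neighborhood of $Ay$ and $Ax$ respectively.

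\textbf{Paragraph 3 (Slack identity and main obstacle).}  The inequality $\sum_j \s(\beta^j) \le T$ is immediate: for each fixed $R$, the segments $\alpha_m|_{[S_m^j - R,\, S_m^j + R]}$ (where $\alpha_m(S_m^j) = q_m^j$) become eventually pairwise disjoint since $d(q_m^j, q_m^{j'}) \to \infty$, their slacks converge to $\s(\beta^j|_{[-R,R]})$, and they sit inside $\alpha_m$, so summing and letting $R \to \infty$ yields the bound.  The reverse inequality is the main obstacle, as it requires ruling out slack escaping into the portions of $\alpha_m$ that lie close to $\T^1_+\lambda$.  The cleanest route is to apply \Cref{lem:epsilon chain slack} in reverse.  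For each large $R$, assemble a broken geodesic in $\Sigma$ whose pieces are the forward-asymptotic tail of $Ay$, the pulled-back truncated arcs $(k_m^j)^{-1}\cdot \beta^j|_{[-R,R]}$, long bridging arcs along the shared asymptotic components $Ac_m^j \subset \T^1_+\lambda$ identified in Paragraph~2, and the forward tail of $Ax$.  For $m$ large the endpoint discrepancies are $\ep_m$-small with $\ep_m \to 0$.  \Cref{lem:epsilon chain slack} produces a geodesic representative homotopic rel endpoints whose slack is $\sum_j \s(\beta^j|_{[-R,R]}) + O(\ep_m)$, since the bridging arcs and tails along $\T^1_+\lambda$ contribute zero slack.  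By uniqueness of a bi-infinite geodesic with prescribed endpoints at infinity (both this representative and $\alpha_m$ have their endpoints determined by the backward lift of $Ay$ and the forward lift of $Ax$), this representative is $\alpha_m$; letting $m \to \infty$ then $R \to \infty$ yields $T = \sum_j \s(\beta^j)$ and completes the proof.
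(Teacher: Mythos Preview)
Your argument is sound and follows the same overall strategy as the paper, but there are two points in Paragraph~3 where the write-up is not quite right, even though the conclusion survives.

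First, the statement ``for $m$ large the endpoint discrepancies are $\ep_m$-small with $\ep_m\to 0$'' is false for fixed $R$. The jump between, say, the right end of $(k_m^j)^{-1}\beta^j|_{[-R,R]}$ and the adjoining segment on $Ax_+^j$ is $d(a_R q^j, Ax_+^j)$ (recall $Ax_+^j$ is $\Z$-invariant after the $d$-fold cover, so the translation by $k_m^j$ is irrelevant here). This quantity does not depend on $m$ at all; it only tends to $0$ as $R\to\infty$. So the correct error in your application of \Cref{lem:epsilon chain slack} is $O(\epsilon(R)) + o_m(1)$ with $\epsilon(R)\to 0$, and your double limit ``$m\to\infty$ then $R\to\infty$'' still gives the identity. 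The paper sidesteps this by truncating with $\beta^j\setminus \T^1_+\lambda^{(\epsilon)}$ rather than $\beta^j|_{[-R,R]}$, so the jumps are $\le\epsilon$ by construction; one application of \Cref{lem:epsilon chain slack} then gives both inequalities simultaneously, without separating into $\le$ and $\ge$.

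Second, your appeal to ``uniqueness of a bi-infinite geodesic with prescribed endpoints at infinity'' is incomplete: in $\Sigma$ there are many elements of $\Axy xy$, and the endpoints in $\partial\Hplane$ depend on which lift you take. What you need is that your broken path is homotopic rel ends to $\alpha_m$, and this holds because each piece (truncated $\beta^j$-arcs, bridging leaf segments, tails) is built to lie within the injectivity radius of the corresponding portion of $\alpha_m$. That is the missing sentence.

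Finally, your Paragraph~1 is a valid alternative to the paper's argument: you bound $\s(\beta^j)$ by lower semi-continuity and then invoke \cite[Theorem~3.4]{FLM} (rather than Theorem~1.4, which is the more precise citation for the asymptotic statement you need). The paper instead observes directly that each cluster of excursions is flanked on both sides by segments in $\T^1_+\lambda^{(\epsilon_0)}$ whose length goes to infinity, so the geometric limit $\beta^j$ is forced to be asymptotic at both ends to $\T^1_+\lambda$. Both routes work; the paper's is more self-contained.
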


        We call such $(\beta^1,\ldots, \beta^M)$ a {\em geometric limit chain} for the sequence $\alpha_m$.
        
        \begin{proof}[Proof of Claim \ref{clm: betas are a path}]
        Each $j$ represents a finite collection of adjacent $\epsilon_0$-excursions in each $\alpha_m$ of bounded total length, which remain a bounded distance from each other, and are adjacent on both sides to segments contained in $\T^1_+\lambda^{(\epsilon_0)}$
        whose length goes to $\infty$ with $m$. Thus, each limiting $\beta^j$ is asymptotic at both ends to (a leaf of) $\T^1_+\lambda$ and so represents an edge of $\G$. For each adjacent pair $\beta^j,\beta^{j+1}$, $\beta^j$ is forward-asymptoic to the same component of $\T^1_+\lambda$ to which $\beta^{j+1}$ is backward-asymptotic. This makes $\beta^1\cdots\beta^M$ a path in $\G$. 

        Let $\epsilon>0$ be given.  
        We may choose $m$ large enough that $\alpha_m$ is well approximated by the finite union of $\cup \beta^j \setminus \T^1_+\lambda^{(\epsilon)}$ with jumps of size $\epsilon$ and long segments of leaves of $\T^1_+\lambda$ in between.
        Lemma \ref{lem:epsilon chain slack} now implies that the finite sum $\sum_j \s(\beta^j)$ is an $O(\epsilon)$-good approximation of $\s(\alpha_m)$. Since $\epsilon$ was arbitrary, we conclude \eqref{eqn: slack sum inequality}.
        \end{proof}
        This concludes the proof of the theorem.
    \end{proof}
 
    \subsection*{No miracles lemma} The following technical point will be useful for describing the structure of $\Zxy xy$ (Theorem \ref{thm: countable filtration Zxy}) as well as in the next subsection, when we consider the case that $\lambda_0$ has an infinite leaf (Theorem \ref{thm: chain recurrent leaf}). It says that the slack of a composition of two edges in $\Hom_{\mathcal G}(x,z)$ can be obtained as the limit of a {\em non-constant} sequence of slacks of single edges. 
    \begin{lemma}\label{lem: non-trivial accumulation}
        For any $x, z \in V(\mathcal G)$, given $\underline \alpha = \alpha^1\cdot \alpha^2 \in \Hom_{\mathcal G}(x,z)$, there exists a sequence %
        $\alpha_m \in \Axy zx$ with the properties that $ |\s(\alpha_m) - \s(\underline \alpha) | \to 0$ and $\s(\alpha_m) \not = \s(\underline \alpha)$ for all large $m$.
    \end{lemma}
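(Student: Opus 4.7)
The plan is to construct $\alpha_m \in \Axy zx$ by splicing $\alpha^1$ and a deck translate $k.\alpha^2$ along a long segment of the intermediate leaf $Ay$, where $y \in V(\G)$ is the vertex joining $\alpha^1$ and $\alpha^2$. Since we work in the intermediate finite cover where the first return map $\sigma$ is trivial, every leaf of $\T^1_+\lambda$ is preserved by the deck action (acting by $a_{dc}$ along the leaf), so $k.\alpha^2 \in \Axy zy$ for every $k \in \Z$. As $k \to \infty$, the backward tail of $k.\alpha^2$ approaches $Ay$ at a footpoint shifted by approximately $kdc$ along $Ay$ relative to where $\alpha^2$ does, allowing us to connect to $\alpha^1$ via longer and longer excursions of $Ay$.

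For each large $k$, choose $T_1(k) \to \infty$ and $T_2(k) \to -\infty$ such that both $\alpha^1(T_1)$ and $k.\alpha^2(T_2)$ lie within distance $\epsilon_k \to 0$ of $Ay$. Form the broken geodesic $\bar\alpha_k$ by concatenating the backward ray $\alpha^1|_{(-\infty, T_1]}$, a short jump onto $Ay$, a segment of $Ay$ of length $\approx kdc$ joining the two footpoints, a short jump to $k.\alpha^2(T_2)$, and the forward ray $k.\alpha^2|_{[T_2, \infty)}$. Applying \Cref{lem:epsilon chain slack} (in its two-sided ray variant) straightens $\bar\alpha_k$ into a bi-infinite geodesic $\alpha_m$ that is backward-asymptotic to $\alpha^1$ (hence to $Ax$) and forward-asymptotic to $k.\alpha^2$ (hence to $k.Az = Az$), so $\alpha_m \in \Axy zx$. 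Since the slack along $Ay$ vanishes by \eqref{eqn: zero slack in lambda} and the truncated ray-slacks tend to $\s(\alpha^1)$ and $\s(\alpha^2)$, the lemma yields
\[|\s(\alpha_m) - \s(\underline\alpha)| \le \kappa\epsilon_k + o(1) \xrightarrow{k\to\infty} 0.\]

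The main obstacle is the strict inequality $\s(\alpha_m) \ne \s(\underline\alpha)$ for all large $m$. The plan is to invoke the rigidity of the Bruhat decomposition: the $\alpha_m$ are pairwise geometrically distinct because lifted to $\T^1\bH^2$ they share the common backward endpoint inherited from $\alpha^1$ but have forward endpoints traversing the distinct $\gamma_0^k$-translates of a fixed lift of $Az$, where $\gamma_0$ is the deck generator. By \Cref{lem: slack computes delta projection}, $\s(\alpha_m) = \log\delta(gh_k\inverse)$ with $h_k = \gamma_0^{-k}h_0$. If $\s(\alpha_m) = \s(\underline\alpha)$ were to hold for all large $k$, then the entire right $\langle\gamma_0\rangle$-orbit of $gh_0\inverse$ in $G$ would lie inside the single Bruhat fiber $Na_{\s(\underline\alpha)}U$; since $\gamma_0$ is hyperbolic and this fiber is a proper submanifold of $G$ that is not invariant under right-multiplication by $\gamma_0$, this is impossible. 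Hence only finitely many $k$ can give $\s(\alpha_m) = \s(\underline\alpha)$, and discarding those terms yields the required sequence. A fallback, should this Bruhat-level argument need strengthening in a particular algebraic configuration, is to enrich the construction with a second independent shift parameter (for instance, composing with a nontrivial $\Z$-translate before the splice and varying it), producing enough freedom to force the slacks to differ from $\s(\underline\alpha)$.
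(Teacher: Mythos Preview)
Your construction of $\alpha_m$ by splicing $\alpha^1$ with $k.\alpha^2$ along $Ay$ is correct and is the same construction the paper carries out, just presented geometrically via \Cref{lem:epsilon chain slack} rather than through the homotopy classes $\eta_1 * m.\eta_2$. The convergence $\s(\alpha_m)\to\s(\underline\alpha)$ follows as you say.

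The gap is in the strict inequality $\s(\alpha_m)\ne\s(\underline\alpha)$. First, your identification $h_k=\gamma_0^{-k}h_0$ is not quite right: when you invoke \Cref{lem: slack computes delta projection} you must use basepoints with $\tau(x)=\tau(z)=0$, and the point on the $k$-th lift of $Az$ that projects to $z$ (rather than to $k.z=a_{kc}z$) carries an extra $a_{-kc}$-shift. With the correct bookkeeping one gets exactly the paper's expression $g_z\gamma_m g_x^{-1} = a_{-mc}n_2\ell_2u_2a_{mc}\,n_1\ell_1u_1$, and hence $\s(\alpha_m)=\s(\underline\alpha)+\log\ell'_m$ with $\ell'_m=\delta(a_{-mc}u_2a_{mc}n_1)$.

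Second, and more seriously, the assertion that ``$Na_TU$ is a proper submanifold not invariant under right multiplication by $\gamma_0$, hence only finitely many orbit points can lie in it'' is not a proof: non-invariance of a codimension-one submanifold does not prevent an orbit from meeting it infinitely often, and your fallback two-parameter scheme is not developed enough to close this. The paper resolves the issue by a direct $2\times 2$ matrix computation: writing $u_2=\left(\begin{smallmatrix}1&p\\0&1\end{smallmatrix}\right)$ and $n_1=\left(\begin{smallmatrix}1&0\\q&1\end{smallmatrix}\right)$ with $p,q\ne 0$ (since $Ax,Ay,Az$ are pairwise non-asymptotic), one finds $\log\ell'_m=2\log|1+pqe^{-mc}|$, which is nonzero for every $m$. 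This explicit nonvanishing of the correction term is the missing ingredient; once you supply it, the argument is complete.
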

        
	\begin{proof}
    The algebraic perspective will be more helpful; we follow the proof of \cite[Proposition 7.20]{FLM}.

    We have $\alpha^1 \in \Axy yx$ and $\alpha^2 \in \Axy zy$.
    There is a bijective correspondence between $\Axy yx$ and relative homotopy classes of paths in $\T^1\Sigma$ joining $Ax$ and $Ay$: for an arc $\eta$ joining $Ax$ to $Ay$, obtain $\alpha_\eta \in \Axy yx$ by dragging the initial and terminal endpoints of $\eta$ to infinity along $A_-x$ and $A_+y$, respectively.  
    Let $\eta_i$ be such that $\alpha^i = \alpha_{\eta_i}$ for $i = 1,2$. 
    We will also consider the arc $m.\eta_2$.

    Join $Ax$ to $Ay$ via $\eta_1$ and lift this simply connected $1$-complex to $G$, where the lift of $\eta_1$ joins $Ag_x$ to $Ag_y$ with $g_x, g_y\in G$ lifting $x$ and $y$, respectively.
    The slack of $\alpha_{\eta_1}$ can be computed  as $\log(\delta(g_yg_x\inverse))$, i.e, \[\text{ if  $n_1\ell_1u_1 g_x = g_y $, then $\s(\alpha_{\eta_1}) = \log(\ell_1)$}.\] See Lemma \ref{lem: slack computes delta projection} and the text preceding it.

    Similarly, the slack of $\alpha_{\eta_2}$ can be computed as $\log(\delta(g_zg_y\inverse))$, where $g_z\in G$ is the lift of $z$ determined by $g_y$ and $\eta_2$.
    Then $g_zg_y\inverse = n_2\ell_2u_2 \in NAU$, and $\s(\alpha_{\eta_2}) = \log(\ell_2)$.

    Now consider the path $\eta_1 \ast m.\eta_2$ determined by joining $\eta_1$ to $m.\eta_2$ along a segment of $Ay$.
    Take $\alpha_m =\alpha_{\eta_1\ast m.\eta_2} \in \Axy zx$. %
    There are $\gamma_m \in \Gamma$ such that \[\s(\alpha_{\eta_1 \ast m.\eta_2})=\log(\delta(g_z\gamma_mg_x\inverse))= \log(\delta(g_z\gamma_mg_y\inverse g_yg_x\inverse)).\]

    From the definitions (see also Figure \ref{fig: lifting arcs}), we have
    \begin{equation}\label{eqn: lifting arcs}
        n_2\ell_2u_2a_{mc}g_y = a_{mc}g_z\gamma_m,
    \end{equation}
    so that $g_z\gamma_m g_y\inverse = a_{-mc}n_2\ell_2u_2a_{mc}$.

    \begin{figure}
        \centering
        \includegraphics[width=1\linewidth]{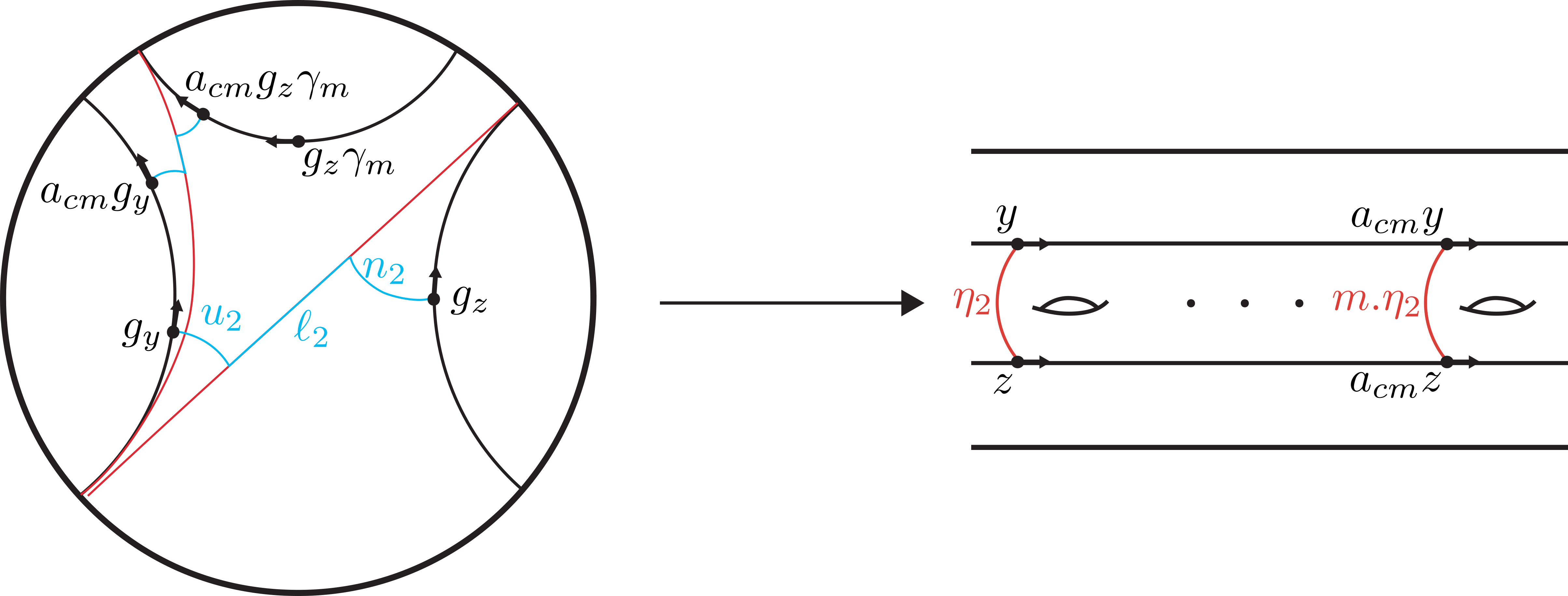}
        \caption{Lifting the arcs $m.\eta_2$ to $G$ gives \eqref{eqn: lifting arcs}}
        \label{fig: lifting arcs}
    \end{figure}

    Note that none of $n_1, n_2\in N$ and $u_1, u_2 \in U$ is the identity, because none of the lines $Ax$, $Ay$, and $Az$ is asymptotic to any other in either forward or backward time.
    \medskip

    Now we compute
    \begin{align*}
        (g_z\gamma_mg_y\inverse)( g_yg_x\inverse) & = a_{-mc}n_2\ell_2u_2a_{mc}n_1\ell_1u_1\\
        & = (a_{-mc}n_2a_{mc})\ell_2 a_{-mc}u_2a_{mc} n_1 \ell_1 u_1,
    \end{align*}
    so
    \[\delta(g_z\gamma_mg_x\inverse) = \delta(\ell_2 a_{-mc}u_2a_{mc} n_1 \ell_1).\]

    Since $a_{-mc}u_2a_{mc} \to e$, as $m\to \infty$, we can write 
    \[a_{-mc}u_2a_{mc} n_1 = n_m'\ell_m'u_m'\in NAU\]
    for $m$ large enough.
    Then 
    \[\ell_2 a_{-mc}u_2a_{mc} n_1 \ell_1 = \ell_2 n_m'\ell_m'u_m'\ell_1 = n_m'' \ell_1 \ell_2 \ell_m' u_m''.\]
    Hence $\s(\alpha_{\eta_1 \ast m.\eta_2}) = \s(\alpha_{\eta_1}) +\s(\alpha_{\eta_2}) +\log(\ell_m')$.
    
    We can now see that $ u_2, n_1 \not= e$ implies that $\ell_m' \not= e$ for all $m$ large by an explicit matrix computation; see the proof of the Claim in \cite[Proposition 7.20]{FLM} for an argument in $ \SO^+(d,1) $.
    Indeed, if $u,n \not=e$ then 
    \begin{align*}
    	a_{-t} u a_t n &= \begin{pmatrix}
    		e^{-t/2} & \\
    		& e^{t/2}
    	\end{pmatrix}\begin{pmatrix}
    		1 & x\\
    		& 1
    	\end{pmatrix}
    	\begin{pmatrix}
    		e^{t/2} & \\
    		& e^{-t/2}
    	\end{pmatrix}\begin{pmatrix}
    		1& \\
    		y & 1
    	\end{pmatrix} \\
    	&=
    	\begin{pmatrix}
    		1+e^{-t}xy & e^{-t}x \\
    		y & 1
    	\end{pmatrix},
    \end{align*}
    for $x,y\in \R\setminus \{0\}$.
    Then $\log(\delta(a_{-t} u a_t n)) = 2\log(1+e^{-t}xy)\not= 0$.\footnote{In fact, an explicit computation shows $ \log(\delta(M)) = 2\log(|M_{1,1}|) $ for any $ M \in \PSL_2(\R) $.}

    This completes the proof that $\s(\alpha_m) \not= \s(\underline \alpha)$ for all large $m$, but $\s(\alpha_m) \to \s(\underline \alpha)$.
	\end{proof}

    \subsection*{Depth}\label{subsec: Depth}
    Recall that for a set $S\subset \R$, the \emph{derived set} $S^{(1)}$ is obtained from $S$ by removing the isolated points from $S$.
    Inductively, $S^{(i)}$ is the derived set of $S^{(i-1)}$. We say that $S$ {\em has depth $d\in \N$ } if 
    $S^{(i)} \not = \emptyset$ for all $i<d$, and $ S^{(d)} = \emptyset$. We say that {\em $S$ has depth $\omega$} if $S^{(i)} \not = \emptyset$ for all $i$ and $\cap_{i \in \mathbb N} S^{(i)} = \emptyset$.
    
    For an edgepath $\underline \alpha = \alpha^1 \cdots \alpha^i \in \Hom_{\mathcal G}(x,y)$, let $\ell(\underline \alpha) = i$ denote its combinatorial length.
    Let $\Hom_{\mathcal G}^{(i)}(x,y)$ denote those $\underline \alpha$ with $\ell(\alpha) \ge i$.
    The following structural result for the shift set $\Zxy yx$ says that its accumulations are filtered by the combinatorial length of paths, via the slack map $\s$.
    It is essentially a corollary of the the proof of Theorem \ref{thm:structure of Zxy} and the technical Lemma \ref{lem: non-trivial accumulation}. 
    
    \begin{theorem}\label{thm: countable filtration Zxy}
        $({\Zxy yx})^{(i)} = \s(\Hom_{\mathcal G}^{(i+1)}(x,y))$ for all  $i \geq 0$.
    \end{theorem}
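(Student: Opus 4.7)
We prove the theorem by induction on $i$, with base case $i = 0$ given by Theorem~\ref{thm:structure of Zxy} (both sides equal $\Zxy yx$). Assume the equality holds at level $i-1 \ge 0$.

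\textbf{Forward inclusion $\s(\Hom_{\mathcal G}^{(i+1)}(x,y)) \subseteq (\Zxy yx)^{(i)}$.} Given $\underline\alpha = \alpha^1 \cdots \alpha^j$ with $j \ge i+1$, apply Lemma~\ref{lem: non-trivial accumulation} to the final two-edge subpath $\alpha^{j-1} \cdot \alpha^j$ to obtain single edges $\beta_m$ with $\s(\beta_m) \to \s(\alpha^{j-1}) + \s(\alpha^j)$ and $\s(\beta_m) \ne \s(\alpha^{j-1}) + \s(\alpha^j)$ for large $m$. The replacements $\underline\alpha_m = \alpha^1 \cdots \alpha^{j-2} \cdot \beta_m$ (read as $\beta_m$ alone when $j = 2$) are edgepaths of length $j - 1 \ge i$, so the inductive hypothesis gives $\s(\underline\alpha_m) \in (\Zxy yx)^{(i-1)}$. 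Since $\s(\underline\alpha_m) \to \s(\underline\alpha)$ without eventual equality, $\s(\underline\alpha) \in (\Zxy yx)^{(i)}$.

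\textbf{Reverse inclusion $(\Zxy yx)^{(i)} \subseteq \s(\Hom_{\mathcal G}^{(i+1)}(x,y))$.} Choose $t_m \to t$ with $t_m \in (\Zxy yx)^{(i-1)} \setminus \{t\}$. By the inductive hypothesis, $t_m = \s(\underline\alpha_m)$ for some path $\underline\alpha_m = \alpha_m^1 \cdots \alpha_m^{n_m}$ of length $n_m \ge i$. After removing any slack-zero pieces and passing to a subsequence, we may assume $n_m = n$ is constant (slacks of non-trivial edges are bounded below), and the source and target vertices of each edge $\alpha_m^j$ are constant in $m$; each slack $\s(\alpha_m^j)$ is then bounded since their sum $t_m$ converges. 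Apply the geometric limit chain construction from the proof of Theorem~\ref{thm:structure of Zxy} to each sequence $(\alpha_m^j)_m$ individually, and diagonalize, obtaining limit chains $\underline\beta_j = \beta^{j,1} \cdots \beta^{j, M_j}$ in $\mathcal G$ with $\lim_m \s(\alpha_m^j) = \sum_r \s(\beta^{j, r})$. Endpoints match, so the concatenation $\underline\beta = \underline\beta_1 \cdots \underline\beta_n$ is an edgepath from $x$ to $y$ of length $L = \sum_j M_j \ge n \ge i$ with $\s(\underline\beta) = t$.

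\textbf{The main obstacle} is upgrading $L \ge n$ to $L \ge i+1$, i.e., ruling out the case $M_j = 1$ for every $j$. In that putative case every $\alpha_m^j$ has all of its $\epsilon_0$-excursions clustered within a bounded region of $\Sigma$, and by Claim~\ref{clm: slack bounded below} bounded slack forces bounded total excursion length. Combined with compactness of $\Sigma_0$, this shows there are only finitely many bi-infinite geodesics in $\Sigma_0$ asymptotic to the prescribed closed leaves with a fixed upper bound on excursion length; since slack is $\Z$-invariant, $\s(\Axy{u_j}{v_j}) \cap [0, C]$ is finite for each $C$. Then the convergence $\s(\alpha_m^j) \to \s(\beta^{j,1})$ inside this discrete set forces $\s(\alpha_m^j) = \s(\beta^{j,1})$ for $m$ large. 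Summing over $j$ gives $t_m = t$ eventually, contradicting $t_m \ne t$. Hence some $M_j \ge 2$, so $L \ge n + 1 \ge i + 1$, and $t = \s(\underline\beta) \in \s(\Hom_{\mathcal G}^{(i+1)}(x,y))$.
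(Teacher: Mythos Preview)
Your overall strategy matches the paper's: induction on $i$, with the forward inclusion via Lemma~\ref{lem: non-trivial accumulation} and the reverse via the geometric limit chain construction from Theorem~\ref{thm:structure of Zxy}. The forward inclusion and the limit-chain setup for the reverse are fine.

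The gap is in your ``main obstacle'' paragraph. The assertion that $\s(\Axy{u_j}{v_j}) \cap [0,C]$ is finite is false, and the supporting claim that there are only finitely many bi-infinite geodesics in $\Sigma_0$ asymptotic to the prescribed closed leaves with bounded total excursion length is also false. Lemma~\ref{lem: non-trivial accumulation} itself furnishes a counterexample: for any two-edge path $\alpha^1\cdot\alpha^2$ it produces infinitely many single edges $\alpha_m\in\Axy zx$ with pairwise distinct slacks accumulating on $\s(\alpha^1)+\s(\alpha^2)$. These $\alpha_m$ all have uniformly bounded total excursion length (roughly the sum of the excursion lengths of $\alpha^1$ and $\alpha^2$), yet they project to infinitely many distinct geodesics in $\Sigma_0$ (they differ in how many times they wind around the intermediate closed leaf). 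So neither finiteness statement holds, and you cannot deduce eventual constancy of $\s(\alpha_m^j)$ from discreteness of a slack set.

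What the paper does instead is argue geometrically that, when $M_j=1$, the edge $\alpha_m^j$ is literally equal to $\beta^{j,1}$ (up to the $\Z$-action) for large $m$, not merely that their slacks agree. The point is that $M_j=1$ forces the complement of a fixed compact interval $K$ in $\alpha_m^j$ to consist of two rays remaining in the regular neighborhood of $\T^1_+\lambda$; any additional exit would create a second geometric limit component. Since $\alpha_m^j$ fellow-travels $\beta^{j,1}$ on $K$ and both stay in the neighborhood of the \emph{same} leaf outside $K$ (the leaf neighborhoods being $3\epsilon_0$-separated), lifting to $\mathbb H^2$ shows they share both endpoints at infinity, hence coincide. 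Summing then gives $t_m=t$ eventually. This is the argument you need; your discreteness-of-slacks shortcut does not work.
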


    \begin{proof}
        
        Note that for $i=0$ this is just $\Zxy yx = \s(\Hom_\G(x,y))$, which is Theorem \ref{thm:structure of Zxy}. 
        For readability, denote $Z^i = ({\Zxy yx})^{(i)}$ and $H^i=\s(\Hom_{\mathcal G}^{(i)}(x,y))$ for the rest of the proof.

        We first argue that every point of $H^{i+1}$ is an accumulation point of $H^i$. Indeed, for $i=1$, Lemma \ref{lem: non-trivial accumulation} gives for any $\alpha\cdot\beta\in\Hom_{\mathcal G}^{(2)}(x,y) $ a sequence $\gamma_n\in \Hom_\G^{(1)}(x,y)$ such that $\s(\gamma_n) \to \s(\alpha\cdot\beta)$ nontrivially (not eventually constant). Now for $\ell(\underline\alpha)=i+1>2$ we just apply Lemma \ref{lem: non-trivial accumulation} to two successive edges in $\underline\alpha$. 
        
        This implies that no point of $H^2$ is isolated in $H^1=Z$, so $H^2\subset Z^1$. Arguing by induction we find
        \begin{equation}\label{eqn: H in Z}
        H^{i+1} \subset Z^i,
        \end{equation}
        where the inductive step is that no point of $H^{i+2}$ is isolated in $H^{i+1}$, and hence in $Z^i$, so that $H^{i+2}$  must be in $Z^{i+1}$.

        To prove the other inclusion, we need the following.
        \begin{claim}\label{clm: nontrivial accumulation}
        For each $i\ge 1$, $H^i\setminus H^{i+1}$ is isolated in $H^i$.
        \end{claim}
	
	\begin{proof}[Proof of the claim]
    We need to prove that if $\ell(\underline \alpha) = i$ and $\s(\underline \alpha) \not = \s(\underline \beta)$ for all $\underline \beta$ with $\ell(\underline \beta) >i$, then  $\s(\underline \alpha)$ is isolated in $\s(\Hom_{\mathcal G}^{(i)}(x,y))$.
    Let $\underline \alpha_m$ be a sequence of paths with $\ell(\underline \alpha_m) \ge i$ and $\s(\underline \alpha_m) \to \s(\underline \alpha)$. 
    Using Claim \ref{clm: slack bounded below}, we see that $\ell(\underline \alpha_m)$ is uniformly bounded from above.  Up to taking a subsequence, we can assume that $\ell(\underline \alpha_m) = i_0\ge i$.
    Thus $\underline \alpha_m = \alpha_{m,1}\cdots \alpha_{m,i_0}$. 

    After restricting to a subsequence, the proof of Theorem \ref{thm:structure of Zxy} gives a geometric limit chain $\beta_k^1\cdots \beta_k^{M_k}$ for each  sequence $(\alpha_{m,k})_m$, which we concatenate to a path $\underline\gamma = \beta_1^1\cdot\beta_1^2\cdots \beta_{i_0}^{M_{i_0}}$  in $\G$ satisfying $\s(\underline\gamma) = \s(\underline\alpha)$. 

    If the length of $\underline\gamma$ is bigger than $i$ then we have contradicted the
    hypothesis that $\s(\underline\alpha) \notin H^{i+1}$.  Thus $\ell(\underline\gamma)=i$, which means that $\ell(\underline\alpha_m)\equiv i$ and each geometric limit chain for $(\alpha_{m,k})_m$ is composed of a single element $\beta_k^1$. 

    We claim now that in fact $\alpha_{m,k} = \beta_k^1$ (up to the $\Z$ action) for  large enough $m$.  To see this, decompose $\beta_k^1$ into a compact interval $K$
    and two rays contained in a regular neighborhood of $\T^1_+\lambda$. For large $m$, $\alpha_{m,k}$ contains an interval $K_m$ following $K$ very closely, and the rest of $\alpha_{m,k}$ must consist of rays in the regular neighborhood of $\T^1_+\lambda$, because any exit from that neighborhood would lead to a second component of the geometric limit chain.

    We conclude that the subsequence we've extracted from $\underline \alpha_m$ is eventually constant. In particular, for every sequence
    in $\s(\Hom_{\mathcal G}^{(i)}(x,y))$ converging to $\s(\underline\alpha)$ there is a constant subsequence. This implies that $\s(\underline\alpha)$ is isolated in $\s(\Hom_{\mathcal G}^{(i)}(x,y))$.
    \end{proof}

   Now we can prove that $Z^i=H^{i+1}$  by induction: 
   For $i=0$ this is Theorem \ref{thm:structure of Zxy}, as above.  Suppose we have the equality for $i\ge 0$. Now any point $z$ in $Z^{i+1}$ is by definition not isolated in $Z^i$ which is $H^{i+1}$. By Claim \ref{clm: nontrivial accumulation}, this implies that $z$ is in $H^{i+2}$. Thus 
    $Z^{i+1}\subset H^{i+2}$, and by the inclusion (\ref{eqn: H in Z}) they are equal. 
    \end{proof}

    \begin{corollary}\label{cor: depth omega}
    The depth of $\Zxy yx$ is $\omega$.
    \end{corollary}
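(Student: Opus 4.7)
The plan is to deduce \Cref{cor: depth omega} directly from \Cref{thm: countable filtration Zxy}, which identifies $(\Zxy yx)^{(i)} = \s(\Hom_\G^{(i+1)}(x,y))$. Depth $\omega$ then reduces to two separate assertions: that $\Hom_\G^{(i)}(x,y)$ is non-empty for every $i\in\N$, and that no fixed $T\in\R$ belongs to every $\s(\Hom_\G^{(i)}(x,y))$.

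For the first assertion, I would produce arbitrarily long directed edgepaths by pre-concatenation: fix any $\underline\alpha\in\Hom_\G(x,y)$ (which exists whenever $\Zxy yx\neq\emptyset$, by \Cref{thm:structure of Zxy}) and pre-compose with arbitrarily many copies of a non-trivial self-loop $\beta\in\Axy xx$ at $x$, obtaining edgepaths of combinatorial length $n+\ell(\underline\alpha)$ in $\Hom_\G(x,y)$. Non-trivial self-loops at each vertex always exist in the multi-curve setting: the identity-return convention of \S\ref{subsec: multi-curve} ensures $k.\beta\in\Axy xx$ for any $\beta\in\Axy xx$ and every $k\in\Z$, and the freeness of the $\Z$-action on $\T^1\Sigma$ makes these translates pairwise distinct.

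For the second assertion, I would establish a uniform positive lower bound $\delta_0$ on the slack of any (non-trivial) edge of $\G$. Choose $\epsilon_0>0$ as in the proof of \Cref{thm:structure of Zxy}, so that components of $\T^1_+\lambda$ in $\T^1\Sigma$ are pairwise separated by at least $3\epsilon_0$. Any edge $Az\in\Axy yx$ is a bi-infinite geodesic distinct from every component of $\T^1_+\lambda$, so it must traverse at least one maximal excursion outside $\T^1_+\lambda^{(\epsilon_0)}$; by \Cref{clm: slack bounded below}, such an excursion contributes at least $\delta\epsilon_0=:\delta_0$ to $\s(Az)$. Additivity of $\s$ over concatenations yields $\s(\underline\gamma)\ge n\delta_0$ for any edgepath $\underline\gamma$ of combinatorial length $n$; in particular, for any fixed $T\in\R$, taking $n>T/\delta_0$ forces $T\notin\s(\Hom_\G^{(n+1)}(x,y))=(\Zxy yx)^{(n)}$, whence $\bigcap_i(\Zxy yx)^{(i)}=\emptyset$.

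The delicate point — and the only place this approach risks breaking — is the tacit convention that paths in $\Hom_\G$ do not include the trivial self-loop $Ay\in\Axy yy$ at each vertex (which carries slack $0$ and could otherwise be iterated freely, placing $0$ in every derived set and destroying depth $\omega$). This convention is already implicit in the proof of \Cref{lem: non-trivial accumulation}, whose perturbation argument requires the horospherical ingredients $u_2,n_1$ to be non-trivial, and in the uniform length bound $\ell(\underline\alpha_m)\le i_0$ used in \Cref{clm: nontrivial accumulation}, which crucially depends on each edge contributing a definite amount of slack.
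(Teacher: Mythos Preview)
Your proof is correct and takes essentially the same approach as the paper's: both invoke \Cref{thm: countable filtration Zxy} to identify $(\Zxy yx)^{(i)}$ with $\s(\Hom_\G^{(i+1)}(x,y))$, then argue non-emptiness of each $\Hom_\G^{(i)}(x,y)$ and use the uniform slack lower bound from \Cref{clm: slack bounded below} to preclude paths of unbounded combinatorial length with bounded slack. The paper dispatches non-emptiness more tersely (``nonempty by definition'') and phrases the second step contrapositively, but the content is the same as your explicit $\s(\underline\gamma)\ge n\delta_0$ bound. Your closing observation about excluding the trivial self-loop $Ax\in\Axy xx$ is well taken and matches the paper's implicit convention, already visible in the hypothesis $u_2,n_1\neq e$ of \Cref{lem: non-trivial accumulation}.
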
       

\begin{proof}
    We need to show that each $Z^i\ne \emptyset$, and that $\cap_i Z^i = \emptyset$. The first of these comes from $Z^i=H^{i+1}$ and the fact that the $H^i$ are nonempty by definition. If the second fails then $\cap_i H^i\ne \emptyset$, so there is a sequence of paths $\underline \alpha_m$ with $\ell(\underline\alpha_m)\to\infty$, and $\s(\underline\alpha_m)$ bounded (in fact constant). By Claim \ref{clm: slack bounded below}, this is impossible.
\end{proof}

	\subsection{Finite component with an infinite leaf}\label{subsec: infinite cr leaf}

    We have now understood the structure of $N$-orbit closures when the minimizing lamination $\lambda_0\subset \Sigmazero$ consists only of (a finite collection of) simple closed curves.
    Now we consider the case that an arbitrary $\lambda_0$ contains a connected component $\mu_0$ with finitely many leaves, not all of them closed.
	\begin{theorem}\label{thm: chain recurrent leaf}
        Suppose $\mu_0\subset\lambda_0$ is a connected component with finitely many leaves, at least one of which is an infinite leaf.
        Suppose $x\in \T_+^1\mu$. Then $ \Zxy xx=[0,\infty) $.
	\end{theorem}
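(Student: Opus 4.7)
The plan is to show that $\Zxy{x}{x} = [0,\infty)$ by exhibiting arbitrarily small positive elements in this closed sub-semigroup of $[0,\infty)$ (recorded to be so in \S\ref{sec: preliminaries}). Since a closed sub-semigroup of $[0,\infty)$ containing $0$ is either discrete or dense --- and hence equal to $[0,\infty)$ --- the existence of such a sequence suffices. First, I would invoke the forthcoming \Cref{thm: intro reduction} to reduce to the case where $x$ lies on a lift $\tilde c_0$ of a simple closed curve $c_0 \subset \mu_0$; such a $c_0$ exists because a chain-recurrent lamination with finitely many leaves must contain a minimal component, which in the finite-leaf case is a simple closed curve. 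Let $k_0 = |\varphi_*(c_0)|$ be its $\varphi$-degree, so that $g^{k_0}$ stabilizes $\tilde c_0$ setwise.

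Next, I would exploit the chain recurrence of $\mu_0$ (\Cref{L chain recurrent}), its connectedness, and the presence of an infinite leaf to build, for every $\epsilon > 0$, a closed $\epsilon$-pseudo-orbit $\gamma_\epsilon \subset \T_+^1 \mu_0$ that passes through $\pi_\Z(x)$, has segments of length bounded below by a fixed constant, and uses the infinite leaf as at least one of its segments. Lifting $\gamma_\epsilon$ to $\Sigma$ starting at $x \in \tilde c_0$ produces a finite pseudo-orbit terminating on some deck translate of $\tilde c_0$. Iterating $\gamma_\epsilon$ an appropriate number of times --- namely $k_0/\gcd(n_\epsilon, k_0)$, where $n_\epsilon \in \Z$ is the deck shift per iteration --- arranges that the lifted pseudo-orbit ends at a point $x^\star$ on the same lift $\tilde c_0$. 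Pre- and post-concatenating with the backward and forward infinite rays of $\tilde c_0$ ending at $x$ and starting at $x^\star$ respectively yields a bi-infinite pseudo-orbit $\widetilde\gamma_\epsilon$ in $\T^1\Sigma$ whose two infinite tails lie on $Ax$.

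\Cref{lem:epsilon chain slack} then produces a homotopic bi-infinite geodesic $Az_\epsilon \in \Axy{x}{x}$ with $\s(Az_\epsilon) \le \kappa\cdot(\text{number of jumps})\cdot\epsilon = O(\epsilon)$. Because $\widetilde\gamma_\epsilon$ makes a nontrivial topological detour relative to $\tilde c_0$, the geodesic $Az_\epsilon$ differs from $Ax$ in $\T^1\Sigma$; by \eqref{eqn: zero slack in lambda}, the only zero-slack element of $\Axy{x}{x}$ is $Ax$ itself, so $\s(Az_\epsilon) > 0$. To transfer this positive slack into $\Zxy{x}{x}$, I would adapt the first step of the proof of \Cref{thm:structure of Zxy}: each translate $g^{m k_0}\cdot Az_\epsilon$ remains in $\Axy{x}{x}$, and choosing $y_m$ to be the unique point on $g^{m k_0}\cdot Az_\epsilon$ with $\tau=0$ gives $y_m \to x$ together with $\s(A_+ y_m) \to \s(Az_\epsilon)$; \Cref{lem:slack gives points in Nxbar} then yields $\s(Az_\epsilon) \in \Zxy{x}{x}$. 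Letting $\epsilon \to 0^+$ produces arbitrarily small positive elements of $\Zxy{x}{x}$ and finishes the proof.

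The main technical obstacle will be arranging that the lifted pseudo-orbit closes back onto the \emph{same} lift $\tilde c_0$ of $c_0$: each traversal of $\gamma_\epsilon$ strictly advances $\tau$ and incurs a nontrivial deck shift, so one is forced to iterate the cycle the precise number of times needed to absorb that shift into the cyclic stabilizer of $\tilde c_0$. Without this closure, the approximating geodesic would be asymptotic to a different line than $Ax$ and the construction would fail to produce elements of $\Axy{x}{x}$.
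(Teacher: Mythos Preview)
Your positivity step contains a genuine gap. The sentence ``by \eqref{eqn: zero slack in lambda}, the only zero-slack element of $\Axy{x}{x}$ is $Ax$ itself'' is false precisely in the setting of this theorem. Equation \eqref{eqn: zero slack in lambda} says only that $\s(Az)=0$ iff $Az\subset\T^1_+\lambda$; it does not say that $Ax$ is the only leaf of $\T^1_+\lambda$ lying in $\Axy{x}{x}$. In fact the infinite leaves of $\mu$ furnish exactly such elements: a lift $\tilde\ell$ of an infinite leaf of $\mu_0$ spiraling onto $c_0$ at both ends is bi-asymptotic to a lift of $c_0$ and therefore lies in $\Axy{x}{x}$ with slack zero whenever its two ends land on the same lift $\tilde c_0$. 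Your pseudo-orbit $\widetilde\gamma_\epsilon$, when it consists of a backward ray on $\tilde c_0$, a single segment of such a $\tilde\ell$, and a forward ray on $\tilde c_0$, will straighten (via \Cref{lem:epsilon chain slack}) to $\tilde\ell$ itself, giving $\s(Az_\epsilon)=0$. Nothing in your construction excludes this, and when a single traversal of $\gamma_\epsilon$ already closes on $\tilde c_0$ (so your iteration count is $1$) this is exactly what happens.

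The paper's proof takes a different route to avoid this. It first packages the lifted infinite leaves as a zero-slack edge path $\underline\alpha\in\Hom_{\mathcal G^{\per}}(y,y)$ of combinatorial length $\ge 2$, and then applies the ``no miracles'' \Cref{lem: non-trivial accumulation}, whose explicit Bruhat computation shows that the approximating single edges have slack that is \emph{strictly} positive while tending to $0$. That lemma is doing the real work of producing nonzero small slack, and your argument has no substitute for it. (A minor additional point: your reduction to $x$ on a periodic leaf via the forthcoming \Cref{thm: intro reduction} is a forward reference and overkill; the paper uses the elementary observation that every leaf of a finite-leaf component is forward asymptotic to a periodic one, so $Nx$ meets $Ay$ for some periodic $y$ and $\Zxy{x}{x}=\Zxy{y}{y}$.)
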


    \begin{proof}
    Consider those leaves $\T^1_+\mu^{\per} \subset \T_+^1\mu$ that project to periodic orbits in $\T^1\Sigmazero$ and the directed graph $\mathcal G^{\per}$ whose vertex set is $\T^1_+\mu^{\per}\cap \tau\inverse(0)$; the directed edges joining $y$ to $z$ are the elements of $\Axy zy$.
    Any leaf of $\T_+^1\mu$ is forward asymptotic to $Ay$ for some $y \in V(\mathcal G^{\per})$.  In particular, $Nx\cap Ay = a_ty$ for some $t$ and $y \in V(\mathcal G^{\per})$, and it suffices to compute $\Zxy xx = \Zxy {a_ty}{a_t y} = \Zxy yy$.
    Since $\Zxy xx$ is a closed semi-group, $\Zxy xx= [0,\infty)$ if and only if $\Zxy xx = \Zxy yy$ contains arbitrarily small positive values.

    Since $\T_+^1\mu$ contains the preimage of an infinite chain recurrent leaf and $y\in \T_+^1\mu$, there is an $\underline \alpha \in \Hom_{\mathcal G^{\per}}(y,y)$ with $\s(\underline \alpha) = 0$  (see \eqref{eqn: zero slack in lambda}).
    Since $\s(\underline \alpha \cdot \underline \alpha) =2 \s(\underline \alpha) =0$, by  replacing $\underline \alpha$ with $\underline \alpha\cdot \underline \alpha$, we may assume that $\ell(\underline \alpha)\ge 2$.
    
    Apply Lemma \ref{lem: non-trivial accumulation} to obtain a sequence $\underline \alpha_m\in \Hom_{\mathcal G^{\per}}(y,y)$ satisfying 
    \begin{itemize}
        \item the combinatorial length satisfies $\ell(\underline\alpha_m) = \ell(\underline \alpha)-1 \ge 1$;
        \item $\s(\underline \alpha_m) >0$ for all $m$; and 
        \item $\s(\underline \alpha_m) \to  \s(\underline \alpha) = 0$.
    \end{itemize} 
    The proof of Theorem \ref{thm:structure of Zxy} applies to see that $\Zxy yy\supset \s(\Hom_{\mathcal G^{\per}}(y,y))$, which  contains arbitrarily small positive values.
    This is what we wanted.
    \end{proof}

    \subsection{Moving toward general laminations}\label{subsec: isolated multi-curve}
    In Section \ref{sec: structure of horo orbit closures}, we will address the structure of $\Zxy xy$  where $x$ and $y$ are tangent to general chain recurrent laminations $\lambda_0$, which may have uncountably many leaves.
    In this section, we extract a lemma from the proof of Theorem \ref{thm:structure of Zxy} for use later on.

    In general, each connected component of $\lambda_0$ is either an isolated closed leaf or contains an infinite leaf.  
    Denote by $\lambda_0^{\imc}$ the \emph{isolated multi-curve} part of $\lambda_0$, which is just the union of the isolated closed leaves. 
    Denote by $\lambda_0^\infty$ the union of the components that contain an infinite leaf; %
    it is equal to $\lambda_0\setminus \lambda_0^{\imc}$.  

    We define a directed graph $\Gimc$ in a similar fashion as in the beginning of the section as follows.
    Denote by $\T^1_+\lambda^{\imc} \subset \T^1\Sigma$ as the preimage under $\pi_\Z$ of the tangents to $\lambda_0^{\imc}$ exiting the `$+$' end, and define $\T^1_+\lambda^\infty$ analogously.
    The vertex set $V(\Gimc)$ of $\Gimc$ is $\T^1_+\lambda^{\imc} \cap \tau\inverse(0)$. %
    The directed edge set from $y$ to $x$ is $\Axy xy$.

    \begin{lemma}\label{lem: isolated multi-curve slack - arcs outside nbhd of non-imc}
        Let $x, y \in V(\Gimc)$ and  $T \in \Zxy xy$.
        Suppose there is a positive $\epsilon>0$ and a sequence $n_m \in N$ such that $n_m x \to a_Ty$ as $m\to \infty$ and $A_+n_mx$ avoids $(\T^1_+\lambda^\infty)^{(\epsilon)}$ for all $m$.
        Then there is $\underline \alpha \in \Hom_{\Gimc}(y,x)$ such that $T = \s(\underline \alpha)$.
    \end{lemma}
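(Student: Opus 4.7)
My plan is to mirror the $\supset$-inclusion argument in the proof of Theorem \ref{thm:structure of Zxy}, replacing $\T^1_+\lambda$ by $\T^1_+\lambda^{\imc}$ and using the hypothesis that $A_+ n_m x$ avoids $(\T^1_+\lambda^\infty)^{(\ep)}$ at the step estimating the slack of excursions. First, I will choose $\ep_0 \in (0, \ep)$ small enough that $\ep_0$ is below the injectivity radius of $\T^1\Sigma$ and that distinct components of the lift of $\T^1_+\lambda^{\imc}$ to $\T^1\bH^2$ are pairwise at distance at least $3\ep_0$. This is possible because $\lambda_0^{\imc}$ is a finite disjoint union of closed geodesics, $\lambda_0^{\imc}$ and $\lambda_0^\infty$ are disjoint closed subsets of the compact surface $\Sigmazero$, and the $\Z$-action on $\Sigma$ is by isometries.

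Next, I will analyze the forward rays $\alpha_m^+ := A_{[0,\infty)} n_m x$. Since $n_m x \to a_T y \in \T^1_+\lambda^{\imc}$, the computation in the proof of Lemma \ref{lem:slack gives points in Nxbar} yields $\s(\alpha_m^+) = \tau(n_m x) - \tau(x) \to T$. Decompose $\alpha_m^+ \setminus (\T^1_+\lambda^{\imc})^{(\ep_0)}$ into components $\kappa_m^1, \ldots, \kappa_m^{i_m}$. By construction each $\kappa_m^j$ lies at distance at least $\ep_0$ from $\T^1_+\lambda^{\imc}$, and by hypothesis at distance at least $\ep > \ep_0$ from $\T^1_+\lambda^\infty$; hence every $\kappa_m^j$ is at distance at least $\ep_0$ from $\T^1_+\lambda$. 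The proof of Claim \ref{clm: slack bounded below} then applies verbatim to produce a uniform $\delta > 0$ with $\s(\kappa_m^j) \ge \delta\ell(\kappa_m^j) \ge \delta\ep_0$. Since $\s(\alpha_m^+) \to T$, the counts $i_m$ and the total excursion lengths are uniformly bounded.

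Then, following the proof of Theorem \ref{thm:structure of Zxy}, I will pass to a subsequence along which the projections to $\T^1\Sigma_0$ of chosen representative points $p_m^j \in \kappa_m^j$ converge, and partition the upper index into classes via the bounded-distance equivalence relation. Class representatives will determine a geometric limit chain $\beta^1 \cdots \beta^M$ forming a path in $\G$ exactly as in Claim \ref{clm: betas are a path}. Crucially, the regular portions of $\alpha_m^+$ between successive equivalence classes have length tending to infinity and lie in $(\T^1_+\lambda^{\imc})^{(\ep_0)}$, so the forward and backward asymptotes of each $\beta^j$ lie in components of $\T^1_+\lambda^{\imc}$, hence correspond to vertices of $\Gimc$. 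The initial regular part of $\alpha_m^+$ starts near $n_m x \to a_T y$ and $\alpha_m^+$ is forward asymptotic to $Ax$, pinning the endpoints of the chain to the vertices $y$ and $x$. Consequently $\beta^1 \cdots \beta^M \in \Hom_{\Gimc}(y, x)$, and Lemma \ref{lem:epsilon chain slack} delivers $\sum_j \s(\beta^j) = \lim_m \s(\alpha_m^+) = T$.

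The main obstacle I anticipate is ensuring the geometric limit chain stays within $\Gimc$ rather than accessing edges of $\G$ adjacent to $\T^1_+\lambda^\infty$. The hypothesis on $A_+ n_m x$ is exactly what achieves this on two fronts: it forces every excursion away from $\T^1_+\lambda^{\imc}$ to be simultaneously away from $\T^1_+\lambda^\infty$, so that Lemma \ref{small slack close to L} supplies the uniform slack lower bound; and it simultaneously forces each regular portion to remain near $\T^1_+\lambda^{\imc}$, pinning the asymptotes of the geometric limits to vertices of $\Gimc$.
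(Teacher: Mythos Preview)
Your proposal is correct and follows essentially the same approach as the paper: the paper's proof simply says to follow verbatim the $\Zxy xy \subset \s(\Hom_{\mathcal G}(y,x))$ argument from Theorem~\ref{thm:structure of Zxy}, with $\T^1_+\lambda^{\imc}$ in place of $\T^1_+\lambda$ and $\ep_0 < \ep$, which is exactly what you do. Your explicit identification of why $\ep_0<\ep$ matters---namely that each excursion from $(\T^1_+\lambda^{\imc})^{(\ep_0)}$ is then bounded away from all of $\T^1_+\lambda$, so that Lemma~\ref{small slack close to L} yields the uniform slack lower bound, and simultaneously the regular portions are forced to fellow-travel $\T^1_+\lambda^{\imc}$, pinning the limit chain inside $\Gimc$---is precisely the content the paper leaves implicit.
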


    \begin{proof}
        The proof follows verbatim the proof of inclusion $\Zxy xy\subset \s(\Hom_{\mathcal G}(y,x))$ from Theorem \ref{thm:structure of Zxy} with the following changes: Here, $\T^1_+\lambda^{\imc}$ plays the role of $\T^1_+\lambda$, and $\epsilon_0$ from the proof of Theorem \ref{thm:structure of Zxy} should be taken smaller than $\epsilon$ from the statement of the lemma.
    \end{proof}
      
	\section{Chain proximality on minimal components}\label{sec:chain prox min}
    
    Suppose $\lambda$ is an oriented minimal geodesic lamination on a closed hyperbolic surface $S$ with more than one leaf.
    Let $\gamma$ be an oriented $C^1$ transversal to $\lambda$ without backtracking, i.e., $\gamma$ is transverse to $\lambda$ with the same sign everywhere.

    Let $X = \lambda \cap \gamma$ and $\sigma: X \to X$ be the first return for the geodesic flow tangent to $\lambda$ in the forward direction.
    Note that $X$ is a compact metric space with Hausdorff dimension $0$  and that $\sigma$ is a bi-Lipschitz homeomorphism.
    The latter fact is due to the classical observation that the map sending a point $x\in X$ to its forward unit tangent vector along $\lambda$ is bi-Lipschitz onto its image, the geodesic flow is smooth, and the first return time along the flow is a continuous function on $X$.

    The central notion of this section is that of \emph{chain proximality} (see Lemma
    \ref{lem:chain prox implies inclusion in Nxbar} for the connection between this notion and our $N$-orbit closures). 
    \begin{definition}\label{def: chain proximal}
        Let $X$ be a metric space and $\sigma: X 
        \to X$ be a map.
        For $x,y \in X$, we say that $x$ is \emph{chain proximal} to $y$ and write $x \chprox y$ if, for every $\ep>0$, there exists a sequence
          $x=x_0,x_1,\ldots,x_m$ such that
         \begin{equation}\label{eqn: strong ep chain}
             \sum_{i=0}^{m-1} d(x_{i+1},\sigma(x_i)) < \ep
         \end{equation}
 and $ x_m = \sigma^m(y)$.
 We call such a sequence an \emph{$\ep$-interception of $y$ by $x$} and say that $x$ $\ep$-intercepts $y$.
 If $x \chprox y$ and $y\chprox x$, we write $x\chproxb y$.
    \end{definition}
    Clearly, $\chprox$ is a reflexive relation on $X$.
    That $\sigma$ is bi-Lipschitz implies that $\chprox$ is also transitive.  
    A priori, $x\chprox y$ need not imply $y\chprox x$.
    
    \begin{remark}
        A \emph{strong $\ep$-chain} from $x$ to $y$ would be a sequence 
        $x= x_0, x_1, ...,$ $ x_m=y$ satisfying \eqref{eqn: strong ep chain}.  This notion seems to have been introduced by Easton \cite{Easton:strongchain} following work of Conley \cite{Conley:chain_recurrent}.
        Note that in our definition of chain proximality, an $\ep$-interception of $y$ by $x$ is a strong $\ep$-chain of length $m$ from $x$ to $\sigma^m(y)$.  That is, an $\ep$-interception of $y$ by $x$ starts at $x$, closely follows $\sigma$-orbits making summable jumps, and eventually catches up with the orbit of $y$ in a synchronous fashion.
    \end{remark}

    A neighborhood $\mathcal N$ of $\lambda$ in $S$ is called \emph{snug} if each component of $S\setminus \mathcal N$ is a deformation retract of the component of $S\setminus \lambda$ containing it.
    Suppose $\mathcal N$ is snug for $\lambda$ on $S$, and denote by $\{\gamma_i\}$ the set of connected components of $\mathcal N \cap \gamma$.
    Consider the partition $\{L_i = \gamma_i\cap X\}$ of $X=\lambda \cap \gamma$ by closed and open sets.

    Our first main result in this section is the following characterization of the chain proximality relation on $X$.

    	\begin{theorem}\label{thm: chain prox invt equiv relation}
    		Let $\lambda \subset S$ be a minimal oriented geodesic lamination and let $\gamma$ be a $C^1$  transversal to $\lambda$ without backtracking. Let $X=\lambda \cap \gamma$, with $\sigma$ the first return map to $X$. Then chain proximality is a $\sigma$-invariant equivalence relation on $X$ with finitely many equivalence classes $M_1, ..., M_s$.  
    		
    		Moreover, these equivalence classes are closed subsets of $X$ which are finite unions of members of the partition $\{L_i\}$.      
    \end{theorem}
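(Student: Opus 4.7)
My plan is to verify, in turn, reflexivity, $\sigma$-invariance, transitivity, symmetry, and then the structural statement about the $L_i$'s. Reflexivity is witnessed by the trivial chain $x_i = \sigma^i(x)$ with all jumps equal to zero. For $\sigma$-invariance, applying $\sigma$ termwise to an $\epsilon$-interception of $y$ by $x$ yields an $L\epsilon$-interception of $\sigma(y)$ by $\sigma(x)$, where $L$ is the Lipschitz constant of $\sigma$; since $\sigma$ is bi-Lipschitz, the symmetric argument gives $\sigma^{-1}$-invariance as well. Transitivity follows by concatenating two interceptions: given witnesses of length $m_1$ and $m_2$ for $x \chprox y$ and $y \chprox z$, one composes them into a chain joining $x$ to $\sigma^{m_1+m_2}(z)$, choosing the initial tolerances dependent on $L^{m_1}$ to control the total jump cost by any prescribed $\epsilon$.

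The core structural claim is that every partition member $L_i$ lies inside a single $\chprox$-class, with chain-proximality holding in both directions. My approach here is geometric: two points $x, x' \in L_i$ lie on leaves of $\lambda$ that traverse a common strand of the snug neighborhood, and by refining $\mathcal N$ so that strands have arbitrarily small diameter, the minimality of $\lambda$ provides synchronous returns where $\sigma^k(x)$ and $\sigma^k(x')$ lie in a common thin strand. One can then construct an interception of $x'$ by $x$ by following $x$'s orbit, executing a single small jump at such a time $k$, and then following $x'$'s orbit; the cost of this jump is controlled by the strand diameter, hence is arbitrarily small. By symmetry of the argument in $x$ and $x'$, both $x \chprox x'$ and $x' \chprox x$ hold. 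Granting this, the description of classes as finite unions of $L_i$'s follows from transitivity, and finiteness and closedness of the classes are immediate, since $\{L_i\}$ is a finite partition into clopen sets of $X$.

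The main obstacle is symmetry of $\chprox$. My plan is to derive it from the structural claim combined with a symbolic strand-tracking argument. Given $x \chprox y$, fix a snug $\mathcal N$, refined if necessary so that $\sigma$ sends each strand into a single strand (this can always be arranged by a finite iteration of refinements, replacing $\mathcal N$ by a snug neighborhood adapted to $\sigma$), and choose $\epsilon$ strictly less than the minimum inter-strand gap $g$ in $\gamma \cap \mathcal N$. In any $\epsilon$-interception $(x_i)_{i=0}^m$ from $x$ to $\sigma^m(y)$, no individual jump exceeds $g$, so no jump crosses between distinct strands. Combined with the fact that $\sigma$ maps each strand into a single strand, this forces $x_i$ and $\sigma^i(x)$ to lie in the same strand $L_{j(i)}$ for every $i$. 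In particular $\sigma^m(y) = x_m$ and $\sigma^m(x)$ share a strand $L_{j(m)}$, so the structural claim yields $\sigma^m(y) \chprox \sigma^m(x)$. Applying $\sigma^{-m}$-invariance (from the first paragraph) then gives $y \chprox x$, completing the proof of symmetry and hence of the theorem.
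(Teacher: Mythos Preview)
Your reflexivity, $\sigma$-invariance, and transitivity arguments are fine, and the strategy of deducing symmetry from the structural claim via strand-tracking is sound. The genuine gap is in the structural claim itself: that any two points $x,x'\in L_i$ satisfy $x\chprox x'$.

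Your justification, that ``minimality of $\lambda$ provides synchronous returns where $\sigma^k(x)$ and $\sigma^k(x')$ lie in a common thin strand,'' does not hold. Minimality says nothing about synchronous closeness of two distinct orbits. Concretely, $\sigma$ is (bi-Lipschitz conjugate to) an interval exchange transformation, and when this IET is a rotation, $d(\sigma^k(x),\sigma^k(x'))$ is constant in $k$; in particular $\sigma^k(x)$ and $\sigma^k(x')$ are never closer than $d(x,x')$, so your single-jump interception cannot be made with arbitrarily small cost. Refining $\mathcal N$ does not help: once the strands are thinner than $d(x,x')$, the two points lie in different refined strands and you have not gained anything.

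What the paper actually uses to establish this claim is substantially more: (i) the geometric fact (Lemma~\ref{lem: gaps shrinking}) that every gap $(p,q)\subset\gamma_i\setminus X$ has its endpoints asymptotic along $\lambda$ in forward or backward time, coming from finite area of the complementary regions; (ii) a Poincar\'e-recurrence argument on $X^m$ for a fully supported invariant measure (Theorem~\ref{thm: chain prox ae gaps}), which lets one cross finitely many gaps between $x$ and a typical $y\in X^\dagger$ with arbitrarily small total jump cost; and (iii) an almost-proximality lemma (Lemma~\ref{lem: almost proximal}) to upgrade from $X^\dagger$ to all of $X$. Step (i) is exactly the hyperbolic-surface input that makes the theorem work and is absent from your argument.
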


    The proof of this theorem occupies the remainder of this section. 
    \medskip
    
    We call a component $J \subset \gamma_i\setminus X$ not containing an endpoint of $\gamma_i$ a \emph{gap}.  Say that a gap $J=(x,y)$ is \emph{shrinking in forward (resp. backward) time} if $d(\sigma^n(x), \sigma^n(x)) \to 0$ as $n\to +\infty$ (resp. $n \to -\infty)$.
    \begin{lemma}\label{lem: gaps shrinking}
        Every gap $J = (x,y)\subset \gamma_i \setminus X$ is shrinking in either forward or backward time.
    \end{lemma}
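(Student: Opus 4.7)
The strategy is to lift to the universal cover $\Hplane$ and classify the gap according to the geometry of its two bounding geodesics. Fix a lift $\tilde J=(\tilde x,\tilde y)$ of $J$ inside a lift $\tilde\gamma_i$ of $\gamma_i$, and let $\tilde\ell_x,\tilde\ell_y$ be the geodesic leaves of the preimage $\tilde\lambda$ of $\lambda$ passing through $\tilde x$ and $\tilde y$. Because no leaf of $\tilde\lambda$ crosses the open arc $\tilde J$, these two disjoint geodesics either share a single endpoint on $\partial\Hplane$ (the asymptotic case) or have four distinct endpoints at infinity (the ultraparallel case).

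The heart of the argument is to rule out the ultraparallel case using snugness. If $\tilde\ell_x,\tilde\ell_y$ are ultraparallel then, by the structure theorem (on a closed hyperbolic surface, each complementary region of a minimal geodesic lamination with more than one leaf is a finite-sided ideal polygon, and any two adjacent sides share an ideal vertex, hence are asymptotic), they must be two non-adjacent sides of a lift $\tilde P$ of a single complementary region $P \subset S\ssm\lambda$, and $\tilde J\ssm\{\tilde x,\tilde y\}$ lies inside $\tilde P$. Thus the arc of $\gamma$ from $x$ to $y$ crosses $P$. Snugness provides a component $K$ of $S\ssm\mathcal N$ inside $P$ that is a deformation retract of $P$; since $P$ and $K$ are simply connected, $K$ separates the two distinct sides of $\partial P$ on which $x$ and $y$ lie, so the arc of $\gamma$ between them must enter $K\subset S\ssm\mathcal N$. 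This places $x$ and $y$ in distinct components of $\gamma\cap\mathcal N$, contradicting $J\subset\gamma_i$.

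Hence $\tilde\ell_x$ and $\tilde\ell_y$ are asymptotic, sharing an endpoint $\xi\in\partial\Hplane$. If $\xi$ is their common forward endpoint, I would show $d(\sigma^n(x),\sigma^n(y))\to 0$ as $n\to+\infty$ as follows. The hyperbolic distance between $\tilde\ell_x$ and $\tilde\ell_y$ decays exponentially along either leaf as one approaches $\xi$, while lifts of $\gamma$ are uniformly discretely spaced in $\Hplane$; consequently, for all sufficiently large $n$, the two leaves meet the same deck translate $g_n\tilde\gamma$ of $\tilde\gamma$, and the intersection points projecting to $\sigma^n(x)$ and $\sigma^n(y)$ converge to each other on $g_n\tilde\gamma$. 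The $C^1$ transversality of $\gamma$ with $\lambda$, together with a uniform lower bound on crossing angles (by compactness of $S$ and $\gamma$), makes distance along $\gamma$ comparable to hyperbolic distance, yielding $d(\sigma^n(x),\sigma^n(y))\to 0$; the common-backward-endpoint case is symmetric. The main obstacle is the snugness step: making rigorous that the deformation-retract component $K\subset P$ genuinely separates the two sides of $\partial P$ carrying $x$ and $y$, which requires unpacking the ``deformation retract'' hypothesis and using the simple-connectivity of $P$ so that no arc between distinct boundary sides can avoid the retract core.
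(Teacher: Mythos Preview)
Your overall strategy matches the paper's: both argue that the two leaves of $\lambda$ bounding the gap must be asymptotic, and then deduce shrinking. The paper phrases this in the metric completion $S'$ of the complementary component and appeals to its finite area; you lift to $\Hplane$ and invoke simple connectivity. Your treatment of the last step (asymptotic leaves $\Rightarrow$ $d(\sigma^n x,\sigma^n y)\to 0$) is more explicit than the paper's terse ``which implies the lemma.''

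There is, however, a genuine error: the ``structure theorem'' you cite is false. A minimal geodesic lamination on a closed hyperbolic surface need not be filling, so a complementary region $P$ need not be a simply connected ideal polygon. For example, take $\lambda$ to be a minimal lamination carried in a proper incompressible subsurface---say an irrational-slope lamination confined to a one-holed torus inside a genus-two surface; the complementary region containing the other handle then has nontrivial $\pi_1$. Once $P$ fails to be simply connected, your separation argument (``$K$ separates the two distinct sides of $\partial P$'') no longer applies as written. The paper sidesteps this by using only that $S'$ has finite area, which always holds. To repair your version you should carry out the separation argument in the lift $\tilde P$ (which \emph{is} always simply connected, being a convex region of $\Hplane$); but note that even there the bare hypothesis that the lifted core $\tilde K$ is a deformation retract of $\tilde P$ does not by itself force separation of non-adjacent sides, so some further use of the geometry---compactness of $K$ and the spike structure of the ends of $S'$---is still required.
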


    \begin{proof}
        Let $S'$ be the metric completion of the component of $S\setminus \lambda$ containing $J$.  Then the closure of $J$ in $S'$ joins two of its boundary components.
        Since $J\subset \mathcal N$, and $\mathcal N$ is snug and since $S$ (hence $S'$) is of finite area, these two boundary components must be asymptotic, which implies the lemma.
    \end{proof}

    Let $\nu$ be a $\sigma$-invariant probability measure with full support on $X$.
    \begin{theorem}
    \label{thm: chain prox ae gaps}
        There is a subset $X^\dagger \subset X$ of full $\nu$-measure such that $x\chprox y$ for every $x\in X$ and $y \in X^\dagger$ in the same component $\gamma_i$ as $x$.
    \end{theorem}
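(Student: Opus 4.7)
The plan is to take $X^\dagger$ to be the set of $y\in X$ that are two-sidedly Poincar\'e recurrent: for every neighborhood $V$ of $y$ there exist infinitely many $n>0$ and infinitely many $n<0$ with $\sigma^n y\in V$. Since $\nu$ is a $\sigma$-invariant probability measure on the compact space $X$ and $\sigma$ is a homeomorphism, applying Poincar\'e recurrence to both $\sigma$ and $\sigma^{-1}$ yields $\nu(X^\dagger)=1$.

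Fix $y\in X^\dagger$, $x$ in the same $\gamma_i$ as $y$, and $\ep>0$. Because $X\cap\gamma_i$ is compact and totally disconnected, for any $\delta>0$ only finitely many gaps between $x$ and $y$ in $\gamma_i$ exceed size $\delta$, and the combined length of the remaining gaps is $O(\delta)$. Pick $\delta$ so that the sub-$\delta$ gaps between $x$ and $y$ contribute at most $\ep/3$ to the jump budget, and enumerate the finitely many large gaps $J_1,\ldots,J_k$. By \Cref{lem: gaps shrinking}, each $J_j$ is either forward- or backward-shrinking. For a forward-shrinking $J_j$ with endpoints $(a_j,b_j)$, we iterate $\sigma$ along the endpoint orbits until they are within $\ep/(3k)$, then hop between the two orbits with cost $<\ep/(3k)$; concatenating these jumps on an increasing schedule produces a pseudo-orbit starting at $x$ that hops from one endpoint orbit to the next and eventually synchronizes with the $y$-orbit.

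The subtlety is the treatment of backward-shrinking gaps, since forward iteration does not decrease their endpoint distance. The resolution uses the backward recurrence built into $X^\dagger$: by choosing the total pseudo-orbit length $m$ to be a large backward recurrence time of $y$, so that $\sigma^{-m}y$ lies in an arbitrarily small neighborhood of $y$, one can route the interception through a copy of each backward-shrinking gap positioned deep in the past where its endpoints are close, and still land at $\sigma^m y$ on the nose via the approximate identity $\sigma^{-m}y\approx y$ combined with $m$ forward iterations. Summing the three contributions --- sub-$\delta$ gaps, forward-shrinking jumps, and backward-shrinking jumps with their recurrence error --- one obtains total jump cost below $\ep$, establishing $x\chprox y$. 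The principal difficulty is the tight synchronization condition $x_m=\sigma^m y$ together with Lipschitz amplification over a pseudo-orbit of length $m$: the recurrence time $m$ must be chosen large enough to reap the recurrence benefit for backward-shrinking gaps, yet the recurrence itself must be tight enough that forward Lipschitz propagation over $m$ steps does not blow up the corresponding contributions.
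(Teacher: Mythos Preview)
Your proposal identifies the right ingredients (the gap decomposition, the forward/backward shrinking dichotomy from \Cref{lem: gaps shrinking}, and the need for some recurrence), but it does not close, and you have in fact put your finger on exactly why. The difficulty you flag at the end---Lipschitz amplification over a pseudo-orbit of length $m$---is fatal to the argument as written. Recurrence of $y$ alone (your definition of $X^\dagger$) tells you nothing about the orbits of the endpoints of a backward-shrinking gap: if $(a,b)$ is such a gap, then $d(\sigma^n a,\sigma^n b)$ need not be small for any positive $n$, so there is no forward time at which you can make a cheap hop from the $a$-orbit to the $b$-orbit. Knowing that $\sigma^{-m}y$ is close to $y$ does not help, because applying $\sigma^m$ to a point close to $y$ incurs a Lipschitz factor of order $L^m$, wiping out whatever gain the recurrence bought you. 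Your ``route through a copy \ldots in the past'' is not made precise, and I do not see how to make it precise with only single-point recurrence.

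The paper's proof resolves this by defining $X^\dagger$ via a much stronger property: $y\in X^\dagger$ means that for every $m$, for $\nu^{m-1}$-almost every auxiliary tuple $\underline q\in X^{m-1}$, the full tuple $(y,\underline q)$ is recurrent under the \emph{diagonal} action of $\sigma$ on $X^m$. One then chooses the auxiliary points $q_i$ to be perturbations of the gap endpoints, with each odd pair $(q_i,q_{i+1})$ pre-adjusted so that $d(\sigma^{m_i}q_i,\sigma^{m_i}q_{i+1})$ is small for some fixed (possibly negative) $m_i$. Joint recurrence gives times $n_l\to\infty$ at which every $\sigma^{n_l}q_i$ is close to $q_i$ simultaneously; applying the \emph{fixed} power $\sigma^{m_i}$ then gives $d(\sigma^{n_l+m_i}q_i,\sigma^{n_l+m_i}q_{i+1})$ small with $n_l+m_i>0$. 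This is exactly what beats the amplification: the long time $n_l$ is absorbed by recurrence (of all the relevant points at once, not just $y$), and only a bounded power $\sigma^{m_i}$ is ever applied to a small quantity.
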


    \begin{proof}
        For each $m\ge1$, we consider the diagonal action of $\sigma$ on $X^m$.  Say that an $m$-tuple $\underline x \in X^m$ is \emph{recurrent} if $\sigma^n(\underline x)$ accumulates on $\underline x$ as $n\to \infty$. %
        By Poincar\'e Recurrence, $\nu^m$-a.e. $\underline x \in X^m$ is recurrent. 
        By Fubini, there is a set $X_m \subset X$ of full $\nu$-measure such that for all $y \in X_m$ and for $\nu^{m-1}$-a.e. $\underline x \in X^{m-1}$, the tuple $y\times \underline x \in X^m$ is recurrent.
        Changing $X_m$ by at most a $\nu$-null set, we may assume that $X_m$ is $\sigma$-invariant, i.e., $\sigma(X_m) = X_m$ for all $m\ge 1$. 

        Then $X^\dagger = \cap_{m\ge 1} X_m$ is a $\sigma$-invariant set of full $\nu$-measure. Consider $y\in X^\dagger$, let $x\in X\cap \gamma_i$, and suppose $I\subset \gamma_i$ is an interval with endpoints $x$ and $y$.
        Denote by $\le$ the linear order on $I$, oriented positively from $x$ to $y$.

        Since $X$ has length $0$ in $\gamma_i$ we have $\ell(I) = \ell(I \setminus X)$.  Let $\ep>0$ be given, and find a finite collection of gaps $J_1< J_3 ...< J_{2k-1}$ of $X$ contained in $I$ such that $\ell(I \setminus \cup J_i) <\ep$.
        We have $J_i = (p_i, p_{i+1})$, so that 
        \[x=p_0\le p_1 <p_2<  ... < p_{2k-1}<p_{2k}\le p_{2k+1}=y,\]
        and take $m = 2k+1$.
        Notice that $I\setminus \cup J_i = \cup_{i = 0}^{k} [p_{2i}, p_{2i+1}]$, which has total length at most $\epsilon$.
        
        \begin{claim}\label{clm: making a chain}
            There are $q_0, ...., q_m=y \in X$ and $\epsilon_i > 0$ with $\sum \epsilon_i \le 3\epsilon$ such that $d(\sigma(q_0), \sigma(x))<\epsilon$ and such that for each $i = 0, ..., m-1$ there exist infinitely many positive values of $n$ satisfying
            \[d(\sigma^n(q_i), \sigma^n(q_{i+1}))<\epsilon_i.\]
        \end{claim}

        \begin{proof}[Proof of Claim \ref{clm: making a chain}]
        Recall $ m=2k+1 $. The following defines an open subset of $ X^m $ --- Let $ (q_0,\ldots,q_{m-1}) $ satisfy:
    		\begin{enumerate}[label=(\alph*)]
    			\item $ d(\sigma(q_0),\sigma(x))<\varepsilon $;
    			\item $ q_{2i},q_{2i+1} \in (p_{2i},p_{2i+1}) $ for all $ i=0,\ldots,k $ and $ q_{2k} \in (p_{2k},y) $; and
    			\item for each odd $i$, since the gap $J_i=(p_i, p_{i+1})$ is shrinking in either forward or backward time, we know there exists $m_i\in \Z$ such that $d(\sigma^{m_i}(p_i),\sigma^{m_i}(p_{i+1}))<\epsilon/4k$. Choose $ q_i,q_{i+1} $ so close to $ p_i $ and $ p_{i+1} $ respectively so as to ensure that $ d(\sigma^{m_i}(q_i),\sigma^{m_i}(q_{i+1}))<\epsilon/k $. See \cref{fig: chain prox jumps}.
    		\end{enumerate}
    		
    		\begin{figure}
    			\centering
    			\includegraphics[width=0.8\linewidth]{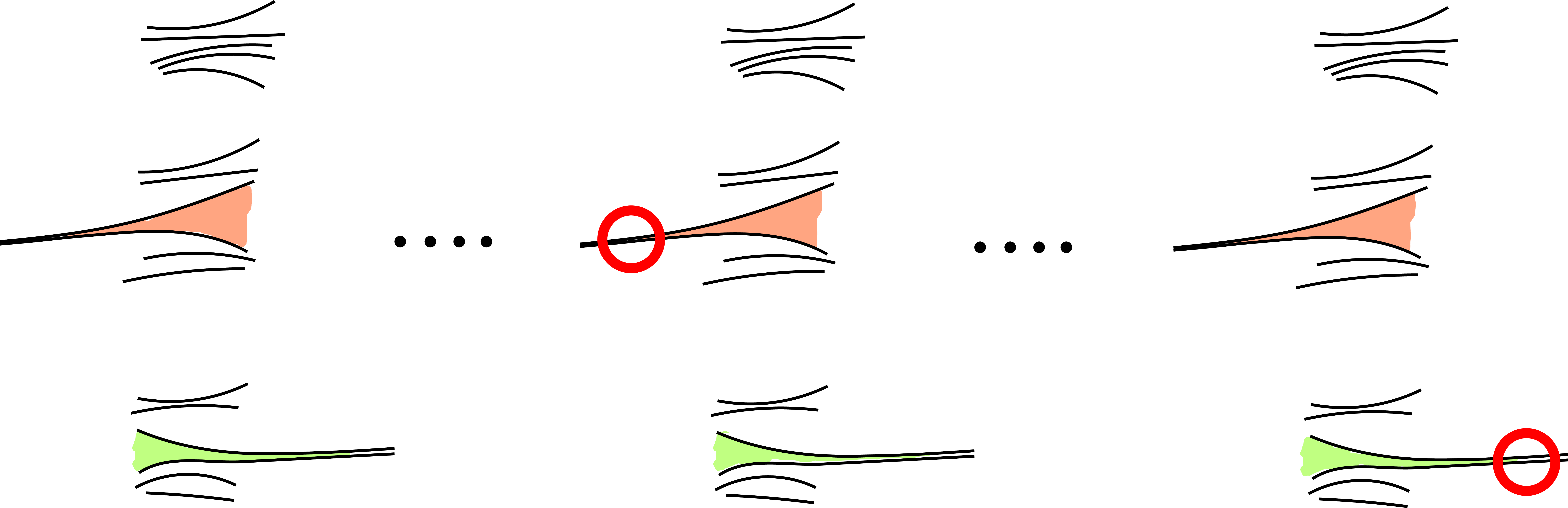}
    			\caption{In this illustration one can see a recurring transection of the lamination (in black) with forward asymptotic gaps (shaded in green) and backward asymptotic gaps (shaded in red). Red circles mark the position of intended jumps across gaps, which are from $\sigma^{m_i}(q_i) \sim \sigma^{m_i}(p_i) $ to $ \sigma^{m_i}(p_{i+1}) \sim \sigma^{m_i}(q_{i+1}) $ as described in (c) of the proof of Claim \ref{clm: making a chain}.}
    			\label{fig: chain prox jumps}
    		\end{figure}
    		
    		In case $ x=p_0=p_1 $, take $ q_0=q_1 $ at distance less than $ \varepsilon/2 $ from $ x $. Similarly whenever $ y=p_m=p_{m-1} $. In such cases, these conditions define an open set in $ X^{m-1} $ or $ X^{m-2} $.
    		
    		Since $ \sigma $ is bi-Lipschitz we are ensured that the open set defined above is non-empty. The measure $ \nu $ having full support in $ X $ and the fact that $ y \in X^\dagger $ imply that there exists a recurrent tuple $ (q_0,\ldots,q_{m-1},y) $ in $ X^{m+1} $ satisfying (a)-(c). Let $ n_l \to \infty $ be a sequence of times for which $ \sigma^{n_l}((q_0,\ldots,q_{m-1}, y)) $ tends to $ (q_0,\ldots,q_{m-1}, y) $.
    		
    		Set $ q_m=y $. For all $ i=1,\ldots,k $, and all large enough $ l\geq 1 $ we know that both $ \sigma^{n_l}(q_{2i}),\sigma^{n_l}(q_{2i+1}) \in (p_{2i},p_{2i+1}) $ and hence
    		\[ d(\sigma^{n_l}(q_{2i}),\sigma^{n_l}(q_{2i+1})) < d(p_{2i},p_{2i+1}). \] 
    		
    		On the other hand, for each odd $ 1 \leq i \leq 2k+1 $, we know that 
    		\[ d(\sigma^{n_l+m_i}(q_i),\sigma^{n_l+m_i}(q_{i+1}))<\epsilon/k \quad \text{for all large }l. \]
    		
    		Set $ \varepsilon_{2i}=d(p_{2i},p_{2i+1}) $ and $ \varepsilon_{2i+1}=\varepsilon/k $ for all $ i=0,\ldots,k $. Hence for each $ i=0,\ldots,m-1 $ there exist infinitely many $ n $'s where $ d(\sigma^n(q_i), \sigma^n(q_{i+1}))<\epsilon_i $ and
    		\[ \sum_{j=0}^{m-1} \varepsilon_j < \varepsilon + \sum_{i = 0}^k d(p_{2i},p_{2i+1}) + \sum_{i = 1}^k \varepsilon/k < \varepsilon + \ell\left(\cup_{i = 0}^{k} [p_{2i}, p_{2i+1}]\right)+\varepsilon<3\varepsilon, \]
    		proving the claim.
        \end{proof}

        With the claim established, we can now construct a $4\epsilon$-interception of $y$ by $x$. %
        Let $x_0 = x$ and let $x_1 = \sigma(q_0)$.
        Now choose a sequence of times $1 = n_0 < n_1 < ... <n_m$ inductively by choosing $n_{i+1}>n_i$ satisfying 
        \[d(\sigma^{n_{i+1}}(q_i), \sigma^{n_{i+1}}(q_{i+1}))<\epsilon_i,\]
        which is possible by Claim \ref{clm: making a chain}.

        Define, for $n_i\le j <n_{i+1}$ \[x_j = \sigma^j(q_i),\]
        and finally when $j = n_m$, we let 
        \[x_j = x_{n_m} = \sigma^{n_m}(q_m) = \sigma^{n_m}(y).\]

        These points $x=x_0, x_1, ..., x_{n_m} = \sigma^{n_m}(y)$ follow $\sigma$-orbits except at the ``jump'' times $n_i$, where the jump distance is controlled by the claim.
        Summing up the errors we conclude
        \[\sum d(x_{j+1}, \sigma(x_j))<4\epsilon.\]
        Letting $\epsilon$ tend to $0$ proves that $x\chprox y$.
    \end{proof}

    For the proof of Theorem \ref{thm: chain prox invt equiv relation}, we will use the following lemma regarding $\delta$-proximal pairs.
    \begin{lemma}\label{lem: almost proximal}
        With $\nu$ as before, let $x \in X$.  For every $\delta>0$, there is a set $F\subset X$ with $\nu(F)>0$ such that for all $z\in F$, 
        \begin{equation}\label{eq: delta-proximality}
        	\liminf_{n\to \infty} d(\sigma^n(x), \sigma^n(z))<\delta.
        \end{equation}
    \end{lemma}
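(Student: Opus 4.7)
The plan is to reduce the statement to a standard reverse-Fatou / second Borel--Cantelli argument applied to the sets $A_n := \{z \in X : d(\sigma^n x, \sigma^n z) < \delta/2\}$ for $n \geq 0$. Observe that $A_n = \sigma^{-n}(B(\sigma^n x, \delta/2))$, so by the $\sigma$-invariance of $\nu$ we have $\nu(A_n) = \nu(B(\sigma^n x, \delta/2))$. If I can bound $\nu(A_n)$ below by a positive constant independent of $n$, then the set $F := \limsup_n A_n = \bigcap_N \bigcup_{n \geq N} A_n$ will have positive $\nu$-measure, and for any $z \in F$ one has $d(\sigma^n x, \sigma^n z) < \delta/2$ for infinitely many $n$, hence $\liminf_n d(\sigma^n x, \sigma^n z) \leq \delta/2 < \delta$.

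The main intermediate step is to establish a uniform lower bound $\nu(B(y, \delta/2)) \geq c > 0$ valid for every $y \in X$. Compactness of $X$ lets us choose a finite cover by balls $B(y_i, \delta/4)$; since $\nu$ has full support, each of these balls has positive $\nu$-measure, and I take $c$ to be their minimum. Every $y \in X$ then lies in some $B(y_i, \delta/4)$, and the triangle inequality gives $B(y_i, \delta/4) \subset B(y, \delta/2)$, yielding $\nu(B(y, \delta/2)) \geq c$.

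Combining the two inputs, $\nu(A_n) \geq c$ for every $n$. The elementary inequality $\nu(\limsup_n A_n) \geq \limsup_n \nu(A_n)$, valid in any finite measure space by continuity from above applied to the decreasing family $\{\bigcup_{n \geq N} A_n\}_N$, then gives $\nu(F) \geq c > 0$, as required. I do not foresee any serious obstacle: the lemma follows directly from basic measure theory, the $\sigma$-invariance of $\nu$, and the uniform ball-mass bound that compactness together with full support supply. Neither minimality of $\sigma$ nor the hyperbolic geometry of $\lambda$ plays any role in this step; they are only used later in combining the lemma with Theorem \ref{thm: chain prox ae gaps} to build chain proximality relations.
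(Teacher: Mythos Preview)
Your proof is correct and rests on the same two ingredients as the paper's: a uniform lower bound on the $\nu$-mass of $\delta$-balls coming from compactness and full support, together with the $\sigma$-invariance of $\nu$. The paper packages these as a proof by contradiction (if almost every $z$ eventually stays $\delta$-far from the orbit of $x$, then for some large $n$ the set $\sigma^n(F_N)$ and the ball $B_\delta(\sigma^n x)$ would be disjoint with total mass exceeding $1$), whereas you construct $F=\limsup_n A_n$ directly and invoke reverse Fatou; these are two sides of the same coin. Your covering argument for the uniform ball bound is in fact a bit cleaner than the paper's, which appeals to continuity of $y\mapsto \nu(B_\delta(y))$ via the non-atomicity of $\nu$---a step your approach makes unnecessary.
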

    \begin{proof}
        Suppose not.  Then there is a $\delta>0$ such that for $\nu$-a.e. $z\in X$, there is an $N_z<\infty$, such that for $n\ge N_z$,
        \[d(\sigma^n(x), \sigma^n(z))\ge \delta.\]
        Since $\nu$ has no atoms, the function $X \ni y\mapsto \nu(B_\delta(y))$ is continuous. Since $\nu$ has full support, which is compact, there is a $b>0$ such that $\nu(B_\delta(y))>b$ for all $y\in X$, where $B_\delta(y)$ is the ball of radius $\delta$ around $y$ in $X$.
        Since $N_z$ is finite for $\nu$-a.e. $z$, there is some $N$ such that $F_N = \{z: N_z<N\}$ has measure greater than $1-b$. %

        Thus for $n>N$, we find that $\sigma^n(F_N)$ is disjoint from $B_\delta(\sigma^n(x))$.  Since $\sigma$ is a homeomorphism preserving $\nu$, we see that $\nu(\sigma^n(F_N)) +\nu(B_\delta(\sigma^n(x)))>1-b +b >1$, which is a contradiction.   
    \end{proof}

    \begin{proof}[Proof of Theorem \ref{thm: chain prox invt equiv relation}]
        Note first that, since $\chprox$ is transitive and reflexive, the relation $\chproxb$ is (tautologically) an equivalence relation. 
        Recall the definition of $ L_i= \lambda \cap \gamma_i $, where $ \gamma_i $ are the connected components of $ \gamma \cap \cal N$. By Theorem \ref{thm: chain prox ae gaps}, each $L_i\cap X^\dagger$ is contained in an equivalence class of $\chproxb$. Therefore the equivalence classes in $X^\dagger$ of $\chproxb$ can be written as $M_j\cap X^\dagger$, where each $M_j$ is a union of some subcollection of $L_i$. Note that the partition  $X=\cup M_j$ is invariant by $\sigma$, since the relation $\chproxb$ is invariant by $\sigma$ and $X^\dagger$ is dense in $X$. 

        Now consider $x,y\in M_i$ and let us show that $x\chprox y$. 
        Let $b>0$ be the minimum distance between $M_j$ and $M_k$ for all $j\not=k$, and let $\ep>0$ be given. 
         Let $F$ be the set from Lemma \ref{lem: almost proximal} for $\delta = \min\{\epsilon/2,b\}$, and for the point $y$.
    Then $\nu(F\cap X^\dagger)>0$; take $y' \in F\cap X^\dagger$, so that $y'$ is $\delta$-proximal to $y$ in the sense of \eqref{eq: delta-proximality}. This implies $y'\in M_i$ as well, since otherwise the distance between $\sigma^i(y)$ and $\sigma^i(y')$ for $i>0$ is bounded below by $b$.

    We can approximate $x$ as closely as we'd like by $x' \in M_i\cap X^\dagger$, where we already know  $x'\chproxb y'$.
    Thus, we have an $\ep/2$-interception of $y'$ by $x'$.  Using Lemma \ref{lem: almost proximal}, there is $m'>m$ such that $d(\sigma^{m'}(y'), d(\sigma^{m'}(y))<\ep/2$. By concatenation, this produces an $\ep$-interception of $y$ by  $x'$.  Since $\ep$ was arbitrary, we have $x'\chprox y$.

    Since $x'$ can be made arbitrarily close to $x$, we conclude that $x\chprox y$ (by prepending to the chain a jump from $\sigma(x)$ to $\sigma(x')$). Arguing symmetrically, $y\chprox x$. On the other hand, if $x\in M_i$ and $y\in M_j$ for $i\ne j$, then their orbits remain at least $b$ apart for all time, and so $x\chprox y$ cannot hold. 
    Thus $\chprox$ is equal to $\chproxb$, so it is an equivalence relation and the $M_i$ are the equivalence classes. This concludes the proof of Theorem \ref{thm: chain prox invt equiv relation}.
    \end{proof}

	\section{A synchronous Transversal}\label{Section:Synchronous Transversal}
    In this section, we study the chain proximality relation for the first return mapping to a $\tau_0$ fiber for the geodesic flow tangent to $\lambda_0$ and explain how chain proximality allows us to conclude  containments of $N$-orbit closures in $\T^1\Sigma$ (Lemma \ref{lem:chain prox implies inclusion in Nxbar}).
    In order to apply  the results of the previous section describing the chain proximality relation, we construct a \emph{synchronized} $C^1$ transversal $\gamma$ to $\lambda_0$ (Lemma \ref{lem: nice transversal}), i.e., one that meets every minimal component  in the same $\tau_0$-fiber. 
    This good transversal will in fact be contained in leaves of Thurston's {\em horocyclic
      foliation}, which we describe below.    In particular, the construction of $\gamma$ actually gives some insight into the
    structure of \emph{every} tight map in a neighborhood of $\lambda_0$ (Corollary
    \ref{cor: structure of tight maps}).

    We then give a satisfying classification of the chain proximality equivalence classes in terms of the connected components of the preimage of $\lambda_0$ in a finite cover (Theorem \ref{thm: chprox equiv chain recurrent}) and in terms of the weak components of $\lambda$ (Corollary \ref{cor: chprox and weak components}).
    Finally, in \S\ref{subsec: arcs and shifts revisit}, we return to our discussion relating slacks and shifts for arbitrary $x,y \in \T^1_+\lambda$ in the same $\tau$-fiber.  

    \subsection{Chain proximality and orbit closures}\label{subsec: why chain prox}

The reason we are interested in the chain proximality relation is that it is tightly
connected to containment of orbit closures. %

    Let $\Y = \T^1_+\lambda \cap \tau\inverse(0)$, let $\Y_0 = \pi_\Z(\Y)\subset \T^1\Sigma_0$, and 
    denote by 
    \[\sigma_0: \Y_0 \to \Y_0\]
    the first return mapping, i.e., $\sigma_0(x) = a_{c} x$.

    The notion of chain proximality also makes sense applied to the time $c$ map for the geodesic flow restricted to $\T^1_+\lambda$.
    \begin{lemma}\label{lem: chprox in Sigma vs Sigmazero}
        For $x$ and $y \in \Y$, $x \chprox y $ for $a_c$ if and only if $\pi_\Z(x) \chprox \pi_\Z(y)$ for $\sigma_0$.    
    \end{lemma}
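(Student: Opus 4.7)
The plan is to exploit two elementary compatibilities between $\pi_\Z$ and the function $\tau$: (i) $\pi_\Z$ is a $1$-Lipschitz local isometry, and (ii) the deck $\Z$-action shifts $\tau$ by $c$, so two points in the same $\pi_\Z$-fiber with the same value of $\tau$ must coincide. Combined with the identity $\tau(a_t z) = \tau(z) + t$ for $z \in \T^1_+\lambda$ --- which follows from $\s$ vanishing on $\T^1_+\lambda$ (see \eqref{eqn: zero slack in lambda}) --- these observations reduce the lemma to a bookkeeping of $\tau$-values along chains.

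For the forward implication, I would start with an $\ep$-interception $x = x_0, x_1, \ldots, x_m = a_c^m y$ in $\T^1_+\lambda$ and record the drifts $s_i := \tau(x_i) - ic$. Since $\tau$ is $1$-Lipschitz and $\tau(a_c x_i) = \tau(x_i) + c$, one gets $|s_{i+1} - s_i| \le d(x_{i+1}, a_c x_i)$, so $|s_i| \le \ep$ (and in fact $s_0 = s_m = 0$ exactly). Setting $\tilde x_i := a_{-s_i} x_i$ places the chain in $\tau^{-1}(c\Z)$ without moving the endpoints. A triangle inequality comparing $d(\tilde x_{i+1}, a_c \tilde x_i)$ with $d(x_{i+1}, a_c x_i) + |s_{i+1} - s_i|$, together with the bounded Lipschitz constant of $a_{-s_i}$ on short scales, yields $\sum_i d(\tilde x_{i+1}, a_c \tilde x_i) = O(\ep)$. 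Projecting via $\pi_\Z$ and using that $\sigma_0 = a_c|_{\Y_0}$ produces an $O(\ep)$-interception of $\pi_\Z(y)$ by $\pi_\Z(x)$ in $\Y_0$.

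For the reverse implication, I would take an $\ep$-interception $\pi_\Z(x) = y_0, \ldots, y_m = \sigma_0^m \pi_\Z(y)$ in $\Y_0$ with $\ep$ smaller than both $c/2$ and the injectivity radius of $\Sigma_0$. Starting from $x_0 = x$, I lift each $y_{i+1}$ to the unique $x_{i+1} \in \T^1_+\lambda$ lying in the $\ep$-ball around $a_c x_i$; then $d(x_{i+1}, a_c x_i) = d(y_{i+1}, \sigma_0 y_i)$, so the total error is preserved. Because $\pi_\Z(x_i) \in \Y_0$ forces $\tau(x_i) \in c\Z$ and each jump has size less than $c/2$, a straightforward induction shows $\tau(x_i) = ic$. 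In particular $\tau(x_m) = mc = \tau(a_c^m y)$ and $\pi_\Z(x_m) = \pi_\Z(a_c^m y)$, so observation (ii) forces $x_m = a_c^m y$ exactly, completing the interception.

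Neither direction presents a serious obstacle; the only subtlety is the drift correction in the forward direction, required because intermediate chain points in $\T^1_+\lambda$ need not lie in $\tau^{-1}(c\Z)$ and hence do not project directly into $\Y_0$. Once that correction is in place, the proof is essentially a tautology arising from $\pi_\Z$ being an isometric covering compatible with the $\tau$-grading.
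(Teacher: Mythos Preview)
Your proof is correct, and your reverse implication is essentially identical to the paper's: both lift the chain from $\Y_0$ into the slices $a_{ci}\Y$ and use the bijectivity of $\pi_\Z|_{a_{ci}\Y}$ together with the local isometry to match distances and pin down the terminal point as $a_c^m y$. The paper defines the lifts directly via the bijection $a_{ci}\Y\to\Y_0$ rather than inductively via the nearby lift, but these coincide once $\ep$ is below the injectivity radius, as you observe.

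For the forward implication, you are actually more careful than the paper. The paper simply asserts that an $\ep$-interception for $a_c$ ``descends'' via $\pi_\Z$ to an $\ep$-interception for $\sigma_0$, using $\pi_\Z(a_c z)=\sigma_0(\pi_\Z(z))$. But as you correctly flag, the intermediate points $x_i$ of an $a_c$-chain in $\T^1_+\lambda$ need not satisfy $\tau(x_i)\in c\Z$, so $\pi_\Z(x_i)$ need not lie in $\Y_0$, where $\sigma_0$ is defined. Your drift correction $\tilde x_i=a_{-s_i}x_i$ with $s_i=\tau(x_i)-ic$ fixes this cleanly: the telescoping bound $|s_{i+1}-s_i|\le d(x_{i+1},a_c x_i)$ and the uniformly bounded Lipschitz constant of $a_t$ for $|t|\le\ep$ give $\sum d(\tilde x_{i+1},a_c\tilde x_i)=O(\ep)$, the endpoints are unchanged since $s_0=s_m=0$, and now every $\pi_\Z(\tilde x_i)$ genuinely lives in $\Y_0$. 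So your version fills a small gap the paper leaves implicit.
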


    \begin{proof}
        Assume that $x\chprox y$ for $a_c$.  Using the relation that $\pi_\Z(a_cz) = \sigma_0 (\pi_\Z(z))$ and that $\pi_\Z$ is $1$-Lipschitz, any $\epsilon$-interception of $y$ by $x$ using $a_c$ descends to an $\epsilon$-interception of $\pi_\Z(y)$ by $\pi_\Z(x)$ using $\sigma_0$, demonstrating that $\pi_\Z(x) \chprox \pi_\Z(y)$ for $\sigma_0$.

        For the other direction, observe that 
        \[\pi_\Z\inverse(\Y_0) = \sqcup_{m \in \Z} a_{cm}\Y,\]
        so that $\pi_\Z$ restricts to a bijection $a_{cm}\Y \to \Y_0 $, for each $m$.
        Let $ \pi_\Z(x)= x_0, x_1, ..., x_N = \sigma_0^N(\pi_\Z(y))$ be an $\epsilon$-interception of $\pi_\Z(y)$ by $\pi_\Z(x)$, where $\epsilon$ is smaller than half the injectivity radius of $\T^1\Sigmazero$ and define $y_i \in a_{ci}\Y$ by $\pi_\Z(y_i) = x_i$.
        Since $a_{[0,c]}y_i\subset \T^1_+\lambda$, we have $\tau(a_cy_i) = \tau(y_i) +c = ci+c$, which implies that $a_c y_i \in a_{c(i+1)}\Y$.  
        Since $\pi_\Z$ is locally isometric, and $\epsilon$ is smaller than half the injectivity radius of $\T^1\Sigmazero$, we have 
        \[\sum d(y_{i+1}, a_c y_i) =\sum d(x_{i+1}, \sigma_0(x_i)) <\ep.\]
        Thus $x = y_1, ..., y_N = y$ is an $\epsilon$-interception of $y$ by $x$, which proves the lemma.
    \end{proof}
    
    The first return map $ \sigma_0 $ allows us to correctly relate $ \sigma_0 $-chain-proximality on $\Y_0$ with the geodesic flow along leaves of $\T^1_+ \lambda $ in $ \T^1\Sigma $, and consequently conclude horocycle orbit accumulation relations.

    \begin{lemma}\label{lem:chain prox implies inclusion in Nxbar}
		Let $ x,y \in \mathcal Y $ with $ x \chprox y $ then $ x \in \overline{Ny} $. Consequently, if $ y \chproxb x $ then $ \overline{Ny}=\overline{Nx} $.
	\end{lemma}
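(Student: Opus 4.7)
The plan is to use the chain witnessing $x \chprox y$ to manufacture a sequence of geodesic rays based near $x$, forward-asymptotic to $Ay$, with slack tending to $0$, and then invoke \Cref{lem:slack gives points in Nxbar} (with the roles of $x$ and $y$ exchanged and $t=0$) to conclude $x\in\overline{Ny}$. Given $\varepsilon>0$, upgrade $x\chprox y$ to a lifted chain $x = x_0, x_1, \ldots, x_N = a_{Nc} y$ with $x_i \in a_{ci}\Y \subset \T^1_+\lambda$ and $\sum_{i=0}^{N-1} d_{\T^1\Sigma}(x_{i+1}, a_c x_i) < \varepsilon$, in the spirit of \Cref{lem: chprox in Sigma vs Sigmazero}. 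Form the broken path $\bar\alpha$ in $\Sigma$ whose pieces are the projections of the geodesic arcs $\alpha_i = A_{[0,c]}x_{i-1}$ for $i=1,\ldots,N$, capped by the forward ray $\alpha_{N+1} = A_{[0,\infty)}x_N$, which is a subray of $Ay$; consecutive pieces can be joined by short arcs of total length $O(\varepsilon)$.

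Each $\alpha_i$ lies in $\T^1_+\lambda$, hence has zero slack by \eqref{eqn: zero slack in lambda}. The ray version of \Cref{lem:epsilon chain slack} (applied with $b=b'=c$, producing a constant $\kappa_c$) then yields a geodesic ray $\alpha^\varepsilon \subset \Sigma$ starting at the basepoint of $x$, forward-asymptotic to $\alpha_{N+1}$ (hence to $Ay$), with $\s(\alpha^\varepsilon) \le \kappa_c \varepsilon$ and Hausdorff distance from $\bar\alpha$ bounded by $\kappa_c\varepsilon$. Let $\hat x_\varepsilon \in \T^1\Sigma$ denote the initial unit tangent of $\alpha^\varepsilon$. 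Both $\alpha^\varepsilon$ and the first piece $\alpha_1$ of $\bar\alpha$ emanate from the same basepoint, and $\alpha_1$ is a genuine geodesic segment of length $c$ with initial tangent $x$. The Hausdorff bound forces $\alpha^\varepsilon$ to pass within $\kappa_c\varepsilon$ of $\alpha_1(c)$, and a standard comparison of two geodesic rays sharing a basepoint in $\Sigma$ yields $d_{\T^1\Sigma}(\hat x_\varepsilon, x) = O(\varepsilon)$.

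Letting $\varepsilon_m \to 0$, we obtain $\hat x_{\varepsilon_m} \to x$ with $A\hat x_{\varepsilon_m}$ forward-asymptotic to $Ay$ and $\s(A_+ \hat x_{\varepsilon_m}) \to 0$. \Cref{lem:slack gives points in Nxbar} then gives $x\in\overline{Ny}$; the consequence $\overline{Nx} = \overline{Ny}$ under $y\chproxb x$ follows by applying this in both directions. The main technical subtlety lies in converting the Hausdorff closeness of $\alpha^\varepsilon$ and $\bar\alpha$ into closeness of initial tangent vectors; this is enabled by the fact that the first piece of the broken path is a genuine geodesic of macroscopic fixed length $c$, pinning down the initial direction of $\bar\alpha$ well enough for the angular comparison to go through.
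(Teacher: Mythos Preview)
Your proof is correct and follows essentially the same route as the paper: lift the $\varepsilon$-interception to $\T^1_+\lambda$ (as in \Cref{lem: chprox in Sigma vs Sigmazero}), assemble the zero-slack pieces $A_{[0,c]}x_i$ capped by the forward ray of $y$, apply the ray version of \Cref{lem:epsilon chain slack} to obtain a geodesic ray of slack $O(\varepsilon)$ forward-asymptotic to $Ay$, and conclude via \Cref{lem:slack gives points in Nxbar} with $t=0$. The paper's proof is the same, only it asserts the convergence $x^\varepsilon\to x$ without the angular-comparison justification you supply; note your parenthetical ``$b=b'=c$'' is a slight slip in notation (that phrasing belongs to \Cref{small slack close to L}, while \Cref{lem:epsilon chain slack} only requires the lower bound $c$), but this does not affect the argument.
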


	\begin{proof}
    Let $ \pi_\Z(x)=x_0,\ldots,x_m=\sigma_0^m(\pi_\Z(y)) = \pi_\Z(a_{mc}y) \in \Y_0$ be an $ \varepsilon $-interception of $ \pi_\Z(y) $ by $ \pi_\Z(x) $. 
    As in the proof of Lemma \ref{lem: chprox in Sigma vs Sigmazero}, we have corresponding points $y_i \in a_{ci}\Y$ that determine an $\ep$-interception of $y$ by $x$ for $a_c$ in $\T^1_+\lambda$.
    
    Consider geodesic arcs $\alpha_0, \ldots, \alpha_{m-1}$ with $\alpha_i(t) = a_ty_i$, for $t \in [0,c]$, and the ray $ \alpha_m(t)=a_{mc+t} y = a_t y_m $ for $ t \in [0,\infty) $. Hence we have  $ \ell(\alpha_i) \geq c $ for all $ i=0,\ldots,m $ and 
		\[ \sum_{i=0}^{m-1} d_{\T^1\Sigma}(\alpha_i(c),\alpha_{i+1}(0)) < \varepsilon. \]
		By \Cref{lem:epsilon chain slack}, we conclude that for all small enough $ \varepsilon $, there exists a geodesic ray $\alpha^\varepsilon$ that is asymptotic to $Ay$ in forward time satisfying
		\[ \s(\alpha^\varepsilon) \leq \sum_{i=0}^{m} \s(\alpha_i) + \left|\s(\alpha^\varepsilon)- \sum_{i=0}^{m} \s(\alpha_i) \right| < \kappa_c\varepsilon, \]
        where $\alpha^\varepsilon = A_+x^\varepsilon$.
		Moreover, as $ \varepsilon \to 0 $ we have $ x^\varepsilon \to x $. Applying \Cref{lem:slack gives points in Nxbar}, with $ t=0 $, we conclude that $ x \in \overline{Ny} $, as claimed.
		
		The last implication follows from the fact that $ x \in \overline{Ny} \Rightarrow \overline{Nx} \subset \overline{Ny} $.
	\end{proof}

    \subsection{The horocyclic foliation}
    Consider the metric completion $\Sigma'$ of $\Sigmazero\setminus \lambda_0$, which is a finite area hyperbolic surface with totally geodesic boundary.  In the universal cover of each of the finitely many connected components, there is shortest positive distance between any two non-asymptotic boundary geodesics.  Let $\delta_0>0$ be smaller than $1/4$ of the minimum such distance, and denote by $\mathcal N_0$ the closure of the $\delta_0$-neighborhood of $\lambda_0$ on $\Sigmazero$.
    Then $\mathcal N_0$ is a snug neighborhood of $\lambda_0$.
        
    Pairs of asymptotic leaves in $\lambda_0$ correspond to ends of non-compact boundary components of $\Sigma'$ called \emph{spikes}.
    Each spike has a maximal closed neighborhood contained in $\mathcal N_0$ that is foliated by horocyclic segments facing the end and joining the boundary components, meeting them orthogonally.  
    Denote by $\mathcal S_0$ the closure of the union of these foliated spike neighborhoods, together with the isolated, closed leaves of $\lambda_0$.
    Note that $\mathcal S_0$ is a closed set containing $\lambda_0$; see Figure \ref{fig: horofoliation}.
    
\begin{figure}
    \centering
    \includegraphics[width=0.75\linewidth]{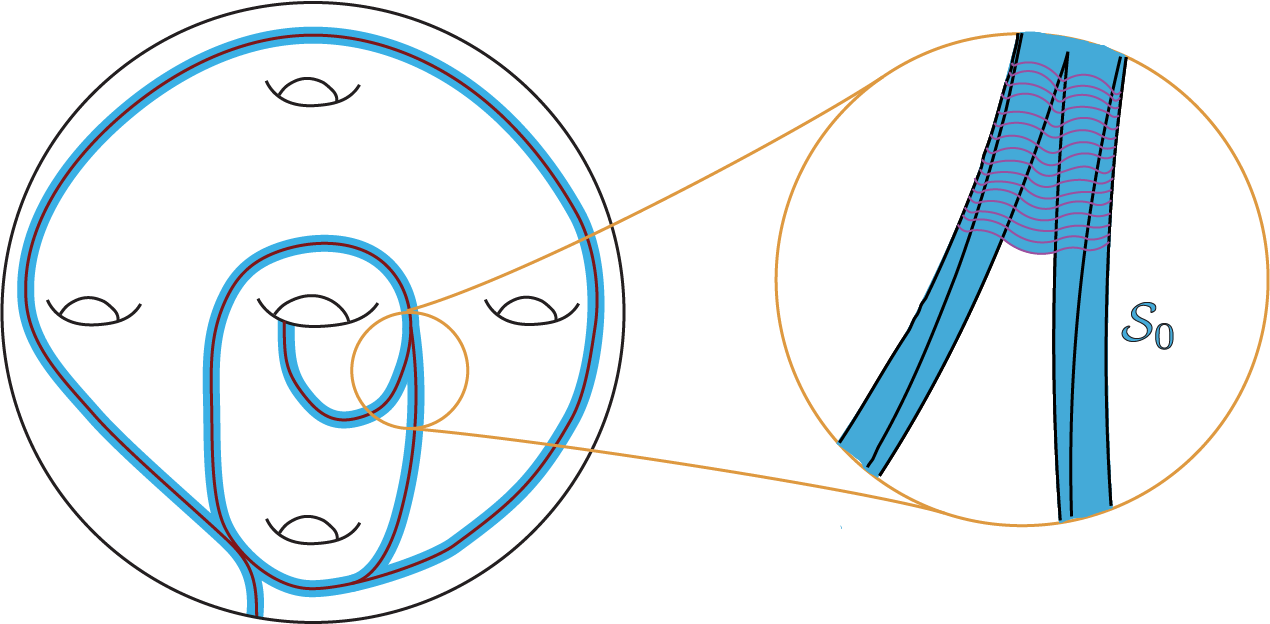}
    \caption{The spike neighborhood $\mathcal S_0$ inside of a snug neighborhood for $\lambda_0$ and leaves of the horocycle foliation.}
    \label{fig: horofoliation}
\end{figure}
    The partial foliation of $\mathcal S_0$ by horocyclic segments extends across the leaves of $\lambda_0$ to a $C^1$ foliation of $\mathcal S_0$ %
    called the \emph{horocyclic foliation}, which was defined by Thurston in a
    neighborhood of $\lambda_0$ (see \cite[\S8.9]{Thurston:notes} and
    \cite[\S4]{Thurston:stretch}, or \cite{CF:SHSH,CF:SHSHII} for a related
    construction). Each leaf of this foliation is the closure of a union of both stable and unstable horocyclic arcs. 
    We regard the isolated leaves of $\lambda_0$ as being foliated by their points.

    \subsection{Horocycle  transversals are synchronous}\label{subsec: const synchronous transversal}

    The following lemma supplies us with a nice transversal to $\lambda_0$ whose intersection with $\lambda_0$ is contained in a $\tau_0$-fiber.  
    An oriented $C^1$ transversal $\gamma$ to $\lambda_0$ is \emph{complete} if no component backtracks and $\gamma$ meets every minimal sublamination of $\lambda_0$.

    \begin{lemma}\label{lem: nice transversal}
        For any fiber $F_0$ of $\tau_0: \Sigmazero \to \R/c\Z$, there is an oriented $C^1$
        complete transversal $\gamma$ to $\lambda_0$ with $\gamma \cap
        \lambda_0 = F_0\cap \lambda_0$. Each arc of $\gamma$ is
        contained in a leaf of the horocyclic foliation of $\lambda_0$. 
    \end{lemma}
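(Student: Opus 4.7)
The plan is to use Thurston's horocyclic foliation on $\mathcal{S}_0$ to produce the arcs of $\gamma$. First, I would establish the key property that the tight map $\tau_0$ is constant on every leaf of the horocyclic foliation. For a horocyclic arc $h \subset \mathcal{S}_0$ joining $p$ to $q$ on asymptotic leaves $\ell_p, \ell_q \subset \lambda_0$ bounding a spike, the geodesic flow moves these endpoints to $a_tp$ and $a_tq$, joined by a horocyclic arc $h_t$ of length $e^{-t}\ell(h)$. Since $\ell_p, \ell_q \subset \lambda_0 = \Stretch(\tau_0)$, along each leaf $\tau_0$ increases at unit rate, so
\[
\tau_0(a_tp)-\tau_0(a_tq) = \tau_0(p) - \tau_0(q).
\]
The $1$-Lipschitz bound $|\tau_0(a_tp)-\tau_0(a_tq)| \le e^{-t}\ell(h) \to 0$ forces $\tau_0(p) = \tau_0(q)$, and continuity gives $\tau_0$ constant on $h$. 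Since a general horocyclic leaf $H$ is connected and $H \setminus \lambda_0$ is a union of such horocyclic arcs in spikes, $\tau_0$ is constant on all of $H$. Consequently, $F_0 \cap \mathcal{S}_0$ is saturated by horocyclic leaves, and because distinct horocyclic leaves are disjoint, each connected component of $F_0 \cap \mathcal{S}_0$ is a single horocyclic leaf.

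Next, I would construct $\gamma$ as a finite disjoint union of $C^1$ arcs, each contained in a horocyclic leaf. By a compactness argument, $F_0 \cap \lambda_0$ is covered by finitely many connected components of $F_0 \cap \mathcal{S}_0$, namely finitely many maximal horocyclic leaves $H_{p_1}, \dots, H_{p_k}$ through points of $F_0$ in minimal sublaminations with infinitely many leaves, together with single points on isolated closed leaves. For each isolated closed leaf $c$ of $\lambda_0$, I would supplement with a short $C^1$ arc through each point of $F_0 \cap c$ orthogonal to $c$ (which is a leaf of the horocyclic foliation in Thurston's convention, foliated by its points). Taking $\gamma$ to be the disjoint union of all these arcs, truncated to terminate just outside $\lambda_0$, yields a finite disjoint union of $C^1$ arcs, each contained in a leaf of the horocyclic foliation.

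The required properties then follow routinely. The equality $\gamma \cap \lambda_0 = F_0 \cap \lambda_0$ holds because each $H_{p_i} \subset F_0$ gives $H_{p_i} \cap \lambda_0 \subset F_0 \cap \lambda_0$, and the collection was chosen to cover $F_0 \cap \lambda_0$. Transversality and $C^1$-smoothness follow from the orthogonality of horocyclic leaves to $\lambda_0$ together with the $C^1$ character of the horocyclic foliation. Orientation and absence of backtracking come from the orientations of $\Sigma_0$ and $\lambda_0$. Completeness---meaning $\gamma$ meets every minimal sublamination of $\lambda_0$---holds because $\tau_0$ stretches at unit rate along every leaf of $\lambda_0$ and therefore cannot be constant on any component, so $F_0$ must meet each minimal sublamination.

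The main obstacle is the finiteness step in the construction: showing that only finitely many connected components of $F_0 \cap \mathcal{S}_0$ meet $F_0 \cap \lambda_0$. For a minimal sublamination $\mu_0$ with uncountably many leaves, $F_0 \cap \mu_0$ is typically Cantor-like, and one needs to argue finiteness from the local structure of the horocyclic foliation in a compact neighborhood of $\mu_0$ in $\mathcal{S}_0$---in particular from the fact that $F_0 \cap \mathcal{S}_0$ is the restriction of a level set of a $1$-Lipschitz function to a compact set together with the uniform transverse product structure of Thurston's foliation near $\mu_0$.
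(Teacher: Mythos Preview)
Your overall strategy matches the paper's: take the horocyclic leaves $\gamma_x$ through points of $Y_0 = F_0\cap\lambda_0$, show they meet $\lambda_0$ only inside $F_0$, and use compactness to reduce to finitely many. The endpoint computation $\tau_0(p)=\tau_0(q)$ via asymptoticity and the $1$-Lipschitz bound is correct and is exactly what the paper does.

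There is, however, a real gap at the sentence ``continuity gives $\tau_0$ constant on $h$.'' Knowing that the two endpoints of a single horocyclic gap arc $h$ agree says nothing about the interior of $h$, and in fact $\tau_0$ need \emph{not} be constant on horocyclic leaves in general: Corollary~\ref{cor: structure of tight maps} says only that $\tau_0$ is \emph{homotopic} to such a map. What the lemma actually needs is the weaker statement that $\tau_0$ is constant on $\gamma_x\cap\lambda_0$, and this does not follow from your endpoint equality alone, because for a Cantor set of intersection points most points are not gap endpoints, and distinct gaps are not a priori linked. The paper bridges this with a short measure/Lipschitz argument: since $\gamma_x\cap\lambda_0$ has zero length in $\gamma_x$, finitely many gaps $J_1<\cdots<J_m$ leave complement of total length $<\epsilon$; the $\tau_0$-values at the two endpoints of each $J_i$ agree, and the $1$-Lipschitz bound across the small complementary pieces gives $\sum|t_i-t_{i+1}|<\epsilon$. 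Letting $\epsilon\to 0$ yields constancy on $\gamma_x\cap\lambda_0$.

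On finiteness, you correctly flag this as the delicate point but do not resolve it. The paper's argument is short once the previous paragraph is in place: since $\gamma_x\cap\lambda_0\subset Y_0$ and two horocyclic leaves either coincide or are disjoint, the sets $\{\gamma_x\cap Y_0 : x\in Y_0\}$ partition $Y_0$; each is relatively open in $Y_0$ (transversality of the horocyclic foliation gives a local product structure near every $y\in\gamma_x\cap\lambda_0$), so compactness of $Y_0$ forces finitely many distinct $\gamma_x$. Your appeal to ``$F_0\cap\mathcal S_0$ is saturated by horocyclic leaves'' is not available, since that relies on the unjustified stronger claim.
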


    \begin{proof}
    Define $Y_0 = F_0\cap \lambda_0$.
    Given $x \in Y_0$, denote by $\gamma_x$ the leaf of the horocycle foliation in $\mathcal S_0$ containing $x$.
    We claim that $\gamma_x \cap \lambda_0\subset Y_0$.
    Indeed, for any horocyclic segment $J \subset \gamma_x$ joining points $y, z \in \lambda_0\cap \gamma_x$, it must be that $\tau_0(y) = \tau_0(z)$.  The easiest way to see this is to lift the situation to $\Sigma$, where the the corresponding leaves $g_y$ and $g_z$ of $\lambda = \pi_\Z\inverse (\lambda_0)$ are asymptotic in one direction (because they are joined by a horocyclic arc). %
    Since $\tau$ is continuous and isometric along leaves of $\lambda_0$ it follows that $\tau$-values of endpoints of horocyclic segments joining $g_y$ to $g_z$ coincide.
    Since $\gamma_x$ is a $C^1$ transversal and $\lambda_0$ has zero $2$-dimensional Lebesgue measure, Fubini gives that $\gamma_x \cap \lambda_0$ has $1$-dimensional Lebesgue measure $0$.
    Given $\epsilon>0$, there are finitely many horocyclic segments $J_1 \le ... \le J_m$ (for a linear order on $\gamma_x$) such that $\ell(\gamma_x\setminus \cup J_i) <\epsilon$.
    Let $t_i$ be the value of $\tau_0$ at the endpoints of $J_i$.
    Since $\tau_0$ is $1$-Lipschitz, $\sum_{i = 1}^{m-1} |t_i - t_{i+1}|<\epsilon$.
    Since $\epsilon$ was arbitrary, the claim that $\gamma_x \cap \lambda_0$ is contained in the same $\tau_0$-fiber follows.

    Consider the collection $\{\gamma_x \}_{x\in Y_0}$.  If $y \in Y_0\cap \gamma_x$, then $\gamma_y = \gamma_x$, and $\gamma_x \cap Y_0$ is open in $Y_0$.  By compactness, there are only finitely many such arcs and points, and we can take $\gamma$ to contain all such.
    \end{proof}

    The proof establishes the following structural result for tight maps.
    \begin{cor}\label{cor: structure of tight maps}
        Any tight map $\Sigmazero \to \R/c\Z$ with canonically stretched lamination $\lambda_0$ is homotopic in a snug train track neighborhood of
        $\lambda_0$ to a map whose fibers are leaves of the horocycle foliation, and
        the homotopy can be chosen to be constant on $\lambda_0$.
    \end{cor}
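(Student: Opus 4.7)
The plan is to construct a map $\tilde\tau$ on a snug train track neighborhood $\mathcal N$ of $\lambda_0$ whose fibers are leaves of the horocyclic foliation, and then to homotope $\tau_0$ to $\tilde\tau$ using a straight-line interpolation cut off by a bump function so that the homotopy stays supported in $\mathcal N$. First I would choose $\mathcal N$ to be a snug train track neighborhood equipped with the horocyclic foliation (defined on $\mathcal S_0$ and, near isolated closed leaves of $\lambda_0$, extended to a perpendicular tubular foliation, giving a $C^1$ foliation of $\mathcal N$ transverse to $\lambda_0$) whose transverse ties are arcs of this foliation. For $p \in \mathcal N$, I define $\tilde\tau(p) := \tau_0(x)$ for any $x \in \lambda_0$ on the tie through $p$. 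The argument in the proof of \Cref{lem: nice transversal} shows that the right-hand side is independent of the choice of $x$, so $\tilde\tau$ is well-defined and equals $\tau_0$ on $\lambda_0$; continuity follows from the $C^1$ regularity of the horocyclic foliation together with continuity of $\tau_0|_{\lambda_0}$.

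Next I would promote the difference $\tilde\tau - \tau_0$ to a genuine continuous $\R$-valued function $\delta: \mathcal N \to \R$ vanishing on $\lambda_0$. I do this by lifting to the $\Z$-cover: on $\pi_\Z^{-1}(\mathcal N)$ the function $\tau: \Sigma \to \R$ is defined, and the same horocycle-constant rule (using $\tau$-values on $\lambda$) produces a $\Z$-equivariant $\R$-valued lift $\tilde\tau$ of the downstairs map. The difference $\tilde\tau - \tau$ is then $\Z$-invariant and descends to the desired $\delta$. The essential input is that $\tau(y) = \tau(z)$ holds as real numbers (not merely modulo $c\Z$) whenever $y, z \in \lambda$ lie on the same lifted horocycle leaf, which is exactly what the asymptotic-leaves argument in the proof of \Cref{lem: nice transversal} provides via the isometric increase of $\tau$ along leaves of $\lambda$ in $\Sigma$.

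Finally, I would choose a smaller snug train track neighborhood $\mathcal N' \Subset \mathcal N$ together with a continuous bump function $\rho: \Sigma_0 \to [0,1]$ with $\rho \equiv 1$ on $\mathcal N'$ and $\rho \equiv 0$ outside $\mathcal N$, and set
\[
h_s(p) := \tau_0(p) + s\,\rho(p)\,\delta(p) \in \R/c\Z \qquad (p \in \mathcal N),
\]
extended by $h_s(p) := \tau_0(p)$ for $p \notin \mathcal N$. Then $h_s$ is continuous (matching across $\partial \mathcal N$ since $\rho$ vanishes there), starts at $h_0 = \tau_0$, equals $\tau_0$ on $\lambda_0$ for every $s$ because $\delta|_{\lambda_0} \equiv 0$, is supported in $\mathcal N$, and satisfies $h_1|_{\mathcal N'} = \tilde\tau|_{\mathcal N'}$, whose fibers are leaves of the horocyclic foliation. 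Renaming $\mathcal N'$ as the snug neighborhood in the statement completes the proof. The main obstacle is the continuity and well-definedness of $\tilde\tau$ across all the complementary regions of $\lambda_0$---spike neighborhoods, isolated closed leaves, and minimal sublaminations with uncountably many leaves---which is why it is crucial to organize the horocyclic foliation carefully on a train track neighborhood containing them all; the $\Z$-cover step is what rules out winding of $\tau_0 - \tilde\tau$ around the target circle along long horocyclic arcs deep in the spike regions.
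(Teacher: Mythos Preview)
Your proof is correct and follows the same approach as the paper, which states the corollary as an immediate consequence of the proof of \Cref{lem: nice transversal} without further argument. You have spelled out the details the paper leaves implicit: defining $\tilde\tau$ by propagating $\tau_0|_{\lambda_0}$ along horocyclic ties (well-defined by the asymptotic-leaves argument), lifting to the $\Z$-cover to realize the difference as an honest $\R$-valued function, and cutting off by a bump function to produce a homotopy supported in a snug neighborhood and constant on $\lambda_0$.
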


    \subsection{Chain proximality in a finite cover}

    Denote by $\lambda_0^{\min}$ the union of the minimal sublaminations of $\lambda_0$ so that $\lambda_0\setminus \lambda_0^{\min}$ consists of finitely many infinite chain recurrent leaves that spiral onto the minimal components.  

    Recall that for each $d\ge 1$, there is a $d$-sheeted cover $\pi_d:\Sigma_d\to \Sigmazero$ and a $1$-Lipschitz tight map $\tau_d : \Sigma_d \to \Z/dc\Z$  induced by the degree $d$ map $\Z/dc\Z \to \Z/c\Z$.
    Denote by $\lambda_d^{\min}\subset \lambda_d$ the minimal part of the canonical maximally stretched lamination for $\tau_d$. 

    \begin{lemma}
        For all $d\ge 1$, we have $\lambda_d^{\min} = \pi_d\inverse(\lambda_0^{\min})$ and $\lambda_d = \pi_d\inverse(\lambda_0)$.
    \end{lemma}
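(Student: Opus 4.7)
The plan is to first establish the full identity $\lambda_d = \pi_d^{-1}(\lambda_0)$ and then deduce the minimal-part identity $\lambda_d^{\min} = \pi_d^{-1}(\lambda_0^{\min})$ from the structure theory for orientable chain-recurrent geodesic laminations.

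Since $\pi_d : \Sigma_d \to \Sigma_0$ is a finite-degree local isometry, and, by construction, $\tau_d$ is compatible with $\tau_0$ in the sense that $\tau_0\circ \pi_d$ agrees with the reduction of $\tau_d$ modulo $c\Z$, the local Lipschitz constant of $\tau_d$ at any $\tilde x\in \Sigma_d$ equals the local Lipschitz constant of $\tau_0$ at $\pi_d(\tilde x)$. This yields $\Stretch(\tau_d) = \pi_d^{-1}(\Stretch(\tau_0)) = \pi_d^{-1}(\lambda_0)$, using our standing assumption $\Stretch(\tau_0) = \lambda_0([\tau_0])$. To upgrade this to the identity $\lambda_d = \Stretch(\tau_d)$ (i.e., the canonical stretch lamination, defined as the intersection of $\Stretch(\tau_d')$ over all tight $\tau_d'$ homotopic to $\tau_d$, equals $\Stretch(\tau_d)$ itself), I would invoke Theorem~\ref{L chain recurrent} and the surface case of \cite[Prop.~9.4]{GK:stretch}, which together characterize the canonical stretch lamination as the maximal chain-recurrent sublamination contained in the stretch locus of any tight representative. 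It then suffices to verify that $\pi_d^{-1}(\lambda_0)$ is chain-recurrent in $\Sigma_d$. This follows from chain-recurrence of $\lambda_0$ in $\Sigma_0$ via a pigeonhole lifting argument: any returning $\varepsilon$-pseudo-orbit of the geodesic flow in $\mathrm{T}^1\lambda_0$ based at $x$ lifts, starting from any preimage $\tilde x$ of $x$, to a chain in $\mathrm{T}^1\pi_d^{-1}(\lambda_0)$; concatenating at most $d$ such copies and applying the appropriate deck transformation produces a returning $\varepsilon$-pseudo-orbit based at $\tilde x$.

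For the minimal parts, I would use the structure theorem recalled earlier: each connected component of an orientable chain-recurrent lamination on a closed hyperbolic surface is the finite union of minimal sublaminations together with finitely many isolated chain-recurrent leaves spiralling onto them. Since $\pi_d$ is a finite covering, a leaf $\tilde \ell$ of $\pi_d^{-1}(\lambda_0)$ projects onto a leaf $\ell$ of $\lambda_0$; density of $\ell$ in its minimal component of $\lambda_0$ together with compactness of the fibres of $\pi_d$ implies that the closure of $\tilde \ell$ is itself a minimal sublamination of $\lambda_d$. Conversely the projection of any minimal sublamination of $\lambda_d$ is, by continuity, a minimal sublamination of $\lambda_0$. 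Combined with the obvious fact that isolated chain-recurrent leaves of $\lambda_0$ lift to isolated chain-recurrent leaves of $\lambda_d$, this yields the set-theoretic identity $\lambda_d^{\min}=\pi_d^{-1}(\lambda_0^{\min})$.

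The main technical step is the transfer of chain-recurrence from $\Sigma_0$ to $\Sigma_d$, which is then coupled with the characterization of the canonical stretch lamination via maximal chain-recurrent sublaminations; once these are in place the remainder reduces to dynamical bookkeeping concerning minimal components under a finite cover.
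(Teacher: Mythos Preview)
Your proof is more careful than the paper's, which handles both identities in two sentences by invoking only locality of the maximal-stretch condition under the local isometry $\pi_d$. You rightly observe that locality alone gives $\Stretch(\tau_d)=\pi_d^{-1}(\lambda_0)$, and that equating this with the \emph{canonical} lamination $\lambda_d=\bigcap_{\tau_d'}\Stretch(\tau_d')$ is a further step the paper leaves implicit.

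There is one soft spot in how you fill that step: the characterization you invoke---that $\lambda_d$ is the maximal chain-recurrent sublamination of the stretch locus of \emph{any} tight representative---is stronger than what Theorem~\ref{L chain recurrent} and \cite[Prop.~9.4]{GK:stretch} actually assert (they give chain-recurrence of $\lambda_d$ and existence of \emph{some} tight map realizing it, not maximality inside an arbitrary stretch locus). A more direct route: for any tight $\tau_d'$ homotopic to $\tau_d$, lift to a $d\Z$-equivariant $1$-Lipschitz $\tilde\tau_d':\Sigma\to\R$ and average over the deck group, setting $\bar\tau(x)=\tfrac{1}{d}\sum_{j=0}^{d-1}\bigl(\tilde\tau_d'(j\cdot x)-jc\bigr)$. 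This is $\Z$-equivariant and $1$-Lipschitz, hence descends to a tight map on $\Sigma_0$, so its stretch locus contains $\lambda_0$; since an average of $1$-Lipschitz functions is isometric along a geodesic only when every summand is, each leaf of $\pi_d^{-1}(\lambda_0)$ lies in $\Stretch(\tau_d')$. Intersecting over all $\tau_d'$ gives $\pi_d^{-1}(\lambda_0)\subset\lambda_d$, and the reverse inclusion is immediate from $\lambda_d\subset\Stretch(\tau_d)$. Your pigeonhole lifting of chain-recurrence is correct but becomes unnecessary for this lemma.

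Your treatment of the minimal parts matches the paper's and is fine.
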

    \begin{proof}
    The maximally stretched lamination $\lambda_d$ for $\tau_d$ is the preimage under $\pi_d$ of the maximally stretched lamination $\lambda_0$ for $\tau_0$, because the coverings are locally isometric and being maximally stretched is a local condition.  
    Each component of $\lambda_d^{\min}$ maps to a component of $\lambda_0^{\min}$, and each component of $\lambda_0^{\min}$ has preimage that is a union of components of $\lambda_d^{\min}$.
    \end{proof}

    With $\Y\subset \T^1\Sigma$ as in \S\ref{subsec: why chain prox},
    for every $d\ge 1$, denote by $\Y_d \subset \T^1\Sigma_d$ the image of $\Y$ under $\pi_{d\Z}: \Sigma \to \Sigma_d$.
    We denote by 
    \[\sigma_d: \Y_d \to \Y_d\]
    the first return mapping for the geodesic flow, i.e., $\sigma_d(x) = a_{dc} x$.
    By fiat, $\Y_0 = \Y_1$, $\Sigmazero = \Sigma_1$, $\sigma_0 = \sigma_1$, and so on.

    Define also, for each $d\ge 0$, $\mathcal Y_d^{\min}\subset \Y_d$ as the subset tangent to $\lambda_d^{\min}$.
    \medskip
    
    Lemma \ref{lem: nice transversal} gives us a nice transversal $\gamma$ to $\lambda_0$ with $\lambda_0 \cap \gamma = \lambda_0 \cap \tau_0\inverse(0)$. 
    Since the natural map $\Y_0 \to \lambda_0 \cap \gamma$ is a bi-Lipschitz homeomorphism,\footnote{That this map is bi-Lipschitz uses the properties of geodesic laminations in dimension $2$.} we can apply Theorem \ref{thm: chain prox invt equiv relation} to obtain, for each $\sigma_0$-minimal closed invariant set $\mathcal X_0 \subset \Y_0^{\min}$, a description of the $\sigma_0|_{\mathcal X_0}$-chain proximality equivalence classes on $\mathcal X_0$.  In particular, there are finitely many, and the corresponding finite partition of $\mathcal X_0$ is left invariant by $\sigma_0|_{\mathcal X_0}$.

    Since there are only finitely many components of $\lambda_0^{\min}$, the following is essentially a direct consequence.
    \begin{cor}\label{cor: finite cover}
        There is a $d\ge 1$ such that the connected components of $\lambda_d^{\min}\subset \Sigma_d$ are in bijection with the $\sigma_d|_{\Y_d^{\min}}$-chain proximal equivalence classes  given by Theorem \ref{thm: chain prox invt equiv relation}, i.e., for $x$ and $y \in \Y_d^{\min}$, $x$ is $\sigma_d$-chain proximal to $y$ if and only if they are tangent to the same component of $\lambda_d^{\min}$.  %
    \end{cor}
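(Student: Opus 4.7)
The plan is to apply Theorem \ref{thm: chain prox invt equiv relation} separately to each minimal sublamination $\mu_0 \subseteq \lambda_0^{\min}$, using the synchronous $C^1$ transversal $\gamma$ from Lemma \ref{lem: nice transversal} (whose intersection with $\lambda_0$ lies in a single $\tau_0$-fiber). This produces on each $\mu_0$ a finite collection of $\sigma_0$-chain proximality classes, each a finite union of snug partition members $\{L_i\}$. Because two points on distinct minimal sublaminations have $\sigma_0$-orbits that remain at positive distance, no small-jump chain can connect them, so chain proximality classes respect the decomposition of $\lambda_0^{\min}$ into its minimal components. Thus the total collection of classes $\{M_j\}$ is the disjoint union over all minimal $\mu_0$, and on each $\mu_0$ the classes are permuted by $\sigma_0$ with some finite order $n_{\mu_0}$.

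I would then define $d$ as a common multiple of two pieces of data: (a) the orders $n_{\mu_0}$, ensuring each individual class is $\sigma_0^d$-invariant; and (b) integers recording the $(\tau_0)_*$-winding in $\Z$ needed to distinguish distinct classes within a given $\mu_0$. The second ingredient is that whenever $M_j\ne M_k$ are distinct classes in the same $\mu_0$, any leaf connecting them carries non-trivial $\tau_0$-winding (otherwise the two points would already be connected via shrinking gaps, contradicting distinctness), and these windings are bounded, using the structure theory of minimal geodesic laminations in dimension two (a train track carrying $\mu_0$ has finitely many branches, so the relevant windings take finitely many values). With such $d$, the connected components of $\mu_d=\pi_d^{-1}(\mu_0)$ are in natural bijection with the chain proximality classes in $\mu_0$.

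The two directions of the bijection then follow. If $x,y\in\Y_d^{\min}$ lie in distinct components of $\lambda_d^{\min}$, these components are separated by a positive distance in $\Sigma_d$ (as $\mathcal N_0$ pulls back to a snug neighborhood with separated components in the cover), so the arbitrarily small jumps in an $\varepsilon$-interception cannot bridge them, ruling out $\sigma_d$-chain proximality. Conversely, if $x,y$ lie in the same minimal component $\nu\subseteq\mu_d$, then $\nu$ is itself minimal, and applying Theorem \ref{thm: chain prox invt equiv relation} to $\sigma_d|_{\nu\cap\Y_d}$ gives finitely many classes; to show there is only one, I would lift an $\varepsilon$-interception from the single downstairs class (via the covering analog of Lemma \ref{lem: chprox in Sigma vs Sigmazero}) to $\nu$, where the divisibility condition on $d$ guarantees that the lifted chain closes up coherently in $\Sigma_d$. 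The main obstacle is the verification in (b) --- that windings between distinct chain proximality classes are bounded --- which requires a careful train-track analysis of how the snug partition and horocyclic foliation interact with $\tau_0$ on each minimal component.
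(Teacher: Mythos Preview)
Your setup through step (a) matches the paper exactly: apply Theorem \ref{thm: chain prox invt equiv relation} to each minimal component of $\lambda_0^{\min}$, observe that classes in distinct minimal components cannot be chain proximal (definite distance), and choose $d$ so that $\sigma_0^d$ fixes every $\sigma_0$-class. The divergence is your step (b), which is both unnecessary and unproved---and you yourself flag it as the main obstacle.

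The paper bypasses (b) with a short suspension argument. The key observation is that $\pi_d$ restricts to a bi-Lipschitz bijection $\Y_d^{\min}\to\Y_0^{\min}$ conjugating $\sigma_d$ to $\sigma_0^d$, and that $\lambda_d^{\min}$ is (the tangent projection of) the suspension of $\sigma_d|_{\Y_d^{\min}}$. Since chain proximality for a bi-Lipschitz map agrees with chain proximality for any iterate (extend a $\sigma_0$-interception to length divisible by $d$ by following the orbit of $y$, then subsample; the Lipschitz constant controls the growth of jump sizes), the $\sigma_d$-classes in $\Y_d^{\min}$ are precisely the lifts of the $\sigma_0$-classes, and by the choice of $d$ each is $\sigma_d$-invariant. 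Now each class is clopen (Theorem \ref{thm: chain prox invt equiv relation}) and $\sigma_d$-invariant, so it suspends to a clopen sublamination of $\lambda_d^{\min}$, which is therefore a union of connected components. Since each class is also contained in a single component (positive distance between components in the compact $\Sigma_d$), the suspension is exactly one component. This gives the bijection directly, with no winding bookkeeping and no need to lift interceptions by hand.

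Your ``winding'' claim---that any leaf segment joining points of distinct classes $M_j,M_k$ in the same minimal $\mu_0$ carries nontrivial $\tau_0$-winding---is not clearly formulated (every leaf of a minimal $\mu_0$ is dense, so it passes near every class infinitely often with all possible windings), and the proposed train-track bound is not established. Drop (b) and replace your converse direction with the suspension argument above.
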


    \begin{proof}
        There is a definite distance between components of $\Y_0^{\min}$, so each 
chain proximal equivalence class for $\sigma_0|_{\Y_0^{\min}}$ is contained in a minimal
component, and hence is equal to one of the
chain proximal equivalence classes for $\sigma_0$ when restricted to that component. 
        Theorem \ref{thm: chain prox invt equiv relation} asserts that there are finitely
        many such, and they are permuted by $\sigma_0$.  
        For a suitable choice of $d$, $\sigma_0^d$ fixes each equivalence class. 
        
        Note that $\sigma_d: \Y_d^{\min} \to \Y_d^{\min}$ is isomorphic to $\sigma_0^d:
        \Y_0^{\min}\to \Y_0^{\min}$, and $\lambda_d^{\min}$ is the tangent projection of
        the suspension of $\sigma_d|_{\Y_d^{\min}}$. 
        Thus, each $\sigma_d$ chain proximality equivalence class in $\Y_d^{\min}$
        suspends to a sublamination of $\lambda_d^{\min}$, hence a union of minimal
        components. On the other hand as above each equivalence is contained in a
        component, so in fact its suspension is exactly a minimal component of  $\lambda_d^{\min}$.
    \end{proof}

        \begin{remark}\label{rmk: minimal filling lamination}
        If $\lambda_0^{\min}$ is {\em filling}, i.e., its complementary components are disks, then any lift to a finite cover is filling, hence also minimal.
        In that case it follows from Corollary \ref{cor: finite cover} that {\em all} pairs are chain-proximal. %
    \end{remark}

    \begin{remark}
		We note that existence of a $ d \geq 1 $ for which some minimal component $ (\mathcal X_0,\sigma_0) $ of $ (\Y_0^{\min},\sigma_0) $ lifts to more than one minimal component of $ (\Y_d^{\min},\sigma_d) $ is equivalent to the existence of a continuous rational eigenfunction for $ (\mathcal X_0,\sigma_0) $.
	\end{remark}
    
    \subsection{Isolated chain recurrent leaves}
    Now that we have understood the chain proximality relation on $\Y_0^{\min}$, we consider the role of the isolated leaves in $\lambda_0$.
    This will be easier to do  in the finite cover guaranteed by Corollary \ref{cor: finite cover} where every pair $x$ and $y$ in the same component of $\Y_d^{\min}$ is $\sigma_d$-chain proximal.
    
    The presence of isolated leaves in the maximal-stretch lamination $\lambda_d$ allows for the possibility that chain proximal equivalence classes for $\Y_d^{\min}$ could merge when considering the chain proximality relation for the first return mapping on all of $\Y_d= \T_+^1\lambda_d\cap \tau_d\inverse(0)$. 
    
    The following theorem asserts that $\sigma_d$-chain proximality is an equivalence relation on $\Y_d$ whose equivalence classes correspond to connected components of $\lambda_d$.

    \begin{theorem}\label{thm: chprox equiv chain recurrent}
        $\sigma_d$-chain proximality is an  equivalence relation on $\Y_d$. The corresponding partition by equivalence classes is $\{\Y_d \cap \T^1\mu_i\}$, where $\mu_1, ..., \mu_k$ are the connected components of $\lambda_d$.  
    \end{theorem}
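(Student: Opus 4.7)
The plan is to identify the chain proximality equivalence classes with $\T^1_+\mu_i \cap \Y_d$ in two directions: points within a single component are mutually chain proximal, and points in different components are not. This combines \Cref{cor: finite cover} (the minimal case), a shadowing lemma for isolated leaves, and the chain recurrence of $\lambda_d$ established in \Cref{L chain recurrent}. Since $\chprox$ is already reflexive and transitive (the latter from bi-Lipschitzness of $\sigma_d$, as noted after \Cref{def: chain proximal}), establishing these two inclusions simultaneously yields symmetry, hence equivalence.

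\textbf{Bridging lemma.} The core step is: if $\ell$ is an isolated chain recurrent leaf of $\lambda_d$ spiraling in forward time onto a minimal sublamination $\mathcal M$, then for any $x \in \ell \cap \Y_d$ and $y \in \T^1_+\mathcal M \cap \Y_d$, we have $x \chproxb y$. Lifting $\ell$ to $\mathbb H^2$, its forward endpoint agrees with the forward endpoint of a lift of a specific leaf $\ell' \subset \mathcal M$; by exponential asymptotic convergence of hyperbolic geodesics with a common endpoint, there exists $z^* \in \ell' \cap \gamma_d$ such that $d_{\Y_d}(\sigma_d^n(x), \sigma_d^n(z^*)) \to 0$ (using the horocyclic transversal from \Cref{lem: nice transversal} to synchronize return times). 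To show $x \chprox y$, we follow $x$'s orbit until the shadowing error is $<\epsilon/2$, jump to the corresponding iterate of $z^*$, and append an interception of $y$ by $z^*$ coming from \Cref{cor: finite cover}, with jumps chosen small enough to absorb the Lipschitz inflation incurred by conjugation with the shift $\sigma_d^n$. The reverse $y \chprox x$ is symmetric: intercept $z^*$ by $y$ inside $\mathcal M$, follow $z^*$'s orbit for many more steps, then jump to the matching iterate of $x$ on $\ell$. Backward-spiraling leaves are handled analogously via time-reversed shadowing.

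\textbf{Propagation and separation.} Within each connected component $\mu_i$ of $\lambda_d$, the chain recurrence of the geodesic flow on $\lambda_d$ yields a sequence of isolated chain recurrent leaves inside $\mu_i$ whose forward-spiraling endpoints link any two of its minimal sublaminations through a directed chain. Iterated application of the bridging lemma together with transitivity then gives a single chain proximality class inside $\T^1_+ \mu_i \cap \Y_d$. Conversely, the components $\mu_1, \ldots, \mu_k$ are disjoint closed subsets of the compact surface $\Sigma_d$, hence at positive pairwise distance $b > 0$, which lifts to $\Y_d$. For any $\epsilon < b$, an $\epsilon$-chain starting in $\T^1_+ \mu_i \cap \Y_d$ must remain there, since $\sigma_d$ preserves each $\T^1_+\mu_j \cap \Y_d$ and any cross-component jump has size at least $b$. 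This rules out chain proximality between distinct components and completes the identification of equivalence classes.

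\textbf{Main obstacle.} The delicate point is the propagation step: confirming that chain recurrence of $\lambda_d$ actually produces forward-directed isolated-leaf bridges between any two minimal sublaminations within a single connected component. This requires exploiting the structure of chain recurrent geodesic laminations on hyperbolic surfaces to produce a chain of isolated leaves with matching spiral directions that allow transitive application of the bridging lemma, and is where the two-dimensional geometry is essential.
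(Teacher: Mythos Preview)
Your overall strategy and the forward bridging lemma are correct and match the paper's approach closely. However, there is a genuine gap in the claim that ``backward-spiraling leaves are handled analogously via time-reversed shadowing.'' The definition of $\epsilon$-interception has a built-in forward-time synchronization: a chain of length $m$ starting at $a$ must terminate precisely at $\sigma_d^m(b)$. When $\ell$ backward-spirals onto $\mathcal M'$, the relation $d(\sigma_d^{-n}(p),\sigma_d^{-n}(w^*))\to 0$ for some $w^*\in\mathcal M'$ cannot be exploited the way forward asymptotics were. If you try to intercept $p$ from $y'\in\mathcal M'$ by first intercepting $w^*$ and then jumping to the orbit of $p$, the jump lands at an iterate $\sigma_d^{k}(p)$ whose exponent $k$ lags the chain length by a fixed positive amount, and following the orbit of $p$ further only preserves that lag --- the index and the exponent never coincide. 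Equivalently, chain proximality for $\sigma_d^{-1}$ does not obviously transfer to chain proximality for $\sigma_d$: reversing an interception and conjugating by $\sigma_d^m$ inflates the jump sum by $L^m$, with $m=m(\epsilon)$ unbounded.

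The paper's fix is short but genuinely different from a time-reversal: first use your forward bridging (together with \Cref{cor: finite cover} inside the forward target $\mathcal M^+$) to show that any two points on the \emph{same} isolated leaf $\ell$ are mutually chain proximal --- both are forward-proximal to points of $\mathcal M^+$, which are chain proximal to each other. Then, for $x\in\mathcal M'$ and $p\in\ell$, use minimality of $\mathcal M'$ to find $m$ with $\sigma_d^{-m}(p)$ arbitrarily close to $x$, and prepend a single jump from $x$ onto an interception of $p$ by $\sigma_d^{-m}(p)$ that you already have. This gives $x\chprox p$. Note that this argument routes through the \emph{forward} end of $\ell$ even to handle the backward end; there is no purely ``time-reversed'' version. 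With this repair, your propagation via the strongly connected directed graph of minimal components goes through exactly as you outlined.
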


    \begin{proof}
        If $\Y_d^{\min} = \Y_d$, i.e., $ \lambda_0^{\min}=\lambda_0$, then this is just Corollary \ref{cor: finite cover}.

        There is a finite directed graph whose vertices are the connected components of $ \lambda_d^{\min}$, and there is a directed edge from $\mu^-$ to $\mu^+$ if there is an isolated leaf  $g\subset \lambda_d$ whose past accumulates onto $\mu^-$ and whose future accumulates onto $\mu^+$.
        Since $\lambda_d$ is chain recurrent, the connected components of $\lambda_d$ correspond to (directed, recurrent) components of this graph (see \cite[\S8]{Thurston:stretch}).

        For such an isolated, infinite leaf  $g$, let $\mathcal Z =\T^1g \cap \Y_d$, $\mathcal Z^- = \T^1\mu^-\cap \Y_d$,  and $\mathcal Z^+ = \T^1\mu^+\cap \Y_d$.
        By the previous paragraph, to show that $x\chproxb y$ whenever $x$ and $y\in \Y_d$ project to the same connected component of $\lambda_d$, it suffices to show that $x\chprox y$ whenever
        \begin{enumerate}
            \item $x \in \mathcal Z$ and $y \in \mathcal Z^+$: in this case $x$ is $\sigma_d$-proximal to some $z \in \mathcal Z^+$, hence $x \chproxb z$. Since $z \chproxb y$, we get $x \chproxb y$.
            \item $x \in \mathcal Z$ and $y \in \mathcal Z$: there is some $m \in \Z$ such that $y = \sigma_d^m(x)$.
        Then $x$ is proximal to some $z \in \mathcal Z_+$ and so $y$ is proximal to $\sigma_d^m(z)$.  Then $x\chproxb z \chproxb \sigma_d^m(z) \chproxb y$.
      \item
$x \in \mathcal Z^-$ and $y \in \mathcal Z$: let $\epsilon>0$ be given. Since $\mu_-$ is
        minimal, $\{\sigma_d^{-m}(y):m\ge 0\}$ is dense in $\mathcal Z^-$. Let $m\ge 1$ be
        such that $d(x,\sigma_d^{-m}(y))<\epsilon/2$. Since by (2) we have
        $\sigma_d^{-m}(y)\chprox y$, we can add one step from $x$ to an interception of
        $y$ by $\sigma_d^{-m}(y)$, and conclude $x\chprox y$.
        \end{enumerate}

    This proves that for every connected component $\mu_i$ of $\lambda_d$, every pair $x,y \in \T^1\mu_i\cap \Y_d$ satisfies $x\chproxb y$.
    Since there is some definite distance between two connected components of $ \lambda_d $ in $ \Sigma_d $, no two $x\in \T^1\mu_i\cap \Y_d$ and $y \in \T^1\mu_j\cap \Y_d$ can have $x\chprox y$ if $i\not=j$, concluding the proof of the theorem.
    \end{proof}

    \subsection{Chain-proximality in $\Sigma$ revisited} 
    Recall that a \emph{weak component} $\mu\subset \lambda \subset \Sigma$ is a sublamination with the property that the $\epsilon$-neighborhood of $\mu$ is connected for every $\epsilon>0$. 
    Recall from \S\ref{subsec: why chain prox} that the chain proximality relation is defined on $\T^1_+\lambda$ for the time $c$ map $a_c$ for the geodesic flow, and Lemma \ref{lem: chprox in Sigma vs Sigmazero} says, for points $x, y \in \Y$, that ``$x\chprox y$ for $a_c$'' is equivalent to ``$\pi_\Z(x) \chprox \pi_\Z(y)$ for $\sigma_0$.''
		
	\begin{cor}\label{cor: chprox and weak components}
			Two points in $\Y=\T^1_+\Sigma \cap \tau^{-1}(0)$ are chain-proximal if and only if they are contained in the same weak connected component of $\T^1\lambda$.
	\end{cor}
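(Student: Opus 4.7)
The proof descends to the finite cover $\Sigma_d$ from \Cref{cor: finite cover}, applies \Cref{thm: chprox equiv chain recurrent} there, and lifts the conclusion back to $\Sigma$ via the intermediate $\Z$-cover $\pi_{d\Z}: \Sigma \to \Sigma_d$. I will first establish that for $x, y \in \Y$, the relation $x \chprox y$ in $\Sigma$ (for $a_c$) is equivalent to $\pi_{d\Z}(x) \chprox \pi_{d\Z}(y)$ in $\Sigma_d$ (for $a_c$) --- a verbatim repeat of the proof of \Cref{lem: chprox in Sigma vs Sigmazero} with $\pi_{d\Z}$ replacing $\pi_\Z$, using the bijection $\pi_{d\Z}|_\Y : \Y \to \Y_d$ (valid because $\pi_{d\Z}^{-1}(\Y_d) = \bigsqcup_{k \in \Z} a_{kdc}\Y$) together with the local isometry property of $\pi_{d\Z}$. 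Next I will bridge $a_c$-chain-proximality on $\T^1_+ \lambda_d$ with $\sigma_d = a_{dc}$-chain-proximality on $\Y_d$: a $\sigma_d$-chain refines into an $a_c$-chain by inserting length-$c$ geodesic segments (the new jumps being zero), and conversely, since distinct connected components of $\lambda_d$ are at positive distance in the compact $\Sigma_d$, sufficiently fine $a_c$-chains cannot cross between components. By \Cref{thm: chprox equiv chain recurrent}, both notions of chain-proximality on $\Y_d$ share the equivalence classes $\T^1\nu \cap \Y_d$, with $\nu$ ranging over the connected components of $\lambda_d$. Hence two points $x, y \in \Y$ are chain-proximal if and only if $\pi_{d\Z}(x)$ and $\pi_{d\Z}(y)$ lie in a common connected component of $\lambda_d$.

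It then remains to identify the partition of $\Y$ by pullback of connected components of $\lambda_d$ with the partition by weak components of $\lambda$. For the easy direction, if $x, y$ share a weak component $\mu$ of $\lambda$, then $\pi_{d\Z}(\mu) \subset \lambda_d$ has connected $\epsilon$-neighborhoods in $\Sigma_d$ for every $\epsilon > 0$ (by pushing forward the connected $\epsilon$-neighborhood of $\mu$ under the local isometry $\pi_{d\Z}$), and hence lies in a single connected component of $\lambda_d$. For the converse, I will show that with $d$ chosen as in \Cref{cor: finite cover}, the preimage $\pi_{d\Z}^{-1}(\nu)$ of each connected component $\nu$ of $\lambda_d$ is itself connected in $\Sigma$; being a connected sublamination it trivially has connected $\epsilon$-neighborhoods, and since it sits at uniform positive distance from the other preimages $\pi_{d\Z}^{-1}(\nu')$ (as $\nu \ne \nu'$ are separated in the compact $\Sigma_d$), it is automatically maximal with this property, hence a weak component of $\lambda$.

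The main obstacle is this final connectedness claim, which amounts to a monodromy statement: the $d\Z$-holonomy of each connected component of $\lambda_d$ in $\Sigma_d$ is trivial for $d$ as in \Cref{cor: finite cover}. As alluded to in footnote 4, this can be established via covering-space theory applied to a snug train-track neighborhood of $\lambda_d$ in $\Sigma_d$; the triviality of the monodromy follows from the choice of $d$ stabilizing chain-proximality classes on $\Y_0^{\min}$ under $\sigma_0^d$, which equivalently trivializes the $\Z$-holonomy of the first-return dynamics along each connected component of $\lambda_d$.
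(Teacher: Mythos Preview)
Your reduction to $\Sigma_d$ and appeal to \Cref{thm: chprox equiv chain recurrent} is the right starting point, and indeed matches the paper's setup. The divergence is in the final step. The paper does \emph{not} attempt to show that $\pi_{d\Z}^{-1}(\nu)$ is connected. Instead, it takes the $\varepsilon$-interception of $\pi_{d\Z}(x)$ by $\pi_{d\Z}(y)$ supplied by \Cref{thm: chprox equiv chain recurrent}, lifts it to $\T^1\Sigma$ (as in \Cref{lem: chprox in Sigma vs Sigmazero}), and applies \Cref{lem:epsilon chain slack} to straighten the broken geodesic into a single arc $\alpha^\varepsilon$ that stays in the $\kappa_c\varepsilon$-neighborhood of $\T^1\lambda$ and runs from near $y$ to near $a_{cdm}x$. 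That arc is the connected path witnessing that $x$ and $y$ lie in the same weak component. This is short, and it uses only tools already in hand.

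Your alternative route---showing $\pi_{d\Z}^{-1}(\nu)$ is connected via a monodromy argument---has a genuine gap. The assertion that ``the choice of $d$ stabilizing chain-proximality classes on $\Y_0^{\min}$ under $\sigma_0^d$ \ldots\ trivializes the $\Z$-holonomy'' conflates two different coverings. The choice of $d$ in \Cref{cor: finite cover} controls the \emph{finite} cover $\Sigma_d\to\Sigma_0$: it ensures each chain-proximality class is $\sigma_0^d$-invariant, so that components of $\lambda_d^{\min}$ match chain-proximality classes. It says nothing directly about surjectivity of $\pi_1(\mathcal N(\nu))\to\Z$ for the \emph{infinite} cover $\Sigma\to\Sigma_d$, which is what connectedness of $\pi_{d\Z}^{-1}(\nu)$ requires. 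To close this gap you would need to show that $\nu$ does not split in \emph{any} further finite cover $\Sigma_{dm}\to\Sigma_d$, which amounts to proving that $\sigma_0^d$-chain-proximality and $\sigma_0^{dm}$-chain-proximality coincide for every $m\ge1$. That is true (a bi-Lipschitz coarsening/refinement argument does it), but it is a separate lemma you have not supplied, and it is essentially the dynamical content you are trying to extract. The paper's use of \Cref{lem:epsilon chain slack} sidesteps this entirely.

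A minor point: your analogue of \Cref{lem: chprox in Sigma vs Sigmazero} for $\pi_{d\Z}$ should relate $a_{dc}$-chain-proximality on $\Y$ with $\sigma_d$-chain-proximality on $\Y_d$, not $a_c$; the deck group of $\Sigma\to\Sigma_d$ acts by $\tau$-shifts of $dc$, so the correct first-return time is $dc$.
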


	\begin{proof}
        With $d$ as in Theorem \ref{thm: chprox equiv chain recurrent} and Corollary \ref{cor: chprox and weak components}, 
        it suffices to show that $\pi_{d\Z}:\Sigma \to \Sigma_d$ induces a bijection between the weak connected components of $\lambda$ and the connected components of $\lambda_d$. 

		Clearly, one weak connected component of $\lambda$ cannot project onto two components of $\lambda_d$. On the other hand, suppose $x,y \in \Y$ are two points which project into the same connected component of $\lambda_d$. By \Cref{thm: chprox equiv chain recurrent}, $x \chproxb y$ and therefore for all $\varepsilon>0$ there exists an $\varepsilon$-interception of $\pi_{d\Z}(x)$ by $\pi_{d\Z}(y)$, consisting of $m$ geodesic arcs, each of length $cd$, starting at $\pi_{d\Z}(y)$ and ending at $\pi_{d\Z}(\sigma_d^m(x))=\pi_{d\Z}(a_{cdm}x)$. Lift this quasi-orbit to $\T^1\Sigma$ beginning at $y$ and terminating at $a_{cdm}x$, as in Lemma \ref{lem: chprox in Sigma vs Sigmazero}. 
		
		Using \Cref{lem:epsilon chain slack}, there exists a geodesic arc $\alpha^\varepsilon$, beginning $\kappa_c\varepsilon$-close to $y$ and ending $\kappa_c\varepsilon$-close to $a_{cdm}x$, which is completely contained in the $\kappa_c \varepsilon$-neighborhood of $\T^1\lambda$.
		Therefore, $x$, $a_{cdm}x$, and $y$ are in the same connected component of the $\kappa_c \varepsilon$-neighborhood of $\T^1\lambda$. Since $\varepsilon>0$ can be taken arbitrarily small we conclude that $x$ and $y$ lie in the same weak connected component of $\lambda$, concluding the proof.
		\end{proof}

	\subsection{Arcs and shifts revisited}\label{subsec: arcs and shifts revisit}

	In \S\ref{Sec:Slack of Paths} we described the relation between the slack of geodesic rays and the shift in the geodesic direction of horocycle accumulation points. We may give now a first complete description of these shift sets. %

	\begin{prop}\label{prop:Zxy closure of s(Axy)}
		For all $ x,y \in \T^1_+\lambda $ with $ \tau(x)=\tau(y) $ 
		$$ \Zxy{x}{y}=\overline{\s(\Axy{x}{y})}. $$
	\end{prop}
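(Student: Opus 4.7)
My plan is to prove the equality by two inclusions, both grounded in \Cref{lem:slack gives points in Nxbar}, which translates membership in $\overline{Nx}\cap Ay$ into slacks of forward rays based near $y$ and forward-asymptotic to $Ax$. First observe that $\Zxy{x}{y}$ is closed in $\R$, being the preimage of the closed set $\overline{Nx}$ under the continuous map $t\mapsto a_ty$; hence the inclusion $\overline{\s(\Axy{x}{y})}\subseteq\Zxy{x}{y}$ reduces to $\s(\Axy{x}{y})\subseteq\Zxy{x}{y}$.

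For this forward inclusion, fix $Az\in\Axy{x}{y}$ with $T=\s(Az)$. Because $Az$ is backward-asymptotic to $Ay$, long past segments of $Ay$ come within $\varepsilon_R\to 0$ of corresponding segments of $Az$; concatenating an $Ay$-backward segment with a small jump onto $Az$ followed by the future of $Az$ yields a broken ray of total slack $T+O(\varepsilon_R)$ which \Cref{lem:epsilon chain slack} straightens to a genuine forward ray $A_+y_R$ forward-asymptotic to $Ax$. The challenge is that this ray is naturally based far in the past of $y$. To produce base points converging to $y$, I would use the $\Z$-deck action together with the chain-proximality structure of the weak component of $y$ from \Cref{thm: chprox equiv chain recurrent} and \Cref{cor: chprox and weak components}, which provides $\Z$-orbits of chain-proximal representatives accumulating at $y$; performing the above construction on these alternate representatives and shifting by the $\Z$-action yields $y_m\to y$ with $\s(A_+y_m)\to T$, so that $T\in\Zxy{x}{y}$ by \Cref{lem:slack gives points in Nxbar}.

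For the reverse inclusion $\Zxy{x}{y}\subseteq\overline{\s(\Axy{x}{y})}$, fix $t\in\Zxy{x}{y}$ and apply \Cref{lem:slack gives points in Nxbar} to obtain $y_m\to y$ with $\s(A_+y_m)\to t$ and $Ay_m$ forward-asymptotic to $Ax$. Each line $Ay_m$ has forward endpoint equal to that of $Ax$ but only approximately the backward endpoint of $Ay$. Using a simultaneous small perturbation by $u_m\in U$ (to fix the backward endpoint onto $\xi_y^-$) and a compensating small $n_m\in N$ (to restore forward alignment onto $\xi_x^+$), one obtains a point $z_m$ with $Az_m\in\Axy{x}{y}$ and $z_m\to y$. \Cref{lem:epsilon chain slack} applied to the concatenation of $A_-z_m$ with $Ay_m$ with $A_+z_m$ then bounds the discrepancy $|\s(Az_m)-\s(A_+y_m)|$ by a constant multiple of the perturbation size, which tends to $0$. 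Hence $\s(Az_m)\to t$, so $t\in\overline{\s(\Axy{x}{y})}$.

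The hard part in both directions is the simultaneous matching of the two asymptotic conditions (backward-asymptotic to $Ay$ and forward-asymptotic to $Ax$). In the forward inclusion, bringing the base point back near $y$ is what forces the use of the chain-proximality framework developed in Sections \ref{sec:chain prox min}--\ref{Section:Synchronous Transversal}; in the reverse inclusion, executing the two-parameter $U\times N$ correction in a controlled way rests on the local Bruhat picture of \Cref{lem: slack computes delta projection}, with the slack error estimated via \Cref{lem:epsilon chain slack}. Once these geometric perturbation steps are in place, both inclusions follow cleanly from the slack--$\overline{Nx}$ dictionary.
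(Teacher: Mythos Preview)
Your reverse inclusion is essentially the paper's argument: given $t\in\Zxy{x}{y}$, \Cref{lem:slack gives points in Nxbar} supplies rays $A_+y_m$ forward-asymptotic to $Ax$ with $y_m\to y$ and $\s(A_+y_m)\to t$, and prepending $A_-y$ (your ``$U\times N$ correction'' is just one way to say this) and straightening via \Cref{lem:epsilon chain slack} produces lines in $\Axy{x}{y}$ with slack approaching $t$. That part is fine.

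The forward inclusion has a genuine gap. You correctly identify the difficulty: after building a ray forward-asymptotic to $Ax$ with slack $\approx T$, its base point sits far in the past of $y$, and you propose to bring it back to $y$ via the $\Z$-deck action together with chain proximality in the weak component of $y$. But the $\Z$-shift moves \emph{both} ends. If $n\in d\Z$ is chosen so that $n.(a_{-s_1}z)$ is close to $y$ (this is where minimality of the component containing $y$ is used, not chain proximality), then the forward end of $n.Az$ is asymptotic to $n.Ax$, not to $Ax$. Your proposal does not explain how to re-target the forward asymptote back onto $Ax$. This is precisely where the paper deploys chain proximality, and it does so on the $x$ side: the points $(-m).a_{cm}x$ and $(-(n+m)).a_{c(n+m)}x$ both lie in $\T^1_+\tilde\mu_x\cap\tau^{-1}(0)$, hence are chain-proximal by \Cref{cor: chprox and weak components}; an $\varepsilon$-interception between them, straightened by \Cref{lem:epsilon chain slack}, yields a small-slack ray $\beta$ which, after translating by $(n+m)$, redirects the forward end from near $n.a_{cm}x$ onto $A_+x$. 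Concatenating $n.A_{[-s_1,s_2]}z$ with $(n+m).\beta$ and straightening then gives the desired ray based near $y$, asymptotic to $A_+x$, with slack close to $T$. Invoking chain proximality only within the weak component of $y$, as you do, addresses the wrong end of the problem.

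A secondary point: the paper first treats the case where $x$ and $y$ project into minimal components of $\lambda_d$ (so that the close-return step on the $y$ side and the chain-proximality step on the $x$ side are available in the clean form above), and then reduces the general case---where $x$ or $y$ may lie on an isolated chain-recurrent leaf---to this one using the identities $\Axy{x'}{y'}=\Axy{x}{y}$ and $\Zxy{x'}{y'}=\Zxy{x}{y}$ for suitable asymptotic replacements $x',y'$. Your sketch does not mention this reduction, though it is a routine addendum once the minimal case is in hand.
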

	\begin{proof}		
		First, note that applying $ a_{-\tau(x)} $ to $ x $ and $ y $ does not change $ \Zxy{x}{y} $ or $ \Axy{x}{y} $ and hence we may assume that $ x,y \in \tau^{-1}(0) $. 
		
		Let $ d $ be the degree of the finite cover discussed in \Cref{cor: finite cover}. Let us assume at first that the points $x$ and $ y $ project into minimal components $ \mu_x $ and $\mu_y$ of $ \lambda_d $, respectively. Denote by $\T^1\tilde{\mu}_x$ and $\T^1\tilde{\mu}_y$ the corresponding lifts to $\T^1\Sigma$ containing $x$ and $y$, and note that $ d\Z $ preserves these lifts.
        \medskip

        The basic idea is to use minimality of $\sigma_d$ on $\mu_y\cap \tau_0\inverse(0)$ and $\sigma_d$-chain proximality between any two points on $\mu_x \cap \tau_0\inverse(0)$ to chain together the past of $y$ with the future of $x$ using a large segment of a large translate of $Az$ (accounting for most of its slack); see Figure \ref{fig: Zxy=s(Axy)}.  Lemma \ref{lem:epsilon chain slack} gives $A$-orbits in $\Axy xy$ with slack approaching $T$ and containing points approaching $y$, so applying Lemma \ref{lem:slack gives points in Nxbar} gives that $T \in \Zxy xy$.
        \medskip
        
		More precisely, let $ Az \in \Axy{x}{y} $ be any arc and denote $T=\s(Az)$. Fix some $ \varepsilon > 0 $. 
		By the minimality of $ \mu_y $ we know that
		\[ \liminf_{n \in d\Z \;,\; n\to \infty}d\left(n.(a_{-cn}y),y\right)= 0. \]
		Since $ Az $ is backward asymptotic to $ Ay $ there exist arbitrarily large $ s_1 >0 $ and $n \in d\Z $ satisfying
		\[ d\left(n.(a_{-s_1}z),y\right)<\varepsilon. \]
		Pick sufficiently large $ s_1,s_2 > 0 $ such that additionally
		\[ \left| \s(A_{[-s_1,s_2]}z)-T \right| < \varepsilon. \]
		Since $ Az $ is forward asymptotic to $ Ax $, one may require that $ s_2 $ additionally satisfies 
		\[ d(a_{s_2}z,a_{cm}x)<\varepsilon \quad \text{for some large } m \in d\Z. \]
				
		Notice that both $ (-m).a_{cm}x $ and $(-(n+m)).a_{c(n+m)}x$ are contained in $ \T^1\tilde{\mu}_x\cap \tau^{-1}(0) $. By \Cref{cor: finite cover}, we know $ (-m).a_{cm}x \chproxb (-(n+m)).a_{c(n+m)}x $. This implies that there exists an $ \varepsilon $-interception from $ (-m).a_{cm}x $ to $(-(n+m)).a_{c(n+m)}x$. \Cref{lem:epsilon chain slack} implies there exists a geodesic ray $ \beta $ beginning $ \kappa_c \varepsilon $-close to $ (-m).a_{cm}x $, asymptotic to $ (-(n+m)).A_+x $ and having 
		slack $ 0 \leq \s(\beta) < \kappa_c\varepsilon $. 		
		
		Now consider the geodesic ray $ \alpha $ constructed by connecting $ n.A_{[-s_1,s_2]}z $ with $ (n+m).\beta $. Notice that the terminal point of $ n.A_{[-s_1,s_2]}z $ is $n.a_{s_2}z$ which is $(1+\kappa_c)\varepsilon$-close to $n.a_{cm}x$, the initial point of $(n+m).\beta$, see \cref{fig: Zxy=s(Axy)}. Invoking \Cref{lem:epsilon chain slack} once more, we are ensured that $ \alpha $ begins $ \kappa_c(1+\kappa_c) \varepsilon $-close to $ y $, is asymptotic to $ A_+x $ and has slack satisfying
		\[ \left| \s(\alpha)-\s(n.A_{[-s_1,s_2]}z)-\s((n+m).\beta) \right|<\kappa_c\varepsilon, \]
		and thus also
		\[ \left| \s(\alpha)-T \right|<(1+2\kappa_c+\kappa_c^2)\varepsilon. \]
		Having $ \varepsilon $ arbitrary, we conclude from \Cref{lem:slack gives points in Nxbar} that $ T \in \Zxy{x}{y} $. Since $\Zxy{x}{y}$ is closed we thus have $\Zxy{x}{y} \supseteq \overline{\s(\Axy{x}{y})}$.
		
		\begin{figure}
			\centering
			\includegraphics[width=0.98\linewidth]{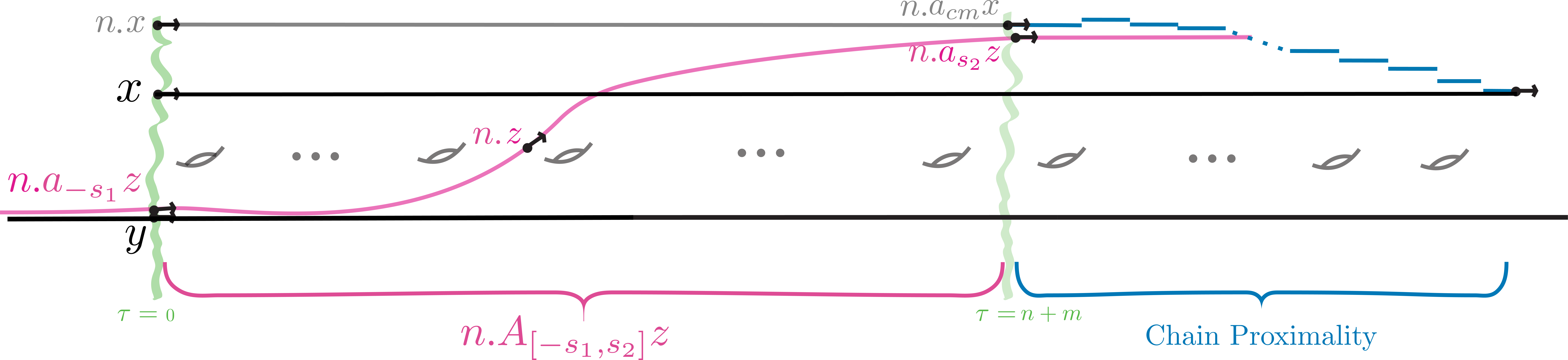}
			\caption{}
			\label{fig: Zxy=s(Axy)}
		\end{figure}
		
		The other inclusion, $ (\subseteq) $, follows from \Cref{lem:slack gives points in Nxbar} by extending the one-sided geodesic rays $ \alpha_m $ back towards $ A_-y $ making them into elements of $ \Axy{x}{y} $ with slack equal to $\s(\alpha_m)$ up to an arbitrarily small error as $ m\to 0 $. 
		\medskip
		
		Now we consider the case where either $x$ or $y$ do not  project into a minimal component of $\lambda_d$. Recall that such a case corresponds to points which are asymptotic, in both forward and backward time, to leaves of minimal components. We claim that if $y'$ is any point in $\T^1_+\lambda \cap \tau^{-1}(0)$ which is backward asymptotic to $Ay$, and $x'$ any point $\T^1_+\lambda\cap \tau^{-1}(0)$ which is forward asymptotic to $Ax$ then
		\[ \Zxy{x'}{y'}=\Zxy{x}{y} \quad \text{and}\quad \Axy{x'}{y'}=\Axy{x}{y}, \]
		thus reducing the proof to the case already proven. 
		
		The fact that $\Axy{x'}{y'}=\Axy{x}{y}$ follows from the definition, since both sets contain those arcs connecting the past of $y$ to the future of $x$. 
		
		For the other identity, since $\tau(a_tx')=\tau(a_tx)=\tau(x)+t$ for all $t \in \R$ and $A_+x'$ is asymptotic to $A_+x$, we conclude that $x$ and $x'$ are asymptotic, that is, $x' \in Nx$. Hence $\Zxy{x'}{y'}=\Zxy{x}{y'}$. On the other hand, $Ay'$ and $Ay$ project onto two leaves of the same minimal component in $\lambda_d$, therefore by \Cref{cor: finite cover} and \Cref{lem:chain prox implies inclusion in Nxbar} we know that $\overline{Ny}=\overline{Ny'}$ and hence $\overline{Na_ty}=\overline{Na_ty'}$ for all $t$. This implies that $a_ty \in \overline{Nx}$ if and only if $a_ty' \in \overline{Nx}$, or in other words that $\Zxy{x}{y'}=\Zxy{x}{y}$. 
	\end{proof}

    \begin{cor}\label{Cor: Zxx is ray if infinite leaf}
		Let $ x \in \Y $ be any point tangent to a weak component $\mu$ of $\lambda$ that is not a periodic line. %
        Then $ \Zxy{x}{x}=[0,\infty) $.
	\end{cor}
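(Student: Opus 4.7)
The set $\Zxy{x}{x}$ is a closed sub-semigroup of $[0,\infty)$ containing $0$, so it suffices to show that it contains positive values tending to $0$; any such closed sub-semigroup is dense in $[0,\infty)$, hence equal to it. I will split into two cases based on the cardinality of the leaves of $\mu$.

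If $\mu$ has only finitely many leaves, then since $\mu$ is not a periodic line, at least one leaf of $\mu$ must be infinite and hence chain recurrent. The proof of \Cref{thm: chain recurrent leaf} then applies essentially verbatim to $\mu$ itself --- that argument uses only that $\T^1_+\mu$ contains a finite set of periodic orbits organized into a slack graph together with at least one chain-recurrent infinite leaf, and we can apply \Cref{lem: non-trivial accumulation} to a zero-slack loop --- yielding $\Zxy{x}{x}=[0,\infty)$.

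If $\mu$ has infinitely many leaves, then $\mu\cap\Y$ is uncountable, coming from the uncountably many infinite leaves of the minimal sublamination in $\mu$. I pick $z\in\mu\cap\Y$ with $z\neq x$ and such that $Az$ is not asymptotic to $Ax$ in either time direction; this excludes only a countable set of $z$, since the leaves of $\lambda$ sharing a prescribed endpoint at infinity with $Ax$ form an at most countable family. By \Cref{cor: chprox and weak components} and \Cref{lem: chprox in Sigma vs Sigmazero}, $x\chproxb z$ for the time-$c$ map on $\T^1_+\lambda$. For each small $\epsilon>0$, I build a broken geodesic pseudo-orbit by concatenating, in order, the backward ray $\{a_tx:t\leq 0\}$, an $\epsilon$-interception of $z$ by $x$ (terminating at $a_{cm}z$), the $a_{cm}$-shift of an $\epsilon$-interception of $x$ by $z$ (terminating at $a_{c(m+k)}x$), and the forward ray $\{a_{c(m+k)+t}x:t\geq 0\}$. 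Every arc lies in $\T^1_+\lambda$ and has zero slack by \eqref{eqn: zero slack in lambda}, while the total jump sum is $O(\epsilon)$. \Cref{lem:epsilon chain slack} (applied with rays at both ends) then produces a bi-infinite geodesic $A\zeta_\epsilon\in\Axy{x}{x}$ (asymptotic at both ends to $Ax$ via the two rays) lying Hausdorff-close to the pseudo-orbit, with slack of order $\epsilon$.

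It remains to verify $\s(A\zeta_\epsilon)>0$ for $\epsilon$ sufficiently small. Since $Az$ is non-asymptotic to $Ax$ in both directions, for an appropriate $m$ the point $a_{cm}z$ lies at definite distance $\delta>0$ from the orbit $Ax$; Hausdorff-closeness then forces $A\zeta_\epsilon$ to deviate from $Ax$ near the corresponding point by at least $\delta/2$, so $A\zeta_\epsilon\neq Ax$. On the other hand, $Ax$ is not a closed orbit because $\tau(a_tx)=\tau(x)+t$ along any orbit in $\T^1_+\lambda$, and any zero-slack element of $\Axy{x}{x}$ must by \eqref{eqn: zero slack in lambda} be a leaf of $\lambda$ whose lift in $\T^1\bH^2$ shares both endpoints at infinity with that of $Ax$, forcing it to equal $Ax$. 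Hence $A\zeta_\epsilon\neq Ax$ forces $\s(A\zeta_\epsilon)>0$. Letting $\epsilon\to 0$ yields positive values of $\s(\Axy{x}{x})$ tending to $0$, and by \Cref{prop:Zxy closure of s(Axy)} these values lie in $\Zxy{x}{x}$, completing the proof. The main technical point will be the selection of $z$ with $Az$ bounded away from $Ax$ on a sufficiently long sub-segment, so that the approximating geodesic genuinely deviates from $Ax$ along its ``excursion'' through $Az$.
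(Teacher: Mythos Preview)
Your case split is off: $\mu$ is a weak component in the $\Z$-cover $\Sigma$, and unless it is a periodic line it always has infinitely many leaves (it is a full $\pi_{d\Z}$-preimage of a component of $\lambda_d$). The relevant dichotomy is countably versus uncountably many leaves of $\mu$, equivalently finitely versus infinitely many leaves of $\mu_0=\pi_\Z(\mu)$; the countable case is exactly what \Cref{thm: chain recurrent leaf} handles. As written, your first case is vacuous and the countably-infinite situation is not covered, since your second case explicitly invokes uncountability of $\mu\cap\Y$.

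The more serious gap, which you flag yourself in the last sentence, is the positivity of $\s(A\zeta_\epsilon)$. The interception length $m$ is dictated by the chain, not chosen, and in a minimal lamination $Az$ recurs arbitrarily close to $Ax$; there is no reason the particular $a_{cm}z$ your construction produces stays $\delta$-far from $Ax$ as $\epsilon\to 0$. Your endpoint argument in $\bH^2$ also does not close: a zero-slack element of $\Axy{x}{x}$ is a leaf whose lift is backward-asymptotic to \emph{some} lift of $Ax$ and forward-asymptotic to a possibly \emph{different} lift, so it need not equal $Ax$.

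The paper sidesteps this by never building a single loop. For $x$ on the perfect minimal part it picks $y\in\Y$ with $d(x,y)<\epsilon$ and $Ay$ non-asymptotic to $Ax$ in either direction, and straightens $A_-x\cup A_+y$ across the one short jump to get $\alpha\in\Axy{y}{x}$ with $0<\s(\alpha)<\kappa_c\epsilon$; positivity holds because such $\alpha$ cannot be a leaf of $\lambda$. Then \Cref{prop:Zxy closure of s(Axy)} gives $\s(\alpha)\in\Zxy{y}{x}$, and symmetrically a small $\s(\beta)\in\Zxy{x}{y}$, whence $0<\s(\alpha)+\s(\beta)\in\Zxy{x}{x}$ by \eqref{eqn: subcontainment for Zxy}. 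Chain proximality is invoked only afterward, to transport the conclusion from this well-placed $x$ to an arbitrary point of $\T^1_+\mu$.
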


	\begin{proof}
		Recall that the case where $ \mu $ has countably many leaves was covered in \Cref{thm: chain recurrent leaf}.
		
		Assume that $\mu$ has uncountably many leaves and choose $x$ on a lift of a minimal sublamination $\mu'$ that is not a closed curve.  Since $ \Zxy{x}{x} $ is a closed semigroup, it suffices to show that $ \Zxy{x}{x} $ contains arbitrarily small positive elements. 
		
		Since $ \mu' $ is the preimage of a minimal sublamination with no isolated leaves, the point $ x $ is not isolated in $ \Y $ and moreover there exists a point $ y \in \Y $ at distance $ d(x,y)<\varepsilon $ which is neither forward nor backward asymptotic to $ x $ in $ \T^1\Sigma $. This implies that the geodesic $ \alpha $ constructed by connecting $ A_-x $ to $ A_+y $ (along the shortest path between $ x $ and $ y $) is not contained in $ \lambda $. In particular, $ \s(\alpha) > 0 $. By \Cref{lem:epsilon chain slack} we moreover know that $ \s(\alpha)< \kappa_c\varepsilon $. By \Cref{prop:Zxy closure of s(Axy)}, we conclude that $ \s(\alpha) \in \Zxy{y}{x} $. 
		
		Applying the argument above, but switching the roles of $ x $ and $ y $, we conclude that there exists a geodesic $ \beta \in \Axy{x}{y} $ with $ \s(\beta) < \kappa_c \varepsilon $ and $ \s(\beta) \in \Zxy{x}{y} $.
		By \eqref{eqn: subcontainment for Zxy},
		\[ \Zxy{x}{x} \supset \Zxy{y}{x}+\Zxy{x}{y}, \]
		and hence conclude that $ \Zxy{x}{x} $ contains 
		\[ 0<\s(\alpha)+\s(\beta)<2\kappa_c\varepsilon, \]
		implying $\Zxy{x}{x}=[0,\infty)$.
		\medskip
		
        Now for arbitrary $x\in \T^1_+\mu$, take $y$ tangent to $\mu'\subset \mu$ as above.
        By \Cref{thm: chprox equiv chain recurrent} and \Cref{lem:epsilon chain slack} there exist arcs in $\Axy{y}{x}$ and in $\Axy{x}{y}$ each having slack smaller than $\varepsilon$, for an arbitrary $\varepsilon>0$. By the previous proposition and \eqref{eqn: subcontainment for Zxy} we thus have
		\[ \Zxy{x}{x} \supset \Zxy{y}{x}+\Zxy{y}{y}+\Zxy{x}{y} \supset [2\varepsilon,\infty), \]
		concluding the proof.
	\end{proof}
    
	\section{Structure of Horocycle Orbit Closures}\label{sec: structure of horo orbit closures}
	
	In this section we integrate all previous results into a complete description of horocycle orbit closures for any hyperbolic metric on $\Sigma_0$. As described in the introduction, the structure of horocycle orbit closures is read off of a directed graph with associated weights --- the \emph{Slack Graph}:
	
	\begin{definition}
		Let $\mu_1,...,\mu_k$ be the weak connected components of $\lambda$, as discussed in \Cref{thm: chprox equiv chain recurrent} and \Cref{cor: chprox and weak components}. 
		Fix a choice of $ x_i \in \T^1_+\mu_i \cap \tau\inverse(0)$, and define the following directed graph $ \mathcal{G} $, having finitely many vertices and infinitely many edges:
		\begin{itemize}%
			\item the vertex set  $V(\mathcal G)$ is $\{x_1,...,x_k\} \subset \mathcal{Y}$. 
			\item the set of directed edges from vertex $y$ to $x$ is $\Axy xy$, the set of bi-infinite geodesics that are asymptotic to $ Ay $ in backward time and to $ Ax $ in forward time.
		\end{itemize}
	\end{definition}
	
	As before, let $ \tau: \Sigma \to \R $ be a $1$-Lipschitz tight map whose maximal stretch locus is equal to $\lambda$, see \Cref{chain recurrence always}. %
	
	\subsection{Marked Busemann function}
	
	A first step in our reduction is identifying horocycle orbit closures of quasi-minimizing points according to the value of a marked Busemann function. 
	
	Recall our definition of slack of a geodesic ray $A_+x$:
	\[ \s(A_+x)=\lim_{t \to \infty} t - (\tau(a_t x)-\tau(x)). \]
	
	\begin{definition}
		Given a point $x \in \T^1\Sigma$ we define $\beta_+:\T^1\Sigma \to \R$ by
		\[ \beta_+(x)=\tau(x)-\s(A_+x).\]
	\end{definition}
	
	This function was discussed in \cite[Section 6]{FLM}\footnote{Note that our definition here of $\beta_+$ agrees with $\beta_+(x)=\lim_{t\to \infty} \tau(a_t x)-t$ used therein.} where we proved that $\beta_+$ is $N$-invariant, upper semi-continuous and satisfies
	\[ \beta_+(x) > -\infty \quad \text{if and only if} \quad x \in \Qm_+. \]
	Moreover, $ \beta_+(a_t x)=\beta_+(x)+t $ for all $x \in \T^1\Sigma$ and $ t \in \R $.
	\medskip
	
	Recalling \cite[Thm. 3.4]{FLM}, we know that every quasi-minimizing point $x \in \Qm$ has its geodesic ray asymptotic to $\T^1\lambda$. The decomposition of $\lambda$ into weakly connected components ensures that every $x \in \Qm$ is asymptotic to exactly one component among $\mu_1,...,\mu_k$, leading to the following well-defined function:
	\begin{definition}
		To every $x \in \Qm$ we associate $\v(x)=x_i \in V(\G)$, where $x_i$ is the corresponding representative of the unique weakly connected component $\mu_i$ to which $A_+x$ is asymptotic.
	\end{definition}
	It is easy to see that the function $\v:\Qm \to V(\G)$ is also $N$-invariant.
	
	\newcommand{\bhatp}{\hat{\beta}_+}
	We are now set to discuss the (positive) \emph{marked Busemann function} as defined in the following theorem:
	
	\begin{theorem}\label{Thm: the marked Busemann function}
		The function $\bhatp: \Qm_+ \to \R \times V(\G)$ given by
		\[ \bhatp(x)=(\beta_+(x),\v(x)) \]
		uniquely identifies the horocycle orbit closure of $x$. That is, for all $x,y \in \Qm_+$
		\[ \overline{Nx}=\overline{Ny} \quad 
		\text{if and only if}\quad \bhatp(x)=\bhatp(y). \]
		In particular,
		\[ \overline{Nz}=a_{\beta_+(z)} \overline{N\v(z)}. \]
	\end{theorem}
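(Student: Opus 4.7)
The plan is to establish the ``in particular'' formula $\overline{Nz} = a_{\beta_+(z)}\overline{N\v(z)}$ first, since it immediately yields the forward direction of the iff and reduces the converse to showing that distinct vertices of $\mathcal{G}$ give distinct $N$-orbit closures. I use throughout that $A$ normalizes $N$, so that $a_t\overline{Nu} = \overline{Na_tu}$.

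For the formula, fix $z \in \Qm_+$ and let $\mu$ denote the weak component of $\v(z)$. The ray $A_+z$ is forward asymptotic to a single leaf $Ax' \subset \T^1_+\mu$, so there exists $n \in N$ with $nz \in Ax' \subset \T^1_+\lambda$. Identity \eqref{eqn: slack and tau}, applied with $nz$ in place of ``$x$'' and $n^{-1}$ in place of ``$n$'', gives $\s(A_+z) = \tau(z) - \tau(nz)$, hence $\tau(nz) = \beta_+(z)$. Consequently $a_{-\beta_+(z)}(nz)$ and $\v(z)$ both lie in $\T^1_+\mu \cap \tau^{-1}(0)$, i.e., in the same weak component of $\lambda$. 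Corollary \ref{cor: chprox and weak components} declares them $\chproxb$-equivalent, and Lemma \ref{lem:chain prox implies inclusion in Nxbar} then yields $\overline{N(a_{-\beta_+(z)}(nz))} = \overline{N\v(z)}$. Applying $a_{\beta_+(z)}$ and recalling that $nz \in Nz$ produces the desired formula.

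Given the formula, the direction $\bhatp(x) = \bhatp(y) \Longrightarrow \overline{Nx} = \overline{Ny}$ is immediate, since both orbit closures equal $a_b\overline{Nv_0}$ with $(b, v_0) := \bhatp(x) = \bhatp(y)$. For the converse, assume $\overline{Nx} = \overline{Ny}$. Upper semi-continuity of the $N$-invariant function $\beta_+$ applied to $y \in \overline{Nx}$ gives $\beta_+(y) \geq \beta_+(x)$, and by symmetry $\beta_+(x) = \beta_+(y) =: b$. The formula then forces $\overline{N\v(x)} = \overline{N\v(y)}$, so it remains to show that distinct vertices $x_i, x_j \in V(\mathcal{G})$ give distinct $N$-orbit closures.

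This vertex-uniqueness step is the main obstacle. Supposing toward contradiction that $\overline{Nx_i} = \overline{Nx_j}$ with $\mu_i \neq \mu_j$, we have $0 \in \Zxy{x_i}{x_j}$, so by Proposition \ref{prop:Zxy closure of s(Axy)} there is either a single arc $\alpha \in \Axy{x_i}{x_j}$ with $\s(\alpha) = 0$, or a sequence $\alpha_m \in \Axy{x_i}{x_j}$ with $\s(\alpha_m) \to 0$. The first case is ruled out by \eqref{eqn: zero slack in lambda}: $\s(\alpha) = 0$ forces $\alpha \subset \T^1_+\lambda$ and hence into a single weak component, contradicting its asymptotics to $\mu_j$ and $\mu_i$. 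For the second case, Theorem \ref{thm: chprox equiv chain recurrent} identifies the finitely many weak components of $\lambda$ with connected components of $\lambda_d$ in the compact intermediate cover $\Sigma_d$, which are pairwise at distance $\geq \epsilon_0$ for some uniform $\epsilon_0 > 0$; this separation lifts to $\Sigma$ since $\Sigma \to \Sigma_d$ is locally isometric. Each $\alpha_m$ transits between the disjoint $\epsilon_0/4$-neighborhoods of $\mu_j$ and $\mu_i$, so it contains a subarc of definite positive length lying outside the $\epsilon_0/4$-neighborhood of $\T^1_+\lambda$. Lemma \ref{small slack close to L} then provides a uniform positive lower bound on $\s(\alpha_m)$, contradicting $\s(\alpha_m) \to 0$. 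Hence $\v(x) = \v(y)$, completing the proof.
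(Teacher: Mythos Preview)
Your argument for the converse direction (that $\overline{Nx}=\overline{Ny}$ forces $\bhatp(x)=\bhatp(y)$) is correct and essentially identical to the paper's, including the use of Lemma~\ref{small slack close to L} to rule out $0\in\Zxy{x_i}{x_j}$ when $i\neq j$.

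There is, however, a genuine gap in your derivation of the formula $\overline{Nz}=a_{\beta_+(z)}\overline{N\v(z)}$. You assert that ``the ray $A_+z$ is forward asymptotic to a single leaf $Ax'\subset\T^1_+\mu$, so there exists $n\in N$ with $nz\in Ax'\subset\T^1_+\lambda$.'' This is false in general. What is known (from \cite[Theorem~3.4]{FLM}) is that $A_+z$ is forward asymptotic to the \emph{set} $\T^1_+\mu$, not to any particular leaf of it. When $\lambda_0$ contains an infinite leaf there are uncountably many Garnett points (see \S\ref{subsec: dichotomy}), and for such $z\in\Qm_+$ no $n\in N$ has $nz\in\T^1_+\lambda$: indeed, $nz\in\T^1_+\lambda$ would give $\s(A_+nz)=0$ and hence a minimizing ray at the endpoint of $z$, contradicting the Garnett property. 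Your reduction to $a_{-\beta_+(z)}(nz)\in\T^1_+\mu\cap\tau^{-1}(0)$ therefore collapses in exactly the case the theorem is meant to cover.

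The paper circumvents this by using the weaker notion of $A$-\emph{proximality} in place of forward asymptoticity to a leaf: \cite[Proposition~8.5]{FLM} supplies a point $w\in\T^1_+\mu_i$ and a sequence $t_n\to\infty$ with $d(a_{t_n}z,a_{t_n}w)\to 0$, which by \cite[Corollary~8.3]{FLM} already forces $\overline{Nz}=\overline{Nw}$. Proximality also equates the $\beta_+$-values, placing $w$ (after reduction to $\beta_+(z)=0$) in $\Y\cap\T^1_+\mu_i$, whereupon chain proximality with $x_i$ finishes the job as in your argument. You can repair your proof by replacing the single-leaf step with this proximality step.
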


	One can see that the coordinate $\v$ determines a family of orbit closures, up to translation by $A$, whereas $\beta_+$ determines how ``deep'' the particular orbit closure is positioned.

	\begin{proof}
		Fix $x \in \Qm_+$ with $\v(x)=x_i$. We will show that 
		\begin{equation}\label{eq:Reduction from Qm to representatives x_i}
		\overline{N x}=a_{\beta_+(x)}\overline{N x_i}.
		\end{equation}
		Actually, by the $ A $-equivariance of $ \beta_+ $ and the fact that $ \overline{N a_t x}=a_t\overline{Nx} $ it suffices to prove the case where $ \beta_+(x)=0 $.
		
		Recall that two points $ y_1,y_2 \in \T^1\Sigma $ are $ A $-proximal if there exists a sequence $ t_n \to \infty $ for which $ d(a_{t_n}y_1,a_{t_n}y_2) \to 0 $. In \cite[Corollary 8.3]{FLM}, we showed that whenever two points are $A$-proximal in $ \T^1\Sigma $ then they have equal horocycle orbit closures. Moreover, the proof of \cite[Proposition 8.5]{FLM} implies that if $ a_t x $ is asymptotic to $ \T^1_+ \mu_i $ then there exists $ z \in \T^1_+\mu_i $ which is $ A $-proximal to $ x $. It is easy to see, from the definition of $ \beta_+ $ and the $ 1 $-Lipschitz property of $ \tau $, that any two $ A $-proximal points have the same $ \beta_+ $-value. Since $\beta_+(x)=0$ and $\beta_+=\tau$ on $\T^1\lambda$, we conclude that there exists $ z \in \Y \cap \T^1_+\mu_i $ satisfying $ \overline{N x}=\overline{Nz} $.
		
		Now consider the two points $ z $ and $ x_i $ in $ \Y \cap \T^1_+\mu_i $. By \Cref{thm: chprox equiv chain recurrent,cor: chprox and weak components} these two points are chain-proximal. 
  By \Cref{lem:chain prox implies inclusion in Nxbar} we conclude that $ \overline{N z} = \overline{N x_i} $.
		
		This in particular implies that if $\bhatp(x)=\bhatp(y)$ then $\overline{Nx}=\overline{Ny}$.
		\medskip
		
		In the other direction, assume that $ \overline{Nx}=\overline{Ny} $. Since $ \beta_+ $ is $ N $-invariant and upper semi-continuous we know that the set $ (\beta_+)^{-1}\left([\beta_+(x),\infty)\right) $ is closed and $ N $-invariant and hence contains $ \overline{Nx} $ and $ y $. In other words, $ \beta_+(y) \geq \beta_+(x) $. Symmetry of this argument implies that $ \beta_+(x)=\beta_+(y) $.
		
		Now assume in contradiction that $ \v(x)=i\neq j= \v(y) $. Let $ x_i $ and $ x_j $ be the representatives of $ \mu_i $ and $ \mu_j $ in $ V(\G) $. 
		
		Since there is a definite $ \varepsilon > 0 $ distance between $ \T^1_+\mu_i $ and $ \T^1_+\mu_j $ in $\T^1\Sigma$, any arc in $ \Axy{x_i}{x_j} $ has slack greater than some $ \delta >0 $ (see Lemma \ref{small slack close to L}) %
        By \Cref{prop:Zxy closure of s(Axy)}, this implies that $ 0 \notin \Zxy{x_i}{x_j} $ and hence $ x_j \notin \overline{Nx_i} $. By \eqref{eq:Reduction from Qm to representatives x_i} we conclude that $ y \notin \overline{N x} $, contradicting our assumption.
	\end{proof}

	In fact, the above theorem reduces the entire analysis to the finite set of representatives $V(\G)$:
	
	\begin{cor}\label{Cor:Orbit closure via representatives}
		For any $ x_i \in V(\G) $
		\[ \overline{Nx_i}=\bhatp^{-1} \left( \bigcup_{x_j \in V(\G)} \left(\Zxy{x_i}{x_j}\times\{x_j\}\right) \right). \]
	\end{cor}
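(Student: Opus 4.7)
The plan is to derive this corollary directly from \Cref{Thm: the marked Busemann function} together with the very definition of the shift sets $\Zxy{x_i}{x_j}$, with essentially no additional geometric input.

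First I would verify that $\overline{Nx_i}\subseteq \Qm_+$, so that $\bhatp$ is defined on the entire orbit closure. Since $\beta_+$ is upper semi-continuous and $N$-invariant, and satisfies $\beta_+(x_i)=\tau(x_i)=0$ (as $x_i\in \Y=\T^1_+\lambda\cap \tau^{-1}(0)$), the set $\{z\in\T^1\Sigma : \beta_+(z)\geq 0\}$ is closed, $N$-invariant, contains $x_i$, and lies inside $\Qm_+$. Hence the closed $N$-invariant hull $\overline{Nx_i}$ is contained in this set, and in particular in $\Qm_+$.

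Next, I would fix an arbitrary $y\in \Qm_+$ and set $x_j=\v(y)$. Observe the equivalence
\[ y\in \overline{Nx_i}\ \Longleftrightarrow\ \overline{Ny}\subseteq \overline{Nx_i}, \]
which holds because $\overline{Ny}$ is the smallest closed $N$-invariant set containing $y$. Using the identity $\overline{Nz}=a_{\beta_+(z)}\overline{N\v(z)}$ from \Cref{Thm: the marked Busemann function} together with the normalization $a_t\overline{N}=\overline{N}a_t$ (which follows from $A$ normalizing $N$), we get
\[ \overline{Ny}=a_{\beta_+(y)}\overline{Nx_j}=\overline{N a_{\beta_+(y)} x_j}. \]
Hence $\overline{Ny}\subseteq \overline{Nx_i}$ is in turn equivalent to $a_{\beta_+(y)}x_j\in \overline{Nx_i}$, which, by the very definition of the shift set, is the statement $\beta_+(y)\in \Zxy{x_i}{x_j}$.

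Combining these steps, $y\in\overline{Nx_i}$ if and only if $\bhatp(y)=(\beta_+(y),x_j)$ lies in $\Zxy{x_i}{x_j}\times\{x_j\}$ for $x_j=\v(y)$, which is precisely the condition $\bhatp(y)\in \bigcup_{x_j\in V(\G)}\Zxy{x_i}{x_j}\times\{x_j\}$, giving both inclusions simultaneously. The argument is essentially a bookkeeping exercise once \Cref{Thm: the marked Busemann function} is available; the only point that genuinely deserves a line is the verification $\overline{Nx_i}\subseteq \Qm_+$, needed because the domain of $\bhatp$ is only $\Qm_+$.
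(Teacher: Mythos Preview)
Your argument is correct and follows essentially the same route as the paper's proof: both reduce the question, via \Cref{Thm: the marked Busemann function}, to the equivalence $y\in\overline{Nx_i}\Longleftrightarrow a_{\beta_+(y)}x_j\in\overline{Nx_i}$ (with $x_j=\v(y)$), which is then read off from the definition of $\Zxy{x_i}{x_j}$. Your explicit verification that $\overline{Nx_i}\subseteq\Qm_+$ via upper semi-continuity of $\beta_+$ is a useful piece of rigor that the paper leaves implicit.
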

	
	\begin{proof}
		Notice the following simple observation --- If $ \overline{Nx}=\overline{Ny} $ then for any~$ z $
		\begin{equation}%
		x \in \overline{N z} \iff y \in \overline{N z},
		\end{equation}
		which follows from the fact that $ x \in \overline{Nz} $ implies $ \overline{Nx} \subset \overline{Nz} $, and similarly for $ y $. 
		
		Now fix some $ 1\leq i \leq k $ and let $ y \in \Qm_+ $ be any point. Denote $\bhatp(y)=(T,x_j)$. Since 
		\[ \overline{N y} = a_T\overline{N x_{j}} = \overline{N a_T x_j}, \]
		we deduce that $ y \in \overline{N x_i} $ if and only if $ a_T x_j \in \overline{N x_i} $. In other words 
		\[ y \in \overline{N x_i} \iff T \in \Zxy{x_i}{x_j}. \]
		Hence the $ \bhatp $ values completely determine the inclusion in $ \overline{N x_i} $, and therefore we may rewrite the above equivalence as 
		\[ y \in \overline{N x_i} \iff \bhatp(y) \in \Zxy{x_i}{x_j}\times \{x_j\}, \]
		proving the corollary.
	\end{proof}

	\subsection{The `$-$'-end}\label{subsec: minus end}
	
	We have thus far focused our attention on the `+' end of $\Sigma$, analyzing those orbit closures contained in $\Qm_+$. This sign attribution is obviously arbitrary and all previous theorems would have applied just as well to the negative end if one had considered $(-\tau)$ as the tight map instead of $\tau$.
	
	Leaving $\tau$ unchanged, the following definitions of the slack and Busemann functions reflect this simple observation:
	\[ \mathscr{S}_-(\alpha)=\length_\Sigma(\alpha)-(\tau(\alpha(a))-\tau(\alpha(b))) \]
	for any rectifiable curve $\alpha: [a,b] \to \T^1\Sigma$, and respectively
	\[ \beta_-(x)=-\tau(x)-\mathscr{S}_-(x) \quad \text{for any } x \in \Qm_-.\footnote{Note that this definition of $\beta_-$ differs in sign from the one given in \cite{FLM}.} \]
	
	Given a point $x \in \T^1\Sigma$ we'll denote by $-x$ its involution, that is, the element with the same basepoint but antipodal direction ( in particular, $x \mapsto -x$ flips the the orientation of all geodesics). 
  Note that if $Az \in \Axy{x}{y}$ with $x,y \in \T^1_+\lambda$, then $z\in \Qm_+$ and $ - z \in \Qm_-$; additionally, $A(-z) \in \Axy{- y}{- x}$ where $- x, - y \in \T^1_-\lambda $.
  Moreover,
	\[ \s(Az)=\mathscr{S}_-(A(-z)). \]
	
	Therefore, we may consider the slack graph $\G_-$ whose vertices are $- x_i$, for $i=1,...,k$ and whose edges are exactly those edges of $\G$ but with flipped orientation. Corresponding slacks of edges are unchanged after this reorientation and use of $\mathscr{S}_-$.

	One amusing consequence is that given any point $ y \in \Y $, its involution $ - y $ facing the opposite direction would satisfy
	\[ \Zxy{y}{y}=\Zxy{- y}{- y}.\footnote{A very different proof of a similar statement was given as part of \cite[Prop. 8.6]{FLM}.} \]

	We may also draw the following corollary:
	\begin{cor}
		Let $k$ be the number of weakly connected components of $\lambda$. Then up to $A$-translation, the number of distinct $N$-orbit closures in $\T^1\Sigma$ is equal to $2k+1$.
	\end{cor}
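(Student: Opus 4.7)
The plan is to partition $\T^1\Sigma$ into the three $A$-invariant sets $\T^1\Sigma\setminus\Qm$, $\Qm_+$, and $\Qm_-$, count the $A$-translation classes of $N$-orbit closures meeting each piece, and then check that the three resulting families of classes are pairwise disjoint so that the totals add.

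I would begin with the non--quasi-minimizing case: by the Eberlein--Dal'bo theorem recalled in the introduction, every $x\notin\Qm$ satisfies $\overline{Nx}=\T^1\Sigma$, so that piece contributes exactly one class. Next, for $x\in\Qm_+$, \Cref{Thm: the marked Busemann function} identifies $\overline{Nx}$ with $\bhatp(x)=(\beta_+(x),\v(x))\in\R\times V(\G)$ via $\overline{Nx}=a_{\beta_+(x)}\overline{N\v(x)}$. Since $\beta_+(a_t x)=\beta_+(x)+t$ while $\v$ is $A$-invariant, the $A$-action shifts the first coordinate freely and preserves the second, so the $A$-translation classes of orbit closures in $\Qm_+$ are in bijection with $V(\G)$ and contribute $k$ classes. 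Running the same argument with the involuted tight map $-\tau$ and the mirrored graph $\G_-$ from \Cref{subsec: minus end} contributes $k$ additional classes from $\Qm_-$.

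The final step is disjointness across the three families. The maximal closure $\T^1\Sigma$ is plainly not $A$-translation-equivalent to any proper closed $N$-invariant set, so the only delicate point is to separate the $\Qm_+$ and $\Qm_-$ families. The tool I would use is the fact, recorded in \cite[\S6]{FLM}, that $\beta_+$ is $N$-invariant and upper semi-continuous: if $x\in\Qm_+$ and $y\in\overline{Nx}$, choosing $n_m\in N$ with $n_m x\to y$ yields $\beta_+(n_m x)=\beta_+(x)$ by $N$-invariance, and upper semi-continuity then forces $\beta_+(y)\ge \beta_+(x)>-\infty$. Hence $\overline{Nx}\subset\Qm_+$, and symmetrically $\overline{Ny}\subset\Qm_-$ for $y\in\Qm_-$; since $\Qm_+$ and $\Qm_-$ are disjoint and both $A$-invariant, the two families of classes are disjoint. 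Adding up yields $1+k+k=2k+1$.

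I do not anticipate any real obstacle here; the corollary is essentially bookkeeping on top of \Cref{Thm: the marked Busemann function} and its mirror. The one step that requires a moment of care is the use of upper (rather than full) semi-continuity to conclude that $N$-orbit closures of $\Qm_+$-points remain in $\Qm_+$, which works precisely because $N$-invariance makes the relevant sequence of $\beta_+$-values constant, so the $\limsup$ is trivially controlled.
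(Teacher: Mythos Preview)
Your proof is correct and follows essentially the same approach as the paper's: count one dense class, $k$ classes from $\Qm_+$ via \Cref{Thm: the marked Busemann function}, and $k$ from $\Qm_-$ by its mirror. The paper's proof is terse and leaves implicit the disjointness of the $\Qm_+$ and $\Qm_-$ families; your use of $N$-invariance plus upper semi-continuity of $\beta_+$ to show $\overline{Nx}\subset\Qm_+$ whenever $x\in\Qm_+$ is exactly the right way to fill that gap.
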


	\begin{proof}
		By \Cref{Thm: the marked Busemann function}, $\Qm_+$ and $\Qm_-$ each provide us with $k$ distinct families. Dense horocycles provide us with the last type of orbit closure.
	\end{proof}

	\subsection{Reading the structure of $\Zxy{x_i}{x_j}$ off of the slack graph}\label{subsec: slack graph G}
	As in \S\ref{subsec: isolated multi-curve}, consider $ \Gimc $, the isolated multi-curve part of $ \G $, that is, the induced subgraph of $ \G $ on those vertices $ x_i $ for which $ \mu_i $ in $ \Sigma $ is a lift of a closed geodesic in $\Sigma_0$. Let $ \s: \Hom_{\mathcal G} \to \R $ be the homomorphic extension of the slack function defined in \S\ref{sec: finite lamination}.
	
	We may read off the structure of the shift sets $ \Zxy{x}{y} $ from the graph $ \G $:
	
	\begin{theorem}\label{Thm:the ray part in Zxy}
		Given $ x_i,x_j \in V(\G) $ let
		\[ \rho_{j,i} = \inf\{ \s(\underline{\alpha}) : \underline{\alpha} \in \Hom_{\G}(x_j,x_i)\smallsetminus \Hom_{\Gimc}(x_j,x_i) \}, \]
		be the infimal slack value over all edge-paths in $ \G $ from $ x_j $ to $ x_i $ which pass through a vertex outside of $ \Gimc $. Then
		\[ \Zxy{x_i}{x_j} = \s\left(\Hom_{\Gimc}(x_j,x_i)\right) \cup [\rho_{j,i},\infty). \]
	\end{theorem}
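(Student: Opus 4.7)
The plan is to prove the two inclusions separately, with the forward inclusion being a direct assembly of earlier results and the reverse inclusion relying on the techniques of \S\ref{sec: finite lamination} together with a contradiction argument that produces a cheap path through $\lambda^\infty$.

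For the inclusion $(\supseteq)$, I would proceed in two steps. Single edges of $\G$ contribute their slacks to the corresponding shift sets by \Cref{prop:Zxy closure of s(Axy)} (which gives $\s(\Axy{x}{y})\subseteq \Zxy{x}{y}$), and sub-additivity \eqref{eqn: subcontainment for Zxy} then yields $\s(\Hom_{\Gimc}(x_j,x_i)) \subseteq \Zxy{x_i}{x_j}$ by concatenating edges. Next, any $\underline\alpha \in \Hom_\G(x_j,x_i) \setminus \Hom_{\Gimc}(x_j,x_i)$ visits some vertex $x_k \notin V(\Gimc)$, where \Cref{Cor: Zxx is ray if infinite leaf} gives $\Zxy{x_k}{x_k} = [0,\infty)$. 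Splitting $\underline\alpha$ at $x_k$ and inserting $\Zxy{x_k}{x_k}$ via subadditivity yields $[\s(\underline\alpha),\infty) \subseteq \Zxy{x_i}{x_j}$; taking the union over such $\underline\alpha$ and using closedness of the shift set gives $[\rho_{j,i},\infty) \subseteq \Zxy{x_i}{x_j}$.

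For the reverse inclusion $(\subseteq)$, fix $T \in \Zxy{x_i}{x_j}$. There is nothing to do if $T \geq \rho_{j,i}$, so assume $T < \rho_{j,i}$ and aim to show $T \in \s(\Hom_{\Gimc}(x_j,x_i))$. Choose $n_m \in N$ with $n_m x_i \to a_T x_j$. The key claim $(\star)$ is that, after passing to a subsequence, there exists $\epsilon>0$ such that the forward ray $A_+ n_m x_i$ avoids $(\T^1_+\lambda^\infty)^{(\epsilon)}$ for all $m$. Granting $(\star)$, \Cref{lem: isolated multi-curve slack - arcs outside nbhd of non-imc} directly produces the desired $\underline\alpha \in \Hom_{\Gimc}(x_j,x_i)$ with $\s(\underline\alpha)=T$, completing the proof.

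The main obstacle is establishing $(\star)$, which I would do by contradiction. If $(\star)$ fails, one finds $\epsilon_m \to 0$ and times $t_m>0$ with $a_{t_m} n_m x_i \in (\T^1_+\lambda^\infty)^{(\epsilon_m)}$. By compactness of $\T^1\Sigmazero$, a further subsequence makes $\pi_\Z(a_{t_m} n_m x_i)$ converge to a point tangent to some weak component $\mu_k$ of $\lambda$ contained in $\lambda^\infty$, giving a vertex $x_k \in V(\G)\setminus V(\Gimc)$. Since $n_m x_i \to a_T x_j \in \T^1_+\mu_j \subset \T^1_+\lambda^{\imc}$ while the limit of $a_{t_m} n_m x_i$ lies in a disjoint weak component $\mu_k \subset \lambda^\infty$, a uniform separation between $\mu_j$ and $\mu_k$ forces $t_m \to \infty$. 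I would then split $A_+ n_m x_i$ at time $t_m$ and extend each piece at its far end by a long geodesic segment along $\T^1_+\mu_j$ (backward from near $a_T x_j$) and along $\T^1_+\mu_k$ (on either side of the return), with jumps of size $O(\epsilon_m)$. Applying \Cref{lem:epsilon chain slack} replaces each extended broken path by a bona fide bi-infinite geodesic $\tilde\beta_m \in \Axy{x_k}{x_j}$ and $\tilde\gamma_m \in \Axy{x_i}{x_k}$, with $\s(\tilde\beta_m)+\s(\tilde\gamma_m) = \s(A_+ n_m x_i) + O(\epsilon_m)$. By \eqref{eqn: slack and tau}, $\s(A_+ n_m x_i) = \tau(n_m x_i) - \tau(x_i) \to T$, so $\tilde\beta_m\cdot\tilde\gamma_m \in \Hom_\G \setminus \Hom_{\Gimc}$ has slack approaching $T$, contradicting $T<\rho_{j,i}$.
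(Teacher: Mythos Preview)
Your plan follows essentially the same route as the paper's proof (its Claim~1 and Claim~2/2a correspond to your $(\supseteq)$ and $(\star)$), but there are two places where your sketch needs patching.

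First, you implicitly assume $x_i,x_j\in V(\Gimc)$: both your invocation of \Cref{lem: isolated multi-curve slack - arcs outside nbhd of non-imc} and the line ``$n_m x_i \to a_T x_j \in \T^1_+\mu_j \subset \T^1_+\lambda^{\imc}$'' require this. When, say, $x_j\notin V(\Gimc)$, every path in $\Hom_\G(x_j,x_i)$ already visits a vertex outside $\Gimc$, so $\rho_{j,i}=\inf\s(\Hom_\G(x_j,x_i))$ and $\Hom_{\Gimc}(x_j,x_i)=\emptyset$; one then concludes directly from \Cref{prop:closure of s(Hom xy) is Zxy} and your Claim~1 that $\Zxy{x_i}{x_j}=[\rho_{j,i},\infty)$. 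The paper handles this case separately at the start of Claim~2.

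Second, in the contradiction for $(\star)$ your broken paths straighten to geodesics asymptotic to some leaf $Ay\subset\T^1_+\mu_k$, not to the chosen representative $Ax_k$; thus $\tilde\beta_m,\tilde\gamma_m$ are not literally edges of $\G$, and you cannot immediately invoke the definition of $\rho_{j,i}$. The paper fixes this by observing (via \Cref{Thm: the marked Busemann function}) that $\Zxy{y'}{x_j}=\Zxy{x_k}{x_j}$ and $\Zxy{x_i}{y'}=\Zxy{x_i}{x_k}$ for $y'=Ay\cap\tau^{-1}(0)$, and then using \Cref{prop:Zxy closure of s(Axy)} to find genuine edges in $\Axy{x_k}{x_j}$ and $\Axy{x_i}{x_k}$ with slacks as close as desired. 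With these two corrections your argument is complete and matches the paper.
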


	The combination of \Cref{Cor:Orbit closure via representatives} and \Cref{Thm:the ray part in Zxy} shows that the graph $ \G $ together with the associated slack values hold all the information needed to describe the structure of all horocycle orbit closures. 
    
    As a preliminary result we state the following:
    \begin{prop}\label{prop:closure of s(Hom xy) is Zxy}
    	For any $ x_i,x_j \in V(\G) $ we have
    	\[ \Zxy{x_i}{x_j}=\overline{\s\left(\Hom_{\G}(x_j,x_i)\right)}. \]
    \end{prop}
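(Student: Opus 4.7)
The plan is to establish the two inclusions separately, leveraging \Cref{prop:Zxy closure of s(Axy)} as the workhorse. The hardest direction will actually be the easier-looking one, because the bulk of the geometric and dynamical content (the approximation of slacks by limits of arcs in $\Axy{\cdot}{\cdot}$) has already been absorbed into \Cref{prop:Zxy closure of s(Axy)} and the chain-proximality machinery of \S\ref{Section:Synchronous Transversal}. What remains is essentially bookkeeping with the semigroupoid structure on $\Pi(\G)$ and the sub-additivity \eqref{eqn: subcontainment for Zxy}.

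For the inclusion $\overline{\s(\Hom_\G(x_j,x_i))}\subseteq \Zxy{x_i}{x_j}$, first I would reduce to showing $\s(\Hom_\G(x_j,x_i))\subseteq \Zxy{x_i}{x_j}$, since $\Zxy{x_i}{x_j}$ is closed. Given an edge-path $\underline\alpha=\alpha_1\cdots\alpha_n$ with $\alpha_k\in \Axy{y_{k+1}}{y_k}$ for some sequence of vertices $x_j=y_1,y_2,\dots,y_{n+1}=x_i\in V(\G)$ (all of which lie in $\tau\inverse(0)$ by construction), \Cref{prop:Zxy closure of s(Axy)} applied edge-by-edge yields
\[ \s(\alpha_k)\in \s(\Axy{y_{k+1}}{y_k})\subseteq \overline{\s(\Axy{y_{k+1}}{y_k})}=\Zxy{y_{k+1}}{y_k}. \]
Then iterating the sub-additivity property \eqref{eqn: subcontainment for Zxy},
\[ \s(\underline\alpha)=\sum_{k=1}^n \s(\alpha_k)\in \sum_{k=1}^n \Zxy{y_{k+1}}{y_k}\subseteq \Zxy{y_{n+1}}{y_1}=\Zxy{x_i}{x_j}, \]
which is exactly what is needed. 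This is the step where I want to double-check the orientation conventions for edges in $\G$ (source versus target) match the telescoping direction in \eqref{eqn: subcontainment for Zxy}, but once that is settled it is a clean induction.

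For the reverse inclusion $\Zxy{x_i}{x_j}\subseteq \overline{\s(\Hom_\G(x_j,x_i))}$, I would simply observe that by the definition of $\G$ the edges from $x_j$ to $x_i$ are exactly the elements of $\Axy{x_i}{x_j}$, viewed as length-one paths. Hence $\s(\Axy{x_i}{x_j})\subseteq \s(\Hom_\G(x_j,x_i))$, and taking closures together with \Cref{prop:Zxy closure of s(Axy)} gives
\[ \Zxy{x_i}{x_j}=\overline{\s(\Axy{x_i}{x_j})}\subseteq \overline{\s(\Hom_\G(x_j,x_i))}. \]
Combining the two inclusions yields the proposition. The only real subtlety I anticipate is verifying that every vertex appearing along an arbitrary path in $\Hom_\G(x_j,x_i)$ is in $\tau\inverse(0)$ so that \Cref{prop:Zxy closure of s(Axy)} applies directly at each edge; but this is immediate from the definition of $V(\G)$, so no new geometric input is required.
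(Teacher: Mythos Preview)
Your proposal is correct and follows essentially the same approach as the paper: both directions are obtained directly from \Cref{prop:Zxy closure of s(Axy)}, with the inclusion $\s(\Hom_\G(x_j,x_i))\subseteq \Zxy{x_i}{x_j}$ following from edge-wise application plus the sub-additivity \eqref{eqn: subcontainment for Zxy}, and the reverse inclusion from the observation that single edges are length-one paths. Your orientation conventions and telescoping are consistent with the paper's, so the caveats you flag are not actual issues.
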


	\begin{proof}
		By \Cref{prop:Zxy closure of s(Axy)}, we know that 
		\[ \Zxy{x_i}{x_j}=\overline{\s(\Axy{x_i}{x_j})}. \]
		Therefore the inclusion $ \Zxy{x_i}{x_j} \subseteq \overline{\s\left(\Hom_{\G}(x_j,x_i)\right)} $ holds by definition. 
		
		For the other inclusion, let $ \alpha_1\cdots \alpha_n $ be an edgepath in $ \Hom_{\G}(x_j,x_i) $ with 
		\[ x_j=y_0,y_1,\dots,y_{n-1},y_n=x_i \]
		being the vertices along the path and $ \alpha_k \in \Axy{y_k}{y_{k-1}} $. Using \Cref{prop:Zxy closure of s(Axy)} once again we know that
		\[ \s(\alpha_1\cdots \alpha_n)=\s(\alpha_1)+\cdots+\s(\alpha_n) \in \sum_{k=1}^n \Zxy{y_k}{y_{k-1}}. \]
		By \eqref{eqn: subcontainment for Zxy} we conclude $ \s(\alpha_1\cdots\alpha_n) \in \Zxy{x_i}{x_j} $, as claimed.
	\end{proof}
	
	\begin{proof}[Proof of \Cref{Thm:the ray part in Zxy}]
		Fix $ x_i,x_j \in V(\G) $ and let $ \rho_{j,i} $ be as in the statement. Note that the case where $ \Hom_{\G}(x_j,x_i)\smallsetminus \Hom_{\Gimc}(x_j, x_i) = \emptyset $, and hence $ \rho_{j,i}=\infty $, was proven in \Cref{thm:structure of Zxy}.
		
		Now assume $\Hom_{\G}(x_j,x_i)\smallsetminus \Hom_{\Gimc}(x_j, x_i)\neq \emptyset$, and hence $ \rho_{j,i} < \infty $. The following two claims prove the required statement.
		
		\medskip
		
		\noindent \underline{\emph{Claim 1:}} $ [\rho_{j,i},\infty) \subset \Zxy{x_i}{x_j} $. 
		\medskip
		
		Given $ \delta>0 $, there exists an edgepath $ \alpha_1\cdots\alpha_n $ passing through the vertices $ x_j=y_0,\dots,y_n=x_i $, having $ \s(\alpha_1\cdots\alpha_n) \in [\rho_{j,i},\rho_{j,i}+\delta) $ and with $ y_{k_0} $ a vertex outside of $ \Gimc $, for some $0\leq k_0 \leq n$.
  
		In particular we know that
		\[ \s(\alpha_1\cdots\alpha_n) \in \sum_{k=1}^n \Zxy{y_k}{y_{k-1}} \subseteq \Zxy{x_i}{x_j}. \]
		Recall that since $ y_{k_0} \notin \Gimc $ then $ \Zxy{y_{k_0}}{y_{k_0}}=[0,\infty) $ by \Cref{Cor: Zxx is ray if infinite leaf}. Hence
  (again by \eqref{eqn: subcontainment for Zxy})
		\[ \s(\alpha_1\cdots\alpha_n)+[0,\infty)\subseteq \sum_{k=1}^{k_0}\Zxy{y_k}{y_{k-1}}+\Zxy{y_{k_0}}{y_{k_0}}+\sum_{k=k_0+1}^n\Zxy{y_k}{y_{k-1}} \subseteq \Zxy{x_i}{x_j}. \]
        Since $ \delta $ was arbitrary and $ \Zxy{x_i}{x_j} $ is closed we conclude the claim.
		\medskip
		
		\noindent \underline{\emph{Claim 2:}} $ \Zxy{x_i}{x_j} \cap [0,\rho_{j,i})= \s\left(\Hom_{\Gimc}(x_j,x_i)\right) \cap [0,\rho_{j,i}) $.
		\medskip
		
		First note that if either $x_i$ or $x_j$ are contained in $\G\smallsetminus\Gimc$ then $\rho_{j,i}=\inf \s(\Hom_{\G}(x_j,x_i))$. By \Cref{prop:closure of s(Hom xy) is Zxy} we thus have $\rho_{j,i}=\min \Zxy{x_i}{x_j}$ and by the previous claim $\Zxy{x_i}{x_j}=[\rho_{j,i},\infty)$. This proves Claim 2 in this case as $\Hom_{\Gimc}(x_j,x_i)=\emptyset $.
		
		Now assume $x_j,x_i \in V(\Gimc)$. Note that $ \s\left(\Hom_{\Gimc}(x_j,x_i)\right) \subseteq \Zxy{x_i}{x_j} $ follows immediately from \Cref{prop:closure of s(Hom xy) is Zxy} above.
		
		Recall the notation $ \T^1_+\lambda^{\infty}$ for the subset of $ \T^1_+\lambda $ which is the lift of all the components of $ \lambda_0 $ containing an infinite leaf. Suppose $ s_0 \in \Zxy{x_i}{x_j} \cap [0,\rho_{j,i}) $. By \Cref{lem:slack gives points in Nxbar}, there exists a sequence of geodesic rays $ \alpha_m $ beginning at $ \alpha_m(0) $, with $ \alpha_m(0)\to x_j $, and forward asymptotic to $Ax_i$,  and having slacks $ \s(\alpha_m) \to s_0 $. 

  \medskip 
  \noindent \underline{\emph{Claim 2a:}} the rays $ \alpha_m $ avoid some $ \varepsilon $-neighborhood of $\T^1_+\lambda^\infty $ for all large enough $ m $.

  \medskip
  \Cref{lem: isolated multi-curve slack - arcs outside nbhd of non-imc} tells us that if Claim 2a holds then $ s_0 \in \s\left(\Hom_{\Gimc}(x_j,x_i)\right) $, implying Claim 2; so it remains to establish 2a.
		
		Assume in contradiction that for all $ \varepsilon >0 $ there exist arbitrarily large $ m $ for which $ \alpha_m $ intersects $ (\T^1_+\lambda^\infty)^{(\varepsilon)} $. Fix $ \varepsilon < \frac{\rho_{j,i}-s_0}{6\kappa_c} $ and let $ m $ be large enough to satisfy 
		\[ \s(\alpha_m)<\frac{\rho_{j,i}+s_0}{2}, \quad \alpha_m \cap (\T^1_+\lambda^\infty)^{(\varepsilon)} \neq \emptyset \quad \text{and} \quad d(\alpha_m(0),x_j)<\varepsilon.  \]
		In particular, let $ y \in \T^1_+\lambda^\infty $ and $ T>0 $ satisfy $ d(\alpha_m(T),y)<\varepsilon $. 
		
		Consider the geodesic $ \eta_1 $ constructed by connecting and straightening
		\[ A_-x_j \cup \alpha_m|_{[0,T]} \cup A_+y, \]    
		and the geodesic $ \eta_2 $ constructed from
		\[ A_-y \cup \alpha_m|_{[T,\infty)}. \]
		Thus $ \eta_1 \in \Axy{y}{x_j} $ and $ \eta_2 \in \Axy{x_i}{y} $.
		
		By  \Cref{lem:epsilon chain slack}, we are ensured that
		\[ \left|\s(\eta_1)-\s(A_-x_j) - \s(\alpha_m|_{[0,T]})\right|<2\kappa_c\varepsilon \] 
		and
		\[\left| \s(\eta_2)-\s(A_-y) - \s(\alpha_m|_{[T,\infty)}) \right|<\kappa_c\varepsilon. \]
		Since $ \s(A_\pm y)=\s(A_-x_j)=0 $, we conclude that
		\[ \left| \s(\eta_1)+\s(\eta_2)-\s(\alpha_m) \right|<3\kappa_c\varepsilon<\frac{\rho_{j,i}-s_0}{2}, \]
		implying in particular that
		\[ \s(\eta_1)+\s(\eta_2)<\s(\alpha_m)+\frac{\rho_{j,i}-s_0}{2}<\rho_{j,i}, \]
		by our choice of $ \varepsilon $ and $ m $.
		
		At this point, if we knew that $ Ay\cap \tau^{-1}(0) \in V(\G) $, then we would have obtained an edgepath $ \eta_1\cdot \eta_2 $ connecting $ x_j $ to $ x_i $, passing outside of $ \Gimc $ and having slack strictly smaller than $ \rho_{j,i} $, contradicting the definition of $ \rho_{j,i} $. Nonetheless, if $ \{y'\} = Ay\cap \tau^{-1}(0) $ and if $ z \in V(\G) $ is the representative of the component of $ y $ and $ y' $, then 
		\[ \Zxy{y'}{x_j}=\Zxy{z}{x_j} \quad \text{and} \quad \Zxy{x_i}{y'}=\Zxy{x_i}{z}, \]
		e.g.~by \Cref{Cor:Orbit closure via representatives} and the fact that $ \beta_+(y')=\beta_+(z) $. Hence by \Cref{prop:Zxy closure of s(Axy)} we know there are arcs $ \eta_1' \in \Axy{z}{x_j} $ and $ \eta_2' \in \Axy{x_i}{z} $ with slacks arbitrarily close to $ \s(\eta_1) $ and $ \s(\eta_2) $, leading again to a contradiction. This proves Claim 2a and hence Claim 2 and the theorem.
	\end{proof}
	
	In light of \Cref{thm: countable filtration Zxy} we draw the following corollary:
	\begin{cor}\label{cor:depth at the beginning of Zxy}
		Using the notation of \Cref{Thm:the ray part in Zxy} we have
		\[ \s(\Hom_\G^{(i+1)}(x_j, x_i))\cap (0,\rho_{j,i}) = (\Zxy {x_i}{x_j})^{(i)} \cap (0,\rho_{j,i}), \]
		where $\Hom_\G^{(i+1)}$ corresponds to edgepaths in $\G$ of length$\geq i+1$ and where $(\Zxy {x_i}{x_j})^{(i)}$ is the $i$-th derived set of $\Zxy {x_i}{x_j}$.
	\end{cor}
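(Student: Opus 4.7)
The proof combines \Cref{Thm:the ray part in Zxy} with \Cref{thm: countable filtration Zxy}. Write $S := \Zxy{x_i}{x_j}$ and $T := \s(\Hom_{\Gimc}(x_j, x_i))$, so that by \Cref{Thm:the ray part in Zxy} we have $S = T \cup [\rho_{j,i}, \infty)$, and in particular
\[
S \cap (0, \rho_{j,i}) = T \cap (0, \rho_{j,i}).
\]

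The plan has three ingredients. First, I would observe that any edgepath in $\G$ with slack strictly below $\rho_{j,i}$ lies entirely inside $\Gimc$: by the very definition of $\rho_{j,i}$, any $\underline\alpha \in \Hom_\G(x_j, x_i)$ that visits a vertex outside $\Gimc$ satisfies $\s(\underline\alpha) \geq \rho_{j,i}$. Consequently
\[
\s(\Hom_\G^{(i+1)}(x_j, x_i)) \cap (0, \rho_{j,i}) \;=\; \s(\Hom_{\Gimc}^{(i+1)}(x_j, x_i)) \cap (0, \rho_{j,i}).
\]
Second, since the derived-set operation is local, the identity $S \cap (0,\rho_{j,i}) = T \cap (0,\rho_{j,i})$ propagates to all iterates: for any $s \in (0, \rho_{j,i})$, whether $s$ is isolated in $S$ is determined by $S \cap V$ for a neighborhood $V \subset (0, \rho_{j,i})$ of $s$, which coincides with $T \cap V$. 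Inducting on $i$ gives $S^{(i)} \cap (0, \rho_{j,i}) = T^{(i)} \cap (0, \rho_{j,i})$.

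The heart of the matter is then to verify that the proof of \Cref{thm: countable filtration Zxy} applies to the subgraph $\Gimc$, yielding $T^{(i)} = \s(\Hom_{\Gimc}^{(i+1)}(x_j, x_i))$. The ingredients used in \S\ref{subsec: multi-curve} --- the uniform lower bound on excursion slacks (Claim \ref{clm: slack bounded below}), the geometric limit chain construction in the proof of \Cref{thm:structure of Zxy}, and the no-miracles \Cref{lem: non-trivial accumulation} --- rest only on the local geometry of $A$-orbits between components of $\T^1_+\lambda^{\imc}$ and on the algebraic Bruhat-decomposition structure, and thus carry over verbatim to $\Gimc$, treating $T$ as the shift set of the $\Gimc$-subsystem. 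Chaining these with the first two ingredients yields
\[
\s(\Hom_\G^{(i+1)}(x_j, x_i)) \cap (0, \rho_{j,i}) \;=\; T^{(i)} \cap (0, \rho_{j,i}) \;=\; S^{(i)} \cap (0, \rho_{j,i}),
\]
which is the corollary.

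The only real subtlety will be to confirm that sequences of $\Gimc$-edgepaths whose slacks accumulate at some $s_0 \in (0, \rho_{j,i})$ produce geometric limit chains that remain inside $\Gimc$, so that the induction of \Cref{thm: countable filtration Zxy} can be run without interference from $\T^1_+\lambda^\infty$. This is precisely the content of Claim 2a in the proof of \Cref{Thm:the ray part in Zxy}: the corresponding $A$-orbits must avoid a uniform $\varepsilon$-neighborhood of $\T^1_+\lambda^\infty$. Combined with \Cref{lem: isolated multi-curve slack - arcs outside nbhd of non-imc}, which converts such isolated-multicurve excursions into genuine edgepaths of $\Gimc$, this closes the argument.
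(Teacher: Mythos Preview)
Your proposal is correct and follows essentially the same route as the paper: rerun the argument of \Cref{thm: countable filtration Zxy} on the subgraph $\Gimc$, using Claim~2a from the proof of \Cref{Thm:the ray part in Zxy} to guarantee that geometric limit chains arising from sequences with limiting slack below $\rho_{j,i}$ stay inside $\Gimc$. One small imprecision: the intermediate identity $T^{(i)} = \s(\Hom_{\Gimc}^{(i+1)}(x_j,x_i))$ is asserted globally, but Claim~2a only controls limit chains when the slack is below $\rho_{j,i}$, so you only establish (and only need) this equality after intersecting with $(0,\rho_{j,i})$---your final paragraph already reflects this, so just adjust the statement of that step accordingly.
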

	
	\begin{proof}
		The proof of this corollary is identical to the one given for \Cref{thm: countable filtration Zxy}, with one added ingredient --- Claim 2a above. That is, whenever a sequence of rays $(\alpha_m)$ has slack $\lim_{m\to \infty}\s(\alpha_m)<\rho_{j,i}$ then there exists an $\varepsilon>0$ such that $\alpha_m$ avoids $(\T^1_+\lambda^\infty)^{(\varepsilon)}$ for all large $m$. This in turn implies that all geometric limit chains extracted from the sequence avoid this neighborhood too and hence correspond to an edgepath in $\Gimc$.
	\end{proof}
	
	\begin{remark}
		Under the assumption that $ \tau $ has maximal stretch locus equal to $ \lambda $, we conclude that there exists a uniform $ \delta>0 $ such that for all $ i\neq j $ 
		\[ \Zxy{x_i}{x_j} \subseteq [\delta,\infty). \]
		This is because all arcs in $ \Axy{x_i}{x_j} $ have to spend some definite amount of time a definite distance away from $ \T^1\lambda $. This in particular implies the following statements:
		\begin{enumerate}[label=(\roman*)]
			\item $ \Zxy{x_i}{x_i}=[0,\infty) $ if and only if $ \mu_i $ contains an infinite leaf.
			\item By \Cref{cor:depth at the beginning of Zxy}, the depth of $ \Zxy{x_i}{x_i}\cap [0,\rho_{j,i}) $ is bounded by $ \lceil \frac{\rho_{j,i}}{\delta} \rceil $.
		\end{enumerate}
	\end{remark}

\subsection{Dichotomy}\label{subsec: dichotomy}
	Another facet of the dichotomy stated in \Cref{thm: dichotomy intro}  has to do with the notion of Garnett points:
	\begin{definition}[e.g.~\cite{Sullivan}]
		A point $\xi $ in the limit set of a Fuchsian group $\Gamma$ is called Garnett if there is a maximal closed horoball centered at
		$\xi$ in $\Hplane$ disjoint from the $\Gamma$ orbit of a point $p \in \Hplane$ and any larger horoball contains infinitely many $\Gamma$-orbits.
	\end{definition}
	
	Recall the classical Busemann function $B:\partial\Hplane \times \Hplane \times \Hplane \to \R$ defined by
	\[ B_\xi(z,w)=\lim_{t \to \infty} d_{\Hplane}(z,\alpha(t))-d_{\Hplane}(w,\alpha(t)), \]
	where $\alpha: [0,\infty) \to \Hplane$ is any geodesic ray ending at $\xi$. An equivalent definition of a limit point being Garnett is that
	\[ B_\xi(z,\gamma.z) < \sup_{\gamma' \in \Gamma} B_\xi(z,\gamma'.z) < \infty \quad \text{for all } z\in \Hplane \text{ and } \gamma \in \Gamma.  \]
	In light of this definition, one can readily verify that a quasi-minimizing point $\xi$ is Garnett if and only if there does not exist a minimizing geodesic ray in $\Hplane/\Gamma$ whose lift ends at $\xi$.
	\medskip
	
	We are now set to fully state and prove the dichotomy:
    \begin{theorem}
    	There is a dichotomy.
    	\begin{enumerate}[label = (\alph*)]
    		\item $\lambda_0$ is a simple multi-curve: for all $x \in \Qm$, $\overline {Nx}$ is a countable union of horocycles, hence has Hausdorff dimension $1$.
    		The set of endpoints of quasi-minimizing rays in $\partial\Hplane$ is countable, and contains no Garnett points. 
    		\item $\lambda_0$ contains an infinite leaf: for all $x\in \Qm$, $\overline{Nx}$ has Hausdorff dimension $2$ and $\overline {Nx} \cap A_+x$ contains a ray.
    		The set of endpoints of quasi-minimizing rays in $\partial\Hplane$ is an uncountable set with Hausdorff dimension 0 which contains uncountably many Garnett points.
    	\end{enumerate}
    \end{theorem}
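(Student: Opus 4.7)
The plan is to derive the dichotomy from the structural results of \S\ref{sec: structure of horo orbit closures} for the orbit-closure statements, and from a symbolic analysis of pseudo-orbits built out of the slack graph $\G$ for the $\partial\Hplane$ statements.

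In case (a), $V(\G) = V(\Gimc)$, so \Cref{cor: multi-curve structure} immediately gives that every $\overline{Nx}$ is a countable union of horocycles and has Hausdorff dimension $1$. Each $x \in \Qm_+$ is strong-stable asymptotic to a lift of one of the finitely many simple closed geodesics making up $\lambda_0$; lifting to $\Hplane$, its forward endpoint is a $\Gamma_0$-translate of the attracting fixed point of the hyperbolic deck transformation representing that closed curve. Combined with the analogous $\Qm_-$ endpoints, the set of quasi-minimizing endpoints is a finite union of countable $\Gamma_0$-orbits, hence countable. Each such endpoint is the forward endpoint of a geodesic lift of a closed curve, so an actual minimizing ray terminates there and the endpoint is not Garnett.

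In case (b), the existence of an infinite leaf guarantees a vertex $x_k \in V(\G)\setminus V(\Gimc)$. For every $x_i \in V(\G)$, the loop $x_i \to x_k \to x_i$ in $\G$ witnesses $\rho_{i,i}<\infty$ in the notation of \Cref{Thm:the ray part in Zxy}, so $\Zxy{x_i}{x_i}\supset[\rho_{i,i},\infty)$. The recurrence semigroup $\Zxy{x}{x}$ is an invariant of $\overline{Nx}$ and coincides with $\Zxy{\v(x)}{\v(x)}$ by \Cref{Thm: the marked Busemann function}, so it also contains a ray; hence $\overline{Nx}\cap A_+x \supset a_{[\rho,\infty)}x$. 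Taking the $N$-saturation of this ray produces a $2$-dimensional subset of $\overline{Nx}$, so $\overline{Nx}$ has Hausdorff dimension at least $2$. The matching upper bound follows from $\overline{Nx}\subset \Qm_+ = N\cdot \T^1_+\lambda$ together with the fact that $\T^1_+\lambda$ is bi-Lipschitz equivalent to the $1$-dimensional set $\lambda$, so $\Qm_+$ has Hausdorff dimension at most $2$. The analogous $\Qm_-$ analysis handles all of $\Qm$.

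For the $\partial\Hplane$ statements in case (b), I would use \Cref{lem:epsilon chain slack} to convert any infinite directed edge-path in $\G$ that revisits $x_k$ into a quasi-minimizing pseudo-ray whose straightened geodesic has a well-defined forward endpoint in $\partial\Hplane$. Since the loop through $x_k$ can be inserted independently at each return, this produces a Cantor set of distinct symbolic paths and therefore a Cantor set of distinct endpoints, which in general will \emph{not} be endpoints of any specific leaf of $\tilde\lambda$; this gives uncountability. The Hausdorff-dimension-$0$ bound will follow because the endpoint map factors through the product of a finite combinatorial datum (the sequence of visited vertices of $\G$) with the transversal $\T^1_+\lambda \cap \tau^{-1}(0)$, which is a $0$-dimensional compact subset of a geodesic lamination on a hyperbolic surface and stays $0$-dimensional under the Lipschitz endpoint-straightening. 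Finally, an endpoint $\xi$ is Garnett exactly when no leaf of $\tilde\lambda$ has $\xi$ as a forward endpoint, and the Cantor-branching construction above produces uncountably many symbolic endpoints that are not leaf endpoints. The main technical obstacle will be making this symbolic-to-geometric endpoint dictionary precise --- in particular the injectivity of the endpoint map on symbolic paths and the dimension bound on its image --- which should follow from careful bookkeeping in \Cref{lem:epsilon chain slack} together with the Cantor-like transverse structure of chain-recurrent geodesic laminations on hyperbolic surfaces.
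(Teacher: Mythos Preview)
Your case (a) argument and the lower bound for Hausdorff dimension in case (b) are correct and essentially match the paper's proof.

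There is a genuine gap in your upper bound for $\dim \overline{Nx}$ in case (b). You write $\Qm_+ = N\cdot \T^1_+\lambda$, but this equality is \emph{false} precisely when $\lambda_0$ has an infinite leaf: a quasi-minimizing ray is asymptotic to $\T^1_+\lambda$ as a \emph{set} (this is \cite[Thm.~3.4]{FLM}), but need not be strong-stable asymptotic to any single leaf. Indeed, the Garnett points you are trying to construct later are exactly the endpoints of quasi-minimizing rays that are not in $N\cdot\T^1_+\lambda$. So your own construction contradicts the equality you are using for the upper bound. The paper instead invokes \cite[Cor.~1.5]{FLM}, which gives directly that $\Qm$ has Hausdorff dimension $2$ (equivalently, the set of non-horospherical limit points has dimension $0$); this is an independent result and you should cite it rather than attempt the product argument.

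For the Garnett points, the paper's construction is more direct than your symbolic scheme: take $x\in\T^1_+\lambda$ with $\Zxy{x}{x}=[0,\infty)$ (\Cref{Cor: Zxx is ray if infinite leaf}), choose arcs $\alpha_m\in\Axy{x}{x}$ with $\sum_m\s(\alpha_m)<\infty$, and chain long segments of $\Z$-translates of the $\alpha_m$ together using close returns of $A_+x$ to $\Z.x$. The resulting ray has finite slack (hence is quasi-minimizing) but, because each $\alpha_m$ leaves $\T^1_+\lambda$, is not asymptotic to any leaf and therefore has no minimizing representative, so its endpoint is Garnett. Permuting the $\alpha_m$ (or choosing to include or omit each one) gives uncountably many distinct endpoints. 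This bypasses the injectivity and bookkeeping issues you flag for your symbolic approach. Your characterization ``$\xi$ is Garnett iff no leaf of $\tilde\lambda$ ends at $\xi$'' is correct in this setting, since a ray of zero slack lies in $\T^1_+\lambda$ by \eqref{eqn: zero slack in lambda}, and the existence of a minimizing ray to $\xi$ is equivalent to the supremum in the Busemann characterization being attained, which forces zero slack.
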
  
    
    \begin{proof}
    	The case where $ \lambda_0 $ is an isolated multi-curve was covered in \Cref{cor: multi-curve structure}. Additionally, since all quasi-minimizing points in this case are asymptotic to a leaf of $ \lambda $, all such limit points have a corresponding minimizing ray implying they are not Garnett.
    	
    	Now assume $ \lambda_0 $ is not an isolated multi-curve. Then by \Cref{Cor:Orbit closure via representatives} and \Cref{Thm:the ray part in Zxy}, for any quasi-minimizing point $ y \in \Qm $ the recurrence semigroup $ \Zxy{y}{y} $ contains a ray. This implies that $ \overline{Ny} $ contains a subset of the form $ A_{(t_1,t_2)}Ny $ which has Hausdorff dimension 2. On the other hand, by \cite[Cor. 1.5]{FLM} we know that $ \Qm $, which contains $ \overline{N y} $, has Hausdorff dimension 2. This implies $ \dim \overline{N y}=2 $, as claimed.
    	
    	By \Cref{Cor: Zxx is ray if infinite leaf}, there exists a point $ x \in \T^1\lambda $ with $ \Zxy{x}{x}=[0,\infty) $. In particular, there exist arcs in $ \Axy{x}{x} $ having arbitrarily small positive slack. Fix some sequence $ (\alpha_m) \subseteq \Axy{x}{x} $ having summable slacks, that is $ \sum_{m \in \N} \s(\alpha_m) < \infty $. Making use of close returns of $ A_+x $ to $ \Z.x $ allows us to chain together countably many long intervals from  such arcs, generating a quasi-minimizing ray which is not asymptotic to any leaf of $ \lambda $. Such a ray corresponds to a Garnett limit point, having no minimizing representative. Clearly, one can generate in such a way uncountably many distinct Garnett points (e.g.~by permuting the elements of the sequence $ (\alpha_m) $).
    \end{proof}
    
\subsection{Examples}

    The goal of this subsection is to exhibit further \emph{non-rigidity} properties of $N$-orbit closures in $\Z$-covers as we vary the metric on the closed surface, downstairs.
    Theorem 5.6 of \cite{FLM} provides a convergent sequence of marked hyperbolic hyperbolic structures $\Sigma_{0,m}  \to \Sigmazero$ with corresponding $\Z$-covers $\Sigma_m$ and $\Sigma$ with the properties that the minimizing laminations $\lambda_{0,m} \subset \Sigma_{0,m}$ have finitely many leaves, while $\lambda_0\subset \Sigma_0$ is minimal and filling with uncountably many leaves. %
    The main results in this paper give that every non-maximal $N$-orbit closure in $\T^1\Sigma_m$ has Hausdorff dimension $1$, while in $\T^1\Sigma$, non-maximal $N$-orbit closures have Hausdorff dimension $2$.  

    In this subsection, we modify the construction from \cite[\S5.5]{FLM} slightly to produce a convergent sequence $\Sigma_{0,m}\to \Sigma_0$ with corresponding $\Z$-covers $\Sigma_m$ and $\Sigma$ where $\dim \overline {Nx} = 2$ for all quasi-minimizing points $x \in \Sigma_m$ and $x\in \Sigma$, but where the initial part of the recurrence semi-group $\Zxy {x_m}{x_m}$ has arbitrarily large, finite depth for certain $x_m \in \T^1_+\lambda_m \subset \T^1\Sigma_m$.
    Meanwhile $\Zxy xx = [0,\infty)$ for all $x \in \T_+\lambda\subset \T^1 \Sigma$.
    Furthermore, the number of distinct $N$-orbit closures (up to $A$-action) in $\T^1\Sigma_m$ grows without bound, while in $\T^1\Sigma$, there are exactly $4$, up to translation by $A$.

    As in our previous paper, these examples are produced from certain interval exchange transformations via the \emph{orthogeodesic foliation} construction and its continuity properties studied in \cite{CF:SHSH, CF:SHSHII}. 

	\begin{remark}\label{rmk: extend construction of stretch laminations}
		Note that the constructions described in \cite[Theorem 5.3]{FLM} were only stated for laminations supporting a transverse measure of full support. However, as chain recurrent laminations are Hausdorff limits of the supports of measured laminations, and the \emph{orthogeodesic foliation construction} from \cite{CF:SHSH,CF:SHSHII} is continuous in the Hausdorff topology, a limiting argument gives the following statement:
        
        Let $S_0$ be a closed, oriented surface, let $c>0$, let $\varphi\in H^1(S_0,c\Z)$, and let $\lambda_0$ be an oriented chain recurrent geodesic lamination on $S_0$.  Suppose $\varphi$ is Poincar\'e dual to a multicurve $\alpha$ with positive $c\Z$-weights that meets $\lambda_0$ transversely and positively and such that $S_0\setminus (\lambda_0 \cup \alpha)$ is a union of pre-compact disks.  Then there is a hyperbolic metric $\Sigma_0$ on $S_0$ and a  $1$-Lipschitz tight map $\tau_0: \Sigmazero \to \R/c\Z$ inducing $\varphi$ on homology with stretch set equal to $\lambda_0$. 
        
	\end{remark}

    For a closed, oriented surface $S_0$, we denote by $\mathcal T(S_0)$ the Teichm\"uller space of homotopy classes of marked hyperbolic structures on $S_0$.
    For the purpose of the following theorem, say that a geodesic lamination is \emph{perfect} if it is minimal and has no isolated leaves.
	\begin{theorem}\label{thm: examples}
	    Given any non-trivial homotopy class $S_0 \to S^1$ with corresponding $\Z$-cover $S \to S_0$, there is a sequence $\Sigma_{0,m} \in \mathcal T(S_0)$ converging to $\Sigma_0\in \mathcal T(S_0)$ with corresponding locally isometric $\Z$-covers $\Sigma_m\to \Sigma_{0,m}$ and $\Sigma \to \Sigma_0$ satisfying the following properties.
     \begin{enumerate}
         \item The minimizing lamination $\lambda_{0,m} \subset \Sigma_{0,m}$ consists of a minimal perfect component and a union of boundedly many simple closed curves.
         In $\Sigma_m$, the perfect component lifts to a weakly connected component of $\lambda_m$, but the number of uniformly isolated leaves in $\lambda_m$ grows without bound.
         \item $\lambda_0\subset \Sigmazero$ consists of $2$ minimal, perfect components, and $\lambda\subset \Sigma$ has $2$ weakly connected components. 
         \item We have convergence $\lambda_{0,m} \to \lambda_0$ in the Hausdorff topology on closed subsets of $S_0$ (with respect to an auxiliary negatively curved metric).
         \item There is a $\rho>0$ such that for all $m$ and for all $y_m$ forward-tangent to uniformly isolated leaves in $\lambda_m$, 
         $\Zxy {y_m}{y_m} \cap [0,\rho]$ is countable with finite depth, which goes to infinity with $m$.
     \end{enumerate}
     In particular, up to $A$-translation, the number of distinct $N$-orbit closures facing the `$+$'-end in $\T^1\Sigma_m$ grows without bound, but  in $\T^1\Sigma$ there are $2$.
	\end{theorem}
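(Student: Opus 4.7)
The plan is to construct the sequence directly through the orthogeodesic foliation machinery of \cite{CF:SHSH, CF:SHSHII}, leveraging the generalization of our previous construction stated in \Cref{rmk: extend construction of stretch laminations}. To produce the limit $\Sigma$, I would first choose a filling oriented chain recurrent lamination $\lambda_0$ on $S_0$ whose minimal components are exactly two disjoint perfect sublaminations $\mu^1,\mu^2$ (minimal, with no isolated leaves, each supported in a distinct essential sub-surface of $S_0$). Taking a multicurve $\alpha\subset S_0$ Poincar\'e-dual to the given homotopy class $\varphi$, meeting $\lambda_0$ transversely and positively, \Cref{rmk: extend construction of stretch laminations} provides a hyperbolic metric $\Sigma_0$ and a tight map $\tau_0:\Sigma_0\to\R/c\Z$ with $\Stretch(\tau_0)=\lambda_0$.

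For the approximating sequence, I would Hausdorff-approximate only one of the perfect components, say $\mu^2$, by multicurves $\alpha^2_m$ supported in a train-track neighbourhood of $\mu^2$, each consisting of boundedly many components whose $\tau_0$-periods tend to infinity (Dehn-twist approximations). Setting $\lambda_{0,m}=\mu^1\cup\alpha^2_m$ gives chain recurrent laminations converging to $\lambda_0$ in the Hausdorff topology; reapplying the orthogeodesic foliation construction produces hyperbolic metrics $\Sigma_{0,m}$ with tight maps $\tau_{0,m}$ realizing $\Stretch(\tau_{0,m})=\lambda_{0,m}$. Continuity of the construction in the Hausdorff topology yields $\Sigma_{0,m}\to\Sigma_0$, establishing (1)--(3). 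Because each $\mu^i$ is minimal and filling in its sub-surface, \Cref{thm: chprox equiv chain recurrent} and \Cref{cor: chprox and weak components} --- applied in the finite intermediate cover provided by \Cref{cor: finite cover} --- ensure each such component lifts to a single weak component, giving $2$ weak components of $\lambda$ and exactly $1+k_m$ weak components of $\lambda_m$, where $k_m\to\infty$ counts the $\Z$-translation classes of lifts of the curves comprising $\alpha^2_m$.

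For (4), the multicurves $\alpha^2_m$ should be arranged so that their $\tau_{0,m}$-periods $T_1<\cdots<T_{k_m}$ are spaced with successive lifts joined by edges of arbitrarily small positive slack in $\mathcal G_m^{\mathrm{imc}}$ (this is possible because $\alpha^2_m$ approximates $\mu^2$), while any edge joining a lift to itself, skipping non-adjacent lifts, or passing through the $\mu^1$-lift, must leave a neighbourhood of $\T^1_+\lambda_m$ and pick up a definite amount of slack by \Cref{small slack close to L}. Choosing $\rho>0$ smaller than this uniform lower bound, \Cref{Thm:the ray part in Zxy} yields
\[
\Zxy{y_m}{y_m}\cap[0,\rho]=\s\bigl(\Hom_{\Gimc}(y_m,y_m)\bigr)\cap[0,\rho],
\]
and \Cref{cor:depth at the beginning of Zxy} shows that this set has finite depth equal to the length of the longest chain of vertices in $\mathcal G_m^{\mathrm{imc}}$ reachable from $y_m$ with total slack $<\rho$. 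Since this length grows at least linearly with $k_m$, the depth goes to infinity. The stated count of $N$-orbit closures up to $A$-translation at the `$+$'-end now follows directly from \Cref{Cor:Orbit closure via representatives}: in $\T^1\Sigma_m$ it equals $1+k_m\to\infty$, while in $\T^1\Sigma$ it equals $2$.

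The main obstacle is the combinatorial bookkeeping in the orthogeodesic construction: one must simultaneously arrange that the Hausdorff approximation $\alpha^2_m\to\mu^2$ can be performed with the prescribed growth of $\tau_{0,m}$-periods so that the resulting slack graph really is a long chain, that the transversality and positivity hypotheses of \Cref{rmk: extend construction of stretch laminations} are preserved uniformly along the sequence, and that the uniform lower bound $\rho$ on slack of ``shortcuts'' persists along the whole sequence. Once this delicate construction is engineered, all dynamical conclusions reduce mechanically to the structural results assembled in \S\ref{subsec: slack graph G}.
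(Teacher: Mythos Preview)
Your outline follows the paper's strategy: build $\lambda_0$ with two perfect minimal components, Hausdorff-approximate one of them by multicurves, invoke continuity of the orthogeodesic construction, and extract the depth growth from the slack graph via \Cref{Thm:the ray part in Zxy} and \Cref{cor:depth at the beginning of Zxy}. Two points need repair.

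First, ``minimal and filling in its sub-surface $S^i$'' does not by itself force a single weak component in $\Sigma$. If $\mu^i$ fills $S^i$, the lift of $\mu^i$ to $\Sigma$ has one weak component for each connected component of $\pi_\Z^{-1}(S^i)$, and that count is the index of $\varphi(\pi_1(S^i))$ in $\Z$. So you must additionally arrange that $\varphi|_{\pi_1(S^i)}$ is surjective for $i=1,2$, which is yet another constraint to engineer alongside the ones you already list. The paper sidesteps this entirely by working in an explicit IET model: it realizes $\lambda_0$ via the suspension of $T'=T\sqcup T$ with $T$ a weak-mixing IET, and weak-mixing gives density of proximal pairs in each factor, hence a single chain-proximality class per copy of $T$ directly (no sub-surface bookkeeping needed).

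Second, your count of isolated weak components is off. A simple closed curve of $\varphi$-degree $p$ in $\Sigma_{0,m}$ lifts to $p$ distinct isolated lines in $\Sigma_m$ (forming a single $\Z$-orbit), so the number of isolated weak components contributed by $\alpha^2_m$ is its \emph{total $\varphi$-degree}, not the number of ``$\Z$-translation classes of lifts'' (which equals the bounded number of components of $\alpha^2_m$). It is this total degree --- the period of the approximating periodic IET $T_m$ in the paper's model --- that tends to infinity and drives both $|V(\Gimc_m)|$ and the depth of $\Zxy{y_m}{y_m}\cap[0,\rho]$.
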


    \begin{remark}
        Item $(4)$ could be strengthened to say that for any $y_m$ and $z_m$ forward tangent to uniformly isolated leaves in $\lambda_m$ on the same $\tau_m$-fiber, $\Zxy {z_m}{y_m} \cap [0,\rho]$ is also countable with finite depth, tending to infinity with $m$.  The argument is more elaborate than we care to include, here.
    \end{remark}
    \begin{proof}
        We only give a sketch of the construction, referring the reader to \cite[\S5]{FLM} for more details.
        Let $T: I \to I$ be a weakly mixing interval exchange transformation,\footnote{The only condition that we are using is that all positive powers of $T$ are ergodic for the Lebesgue measure.} which exist for every irreducible permutation, e.g., by \cite{AF:weak_mixing}.
        Let 
        \[T' = T\sqcup T : I \sqcup I \to I \sqcup I\]
        be the (reducible) interval exchange transformation obtained by stacking two intervals one next to the other and applying $T$ to each.
        Consider the singular flat surface obtained by suspending $T'$ with constant roof function $c>0$ and gluing the remaining two edges by an orientation preserving isometry (consult Figure 8 in \cite{FLM}).
        Non-singular leaves of the  horizontal foliation of this singular flat structure $\omega$ correspond to orbits of $T'$.  Collapsing the leaves of the vertical foliation to points yields a (harmonic) map to the circle $\R/c\Z$.
        We can always find a $T$ such that this singular flat surface $\omega$ is topologically equivalent to $S_0$,\footnote{For certain small complexity examples, one must modify this construction slightly using irrational circle rotations, rather than a weakly mixing IET.} and the mapping class group $\text{Mod}(S_0)$ acts transitively on primitive integer cohomology classes $H^1(S_0, \Z)$, so we can assume that the homotopy class of maps to $\R/c\Z$ is a given one $S_0\to S^1$.
        \medskip

        Let $\lambda_0$ be the measured geodesic lamination obtained by straightening the leaves of the horizontal foliation of the singular flat metric given by $\omega$ from the previous paragraph.
        It has two components, each corresponding to a copy of $T: I \to I$; call them $\mu_1$ and $\mu_2$.
        Using \cite{CF:SHSH}, there is a unique hyperbolic metric $\Sigma_0\in \mathcal T(S_0)$ such that the orthogeodesic foliation $\cO_{\lambda_0}(\Sigma_0)$ is isotopic and measure equivalent to the vertical foliation on $\omega$.
        Collapsing the leaves of $\cO_{\lambda_0}(\Sigma_0)$ yields a tight map $\tau_0 : \Sigma_0 \to \R/c\Z$ with $\Stretch (\tau_0) = \lambda_0$.
        On $\Sigma_0$, there is a positive distance $d_0$ between the two components $\mu_1$ and $\mu_2$ of $\lambda_0$.  
        Since $T$ is weak-mixing, the chain proximality equivalence relation on $\lambda_0$ intersected with a $\tau_0$-fiber has two classes corresponding to the two $\mu_1$ and $\mu_2$; this follows from \S\S\ref{sec:chain prox min}--\ref{Section:Synchronous Transversal} and the fact that, for the first return system to a $\tau_0$ fiber intersected with $\T^1_+\lambda_0$, for a given point $x \in \T^1_+\mu_i$, the set of points $y$ that are proximal to $x$ is dense in $\T^1_+\mu_i$, for $i = 1,2$ (see \cite[Theorem 9.2]{FLM}).

        As in the proof \cite[Theorem 5.6]{FLM}, we can find a sequence of weighted multi-curves $\gamma_m$ contained in a snug train track neighborhood of $\mu_2$ that converge both in the Hausdorff topology and the measure topology to $\mu_2$.
        Furthermore, there are corresponding periodic interval exchange transformations $T_m : I \to I$ that converge to $T$ as $m \to \infty$ such that
        for 
        \[T_m' = T\sqcup T_m: I \sqcup I  \to I \sqcup I , \]
        the corresponding constant roof function $c>0$ suspension $\omega_m$ with its singular flat structure satisfies: 
        \begin{itemize}
            \item the horizontal foliation of $\omega_m$ is measure equivalent to $\lambda_{0,m}= \mu_1 \sqcup \gamma_m$.
            \item collapsing the vertical foliation $\omega_m \to \R/c\Z$ represents $S_0 \to S^1$.
            \item $\omega_m \to \omega$ as $m \to \infty$ (in the natural topology, e.g., that $\omega_m$ and $\omega$ are holomorphic $1$-forms on Riemann surfaces homeomorphic to $S_0$).
        \end{itemize}
        We have corresponding hyperbolic metrics $\Sigma_{0,m} \in \mathcal T(S_0)$ with $\cO_{\lambda_{0,m} }(\Sigma_{0,m})$ isotopic and measure equivalent to the vertical foliation of $\omega_m$ as well as $1$-Lipschitz tight maps $\tau_{0,m} : \Sigma_{0,m} \to \R/c\Z$.

        By \cite[Theorem A]{CF:SHSHII}, $\Sigma_{0,m}$ converges to $\Sigma_0$ in $\mathcal T(S_0)$ as $m\to \infty$.

        \medskip
        This completes the construction and establishes items (1) -- (3).  The only thing left to explain is item (4).
        Let $x_1$ and $x_2 \in \T^1_+\lambda$ be on leaves projecting to $\mu_1$ and $\mu_2 \subset \lambda_0$, respectively, in the same $\tau$-fiber.
        Define
        \[\rho_0 = \inf_{Az \in \Axy {x_2}{x_1}} \s(Az) + \inf_{Az \in \Axy {x_1}{x_2}} \s(Az).\]
        Since the distance between $\mu_1$ and $\mu_2$ is $d_0>0$ and $\Stretch (\tau_0) = \lambda_0$, we can conclude that $\rho_0>0$ (Lemma \ref{small slack close to L}).
        Define $\rho$ (from the statement of the theorem) as $\rho_0/2$.
        Using the results in \S\ref{subsec: slack graph G}, $\rho_0$ does not depend on the choices of $x_1$ or $x_2$.
        \medskip

        Let $y_m$ be a point forward tangent to a uniformly isolated leaf of $\lambda_m$.
        Since $\Sigma_{0,m}$ converges to $\Sigma_{0}$ as $m\to \infty$ and the laminations $\lambda_{0,m} \to \lambda_0$ converge in the Hausdorff topology,
        up to subsequence, there is a point $y'$ forward tangent to the weak component of $\lambda$ corresponding to $\mu_2$ such that 
        the triples
        \[(\T^1\Sigma_m, \T^1_+\lambda_m , y_m) \text{ converge geometrically to } (\T^1\Sigma,\T^1_+\lambda , y'), \text{ as $m\to \infty$. }\]
        In other words, near $y'$, the geometry of $\lambda \subset \Sigma$ is well approximated by the geometry of $\lambda_m \subset \Sigma_m$.

        By geometric convergence of triples, the uniformly isolated leaves of $\lambda_m \subset \Sigma_m$ get closer  to one another (but remain distance at least $d_0/2$ from the component corresponding to $\mu_1$ for large enough $m$) and grow in number as $m \to \infty$.
        Using Theorem \ref{Thm:the ray part in Zxy} and geometric convergence, we have that 
        \[\Zxy {y_m}{y_m} \cap [0, \rho]\]
        is a countable set for $m$ large enough.
        That is, although $\Zxy {y_m}{y_m}$ contains a ray, this ray does not begin until after $\rho$ (in fact its beginning is close to $\rho_0$, if $m$ is large enough).

        Now we find a small slack path joining the past of $y_m$ to its future.
        Given $\ep>0$ and $m$ large enough, there is a uniformly isolated leaf of $\lambda_m$ within $\ep$ of $y_m$.
        A path backwards asymptotic to $y_m$ that jumps to this nearby leaf, and then jumps back to $Ay_m$ when it is close has positive slack of size $O(\epsilon)$ (Lemma \ref{lem:epsilon chain slack}).  
        That this leaf does indeed come back close to $Ay_m$ follows from periodicity of $T_m'$.
        This proves that there is a generator of the semi-group $\Zxy {y_m} {y_m}$ smaller than $\ep$ for $m$ large enough.
        In particular, for $i$ such that $i<\rho/\epsilon$, $\s(\Hom_{\Gimc}^{(i)}(y,y))\cap [0,\rho]$ is non-empty, as it contains positive elements of size smaller than $i\epsilon<\rho$.
        
        By \Cref{cor:depth at the beginning of Zxy}, \[(\Zxy {y_m}{y_m})^{(i-1)}\cap [0,\rho] = \s(\Hom_{\Gimc}^{(i)}(y,y))\cap [0,\rho],\]
        and so $\Zxy {y_m}{y_m}\cap [0,\rho]$ has depth at least $\rho/\epsilon -1$ for large enough $m$.
        Since $\ep$ was arbitrary, this establishes item (4).
        \medskip

        A detailed argument would be rather cumbersome to write down and distract from the main argument, so we conclude our discussion, here.
    \end{proof}

	\subsection{Non-regularity of orbit closures}\label{subsec: non-regularity of orbit closures}
	
	In this subsection we briefly argue that non-maximal horocycle orbit closures are never (topological) submanifolds of $\T^1\Sigma$. We highlight several forms of irregularities, some orbit closures may exhibit more than one.
	
	First notice that whenever the orbit closure is a countable union of horocycles then it is not a manifold as locally, in small compact neighborhoods, it is a countable disjoint union of one-dimensional arcs.
	
	Otherwise, the distance minimizing lamination $\lambda \subset \Sigma$ contains a non-periodic leaf. Suppose $x \in \Qm_+$ and for simplicity assume $\beta_+(x)=0$, hence $\overline{Nx}=\overline{N\v(x)}$. There are two cases --- if $x_i=\v(x)$ corresponds to a point in $\Gimc$ (i.e.~it is asymptotic to a periodic geodesic in $\lambda$) then by \Cref{Thm:the ray part in Zxy} and the remark thereafter, we know that a small neighborhood of $x_i$ intersects $\overline{Nx}$ in a one-dimensional arc, whereas other parts of the orbit closure contain a two dimensional plane (e.g.~around the point $a_sx_i$ where $s>\rho_{i,i}$).
	
	If, on the other hand, $x_i=\v(x)\notin\Gimc$ then we know that $x_i \in \T^1_+\mu_i$ where $\mu_i$ is a weakly connected component of $\lambda$  containing a non-periodic leaf. In particular, $\mu_i$ contains infinitely many leaves (at least countably many of which are isometric copies of the non-periodic leaf). This implies that emanating from any basepoint in $\T^1_+\mu_i$ there are infinitely many quasi-minimizing rays having slack $<\varepsilon$, for any arbitrary $\varepsilon>0$, and which are asymptotic to $\mu_i$. 
	
	Recall that $\Zxy{x_i}{x_i}=[0,\infty)$ and consider the point $a_s x_i \in \overline{Nx}$, for some $s>0$. Consider the Iwasawa decomposition of $\PSL_2(\R)=NAK$ where $K\cong \operatorname*{PSO}(2)$ is the group of rotations around the basepoint ($A$ and $N$ as before). Since there are infinitely many quasi-minimizing rays emanating from points of the form $k a_s x_i$ and having slack$<s/2$ we conclude that these points have $\bhatp$-value in $[s/2,\infty)\times \{x_i\}$ which implies that they are contained in $\overline{Nx_i}$. Moreover, the entire $A_+N$-orbit of these points is contained in $\overline{Nx_i}$.
	
	Hence locally around $a_s x_i$ we have witnessed infinitely many two-dimensional half-planes. As we know that the set of quasi-minimizing directions does not contain an interval (it is in fact 0 Hausdorff dimensional) we conclude that $\overline{N x_i}$ is locally not a manifold.

\appendix
\section{Chain recurrence of the stretch lamination}

\newcommand{\Lf}{\Lambda_+}
\newcommand{\Lb}{\Lambda_-}
\newcommand{\hatLf}{\hat\Lambda_+}
\newcommand{\hatLb}{\hat\Lambda_-}
\newcommand{\Lbf}{\Lambda_\pm}

In this appendix we give the proof of Theorem \ref{L chain recurrent}, which we recall
states that for a 
closed hyperbolic $d$-manifold  $\Sigma_0$ and   a nontrivial homotopy class of maps $\Sigma_0\to
\R/\Z$, the stretch lamination $\lambda_0$ is chain-recurrent. 

The proof for $d=2$ in \cite{GK:stretch} uses the structure theory of geodesic
laminations on surfaces, but on the other hand applies to any dimensional target. 

Before we start let us recall the definition of chain-recurrence. If $\mu$ is an oriented
lamination in a hyperbolic manifold and $x,y\in \mu$ then a {\em $(b,\ep)$-chain}
from $x$ to $y$, where $b,\ep>0$, is a sequence of directed subsegments $\alpha_0,\ldots,\alpha_k$ of
$\mu$, each of length at least $b$, such that $\alpha_0$ begins at $x$, $\alpha_k$ terminates at $y$, and for each
$i<k$ the terminal point of $\T^1\alpha_i$ is within $\ep$ of the initial point of
$\T^1\alpha_{i+1}$. We say that $x\in \mu$ is chain-recurrent if there exists $b>0$
such that for every $\ep>0$ there is a $(b,\ep)$-chain from $x$ to itself. One can check that the
set of chain-recurrent points must be a sublamination of $\mu$, and if it is all of
$\mu$ then we say $\mu$ is chain-recurrent.

\begin{proof}[Proof of Theorem \ref{L chain recurrent}]
Let $\tau_0: \Sigma_0 \to \R/c\Z$ be a tight map in the given homotopy class, where $c>0$ has
been chosen so that the Lipschitz constant of $\tau_0$ is 1. We may assume \cite[Theorem 1.3]{GK:stretch} that $\tau_0$
has been chosen so that $\lambda_0$ is the entire locus where the local Lipschitz constant is
1. Recall that $\lambda_0$ is the intersection of the maximal stretch sets over all maps in our
given homotopy class.

If $\lambda_0$ is not chain-recurrent, let $x\in \lambda_0$ be a non chain-recurrent point. We will find
a homotopic $\tau'$ whose maximal stretch set does not include $x$, thus obtaining a
contradiction.

The oriented lamination $\lambda_0$ admits an $A$ action by geodesic flow, by lifting it to
$\T^1_+\lambda_0\subset \T^1\Sigma_0$, and applying $A$ there. We adopt this notation throughout, so that for
$x\in\lambda_0$, $Ax$ is the lamination leaf through $x$. 

Let $\Lf=\Lf(x)$ and $\Lb=\Lb(x)$  be the following sets:

We let $y\in \Lf$ if $y\in \lambda_0 \ssm Ax$, and if there exists $b>0$ so that for each $\ep>0$ there is a $(b,\ep)$-chain from
$x$ to $y$. Similarly, $y\in \Lb$ if $y\in \lambda_0\ssm Ax$, and there exists $b>0$ so that for each $\ep>0$ there is a
$(b,\ep)$-chain from $y$ to $x$.  

We note the following:
\begin{enumerate}
\item $\Lf$ and $\Lb$ are disjoint.
  If $y$ is in the intersection, then there exists $b>0$ so that for each $\ep$ we have a $(b,\ep)$-chain from $x$ to
  $y$ and back to $x$, which contradicts the choice of $x$ as non-chain-recurrent.
  
\item $\Lf$ and $\Lb$ are $A$-invariant:

  Let $y\in\Lf$ and consider $a_ty$ for
  $t\in\R$. Fixing $b>0$, for each $\ep$ consider a $(b,\ep)$-chain $c_\ep$ from $x$ to $y$.
  If $t\ge 0$ then we can extend the last segment of the chain to reach $a_ty$, and
  conclude $a_ty\in \Lf$.

If $t<0$, first note that if the total
  length of $c_\ep$ is bounded as $\ep\to 0$ then $y\in Ax$ which contradicts the
  definition. Thus the length goes to $\infty$, and so for small enough $\ep$ we can flow
  by $t$ along the chain and adjust the segments to obtain a $(b,\ep')$-chain that goes from $x$ to $a_ty$, where
  $\ep'\to 0$ as $\ep\to 0$.

  The argument for $\Lb$ is the same. 

\item $\Lf$ and $\Lb$ are closed.

  Let $y_n\in \Lf$ converge to $y\in \Sigma_0$. Assume first that $y\notin Ax$. 
For each $y_n$ we have $b_n>0$ such that
  for all $\ep>0$ there is a $(b_n,\ep)$ chain from $x$ to $y_n$. If these chains all had
  bounded total length then $y_n$ would be in $A_{[0,T]}x$ for some fixed $T$ and then so
  would $y$, a contradiction. Thus the lengths  are unbounded. Note that by replacing blocks of $k$-consecutive segments in a $(b',\varepsilon)$-chain by one long segment we get a $(kb',\varepsilon')$-chain. Thus even in the case where $b_n \to 0$, since $\varepsilon$ can be taken arbitrarily small we are always ensured that all $y_n$ are reachable by a $(b,\varepsilon)$-chain, for arbitrary $\varepsilon>0$. Hence so is the point $y$, implying $y\in \Lf$.

  The possibility remains that  $y\in Ax$. But then $y=a_tx$ for some $t$, so applying $A$-invariance
$a_{-t}y_n\in \Lf$ converge to $x$, and this implies that $x$ is chain-recurrent, again a
contradiction. The proof for $\Lb$ is the same. 
\end{enumerate}

Let $\tau:\Sigma\to \R$ be the lift of $\tau_0$ to the $\Z$-cover $\Sigma$. Let
$\hatLf$ and $\hatLb$ be the preimages of $\Lf$ and $\Lb$ in $\Sigma$. 
Next we claim:
\begin{equation}\label{eqn: epsilon room}
  \inf\{ d(y,z) - (\tau(y)-\tau(z)) : y\in\hatLb, z\in\hatLf\} > 0.
\end{equation}

Note that the $1$-Lipschitz property of $\tau$ means that this infimum is non-negative. If
it is zero, let $y_n\in\hatLb,z_n\in\hatLf$ be such that
$$d(y_n,z_n) - (\tau(y_n)-\tau(z_n)) \to 0.$$
Note that this quantity is just the slack $\s(\gamma_n)$ where $\gamma_n$ is a
distance-minimizing geodesic from $z_n$ to $y_n$.

Now because $\Lbf$ are compact and disjoint, the lengths of $\gamma_n$ are uniformly
bounded below. Thus, applying Lemma \ref{small slack close to L}, we obtain  some uniform $b>0$ and $\delta_n\to 0$ so that $\gamma_n$
can be cut into pieces of size roughly $b$  whose lifts to $\T^1\Sigma_0$ lie in
$\delta_n$-neighborhoods of $\T^1\lambda_0$.
In particular this gives us $(b,\delta_n)$-chains from
$z_n$ to $y_n$. These descend to chains in $\Sigma_0$ from $\bar z_n$ to $\bar y_n$, where
$\bar z_n \in \Lf$ and $\bar y_n\in \Lb$. 

Combining these with the chains from $x$ to $\bar z_n$ and from $\bar y_n$ to $x$ given by the
definition of $\Lbf$, we find that $x$ is chain-recurrent, again a contradiction. This
completes the proof that inequality (\ref{eqn: epsilon room}) holds.

\medskip

Now let $\ep>0$ be smaller than the infimum in (\ref{eqn: epsilon room}), and smaller than
$c$. Define
$\tau':\hatLf\cup\hatLb \to \R$ as follows:
\begin{align*}
\tau'|_{\hatLb} &= \tau|_{\hatLb} \\
\tau'|_{\hatLf} &= \tau|_{\hatLf} - \ep.
\end{align*}
We check that $\tau'$ is $1$-Lipschitz:  if $y,z\in \hatLf$ 
or $y,z\in \hatLb$ 
then $\tau'(y)-\tau'(z) = \tau(y)-\tau(z)$ so the fact that $\tau$ is $1$-Lipschitz
suffices. If $z\in\hatLf$ and $y\in\hatLb$ then
by (\ref{eqn: epsilon room}) and the choice of $\ep$ we have
$$\tau'(y)-\tau'(z) = \tau(y)-\tau(z) + \ep < d(y,z).$$
On the other hand
$$\tau'(z)-\tau'(y) = \tau(z)-\tau(y) - \ep < \tau(z)-\tau(y) \le d(y,z)$$
Thus $|\tau'(y)-\tau'(z)| < d(y,z)$ so indeed $\tau'$ is 1-Lipschitz.

By a classical theorem of McShane \cite{McShane}, the function 
\[ w \mapsto \inf\{ \tau'(z)+d_\Sigma(z,w) : z \in \hatLf\cup\hatLb \} \]
is a $1$-Lipschitz extension of $\tau'$ to all of $\Sigma$. We shall denote this extension by $\tau'$ as well. One can easily verify from the formula that equivariance of $\tau'$ on $\hatLf\cup\hatLb$ implies that the extended function is also $\Z$-equivariant. Thus $\tau'$ descends to a $1$-Lipschitz function $\tau'_0: \Sigma_0\to\R/c\Z$, which is in the same homotopy class as $\tau_0$.

However, $Ax$ cannot be in the stretch locus of $\tau'_0$. Suppose that it were. Then,
lifting $x$ to $\hat x\in \Sigma$ we would have $\tau'(a_t\hat x) = \tau'(\hat x) + t$ for all
$t\in\R$. We may assume for convenience that $\tau'(\hat x)=0$.
Consider a sequence $n_i\to\infty$ such that $a_{cn_i} x$ converges to $z$ --
then $z\in \Lf$. Note that $\tau'_0(a_{cn_i}) = 0\mod c\Z$ so $\tau'_0(z)=0\mod c\Z$. But
$z\in\Lf$ implies that $\tau'_0(z) = -\ep \mod c\Z$, which is a contradiction (we chose $0<\ep<c$), and we
conclude that $Ax$ is not in the stretch locus of $\tau'_0$.

But this contradicts the hypothesis that $Ax$ is in $\lambda_0$, the common stretch locus of all maps
homotopic to $\tau_0$, so the proof of the theorem is complete. 

\end{proof}

\bibliography{references}{}
\bibliographystyle{amsalpha.bst}

\end{document}